\documentclass[11pt,letterpaper]{article}
\usepackage[english]{babel}
\usepackage{amsmath}
\usepackage{amsfonts, mathrsfs}
\usepackage{amssymb}
\usepackage{verbatim}
\usepackage{graphicx}
\usepackage{color}
\usepackage{url}
\usepackage{subcaption}
\usepackage{tikz}
\usepackage{pgflibraryarrows}
\usepackage{pgflibrarysnakes}
\usepackage{float}
\usepackage{multirow}
\usepackage{epsfig}
\usepackage{epstopdf}

\oddsidemargin  0pt \topmargin   0pt \headheight 0pt \headsep 0pt
\textwidth   6.5in \textheight 8.5in \marginparsep 0pt
\marginparwidth 0pt
\parskip 1ex  \parindent 0ex

\numberwithin{equation}{section}
\newtheorem{theo}{Theorem}[section]

\newtheorem{prop}[theo]{Proposition}
\newtheorem{lemma}[theo]{Lemma}
\newtheorem{example}[theo]{Example}
\newtheorem{assum}[theo]{Assumption}
\newtheorem{defn}[theo]{Definition}
\newtheorem{remark}[theo]{Remark}
\newenvironment{proof}[1][Proof]{\textbf{#1.} }{\ \rule{0.5em}{0.5em}}

\newcommand{\var}{{\rm Var} \mspace{1mu}}

\renewcommand{\labelenumi}{\alph{enumi}.)}

\newcommand{\Poi}{{\rm Poi} \mspace{1mu}}
\newcommand\norm[1]{\left\lVert#1\right\rVert}

\begin{document}

\title{On a general class of inhomogeneous random digraphs}
\author{
	Junyu Cao \\ {\small University of California, Berkeley}
	\and
	Mariana Olvera-Cravioto \\ {\small University of California, Berkeley}}

\maketitle

\begin{abstract}
	We study a family of directed random graphs whose arcs are sampled independently of each other, and are present in the graph with a probability that depends on the attributes of the vertices involved. In particular, this family of models includes as special cases the directed versions of the Erd\H os-R\'enyi model, graphs with given expected degrees, the generalized random graph, and the Poissonian random graph. We establish the phase transition for the existence of a giant strongly connected component and provide some other basic properties, including the limiting joint distribution of the degrees and the mean number of arcs. In particular, we show that by choosing the joint distribution of the vertex attributes according to a multivariate regularly varying distribution, one can obtain scale-free graphs with arbitrary in-degree/out-degree dependence. 

\vspace{5mm}
\noindent {\em Keywords:} random digraphs, inhomogeneous random graphs, kernel-based random graphs, scale-free graphs, multi-type branching processes, couplings.  \\
MSC: Primary 05C80; Secondary 90B15, 60C05
\end{abstract}

\section{Introduction} 

Complex networks appear in essentially all branches of science and engineering, and since the pioneering work of Erd\H os and R\'enyi in the early 1960s \cite{erdos1959random, erdos1960evolution}, people from various fields have used random graphs to model, explain and predict some of the properties commonly observed in real-world networks. Most of the work until now has been mainly focused on the study of undirected graphs, however, some important networks, such as the World Wide Web, Twitter, and ResearchGate, to name a few, are directed. The present paper describes a framework for analyzing a large class of directed random graphs, which includes as special cases the directed versions of some of the most popular undirected random graph models.

Specifically, we study directed random graphs where the presence or absence of an arc is independent of all other arcs. This independence among arcs is the basis of the classical Erd\H os-R\'enyi model \cite{erdos1959random, erdos1960evolution}, where the presence of an edge is determined by the flip of coin, with all possible edges having the same probability of being present.  However, it is well-known that the Erd\H os-R\'enyi model tends to produce very homogeneous graphs, that is, where all the vertices have close to the same number of neighbors, a property that is almost never observed in real-world networks. In the undirected setting, a number of models have been proposed to address this problem while preserving the independence among edges. Some of the best known models include the Chung-Lu model \cite{chung2002average, chung2002connected, chung2003spectra, miller2012edge}, the generalized random graph \cite{britton2006generating, bollobas2007phase, frieze2015introduction}, and the Norros-Reittu model or Poissonian random graph \cite{norros2006conditionally, van2016random, bollobas2007phase}. In the undirected case, all of these models were simultaneously studied in \cite{bollobas2007phase} under a broader class of graphs, which we will refer to as kernel-based models. In all of these models the inhomogeneity of the degrees is accomplished by assigning to each vertex a {\em type}, which is used to make the edge probabilities different for each pair of vertices. From a modeling perspective, the types correspond to vertex attributes that influence how likely a vertex is to have neighbors, and inhomogeneity among the types translates into inhomogeneous degrees. 

Our proposed family of directed random graphs, which we will refer to as {\em inhomogeneous random digraphs}, provides a uniform treatment of essentially any model where arcs are present independently of each other, in the same spirit as the work in \cite{bollobas2007phase} for the undirected case. The main results in this paper establish some of the basic properties studied on random graphs, including the expected number of arcs, the joint distribution of the in-degree and out-degree, and the phase transition for the size of the largest strongly connected component. We pay special attention to the so-called {\em scale-free} property, which states that the tail degree distribution(s) decay according to a power law. Since many real-world directed complex networks exhibit the scale-free property in either their in-degrees, their out-degrees, or both, we provide a theorem stating how the family of random directed graphs studied here can be used to model such networks. Our main result on the connectivity properties of the graphs produced by our model shows that there exists a phase transition, determined by the types, after which the largest strongly connected component contains (with high probability) a positive fraction of all the vertices in the graph, i.e., the graph contains a  ``giant" strongly connected component. 

That the undirected models mentioned above satisfy these basic properties (e.g., scale-free degree distribution, existence of a giant connected component, etc.) constitutes a series of classical results within the random graph literature, however, considerably fewer results are available for directed graphs in general. Some notable exceptions include the existence of a giant strongly connected component in the directed configuration model  \cite{Cooper2004size}, the corresponding result for the deterministic kernel directed model with a finite number of types  \cite{bloznelis2012birth}, the scale-free property on a directed preferential attachment model  \cite{samorodnitsky2016nonstandard, resnick2015tauberian}, and the limiting degree distributions in the directed configuration model \cite{chen2013directed}\footnote{Neither the configuration model nor the preferential attachment model have independent arcs, and therefore fall outside the scope of this paper.}. Our present work includes as a special case the main theorem in \cite{bloznelis2012birth} and extends it to a larger family of directed random graphs, and it also compiles several results for the number of arcs and the joint distribution of the degrees. It is also worth pointing out that the directed nature of our framework introduces some non-trivial challenges that are not present in the undirected setting, which is the reason we chose to provide a different approach from the one used in \cite{bollobas2007phase} for establishing some of our main results. We refer the reader to Section~\ref{S.PhaseTransition} for more details on these challenges and what they imply. 

The paper is organized as follows.  In Section~\ref{model} we specify a class of directed random graphs via their arc probabilities, and explain how the models mentioned above fit into this framework. In Section~\ref{mainresults} we provide our main results on the basic properties of the graphs produced by our model, and in Section~\ref{proof} we give all the proofs.

 \section{The Model}\label{model}

As mentioned in the introduction, we study directed random graphs with independent arcs. Since we are particularly interested in graphs with inhomogeneous degrees, each vertex in the graph will be assigned a {\em type}, which will determine how large its in-degree and out-degree are likely to be. In applications, the type of a vertex can also be used to model other vertex attributes not directly related to its degrees.  We will assume that the types take values in a separable metric space $\mathcal{S}$, which we will refer to as the ``type space".

In order to describe our family of directed random graphs, we start by defining the vertex set $V_n = \{ 1, 2, \dots, n\}$ and the type sequence $\{ {\bf x}_1^{(n)}, \dots, {\bf x}_n^{(n)} \}$, where ${\bf x}_i^{(n)}$ denotes the type of vertex $i$ in a graph on the vertex set $V_n$.  Note that, depending on how we construct the type sequence,  it is possible for ${\bf x}_i^{(n)}$ to be different from ${\bf x}_i^{(m)}$ for $n \neq m$. 
Define $G_n(\kappa (1+\varphi_n))$ to be the graph on the vertex set $V_n$ whose arc probabilities are given by
\begin{equation} \label{eq:KernelProb}
p_{ij}^{(n)} = \left( \frac{\kappa({\bf x}_i^{(n)}, {\bf x}_j^{(n)})}{n}(1+\varphi_{n}(\bold{x}_i^{(n)},\bold{x}_j^{(n)}))\right) \wedge 1, \qquad 1 \leq i \neq j \leq n,
\end{equation}
where $\kappa$ is a nonnegative function on $\mathcal{S} \times \mathcal{S}$,  
$$\varphi_n({\bf x}, {\bf y}) = \varphi \left(n, \{ {\bf x}_k^{(n)}: 1 \leq k \leq n\}, {\bf x}, {\bf y} \right) > -1 \qquad \text{a.s.,}$$
and $x \wedge y = \min\{x, y\}$ ($x \vee y = \max\{x,y\}$).  In other words, $p_{ij}^{(n)}$ denotes the probability that there is an arc from vertex $i$ to vertex $j$ in $G_n(\kappa (1+ \varphi_n))$. The presence or absence of arc $(i,j)$ is assumed to be independent of all other arcs. Note that the function $\varphi_n({\bf x}, {\bf y})$ may depend on $n$, on the types of the two vertices involved, or on the entire type sequence. Following the terminology used in \cite{bollobas2007phase} and \cite{bloznelis2012birth}, we will refer to $\kappa$ as the kernel of the graph. Note that we have decoupled the dependence on $n$ and on the type sequence by including it in the term $\varphi_n({\bf x}, {\bf y})$, which implies that with respect to the notation used in \cite{bollobas2007phase}, $\kappa_n({\bf x}, {\bf y})$ there corresponds to $\kappa({\bf x}, {\bf y}) (1 + \varphi_n({\bf x}, {\bf y}))$ here.

Throughout the paper, we will refer to any directed random graph generated through our model as an {\em inhomogeneous random digraph} (IRD). 

We end this section by explaining how the directed versions of the Erd\H os-R\'enyi graph \cite{erdos1960evolution, erdos1959random, erdos1964strength, bollobas1998random}, the Chung-Lu (or ``given expected degrees'') model \cite{chung2002average, chung2002connected, chung2003spectra, miller2012edge}, the generalized random graph \cite{britton2006generating, bollobas2007phase, frieze2015introduction}, and the Norros-Reittu model (or ``Poissonian random graph'') \cite{norros2006conditionally, van2016random, bollobas2007phase}, as well as the directed deterministic kernel model in \cite{bloznelis2012birth}, fit into our framework. The first four examples fall into the category of so-called rank-1 kernels, where  the graph kernel is of the form $\kappa({\bf x}, {\bf y}) = \kappa_-({\bf x}) \kappa_+({\bf y})$ for some nonnegative continuous functions $\kappa_+$ and $\kappa_-$ on $\mathcal{S}$.

\begin{example} \label{E.KnownModels}
Directed versions of some well-known inhomogeneous random graph models. All of them, with the exception of the last one, are defined on the space $\mathcal{S} = \mathbb{R}_+$ for a type of the form ${\bf x} = (x^+, x^-)$, and correspond to rank-1 kernels with $\kappa_+({\bf x}) = x^+/\sqrt{\theta}$ and $\kappa_-({\bf x}) = x^-/\sqrt{\theta}$, with $\theta > 0$ a constant. For convenience, we have dropped the superscript $^{(n)}$ from the type sequence, i.e.,  $\{{\bf x}_1, \dots, {\bf x}_n\} = \{{\bf x}_1^{(n)}, \dots, {\bf x}_n^{(n)}\}$.

\begin{enumerate}
\item \emph{Directed Erd\H os-R\'enyi Model:} the arc probabilities are given by
$$p_{ij}^{(n)}=\lambda/n$$
where $\lambda$ is a given constant and $n$ is the total number of vertices; $\varphi_n({\bf x}_i, {\bf x}_j) = 0$. 

\item \emph{Directed Given Expected Degree Model (Chung-Lu):} the arc probabilities are given by  
$$p_{ij}^{(n)}=\frac{x_i^-x_j^+}{l_n}\wedge 1,$$ 
where $l_n=\sum_{i=1}^n (x_i^++x_i^-)$. In terms of \eqref{eq:KernelProb}, it satisfies $\varphi_{n}({\bf x}_i, {\bf x}_j) =\frac{\theta n-l_n}{l_n}$, where $\theta= \lim_{n \to \infty} l_n/n$.

\item \emph{Generalized Directed Random Graph:} the arc probabilities are given by
$$p_{ij}^{(n)}=\frac{x_i^-x_j^+}{l_n+x_i^-x_j^+},$$
which implies that $\varphi_{n}(\bold{x}_i,\bold{x}_j)=\frac{\theta n-l_n-x_i^-x_j^+}{l_n+x_i^-x_j^+}$, with $l_n$ and $\theta$ defined as above.

\item \emph{Directed Poissonian Random Graph (Norros-Reittu):} the arc probabilities are given by
$$p_{ij}^{(n)}=1-e^{-x_i^-x_j^+/l_n},$$
which implies that $\varphi_{n}(\bold{x}_i,\bold{x}_j)= \left( n\theta(1-e^{-x_i^-x_j^+/l_n})-x_i^-x_j^+ \right)/ (x_i^-x_j^+)$, with $l_n$ and $\theta$ defined as above.

\item \emph{Deterministic Kernel Model:} the arc probabilities are given by
$$p_{ij}^{(n)} = \frac{\kappa({\bf x}_i, {\bf x}_j)}{n} \wedge 1,$$
for a finite type space $\mathcal{S} = \{ {\bf s}_1, \dots, {\bf s}_M \}$, and a strictly positive function $\kappa$ on $\mathcal{S} \times \mathcal{S}$; in terms of \eqref{eq:KernelProb}, $\varphi_n({\bf x}_i, {\bf x}_j) = 0$. 
\end{enumerate}
\end{example}

\section{Main Results}\label{mainresults}

We now present our main results for the family of inhomogeneous random digraphs defined through  \eqref{eq:KernelProb}. As mentioned in the introduction, we focus on establishing some of the basic properties of this family, including the distribution of the degrees, the mean number of arcs, and the size of the largest strongly connected component. When analyzing the degree distributions, we specifically explain how to obtain the scale-free property under degree-degree correlations. 

As mentioned in the previous section, we assume throughout the paper that the $n$th graph in the sequence is constructed using the types $\{ {\bf X}_1, \dots, {\bf X}_n\} = \{ {\bf X}_1^{(n)}, \dots, {\bf X}_n^{(n)} \}$, where we have dropped the superscript $^{(n)}$ to simplify the notation. From now on we will use upper case letters to emphasize the possibility that the $\{ {\bf X}_i\}$ may themselves be generated through a random process. To distinguish between these two levels of randomness, we define $\mathscr{F} =  \sigma(\{\bold{X}_i^{(n)},1\leq i\leq n\},n\geq 1)$ and the corresponding conditional probability and expectation $\mathbb{P}(\cdot) = P( \cdot | \mathscr{F})$ and $\mathbb{E}[\cdot] = E[ \cdot | \mathscr{F}]$, respectively.

Our first assumption will be to ensure that the $\{ {\bf X}_i\}$ converge in distribution under the unconditional probability $P$.  As is to be expected from the work in \cite{bollobas2007phase} for the undirected case, we will also need to impose some regularity conditions on the kernel $\kappa$, as well as on the function $\varphi_n$. Our main assumptions are summarized below.

\begin{assum} \label{reg}
\begin{enumerate}
\item There exists a Borel probability measure $\mu$ on $\mathcal{S}$ such that for any $\mu$-continuity set $A \subseteq \mathcal{S}$, 
$$\mu_n(A) := \frac{1}{n}\sum_{i=1}^n 1(\bold{X}_i\in A)\xrightarrow{P}\mu(A) \qquad n \to \infty ,$$
where $\xrightarrow{P}$ denotes convergence in probability.
\item $\kappa$ is nonnegative and continuous a.e.~on $\mathcal{S}\times\mathcal{S}$.
\item $\varphi_n({\bf x}, {\bf y})$ is continuous on $\mathcal{S} \times \mathcal{S}$ and it satisfies $\varphi_n(\bold{x},\bold{y}) \rightarrow 0$ $\mathbb{P}$-a.s.as $n\rightarrow\infty$ for any $\bold{x},\bold{y}\in \mathcal{S}$.
\item The following limits hold:
 $$\lim_{n\rightarrow\infty}\frac{1}{n^2}E\left[\sum_{i=1}^n\sum_{j=1}^n \kappa(\bold{X}_i,\bold{X}_j)\right]= \lim_{n\rightarrow\infty}\frac{1}{n}E\left[\sum_{i=1}^n\sum_{j\neq i} p_{ij}^{(n)}\right]=\iint_{\mathcal{S}^2}\kappa(\bold{x},\bold{y})\,\mu(d\bold{x})\mu(d\bold{y}) < \infty.$$
\end{enumerate}
\end{assum}

\begin{remark} \label{R.GroundSpace}
The pair $(\mathcal{S}, \mu)$, where $\mathcal{S}$ is a separable metric space and $\mu$ is a Borel probability measure, is referred to in \cite{bollobas2007phase} as a {\em generalized ground space}.  For convenience, we will adopt the same terminology throughout the paper. 
\end{remark}

\subsection{Number of arcs}

Our assumption that the types $\{{\bf X}_i\}$ converge in distribution as the size of the graph grows implies that the graphs produced by our model are sparse, in the sense that the mean number of arcs is of the same order as the number of vertices. Our first result provides an expression for the exact ratio between the number of arcs and the number of vertices.

\begin{theo} \label{numberofarcs}
Define $e(G_n(\kappa (1+ \varphi_n)))$ to be the number of arcs in $G_n(\kappa(1+\varphi_n))$. Then, under Assumption~\ref{reg}(a)-(d) we have
$$\frac{1}{n} \mathbb{E}[e(G_n(\kappa (1+ \varphi_n)))]\xrightarrow{L_1}\iint_{\mathcal{S}^2}\kappa(\bold{x},\bold{y})\,\mu(d\bold{x})\mu(d\bold{y})$$
and
$$\frac{1}{n} e(G_n(\kappa (1+ \varphi_n)))\xrightarrow{L_1}\iint_{\mathcal{S}^2}\kappa(\bold{x},\bold{y})\,\mu(d\bold{x})\mu(d\bold{y})$$
as $n \to \infty$, where $\xrightarrow{L_1}$ denotes convergence in $L_1$. 
\end{theo}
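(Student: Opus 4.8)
The plan is to argue conditionally on $\mathscr{F}$. Write $e_n := e(G_n(\kappa(1+\varphi_n)))$ and $c_\kappa := \iint_{\mathcal{S}^2}\kappa(\mathbf{x},\mathbf{y})\,\mu(d\mathbf{x})\mu(d\mathbf{y})$. Because $e_n = \sum_{i=1}^n\sum_{j\neq i} I_{ij}$ with the $I_{ij}$ conditionally independent Bernoulli$(p_{ij}^{(n)})$ random variables, the $\mathscr{F}$-measurable quantity $Y_n := \tfrac1n\mathbb{E}[e_n] = \tfrac1n\sum_{i=1}^n\sum_{j\neq i} p_{ij}^{(n)}$ is exactly what the first display claims converges to $c_\kappa$ in $L_1$ under $P$. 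The starting observation is that Assumption~\ref{reg}(d) already gives $E[Y_n]\to c_\kappa$, while $Y_n\ge 0$; since $(Y_n-c_\kappa)^+ = (c_\kappa-Y_n)^+ + (Y_n-c_\kappa)$ and $0\le (c_\kappa-Y_n)^+\le c_\kappa$, one has $E|Y_n-c_\kappa| = 2E[(c_\kappa-Y_n)^+] + E[Y_n] - c_\kappa$, so by bounded convergence it suffices to prove the one-sided estimate
\begin{equation*}
\lim_{n\to\infty}P\bigl(Y_n < c_\kappa - \varepsilon\bigr) = 0 \qquad\text{for every } \varepsilon>0,
\end{equation*}
the matching upper tail being automatic from $E[Y_n]\to c_\kappa$.

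For this lower bound I would truncate the kernel. Fix $K>0$, put $\kappa_K := \kappa\wedge K$, and combine the elementary identities $(1+t)\wedge 1 = 1 - t^-$ (valid for $t>-1$, with $t^- := (-t)\vee 0$) and $\tfrac{cs}{n}\wedge 1 \ge \tfrac{c}{n}(s\wedge 1)$ (valid for $0\le c\le n$ and $s\ge 0$) to obtain, for all $n\ge K$,
\begin{equation*}
p_{ij}^{(n)} \;\ge\; \frac{\kappa_K(\mathbf{X}_i,\mathbf{X}_j)}{n}\bigl(1 - \varphi_n^-(\mathbf{X}_i,\mathbf{X}_j)\bigr),
\end{equation*}
and hence, after pulling out the diagonal and using $\kappa_K\le K$,
\begin{equation*}
Y_n \;\ge\; \iint_{\mathcal{S}^2}\kappa_K \,d(\mu_n\times\mu_n) \;-\; \frac{K}{n} \;-\; K\iint_{\mathcal{S}^2}\varphi_n^- \,d(\mu_n\times\mu_n).
\end{equation*}
By Assumption~\ref{reg}(a) the random product measures $\mu_n\times\mu_n$ converge weakly in probability to $\mu\times\mu$, and since $\kappa_K$ is bounded and, by Assumption~\ref{reg}(b), continuous $\mu\times\mu$-a.e., the mapping theorem for weak convergence yields $\iint\kappa_K\,d(\mu_n\times\mu_n)\xrightarrow{P} c_K := \iint\kappa_K\,d\mu\,d\mu$, with $c_K\uparrow c_\kappa<\infty$ as $K\to\infty$ by monotone convergence and Assumption~\ref{reg}(d). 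The lower bound will therefore follow once the last term above is shown to be negligible.

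That last step is the heart of the matter: one must show that, for each fixed $K$,
\begin{equation*}
\iint_{\mathcal{S}^2}\varphi_n^- \,d(\mu_n\times\mu_n) \;\xrightarrow{P}\; 0 .
\end{equation*}
The ingredients available are that $0\le\varphi_n^- < 1$ (since $\varphi_n>-1$), that each $\varphi_n$ is continuous, and that $\varphi_n(\mathbf{x},\mathbf{y})\to 0$ $\mathbb{P}$-a.s.\ for every fixed $(\mathbf{x},\mathbf{y})$ (Assumption~\ref{reg}(c)); consequently, for each $\delta>0$ the \emph{open} sets $\{\varphi_n^- > \delta\}$ satisfy $1_{\{\varphi_n^- > \delta\}}\to 0$ pointwise, $\mathbb{P}$-a.s. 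Since $\varphi_n^- \le \delta + 1_{\{\varphi_n^- > \delta\}}$, it is enough to prove $(\mu_n\times\mu_n)(\{\varphi_n^- > \delta\})\xrightarrow{P} 0$. This is genuinely more delicate than in the bounded-kernel setting of \cite{bollobas2007phase}, because both the integrand and the integrating measure vary with $n$, so weak convergence cannot be invoked directly. I would handle it by first using tightness of $\mu$ (together with Assumption~\ref{reg}(a) to carry the resulting mass bound over to $\mu_n$) in order to reduce to a compact subset of $\mathcal{S}\times\mathcal{S}$, and then exploiting the pointwise contraction of the sets $\{\varphi_n^- > \delta\}$ and the uniform bound $\varphi_n^- < 1$; I expect this to be the part of the proof requiring the most care.

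Granting this, $P(Y_n < c_\kappa - \varepsilon)\to 0$ for every $\varepsilon>0$, so by the reduction in the first paragraph $Y_n\xrightarrow{L_1} c_\kappa$, which is the first display. For the second display I would pass from the conditional mean to $e_n$ itself through a conditional second-moment bound: by conditional independence,
\begin{equation*}
\mathbb{E}\bigl[(e_n - \mathbb{E}[e_n])^2\bigr] = \sum_{i=1}^n\sum_{j\neq i} p_{ij}^{(n)}\bigl(1-p_{ij}^{(n)}\bigr) \;\le\; \sum_{i=1}^n\sum_{j\neq i} p_{ij}^{(n)} = nY_n,
\end{equation*}
whence $E\bigl[(\tfrac1n e_n - Y_n)^2\bigr] = \tfrac1{n^2}E\bigl[\mathbb{E}[(e_n-\mathbb{E}[e_n])^2]\bigr] \le \tfrac1n E[Y_n]\to 0$, so $\tfrac1n e_n - Y_n\to 0$ in $L_2(P)$ and a fortiori in $L_1(P)$. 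Combined with the first display and the triangle inequality, this gives $\tfrac1n e_n\xrightarrow{L_1} c_\kappa$.
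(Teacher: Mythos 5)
Your overall architecture is sound and genuinely different from the paper's. The paper couples $\mu_n$-distributed random variables $({\bf X}^{(n)},{\bf Y}^{(n)})$ to $\mu$-distributed ones via a Skorokhod representation, shows $\{\kappa({\bf X}^{(n)},{\bf Y}^{(n)})(1+\varphi_n({\bf X}^{(n)},{\bf Y}^{(n)}))\}\wedge n\to\kappa({\bf X},{\bf Y})$ in probability, and upgrades to $L_1$ via uniform integrability, with Assumption~3.1(d) supplying convergence of the means. Your Scheff\'e-type identity $E|Y_n-c_\kappa|=2E[(c_\kappa-Y_n)^+]+E[Y_n]-c_\kappa$ correctly reduces everything to the one-sided bound $P(Y_n<c_\kappa-\varepsilon)\to0$, and the truncation together with the mapping theorem for $\iint\kappa_K\,d(\mu_n\times\mu_n)$ is fine. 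The second display is handled exactly as in the paper (conditional variance $\le nY_n$).

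The gap is at the step you yourself flag. You need $(\mu_n\times\mu_n)(\{\varphi_n^->\delta\})\xrightarrow{P}0$, and you propose to derive it from tightness plus the fact that $1_{\{\varphi_n^->\delta\}}\to0$ pointwise. This does not close: both the set and the measure move with $n$, and pointwise convergence to zero of indicators of open sets together with weak convergence $\nu_n\Rightarrow\nu$ does not imply $\nu_n(A_n)\to0$, even on a compact space. For instance, on $\mathcal{S}=[0,1]$ take $\nu_n=\delta_{1/n}\Rightarrow\delta_0$ and $A_n=\bigl(1/(n+1),\,1/(n-1)\bigr)$: each fixed $x$ lies in at most two of the $A_n$, so $1_{A_n}\to0$ everywhere, yet $\nu_n(A_n)=1$ for all $n$. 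What is actually needed is some uniformity in the convergence $\varphi_n\to0$ (locally uniform convergence, equivalently continuous convergence, or an equicontinuity property of $\{\varphi_n\}$), which pointwise convergence of continuous functions does not supply. Note that the paper's proof confronts the same issue and disposes of it by asserting $\varphi_n({\bf X}^{(n)},{\bf Y}^{(n)})\xrightarrow{P}0$ along the Skorokhod coupling, which is precisely a continuous-convergence statement; so on either route this is the substantive input, and you would need to either invoke that stronger reading of Assumption~3.1(c) explicitly or add the uniformity as a hypothesis before your argument is complete.
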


\subsection{Distribution of vertex degrees}

We now move on to describing the vertex degree distribution, which is best accomplished by looking at the properties of a typical vertex, i.e., one chosen uniformly at random. In particular, if $D_{n,i}^+$ and $D_{n,i}^-$ denote the in-degree and out-degree, respectively, of vertex $i \in V_n$, and we let $\xi$ be a uniform random variable in $\{1, 2, \dots, n\}$, then we study the distribution of  $(D_{n,\xi}^+, D_{n,\xi}^-)$. We point out that the distribution of $(D_{n,\xi}^+, D_{n,\xi}^-)$ also allows us to compute the proportion of vertices in the graph having in-degree $k$ and out-degree $l$ for any $k,l \in \mathbb{N}$.  In the sequel, $\Rightarrow$ denotes convergence in distribution. 

\begin{theo}\label{degree}
Under Assumption~\ref{reg} we have
$$\left( D_{n,\xi}^+,D_{n,\xi}^- \right)\Rightarrow( Z^+,Z^-), \quad E[D_{n,\xi}^\pm]\rightarrow E[Z^\pm], \quad  \text{ as }n\rightarrow\infty,$$
where $Z^+$ and $Z^-$ are conditionally independent (given {\bf X}) mixed Poisson random variables with mixing distributions 
$$\lambda_+(\bold{X}):=\int_\mathcal{S}\kappa(\bold{y},\bold{X}) \, \mu(d\bold{y}) \quad \text{and} \quad \lambda_-(\bold{X}):=\int_\mathcal{S}\kappa(\bold{X},\bold{y}) \, \mu(d\bold{y}) ,$$ 
respectively, and ${\bf X}$ is distributed according to $\mu$. 
\end{theo}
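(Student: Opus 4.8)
\emph{Proof proposal.} The plan is to work conditionally on the type sequence (under $\mathbb{P}$), prove the convergence there, and then remove the conditioning by bounded convergence. Fix $k,l\in\mathbb{N}$. Since $\xi$ is uniform on $\{1,\dots,n\}$ and independent of the arcs, $\mathbb{P}(D_{n,\xi}^+=k,\,D_{n,\xi}^-=l)=\frac1n\sum_{i=1}^n\mathbb{P}(D_{n,i}^+=k,\,D_{n,i}^-=l)$. The key first observation is that, conditionally on $\mathscr{F}$, the in-degree $D_{n,i}^+=\sum_{j\ne i}\mathbf{1}\{(j,i)\text{ present}\}$ and the out-degree $D_{n,i}^-=\sum_{j\ne i}\mathbf{1}\{(i,j)\text{ present}\}$ are sums over disjoint, independent families of arc indicators, hence conditionally independent; thus $\mathbb{P}(D_{n,i}^+=k,D_{n,i}^-=l)=\mathbb{P}(D_{n,i}^+=k)\,\mathbb{P}(D_{n,i}^-=l)$. (This is one place where the directed setting is actually easier than the undirected one, in which the two half-degrees coincide.)

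Next I would apply a Poisson approximation to each marginal. Writing $\Lambda_i^+:=\sum_{j\ne i}p_{ji}^{(n)}$ and $\Lambda_i^-:=\sum_{j\ne i}p_{ij}^{(n)}$, Le Cam's inequality (provable by coupling) yields $\sum_{k\ge0}\bigl|\mathbb{P}(D_{n,i}^+=k)-e^{-\Lambda_i^+}(\Lambda_i^+)^k/k!\bigr|\le 2\sum_{j\ne i}(p_{ji}^{(n)})^2$, and similarly for $D_{n,i}^-$. To see that this error, averaged over $i$, is negligible, I would truncate the kernel at a level $M$: after replacing $\kappa$ by $\kappa\wedge M$, all arc probabilities are $O(M/n)$ (up to the factor $1+\varphi_n$, which tends to $1$), so $\frac1n\sum_{i=1}^n\sum_{j\ne i}(p_{ji}^{(n)})^2\to0$ in probability by Assumption~\ref{reg}(c)--(d), while the error introduced by the truncation is controlled, for large $n$, by $\iint_{\mathcal S^2}(\kappa-\kappa\wedge M)\,d\mu\,d\mu$, which vanishes as $M\to\infty$ since $\iint_{\mathcal S^2}\kappa\,d\mu\,d\mu<\infty$. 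It therefore suffices to show that, for the bounded continuous function $g(u,v):=e^{-u}\tfrac{u^k}{k!}\,e^{-v}\tfrac{v^l}{l!}$,
$$\frac1n\sum_{i=1}^n g(\Lambda_i^+,\Lambda_i^-)\ \xrightarrow{P}\ E\bigl[g(\lambda_+(\mathbf X),\lambda_-(\mathbf X))\bigr]=P(Z^+=k,Z^-=l).$$

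This last convergence is the heart of the argument. Using $\varphi_n\to0$ and the truncation, I would replace $\Lambda_i^+$ and $\Lambda_i^-$ by $\int_{\mathcal S}\kappa(\mathbf y,\mathbf X_i)\,\mu_n(d\mathbf y)$ and $\int_{\mathcal S}\kappa(\mathbf X_i,\mathbf y)\,\mu_n(d\mathbf y)$, at the cost of an $o(1)$ error uniformly in $i$, so that $\frac1n\sum_{i}g(\Lambda_i^+,\Lambda_i^-)=\int_{\mathcal S}H_n(\mathbf x)\,\mu_n(d\mathbf x)+o(1)$ with $H_n(\mathbf x):=g\bigl(\int\kappa(\mathbf y,\mathbf x)\mu_n(d\mathbf y),\int\kappa(\mathbf x,\mathbf y)\mu_n(d\mathbf y)\bigr)$. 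Since $\mu_n\Rightarrow\mu$ in probability (Assumption~\ref{reg}(a)) and $\kappa$ is bounded (after truncation) and $\mu\times\mu$-a.e.\ continuous, a portmanteau-type argument --- most cleanly carried out by first approximating $\kappa$ by a finitary kernel, for which $\int H_n\,d\mu_n$ becomes a polynomial in integrals $\int f\,d\mu_n$ that converge directly --- gives $\int_{\mathcal S}H_n\,d\mu_n\xrightarrow{P}\int_{\mathcal S}g(\lambda_+(\mathbf x),\lambda_-(\mathbf x))\,\mu(d\mathbf x)$, which is the displayed limit. Finally, bounded convergence turns the convergence of the conditional probabilities into $P(D_{n,\xi}^+=k,D_{n,\xi}^-=l)\to P(Z^+=k,Z^-=l)$, i.e.\ the asserted joint weak convergence. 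The convergence of the means needs none of this machinery: since $\sum_{i}D_{n,i}^+=\sum_{i}D_{n,i}^-=e(G_n(\kappa(1+\varphi_n)))$, we get $E[D_{n,\xi}^\pm]=\frac1nE[e(G_n(\kappa(1+\varphi_n)))]\to\iint_{\mathcal S^2}\kappa\,d\mu\,d\mu=E[\lambda_\pm(\mathbf X)]=E[Z^\pm]$ by Theorem~\ref{numberofarcs}.

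The step I expect to be the main obstacle is the double-averaging limit $\int H_n\,d\mu_n\xrightarrow{P}\int g(\lambda_+,\lambda_-)\,d\mu$: one must handle simultaneously (i) the nested empirical average, since $H_n$ itself depends on $\mu_n$, so two layers of weak convergence interact; (ii) the mere a.e.\ continuity of $\kappa$, which forces the truncation and a check that the relevant discontinuity sets are $\mu$-null (resp.\ $\mu\times\mu$-null) and that $\lambda_\pm$ inherit a.e.\ continuity and $\mu$-integrability; and (iii) the perturbation $\varphi_n$, which must be shown not to disturb the limit. Once a lemma of the form ``$\mu_n\Rightarrow\mu$ implies $\int\kappa(\mathbf y,\cdot)\,\mu_n(d\mathbf y)\to\lambda_+(\cdot)$ in the appropriate averaged sense'' is isolated, the remaining estimates are routine.
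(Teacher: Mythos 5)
Your proposal is correct in outline, but it takes a genuinely different route from the paper. The paper derives Theorem~\ref{degree} as a corollary of the coupling machinery built for the giant-component result: it applies Theorem~\ref{T.Coupling} with $k=1$ to couple $(D_{n,\xi}^+,D_{n,\xi}^-)$ exactly with a pair of conditionally independent Poisson variables with means $\lambda_+^{(m)}({\bf X}_\xi)$, $\lambda_-^{(m)}({\bf X}_\xi)$ for a regular finitary minorant $\kappa_m$, bounds the coupling failure probability by $H(n,m,1,\epsilon)$, and then sends $n\to\infty$, $\epsilon\downarrow 0$, $m\to\infty$ (the authors say explicitly they do it this way only to avoid repetition). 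You instead exploit the observation — special to the directed setting — that $D_{n,i}^+$ and $D_{n,i}^-$ are conditionally independent given $\mathscr{F}$ because they are built from disjoint independent arc families, apply Le Cam's inequality to each marginal, and reduce everything to the empirical double-average $\tfrac1n\sum_i g(\Lambda_i^+,\Lambda_i^-)\to E[g(\lambda_+({\bf X}),\lambda_-({\bf X}))]$. This is more elementary and self-contained; what it costs you is that the hard analytic content (controlling $\varphi_n$, the quadratic error $\tfrac1n\sum_{i,j}(p_{ji}^{(n)})^2$, and the nested empirical average) must still be done by hand, and in the paper these are exactly the jobs of Lemma~\ref{bound} and Lemma~\ref{L.gLimits}, which your sketch implicitly re-derives via truncation and finitary approximation. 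Two small cautions: the replacement of $\Lambda_i^\pm$ by $\lambda_n^\pm({\bf X}_i)$ is \emph{not} $o(1)$ uniformly in $i$ (pointwise convergence of $\varphi_n$ gives no uniformity), but the averaged bound $\tfrac1n\sum_i|\Lambda_i^+-\lambda_n^+({\bf X}_i)|\xrightarrow{P}0$ — which is what Lemma~\ref{bound} delivers and what the Lipschitz continuity of $g$ actually requires — does hold; and the truncation of $\kappa$ changes the graph itself, so you need the (easy) monotone coupling argument showing the averaged total-variation discrepancy in degrees is controlled by $\iint(\kappa-\kappa\wedge M)\,d\mu\,d\mu$, as you indicate. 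With those points made precise, your argument goes through.
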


As mentioned earlier, we are particularly interested in models capable of creating scale-free graphs, perhaps with a significant correlation between the in-degree and out-degree of the same vertex. To see that our family of inhomogeneous random digraphs can accomplish this, we first introduce the notion of non-standard regular variation (see~\cite{resnick2015tauberian, samorodnitsky2016nonstandard}), which extends the notion of regular variation in the real line to multiple dimensions, with each dimension having potentially different tail indexes. In our setting we only need to consider two dimensions, so we only give the bivariate version of the definition.

\begin{defn}
A nonnegative random vector $(X,Y ) \in \mathbb{R}^2$ has a distribution that is non-standard regularly varying if there
exist scaling functions $a(t) \uparrow \infty$ and $b(t) \uparrow \infty$ and a non-zero limit measure $\nu(\cdot)$, called the limit or tail measure, such that 
$$tP\left( (X/a(t),Y/b(t))\in\cdot \right) \xrightarrow{v}\nu(\cdot), \qquad t \to \infty,$$ where $\xrightarrow{v}$ denotes vague convergence of measures in $M_+([0,\infty]^2 \backslash \{{\bf 0} \})$, the space of Radon measures on $[0,\infty]^2 \setminus\{ {\bf 0} \}$.
\end{defn}

In particular, if the scaling functions $a(t)$ and $b(t)$ are regularly varying at infinity with indexes $1/\alpha$ and $1/\beta$, respectively, that is $a(t) = t^{1/\alpha} L_a(t)$ and $b(t) = t^{1/\beta} L_b(t)$ for some $\alpha, \beta > 0$ and slowly varying functions $L_a$ and $L_b$, then the marginal distributions $P(X > t)$ and $P(Y > t)$ are regularly varying with tail indexes $-\alpha$ and $-\beta$, respectively (see Theorem~6.5 in \cite{resnick2007heavy}). Throughout the paper we use the notation $\mathcal{R}_\alpha$ to denote  the family of regularly varying functions with index $\alpha$.

To see how our family of IRDs can be used to model complex networks where both the in-degrees and the out-degrees possess the scale-free property, perhaps with different tail indexes, we give a theorem stating that the non-standard regular variation of the limiting degrees $(Z^+, Z^-)$ follows from that of the vector $(\lambda_+({\bf X}), \lambda_-({\bf X}))$. Moreover, for the models (a)-(d) in Example~\ref{E.KnownModels}, we have
$$(\lambda_+({\bf X}), \lambda_-({\bf X}) ) =  \left( \kappa_+({\bf X}) \int_{\mathcal{S}} \kappa_-({\bf y}) \mu(d{\bf y}), \,    \kappa_-({\bf X}) \int_{\mathcal{S}} \kappa_+({\bf y}) \mu(d{\bf y})\right) = \left( c X^+ , \, (1-c) X^- \right), $$
where $c = E[X^-]/\theta$ and $\theta = E[ X^+ + X^-]$, so the non-standard regular variation of $(Z^+, Z^-)$ can be easily obtained by choosing a non-standard regularly varying type distribution $\mu$.

\begin{theo}\label{regularvarying}
Let ${\bf X}$ denote a random vector in the type space $\mathcal{S}$ distributed according to $\mu$. Suppose that $\mu$ is such that $(\lambda_+(\bold{X}), \lambda_-(\bold{X}))$ is non-standard regularly varying with scaling functions $a(t) \in \mathcal{R}_{1/\alpha}$ and $b(t) \in \mathcal{R}_{1/\beta}$ and limiting measure $ \nu(\cdot)$. Then, $(Z^+, Z^-)$ is non-standard regularly varying with scaling functions $a(t)$ and $b(t)$ and limiting measure $\nu(\cdot)$ as well.
\end{theo}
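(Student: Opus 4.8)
The plan is to verify the definition of non-standard regular variation directly for $(Z^+,Z^-)$, transferring it from $(\lambda_+({\bf X}),\lambda_-({\bf X}))$ by a Poisson concentration argument. Recall that, conditionally on ${\bf X}\sim\mu$, the coordinates $Z^+$ and $Z^-$ are \emph{independent}, with $Z^\pm$ Poisson distributed with (random) mean $\lambda_\pm({\bf X})$. Since $\nu$ is Radon on $[0,\infty]^2\setminus\{{\bf 0}\}$, it suffices, by standard arguments for cone measures (see, e.g., \cite{resnick2007heavy}), to verify the defining vague convergence on complements of neighbourhoods $[0,x]\times[0,y]$ of ${\bf 0}$, and by inclusion--exclusion this in turn follows once we show
$$ tP\big(Z^+>a(t)x\big)\to\nu\big((x,\infty]\times[0,\infty]\big), \qquad tP\big(Z^->b(t)y\big)\to\nu\big([0,\infty]\times(y,\infty]\big), $$
$$ tP\big(Z^+>a(t)x,\ Z^->b(t)y\big)\to\nu\big((x,\infty]\times(y,\infty]\big) $$
for all $x,y>0$ outside the countable set at which the corresponding limit set fails to be a $\nu$-continuity set. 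I will describe the argument for the joint quantity; the two one-coordinate statements follow the same way by ignoring one coordinate (and reduce to the classical fact that a mixed Poisson law with regularly varying mixing distribution has the same tail).

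The main tool is the Chernoff bound for the Poisson distribution: for each $\delta\in(0,1)$ there is a constant $c_\delta>0$ such that, writing $N_\lambda$ for a Poisson random variable with mean $\lambda$, one has $P(N_\lambda>s)\le e^{-c_\delta s}$ whenever $\lambda\le(1-\delta)s$, and $P(N_\lambda\le s)\le e^{-c_\delta s}$ whenever $\lambda\ge(1+\delta)s$. Using conditional independence to write
$$ P\big(Z^+>a(t)x,\ Z^->b(t)y\big)=E\!\left[P\big(N_{\lambda_+({\bf X})}>a(t)x\mid{\bf X}\big)\,P\big(N_{\lambda_-({\bf X})}>b(t)y\mid{\bf X}\big)\right], $$
I bound, for a fixed $\delta\in(0,1)$, each conditional probability from below by $1(\lambda_\pm({\bf X})>(1+\delta)a(t)x)(1-e^{-c_\delta a(t)x})$ (resp.\ with $b(t)y$) and from above by $1(\lambda_\pm({\bf X})>(1-\delta)a(t)x)+e^{-c_\delta a(t)x}$. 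Multiplying by $t$, taking expectations and expanding the products, this sandwiches $tP(Z^+>a(t)x,\ Z^->b(t)y)$ between
$$ (1-e^{-c_\delta a(t)x})(1-e^{-c_\delta b(t)y})\;tP\big((\lambda_+({\bf X}),\lambda_-({\bf X}))\in(a(t)(1+\delta)x,\infty]\times(b(t)(1+\delta)y,\infty]\big) $$
and
$$ tP\big((\lambda_+({\bf X}),\lambda_-({\bf X}))\in(a(t)(1-\delta)x,\infty]\times(b(t)(1-\delta)y,\infty]\big)+r_t, $$
where $r_t$ is a finite sum of terms of the form $te^{-c_\delta a(t)x}$, $te^{-c_\delta b(t)y}$, $te^{-c_\delta(a(t)x+b(t)y)}$, each of which tends to $0$ because $a(t)\in\mathcal{R}_{1/\alpha}$ and $b(t)\in\mathcal{R}_{1/\beta}$ grow at least like a positive power of $t$.

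To finish, note $a(t)(1\pm\delta)x=a(t)\cdot((1\pm\delta)x)$ and likewise for $b$, so the assumed non-standard regular variation of $(\lambda_+({\bf X}),\lambda_-({\bf X}))$ with scaling functions $a,b$ and limit measure $\nu$ gives, provided $\delta$ is chosen to avoid the countably many $\nu$-discontinuity values, that the two bounding probabilities converge as $t\to\infty$ to $\nu(((1+\delta)x,\infty]\times((1+\delta)y,\infty])$ and $\nu(((1-\delta)x,\infty]\times((1-\delta)y,\infty])$ respectively (the prefactor in the lower bound tending to $1$ since $te^{-c_\delta a(t)x},te^{-c_\delta b(t)y}\to0$, and $r_t\to0$). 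Letting $\delta\downarrow0$ and invoking continuity of the finite measure $\nu$ from below (lower bound) and from above (upper bound), together with the assumed $\nu$-continuity of the target rectangle $(x,\infty]\times(y,\infty]$, which forces $\nu([x,\infty]\times[y,\infty])=\nu((x,\infty]\times(y,\infty])$ since the set difference lies in the boundary, squeezes $tP(Z^+>a(t)x,\ Z^->b(t)y)\to\nu((x,\infty]\times(y,\infty])$. Assembling the joint and the two one-coordinate limits via inclusion--exclusion yields the desired vague convergence, i.e.\ that $(Z^+,Z^-)$ is non-standard regularly varying with scaling functions $a,b$ and limit measure $\nu$.

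The probabilistic content is light; the main obstacle is the measure-theoretic bookkeeping on the punctured cone $[0,\infty]^2\setminus\{{\bf 0}\}$ --- selecting a convergence-determining class of $\nu$-continuity sets automatically bounded away from ${\bf 0}$ (complements of lower-left rectangles with strictly positive corners, plus the associated "strips"), and justifying the $\delta\downarrow0$ interchange, which is exactly where $\nu$-continuity of the limiting rectangles enters. It is worth emphasizing that the conditional independence of $Z^+$ and $Z^-$ given ${\bf X}$ is what makes the transfer of the \emph{joint} tail behaviour (not merely the marginal tail indices) go through: it lets the bivariate tail event factor \emph{inside} the expectation over ${\bf X}$, so the coordinatewise Chernoff bounds can be applied without perturbing the dependence structure carried by $(\lambda_+({\bf X}),\lambda_-({\bf X}))$, and hence $(Z^+,Z^-)$ inherits the full limit measure $\nu$.
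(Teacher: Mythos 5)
Your proposal is correct and follows essentially the same route as the paper: condition on ${\bf X}$, factor the joint tail using the conditional independence of $Z^+$ and $Z^-$ given ${\bf X}$, and sandwich each conditional Poisson tail between indicators that the corresponding mixing variable exceeds a slightly perturbed threshold, up to exponentially small error terms that are killed by the polynomial growth of $a(t)$ and $b(t)$. The only difference is bookkeeping: you perturb the thresholds by a fixed relative factor $(1\pm\delta)$ and send $\delta\downarrow 0$ at the end via $\nu$-continuity of the limiting rectangle, whereas the paper uses a vanishing additive window $e(t)=\sqrt{\gamma\, a(t)\log a(t)}=o(a(t))$ so that the main term converges directly; both are valid.
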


\subsection{Phase transition for the largest strongly connected component} \label{S.PhaseTransition}

Our last result in the paper establishes a phase transition for the existence of a giant strongly connected component in $G_n(\kappa (1+ \varphi_n))$. That is, we provide a critical threshold for a functional of the kernel $\kappa$ and the type distribution $\mu$, such that above this threshold the graph will have a giant strongly connected component with high probability, and below it will not. Before stating the corresponding theorem, we give a brief overview of some basic definitions. 

For any two vertices $i,j$ in the graph, we say that there is a directed path from $i$ to $j$ if the graph contains a set of arcs $\{ (i, k_1), (k_1, k_2), \dots, (k_t, j)\}$ for some $t \geq 0$. A set of vertices $V \subseteq V_n$ is {\em strongly connected}, if for any two vertices $i,j \in V$ we have that there exists a directed path from $i$ to $j$ and one from $j$ to $i$. Moreover, we say that $V$ is a {\em giant} strongly connected component if $|V| \geq \epsilon n$ for some $\epsilon > 0$, where $| V|$ denotes the cardinality of set $V$.

For undirected graphs, the phase transition for the Erd\H os-R\'enyi model ($p_{ij}^{(n)} = \lambda/n$ for some $\lambda > 0$) dates back to the classical work of Erd\H os and R\'enyi in \cite{erdos1960evolution}, where the threshold for the existence of a giant connected component is $\lambda = 1$. The critical case, i.e., $\lambda =1$, was studied in \cite{luczak1994structure} using edge probabilities of the form $p_{ij}^{(n)} = (1 + c n^{-1/3})/n$ for some $c > 0$, in which case the size of the largest connected component was shown to be of order $n^{2/3}$. Somewhat unrelated, the corresponding phase transition was established for the (undirected) configuration model in \cite{molloy1995critical}, where the threshold was shown to be $E[D(D-1)]/E[D] = 1$, with $D$ distributed according to the limiting  degree distribution (as the number of vertices grows to infinity).  Back to the (undirected) inhomogeneous random graph setting,  i.e., $p_{ij}^{(n)} = \kappa({\bf x}_i, {\bf x}_j) (1 + \varphi_n({\bf x}_i, {\bf x}_j))/n$ with $\kappa$ symmetric,  the phase transition was first proven for various forms of rank-1 kernels. In particular, Chung and Lu established in \cite{chung2002connected}  the phase transition for the existence of a giant connected component in the so-called ``given expected degree" model. The same authors also give in \cite{chung2002average} a phase transition for the average distance between vertices when the type distribution $\mu$ follows a power-law.
Norros and Reittu proved the phase transition for the existence of a giant connected component for the Poissonian random graph in  \cite{norros2006conditionally}, along with a characterization of the distance between two randomly chosen vertices, and Riordan proved it in~\cite{riordan2005small} for the $c/\sqrt{ij}$ model, which is equivalent to the rank-1 kernel $\kappa({\bf x}, {\bf y}) = \psi({\bf x}) \psi({\bf y})$ with $\psi({\bf x}) = \sqrt{c{\bf x}}$ and $\mu$ the distribution of a Pareto(2,1). More generally, the work in \cite{bollobas2007phase} gives the phase transition for the giant connected component for the general kernel case, along with some  other properties (e.g., second largest connected component, distances between vertices, and stability). The threshold for the existence of a giant connected component is $\| T_\kappa \|_{op} = 1$, with $\| \cdot \|_{op}$ the operator norm\footnote{$\| T \|_{op} := \sup\{ \| Tf \|_2: f \geq 0 ,\|f \|_2 \leq 1 \}$ and $\| f \|_2^2 = \int_{\mathcal{S}} f({\bf x})^2 \mu(d{\bf x})$.}, where $T_\kappa$ is a linear operator induced by $\kappa$, which in the rank-1 case becomes $\| T_\kappa \|_{op}^2 = E[ \psi({\bf X})^2] = 1$, with ${\bf X}$ distributed according to $\mu$.

For the directed case, the phase transition for the existence of a giant strongly connected component was proven for the directed Erd\H os-R\'enyi model ($p_{ij}^{(n)} = \lambda/n$ for some $\lambda > 0$) in \cite{karp1990transitive} and for the ``given number of arcs" version of the Erd\H os-R\'enyi model (number of arcs $= \lambda n$ for some $\lambda > 0$) in \cite{luczak1990phase}, with the threshold being $\lambda = 1$. The work in \cite{luczak1992giant} studies a related model where each vertex $i$ can have three types of arcs: up arcs for $j > i$, down arcs for $j < i$, and bidirectional arcs, and proved the corresponding phase transition for the appearance of a giant strongly connected component. For the directed configuration model the phase transition for the existence of a giant strongly connected component was given in \cite{frieze2015introduction} under the assumption that the limiting degrees have finite variance and satisfy some additional conditions on the growth of the maximum degree, and can also be indirectly obtained from the results in \cite{hoorn2017distances} under only finite covariance between the in-degree and out-degree. The threshold for the directed configuration model is $E[D^+ D^-]/E[D^+ + D^-] = 1$, where $(D^+, D^-)$ are the limiting in-degree and out-degree. A hybrid model where the out-degree has a general distribution with finite mean and the destinations of the arcs are selected uniformly at random among the vertices (which gives Poisson in-degrees) was studied in \cite{penrose2015} and was shown to have a phase transition at $E[D^-] = 1$.  Finally, for general inhomogeneous random digraphs such as those studied here, the main theorem in \cite{bloznelis2012birth} establishes the phase transition for the deterministic kernel in Example~\ref{E.KnownModels}(d) with finite type space $\mathcal{S} = \{1, 2, \dots, M\}$, without characterizing the strict positivity of the survival probability. The authors in \cite{bloznelis2012birth} also suggest that the general case can be obtained using the same techniques used in \cite{bollobas2007phase} to go from a finite type space to the general one, however, the proof in \cite{bollobas2007phase} requires a critical step that does not hold for directed graphs; see Section~\ref{S.ProofsPhaseTransition} for more details.

Our Theorem~\ref{giant} provides the full equivalent of the main theorem in \cite{bollobas2007phase} (Theorem~3.1) for the directed case, and its proof is based on a coupling argument between the exploration of both the inbound and outbound components of a randomly chosen vertex and a double multi-type branching process with a finite number of types. Our approach differs from that of \cite{bollobas2007phase}, done for undirected graphs, in the order in which the couplings are done, and it leverages on the main theorem in \cite{bloznelis2012birth} to obtain a lower bound for the size of the strongly connected component. We give more details on how our proof technique compares to that used in \cite{bollobas2007phase} in Section~\ref{S.ProofsPhaseTransition}. 

As in the undirected case, the size of the largest strongly connected component is related to the survival probability of a suitably constructed double multi-type branching process. To define it, let $\mathcal{T}_\mu^+(\kappa)$ and $\mathcal{T}_\mu^-(\kappa)$ denote two conditionally independent (given their common root) multi-type branching processes defined on the type space $\mathcal{S}$ whose roots are chosen according to $\mu$ and such that the number of offspring having types in a subset $A \subseteq \mathcal{S}$ that an individual of type ${\bf x} \in \mathcal{S}$ can have, is Poisson distributed with means
\begin{equation} \label{eq:OffspringDistr}
\int_A \kappa({\bf y}, {\bf x}) \mu(d{\bf y}) \quad \text{for } \mathcal{T}_\mu^+(\kappa) \qquad \text{and} \qquad  \int_A \kappa({\bf x}, {\bf y}) \mu(d{\bf y}) \quad \text{for } \mathcal{T}_\mu^-(\kappa),
\end{equation}
respectively. Next, let $\rho_+(\kappa; {\bf x})$ and $\rho_-(\kappa; {\bf x})$ denote the survival probabilities of $\mathcal{T}_\mu^+(\kappa; {\bf x})$ and $\mathcal{T}_\mu^-(\kappa; {\bf x})$, respectively, where $\mathcal{T}_\mu^+(\kappa; {\bf x})$ and $\mathcal{T}_\mu^-(\kappa; {\bf x})$ denote the trees whose root has type ${\bf x}$.  We recall that 
a branching process is said to survive if its total population is infinite.  We refer the reader to \cite{mode1971multitype,athreya1972} for more details on multi-type branching processes, including those with uncountable type spaces as the ones defined above.

In order to state our result for the phase transition in IRDs we first need to introduce the following definitions.

\begin{defn} \label{D.Irreducible}
A kernel $\kappa$ defined on a separable metric space $\mathcal{S}$ with respect to a Borel probability measure $\mu$ is said to be {\em irreducible} if for any subset $A \subseteq \mathcal{S}$ satisfying $\kappa = 0$ a.e.~on $A \times A^c$, we have either $\mu(A) = 0$ or $\mu(A^c) = 0$. We say that $\kappa$ is {\em quasi-irreducible} if there is a $\mu$-continuity set $\mathcal{S}'\subseteq \mathcal{S}$ with $\mu(\mathcal{S}')>0$ such that the restriction of $\kappa$ to $\mathcal{S}'\times\mathcal{S}'$ is irreducible, and $\kappa(\bold{x},\bold{y})=0$ if $\bold{x}\notin \mathcal{S}'$ or $\bold{y}\notin \mathcal{S}'$.
\end{defn}

\begin{defn} \label{D.Finitary}
 A kernel $\kappa$ on a a separable metric space $\mathcal{S}$ with respect to a Borel probability measure $\mu$ is {\em regular finitary} if $\mathcal{S}$  has a finite partition into sets $\mathcal{J}_1, . . . , \mathcal{J}_r$ such that $\kappa$ is constant on each $\mathcal{J}_i \times \mathcal{J}_j$, and each $\mathcal{J}_i$ is a $\mu$-continuity set, i.e., it is measurable and has $\mu(\partial \mathcal{J}_i) = 0$.
\end{defn}

To give the condition under which a giant strongly connected component exists we also need to define the operators induced by kernel $\kappa$, i.e., 
$$T_{\kappa}^-f(\bold{x})=\int_{\mathcal{S}}\kappa(\bold{x},\bold{y})f(\bold{y})\mu(d\bold{y}) \qquad \text{and} \qquad T_{\kappa}^+f(\bold{x}) = \int_{\mathcal{S}}\kappa(\bold{y},\bold{x})f(\bold{y})\mu(d\bold{y}),$$
as well as their corresponding spectral radii $r(T_\kappa^-)$ and $r(T_\kappa^+)$,  where the spectral radius of operator $T$ is defined as
$$r(T) = \sup \{ |\lambda| : T - \lambda I \text{ is not invertible} \},$$
with $I$ the operator that maps $f$ onto itself. 


The phase transition result for the largest strongly connected component is given below. 

\begin{theo} \label{giant}
Suppose Assumption~\ref{reg} is satisfied and $\kappa$ is irreducible. Let $\mathcal{C}_1(G_n(\kappa (1+\varphi_n)))$ denote the size of the largest strongly connected component of $G_n(\kappa (1+\varphi_n))$. Then, for any $\epsilon > 0$,
$$\frac{\mathcal{C}_1(G_n(\kappa (1+\varphi_n)))}{n} \xrightarrow{P}  \rho(\kappa)   \qquad n \to \infty,$$
where
$$\rho(\kappa) = \int_{\mathcal{S}} \rho_+(\kappa; {\bf x}) \rho_-(\kappa; {\bf x}) \mu(d{\bf x}).$$
Furthermore, if $\rho(\kappa) > 0$ then $r(T_\kappa^+)> 1$ and $r(T_\kappa^-) > 1$, and if there exists a regular finitary quasi-irreducible kernel $\tilde \kappa$ such that $\tilde \kappa \leq \kappa$ a.e.~and $r(T_{\tilde \kappa}^+) > 1$ (equivalently, $r(T_{\tilde \kappa}^-) > 1$), then $\rho(\kappa) > 0$. 
\end{theo}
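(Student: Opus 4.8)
The plan is to establish the three claims separately: (i) the law of large numbers for $\mathcal{C}_1/n$; (ii) the necessity of $r(T_\kappa^\pm) > 1$; and (iii) the sufficiency of a subkernel with spectral radius above one. For the upper bound in (i), I would run a simultaneous forward and backward exploration from a uniformly chosen vertex $\xi$. A vertex $i$ lies in the strongly connected component of $\xi$ iff $\xi$ can reach $i$ and $i$ can reach $\xi$; the key is that the outbound exploration from $\xi$ is, for a bounded number of generations, well-coupled with the multi-type branching process $\mathcal{T}_\mu^-(\kappa)$ (offspring means $\int_A \kappa(\mathbf{x},\mathbf{y})\mu(d\mathbf{y})$), and the inbound exploration with $\mathcal{T}_\mu^+(\kappa)$, the two being conditionally independent given the root type $\mathbf{X}_\xi$. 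Since $\kappa$ may be unbounded, I first truncate: replace $\kappa$ by $\kappa_K := \kappa \wedge K$, which is bounded, and after proving the result for $\kappa_K$, let $K \to \infty$, using Assumption~\ref{reg}(d) (monotone/dominated convergence for $\rho_\pm(\kappa_K;\mathbf{x}) \uparrow \rho_\pm(\kappa;\mathbf{x})$, a standard monotonicity property of multi-type branching survival probabilities) and a sandwiching argument ($\kappa_K \leq \kappa$ gives a coupling in which $G_n(\kappa_K(1+\varphi_n))$ is a subgraph, hence $\mathcal{C}_1$ only increases with $K$). For bounded kernels, the Poisson approximation to the binomial arc-indicators, together with Assumption~\ref{reg}(a)--(c) (so $\mu_n \to \mu$, $\varphi_n \to 0$), yields that the expected number of vertices reachable-and-co-reachable within $t$ steps converges to $n\int \rho_+^{(t)}\rho_-^{(t)}\,d\mu$; taking $t\to\infty$ gives the upper bound $\rho(\kappa)$ after a second-moment / sprinkling argument rules out two large strongly connected components (this is where irreducibility enters).

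For the lower bound in (i), I would follow the route indicated in the introduction: invoke the main theorem of \cite{bloznelis2012birth} for a regular finitary quasi-irreducible subkernel. Given $\delta>0$, choose (using Assumption~\ref{reg} and the a.e.\ continuity of $\kappa$) a regular finitary kernel $\tilde\kappa \leq \kappa$ with $\iint \tilde\kappa\,d\mu\,d\mu \geq \iint\kappa\,d\mu\,d\mu - \delta$; by the coupling $G_n(\tilde\kappa(1+\tilde\varphi_n))$ is (w.h.p.) a subgraph of $G_n(\kappa(1+\varphi_n))$, so $\mathcal{C}_1(G_n(\kappa(1+\varphi_n))) \geq \mathcal{C}_1(G_n(\tilde\kappa(1+\tilde\varphi_n)))$, and the latter is $(\rho(\tilde\kappa) - o(1))n$ in probability by \cite{bloznelis2012birth} applied to the finitely-many-types model it induces (after aggregating vertices by partition cell — here I need the convergence $\mu_n \to \mu$ to control the cell sizes and the fact that on a $\mu$-continuity cell the empirical type fractions converge). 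Finally let $\tilde\kappa \uparrow \kappa$ along a sequence of refining partitions and use $\rho(\tilde\kappa)\uparrow\rho(\kappa)$ (again monotone convergence for branching survival probabilities, plus an approximation lemma that $\rho_\pm$ is lower semicontinuous under kernel approximation from below, in the spirit of \cite{bollobas2007phase} Lemma~9.x).

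The spectral claims (ii) and (iii) follow by linking the survival of $\mathcal{T}_\mu^\pm(\kappa)$ to the spectral radii of $T_\kappa^\pm$. For (ii): if $\rho(\kappa)>0$ then $\rho_+(\kappa;\mathbf{x})>0$ on a set of positive $\mu$-measure, i.e.\ the branching process $\mathcal{T}_\mu^+(\kappa)$ survives with positive probability; by irreducibility this forces the process to be supercritical, and supercriticality of a multi-type Poisson branching process with mean operator $T_\kappa^+$ is exactly $r(T_\kappa^+)>1$ (the expected generation-$t$ population of type-$A$ descendants is governed by $(T_\kappa^+)^t$; subcriticality $r(T_\kappa^+)\le 1$ would force a.s.\ extinction by a first-moment argument once $\kappa$ is truncated, then extended). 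The same argument applied to $T_\kappa^-$ gives $r(T_\kappa^-)>1$, and the two spectral radii are actually equal — $T_\kappa^+$ and $T_\kappa^-$ are transposes of each other — which is why the two conditions are equivalent. For (iii): if there is a regular finitary quasi-irreducible $\tilde\kappa \leq \kappa$ with $r(T_{\tilde\kappa}^+)>1$, then on the finite type space the mean matrix has Perron eigenvalue above one, so by Perron--Frobenius $\mathcal{T}_\mu^+(\tilde\kappa)$ is supercritical and survives with positive probability; monotonicity of survival probabilities under $\tilde\kappa\leq\kappa$ gives $\rho_\pm(\kappa;\mathbf{x}) \geq \rho_\pm(\tilde\kappa;\mathbf{x})$, and quasi-irreducibility guarantees $\rho_+(\tilde\kappa;\mathbf{x})\rho_-(\tilde\kappa;\mathbf{x})>0$ on the positive-measure set $\mathcal{S}'$, hence $\rho(\kappa) \geq \int_{\mathcal{S}'}\rho_+(\tilde\kappa;\mathbf{x})\rho_-(\tilde\kappa;\mathbf{x})\mu(d\mathbf{x}) > 0$.

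I expect the main obstacle to be the lower bound in (i), and specifically the two places where the directedness breaks the undirected argument of \cite{bollobas2007phase}: first, one cannot simply explore a single component and declare it ``the giant'' — one must show that the $\rho_+(\kappa;\mathbf{X}_i)\rho_-(\kappa;\mathbf{X}_i)$ fraction of vertices that are both in a large out-component and a large in-component actually coalesce into \emph{one} strongly connected component rather than many, which requires a sprinkling/second-moment argument exploiting irreducibility to connect any two ``doubly-large'' vertices through $o(n)$ extra arcs; and second, coupling the joint (inbound, outbound) exploration to the \emph{pair} of conditionally independent branching processes $(\mathcal{T}_\mu^+,\mathcal{T}_\mu^-)$ requires care because the two explorations share the root and, when run to depth growing slowly with $n$, compete for the same vertex pool — handling this is precisely where the order of the couplings (branching process first discretized via the finitary kernel, then matched to the graph, rather than the reverse) matters, as flagged in the introduction. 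Making the transfer from \cite{bloznelis2012birth}'s finite-type statement to our setting rigorous — translating ``$\mu_n \to \mu$ on continuity sets'' plus ``$\varphi_n \to 0$'' into the hypotheses of that theorem for the aggregated graph — is the remaining bookkeeping.
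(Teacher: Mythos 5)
Your overall architecture is the right one and largely matches the paper's: the upper bound comes from counting vertices whose in-component \emph{and} out-component are both large (so that $\mathcal{C}_1 \leq N_n^{\geq k}$), coupled to the pair of conditionally independent trees $(\mathcal{T}_\mu^+,\mathcal{T}_\mu^-)$ plus a second-moment bound; the lower bound is delegated to the finite-type theorem of \cite{bloznelis2012birth} applied to a regular finitary quasi-irreducible subkernel, followed by $\rho(\tilde\kappa)\uparrow\rho(\kappa)$; and the spectral claims reduce to Perron--Frobenius on the finitary approximations. Two clarifications on your framing: no sprinkling/uniqueness argument is needed anywhere, because the lower bound is inherited wholesale from \cite{bloznelis2012birth} and the upper bound $\mathcal{C}_1\le N_n^{\geq k}$ does not care whether the doubly-large vertices form one component or many; and irreducibility enters through the construction of the quasi-irreducible finitary subkernels and the positivity characterization, not through ruling out two giants.

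The genuine gap is your treatment of unbounded kernels in the upper bound. You propose to truncate to $\kappa_K=\kappa\wedge K$, prove the result for $\kappa_K$, and then ``sandwich'' using the subgraph coupling $G_n(\kappa_K(1+\varphi_n))\subseteq G_n(\kappa(1+\varphi_n))$. That coupling gives $\mathcal{C}_1(G_n(\kappa_K(1+\varphi_n)))\le \mathcal{C}_1(G_n(\kappa(1+\varphi_n)))$, which is the wrong direction for an upper bound: knowing $\mathcal{C}_1(G_n(\kappa_K(1+\varphi_n)))/n\le\rho(\kappa_K)+\epsilon$ says nothing about the larger graph. In the undirected setting one repairs this via Lemma~9.4 of \cite{bollobas2007phase}, which bounds the growth of a component by the number of added edges; the paper's Figure~\ref{F.TwoGraphs} shows exactly why no such lemma exists for strongly connected components --- a single extra arc can turn an SCC of size $O(1)$ into one of size $n$. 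This is the ``critical step that does not hold for directed graphs'' that motivates the paper's different coupling order: one must keep the actual graph $G_n(\kappa(1+\varphi_n))$ fixed, couple its joint in/out exploration directly with the double tree of a finitary $\kappa_m\le\kappa$, and absorb the entire (possibly unbounded) excess $\kappa(1+\varphi_n)-\kappa_m$ into an explicit coupling-failure probability $H(n,m,k,\epsilon)$, shown to vanish as $n\to\infty$, $\epsilon\downarrow 0$, $m\to\infty$ using the uniform integrability supplied by Assumption~\ref{reg}(d). As written, your upper bound for general $\kappa$ does not close. A smaller caution: for claim (ii) with unbounded $T_\kappa^\pm$, the direct ``subcritical $\Rightarrow$ extinct'' first-moment argument is delicate; the paper instead deduces $\rho(\kappa_m)>0$ for some finitary $\kappa_m\le\kappa$, gets $r(T_{\kappa_m}^\pm)>1$ there, and concludes by monotonicity of the spectral radius, which sidesteps continuity issues flagged in Remark~\ref{R.NotIFF}.
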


\begin{remark} \label{R.NotIFF}
We point out that we do not have a full if and only if condition for the strict positivity of $\rho(\kappa)$, since our operators $T_\kappa^+$ and $T_\kappa^-$ may be unbounded, in which case the continuity of the spectral radius is not guaranteed. However, when $\kappa$ satisfies
$$\int_{\mathcal{S}} \int_{\mathcal{S}} \kappa(x,y)^2 \mu(d{\bf x}) \mu(d{\bf y}) < \infty,$$
then the operators $T_\kappa^+$ and $T_\kappa^-$ are compact (see Lemma~5.15 in \cite{bollobas2007phase}), and 
Theorem~2.1(a) in \cite{degla2008overview} gives the continuity of the spectral radius for a sequence of quasi-irreducible kernels $\kappa_m \nearrow \kappa$ as $m \to \infty$,  ensuring the existence of $\tilde \kappa$ in Theorem~\ref{giant}.  Interestingly, for the rank-1 case we can indeed provide a full characterization even when the operators $T_\kappa^+$ and $T_\kappa^-$ are unbounded, as Proposition~\ref{P.Rank-1} shows. 
\end{remark}

We end the expository part of the paper with a compilation of all our results for the rank-1 case, which includes the first four models in Example~\ref{E.KnownModels}.

\begin{prop}[IRDs with rank-1 kernel] \label{P.Rank-1}
Suppose that Assumption~\ref{reg} is satisfied with $\kappa$ irreducible and of the form $\kappa({\bf x}, {\bf y}) = \kappa_-({\bf x}) \kappa_+({\bf y})$. Let ${\bf X}$ denote a random variable distributed according to $\mu$. Then, the following properties hold:
\begin{enumerate}
\item {\bf Number of arcs:} let $e(G_n(\kappa (1+\varphi_n)))$ denote the number of arcs in $G_n(\kappa (1+\varphi_n))$, then
\begin{align*}
&\frac{1}{n} \mathbb{E}[ e(G_n(\kappa (1+\varphi_n))) ] \stackrel{L_1}{\longrightarrow} E[\kappa_+(\bold{X})]E[\kappa_-(\bold{X})] \qquad \text{and} \\
&\frac{e(G_n(\kappa (1+\varphi_n)))}{n} \stackrel{L_1}{\longrightarrow} E[\kappa_+(\bold{X})]E[\kappa_-(\bold{X})],
\end{align*}
as $n \to \infty$. 

\item {\bf Distribution of vertex degrees:} let $(D_{n,\xi}^+, D_{n,\xi}^-)$ denote the in-degree and out-degree of a randomly chosen vertex in $G_n(\kappa(1+\varphi_n))$. Set $\lambda_-(\bold{x})= \kappa_-(\bold{x})E[\kappa_+(\bold{X})]$ and $\lambda_+(\bold{x})=\kappa_+(\bold{x})E[\kappa_-(\bold{X})]$. Then, 
$$(D_{n,\xi}^+, D_{n,\xi}^-) \rightarrow (Z^+, Z^-), \qquad E[ D_{n,\xi}^\pm] \to E[Z^\pm],$$
as $n \to \infty$, where $Z^+$ and $Z^-$ are conditionally independent (given ${\bf X}$) mixed Poisson random variables with mixing distributions $\lambda_+({\bf X})$ and $\lambda_-({\bf X})$. 

\item {\bf Scale-free degrees:} suppose that $(\kappa_+(\bold{X}), \kappa_-(\bold{X}))$ is non-standard regularly varying with scaling functions $a(t) \in \mathcal{RV}(1/\alpha)$ and $b(t) \in \mathcal{RV}(1/\beta)$ and limiting measure $\tilde \nu(\cdot)$. Then, $(Z^+, Z^-)$ is non-standard regularly varying with scaling functions $a(t)$ and $b(t)$ and limiting measure $\nu(\cdot)$ satisfying 
$$\nu( (x,\infty] \times (y, \infty]) = \tilde \nu\left( \left( \frac{x}{E[\kappa_+({\bf X})]}, \infty\right] \times \left( \frac{y}{E[\kappa_-({\bf X})]}, \infty\right] \right).$$

\item {\bf Phase transition for the largest strongly connected component:} suppose $\kappa$ is irreducible and let $\mathcal{C}_1(G_n(\kappa (1+\varphi_n)))$ denote the size of the largest strongly connected component of $G_n(\kappa (1+\varphi_n))$. Then, 
$$\frac{\mathcal{C}_1(G_n(\kappa (1+\varphi_n))) }{n}\xrightarrow{P} \rho(\kappa), \qquad n \to \infty,$$
with $\rho(\kappa) > 0$ if and only if $E[\kappa_-({\bf X}) \kappa_+({\bf X})] > 1$. 

\end{enumerate}
\end{prop}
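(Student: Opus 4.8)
The plan is to derive parts (a)--(c) by substituting the rank-1 form $\kappa({\bf x},{\bf y})=\kappa_-({\bf x})\kappa_+({\bf y})$ into Theorems~\ref{numberofarcs}, \ref{degree} and \ref{regularvarying}, and to prove the substantive part (d) by a direct branching-process computation rather than through the spectral criterion of Theorem~\ref{giant}. For (a), $\iint_{\mathcal{S}^2}\kappa\,\mu(d{\bf x})\mu(d{\bf y})=\left(\int_{\mathcal{S}}\kappa_-\,d\mu\right)\left(\int_{\mathcal{S}}\kappa_+\,d\mu\right)=E[\kappa_-({\bf X})]E[\kappa_+({\bf X})]$, so Theorem~\ref{numberofarcs} applies. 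For (b), the mixing parameters in Theorem~\ref{degree} become $\lambda_+({\bf X})=\int_{\mathcal{S}}\kappa({\bf y},{\bf X})\mu(d{\bf y})=\kappa_+({\bf X})E[\kappa_-({\bf X})]$ and $\lambda_-({\bf X})=\kappa_-({\bf X})E[\kappa_+({\bf X})]$, exactly the stated $\lambda_\pm$. For (c), observe that $(\lambda_+({\bf X}),\lambda_-({\bf X}))$ is the image of $(\kappa_+({\bf X}),\kappa_-({\bf X}))$ under coordinatewise multiplication by the positive constants $E[\kappa_-({\bf X})]$ and $E[\kappa_+({\bf X})]$; such a rescaling preserves non-standard regular variation with unchanged scaling functions and rescales the tail measure accordingly, and reading off the masses of the half-spaces $(x,\infty]\times(y,\infty]$ yields the stated formula for $\nu$. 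Theorem~\ref{regularvarying} then transfers the property to $(Z^+,Z^-)$. None of this is delicate.

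The content is part (d). By Theorem~\ref{giant} we already have $\mathcal{C}_1(G_n(\kappa(1+\varphi_n)))/n\xrightarrow{P}\rho(\kappa)=\int_{\mathcal{S}}\rho_+(\kappa;{\bf x})\rho_-(\kappa;{\bf x})\mu(d{\bf x})$, so it suffices to show $\rho(\kappa)>0$ if and only if $m:=E[\kappa_-({\bf X})\kappa_+({\bf X})]>1$. The key point is that the rank-1 form collapses the multi-type trees $\mathcal{T}_\mu^\pm(\kappa)$ into single-type Galton--Watson trees. In $\mathcal{T}_\mu^-(\kappa;{\bf x})$, by \eqref{eq:OffspringDistr} an individual of type ${\bf x}$ produces a Poisson offspring point process with intensity measure $\kappa_-({\bf x})\kappa_+({\bf y})\mu(d{\bf y})$; equivalently, the offspring count is Poisson with mean $\kappa_-({\bf x})E[\kappa_+({\bf X})]$ and, given this count, the offspring types are i.i.d.\ with the size-biased law $\hat\mu_+(d{\bf y}):=\kappa_+({\bf y})\mu(d{\bf y})/E[\kappa_+({\bf X})]$, \emph{independently of the parent's type}. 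Hence every vertex strictly below the root carries an independent type distributed as $\hat\mu_+$; and since $\hat\mu_+$ is also the mixing law of the mixed Poisson offspring distribution, the genealogy below the root is, after marginalizing over types, exactly that of an ordinary Galton--Watson process with the mixed Poisson offspring law (mean $\kappa_-({\bf Y})E[\kappa_+({\bf X})]$, ${\bf Y}\sim\hat\mu_+$), started from a $\mathrm{Poisson}(\kappa_-({\bf x})E[\kappa_+({\bf X})])$ number of founders. Averaging the offspring mean over ${\bf Y}$ gives $\int_{\mathcal{S}}\kappa_-\kappa_+\,d\mu=m$. The mirror reduction for $\mathcal{T}_\mu^+(\kappa)$ (with $\kappa_+\leftrightarrow\kappa_-$) yields a Galton--Watson tree with the \emph{same} mean $m$.

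Given this, the conclusion is the classical Galton--Watson dichotomy. The offspring law above is a mixed Poisson with an a.s.\ finite rate, so it assigns positive probability to zero offspring and is non-degenerate, and the embedded Galton--Watson process survives with positive probability if and only if $m>1$ (the case $m=\infty$, counted as $m>1$, is not excluded, since Assumption~\ref{reg}(d) only forces $E[\kappa_-({\bf X})]E[\kappa_+({\bf X})]<\infty$). If $m\le1$, every such subtree is a.s.\ finite, so $\rho_+(\kappa;{\bf x})=\rho_-(\kappa;{\bf x})=0$ for every ${\bf x}$ (even if the root has offspring, each founds an independent subtree that dies out), and $\rho(\kappa)=0$. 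If $m>1$, a founder's subtree survives with some probability $\bar q\in(0,1)$, and Poisson thinning gives $\rho_-(\kappa;{\bf x})=1-e^{-\bar q\,E[\kappa_+({\bf X})]\kappa_-({\bf x})}$ and, symmetrically, $\rho_+(\kappa;{\bf x})=1-e^{-\bar q'\,E[\kappa_-({\bf X})]\kappa_+({\bf x})}$ with $\bar q'\in(0,1)$; hence $\rho_+(\kappa;{\bf x})\rho_-(\kappa;{\bf x})>0$ exactly on $\{\kappa_+>0\}\cap\{\kappa_->0\}$, which has positive $\mu$-measure because $\int_{\mathcal{S}}\kappa_-\kappa_+\,d\mu=m>1>0$, so $\rho(\kappa)>0$. (The degenerate case $\kappa=0$ $\mu\times\mu$-a.e.\ gives $m=0$ and $\rho(\kappa)=0$, consistently.)

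The hard part is not an estimate but getting the reduction right in full generality, and in particular making sure it never uses boundedness of $T_\kappa^\pm$: the Poisson-process splitting of the offspring law is valid as soon as $E[\kappa_\pm({\bf X})]\in(0,\infty)$, which Assumption~\ref{reg}(d) guarantees outside the degenerate case, and then the Galton--Watson analysis carries through even when $m=\infty$ and $T_\kappa^\pm$ are unbounded. This is exactly why rank-1 is treated separately --- the positivity of $\rho(\kappa)$ cannot then be obtained from the spectral-radius criterion together with the approximating kernel $\tilde\kappa$ of Theorem~\ref{giant}/Remark~\ref{R.NotIFF} --- and what remains is bookkeeping: combining $\rho_-(\kappa;{\bf x})>0\Leftrightarrow\kappa_-({\bf x})>0$ and $\rho_+(\kappa;{\bf x})>0\Leftrightarrow\kappa_+({\bf x})>0$ (for $m>1$) with $\mu(\{\kappa_+>0\}\cap\{\kappa_->0\})>0$ to turn the scalar threshold $m>1$ into positivity of the integral $\rho(\kappa)$.
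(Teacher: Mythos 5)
Your proposal is correct. Parts (a)--(c) are handled exactly as in the paper (direct substitution of the product form into Theorems~\ref{numberofarcs}, \ref{degree} and \ref{regularvarying}), so the only substantive comparison concerns part (d), where you take a genuinely different route. The paper proves the equivalence $\rho(\kappa)>0 \Leftrightarrow E[\kappa_-({\bf X})\kappa_+({\bf X})]>1$ by staying inside its finite-type machinery: it builds rank-1 regular finitary approximations $\kappa_m=\kappa_m^-\kappa_m^+\,1_{B_m\times B_m}$ via Lemma~\ref{approx_reducible}, observes that $\kappa_m^\pm$ are explicit nonnegative eigenfunctions of $T_{\kappa_m}^\pm$ with eigenvalue $r_m=\int_{\mathcal{S}}\kappa_m^-\kappa_m^+\,d\mu$ (hence $r_m=r(T_{\kappa_m}^\pm)$ by the uniqueness part of Proposition~\ref{P.FinitaryPhaseTransition}), and then transfers positivity between $\rho(\kappa_m)$ and $\rho(\kappa)$ with Lemma~\ref{L.MonotoneKernels}, using monotone convergence $r_m\nearrow E[\kappa_-\kappa_+]$. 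You instead collapse each rank-1 multi-type tree exactly into a single-type Galton--Watson process: the offspring point process of a type-${\bf x}$ individual in $\mathcal{T}_\mu^-(\kappa;{\bf x})$ factors into a $\mathrm{Poisson}(\kappa_-({\bf x})E[\kappa_+({\bf X})])$ count with i.i.d.\ types from the size-biased law $\hat\mu_+$ independent of ${\bf x}$, so the subtrees below the root's children are i.i.d.\ single-type GW trees with mean $m=E[\kappa_-({\bf X})\kappa_+({\bf X})]$, and the classical dichotomy plus Poisson thinning finishes the job. Your reduction is valid (the mixed Poisson offspring law is non-degenerate since its random rate is a.s.\ finite, and the $m=\infty$ case is covered by convexity of the generating function), and it buys more than the paper's argument: closed-form expressions $\rho_\pm(\kappa;{\bf x})=1-e^{-\bar q\,E[\kappa_\mp({\bf X})]\kappa_\pm({\bf x})}$ and a completely self-contained treatment that never touches operator theory. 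What it gives up is the explicit identification of $E[\kappa_-\kappa_+]$ as the limit of the spectral radii $r(T_{\kappa_m}^\pm)$, which is the conceptual link to Theorem~\ref{giant} and Remark~\ref{R.NotIFF} that the authors evidently wanted to display. One cosmetic point: in (c), if you actually track the constants through the coordinatewise rescaling, $\lambda_+=\kappa_+E[\kappa_-]$ forces the first argument of $\tilde\nu$ to be divided by $E[\kappa_-({\bf X})]$ and the second by $E[\kappa_+({\bf X})]$; be aware of which convention you are matching when you say the computation ``yields the stated formula.''
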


The remainder of the paper is devoted to the proofs of all the results mentioned above.

\section{Proofs}\label{proof}

This section contains all the proofs of the theorems in Section~\ref{mainresults}. They are organized according to the same order in which their corresponding statements appear. Throughout this section we use the notation 
$$q_{ij}^{(n)} = \frac{\kappa(\bold{X}_i,\bold{X}_j)}{n} \qquad 1 \leq i,j \leq n,$$
to denote the asymptotic limit of the arc probabilities in the graph, and to avoid having to explicitly exclude possible self-loops, we define $p_{ii}^{(n)} = 0$ for all $1 \leq i \leq n$. We also use $f(x) = O(g(x))$ as $x \to \infty$ to mean that $\lim_{x \to \infty} |f(x)/g(x)| < \infty$.

\subsection{Number of Arcs}

The first result we prove corresponds to Theorem~\ref{numberofarcs}, which gives the asymptotic number of edges in $G_n(\kappa(1+\varphi_n))$. Before we do so, we state and prove a preliminary technical lemma that will be used several times throughout the paper.

\begin{lemma}\label{bound}
Assume Assumption \ref{reg} holds and define for any $0< \epsilon < 1/2$ the events
\begin{equation} \label{eq:BijDef}
B_{ij} = \left\{ (1-\epsilon)q_{ij}^{(n)} \leq p_{ij}^{(n)} \leq (1+\epsilon)q_{ij}^{(n)}, \, q_{ij}^{(n)} \leq \epsilon  \right\}.
\end{equation}
Then,
$$\lim_{n\rightarrow\infty}\frac{1}{n}E\left[\sum_{i=1}^n\sum_{j=1}^n \left(p_{ij}^{(n)}+q_{ij}^{(n)}\right) 1(B_{ij}^c) \right]=0.$$
\end{lemma}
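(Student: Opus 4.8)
The plan rests on an elementary decomposition of $B_{ij}^c$, a uniform‑integrability bound for the kernel, and one genuinely nontrivial limit.

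\emph{Decomposition.} Since $p_{ij}^{(n)}=\bigl(q_{ij}^{(n)}(1+\varphi_n(\mathbf X_i,\mathbf X_j))\bigr)\wedge 1$, a short case check using $0<\epsilon<1/2$ shows that if $q_{ij}^{(n)}\le\epsilon$ and $p_{ij}^{(n)}\notin[(1-\epsilon)q_{ij}^{(n)},(1+\epsilon)q_{ij}^{(n)}]$, then $|\varphi_n(\mathbf X_i,\mathbf X_j)|>\epsilon$: if the truncation by $1$ is inactive this is immediate from $p_{ij}^{(n)}=q_{ij}^{(n)}(1+\varphi_n(\mathbf X_i,\mathbf X_j))$, and if it is active then $\varphi_n(\mathbf X_i,\mathbf X_j)>1/q_{ij}^{(n)}-1\ge 1/\epsilon-1>\epsilon$. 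Hence $B_{ij}^c\subseteq\{q_{ij}^{(n)}>\epsilon\}\cup\{|\varphi_n(\mathbf X_i,\mathbf X_j)|>\epsilon\}$. On $\{q_{ij}^{(n)}>\epsilon\}$ one has $p_{ij}^{(n)}\le 1\le q_{ij}^{(n)}/\epsilon$, and on $\{q_{ij}^{(n)}\le\epsilon,\,|\varphi_n|>\epsilon\}$ one checks directly that $q_{ij}^{(n)}\le\epsilon^{-1}|p_{ij}^{(n)}-q_{ij}^{(n)}|$; consequently, for all $i,j$,
$$(p_{ij}^{(n)}+q_{ij}^{(n)})\,\mathbf 1(B_{ij}^c)\ \le\ \bigl(1+\tfrac1\epsilon\bigr)q_{ij}^{(n)}\mathbf 1(q_{ij}^{(n)}>\epsilon)\ +\ \bigl(1+\tfrac2\epsilon\bigr)\,|p_{ij}^{(n)}-q_{ij}^{(n)}| .$$
So it suffices to prove $\tfrac1n E\bigl[\sum_{i,j}q_{ij}^{(n)}\mathbf 1(q_{ij}^{(n)}>\epsilon)\bigr]\to 0$ and $\tfrac1n E\bigl[\sum_{i,j}|p_{ij}^{(n)}-q_{ij}^{(n)}|\bigr]\to 0$.

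\emph{Uniform integrability of $\kappa$.} The first limit equals $\tfrac1{n^2}E\bigl[\sum_{i,j}\kappa(\mathbf X_i,\mathbf X_j)\mathbf 1(\kappa(\mathbf X_i,\mathbf X_j)>n\epsilon)\bigr]$. For fixed $M>0$ the function $\kappa\wedge M$ is bounded and continuous a.e., so by Assumption~\ref{reg}(a),(b) and bounded convergence $\tfrac1{n^2}E\bigl[\sum_{i,j}(\kappa(\mathbf X_i,\mathbf X_j)\wedge M)\bigr]\to\iint_{\mathcal S^2}(\kappa\wedge M)\,d\mu\,d\mu$; subtracting this from the identity in Assumption~\ref{reg}(d) and letting $M\to\infty$ by monotone convergence gives $\limsup_n\tfrac1{n^2}E\bigl[\sum_{i,j}(\kappa(\mathbf X_i,\mathbf X_j)-\kappa(\mathbf X_i,\mathbf X_j)\wedge M)\bigr]\to 0$. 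Since $t\,\mathbf 1(t>n\epsilon)\le 2(t-t\wedge M)$ once $n\epsilon\ge 2M$, the first limit follows; the same estimate also lets us later discard the negligible set $\{q_{ij}^{(n)}>1\}=\{\kappa(\mathbf X_i,\mathbf X_j)>n\}$.

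\emph{The $L_1$ bound on $p^{(n)}-q^{(n)}$, and the main obstacle.} Put $\varphi_n^-=(-\varphi_n)\vee 0\in[0,1)$ (using $\varphi_n>-1$). From the identity $\sum_{i,j}|p_{ij}^{(n)}-q_{ij}^{(n)}|=2\sum_{i,j}(q_{ij}^{(n)}-p_{ij}^{(n)})^{+}+\sum_{i,j}(p_{ij}^{(n)}-q_{ij}^{(n)})$ and Assumption~\ref{reg}(d) — which forces $\tfrac1n E\bigl[\sum_{i,j}(p_{ij}^{(n)}-q_{ij}^{(n)})\bigr]\to 0$, since both $\tfrac1n E[\sum_{i,j}p_{ij}^{(n)}]$ and $\tfrac1n E[\sum_{i,j}q_{ij}^{(n)}]=\tfrac1{n^2}E[\sum_{i,j}\kappa(\mathbf X_i,\mathbf X_j)]$ converge to $\iint\kappa\,d\mu\,d\mu$ — it remains to show $\tfrac1n E\bigl[\sum_{i,j}(q_{ij}^{(n)}-p_{ij}^{(n)})^{+}\bigr]\to 0$. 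On $\{q_{ij}^{(n)}\le 1\}$ one has $(q_{ij}^{(n)}-p_{ij}^{(n)})^{+}=q_{ij}^{(n)}\varphi_n^-(\mathbf X_i,\mathbf X_j)$, so after discarding $\{q_{ij}^{(n)}>1\}$ by the previous step the entire lemma reduces to
$$\frac1{n^2}\,E\Bigl[\sum_{i,j}\kappa(\mathbf X_i,\mathbf X_j)\,\varphi_n^-(\mathbf X_i,\mathbf X_j)\Bigr]\ \longrightarrow\ 0 .$$
Here is where Assumption~\ref{reg}(c) is used. Since $0\le\varphi_n^-\le 1$, the integrand is dominated by the $\mu\otimes\mu$‑integrable $\kappa$, and $\varphi_n^-\to 0$ pointwise; truncating $\kappa$ at a fixed level $M$ exactly as above (the tail being absorbed by the previous step because $\varphi_n^-\le 1$) and splitting $\varphi_n^-\le\eta+\mathbf 1(\varphi_n^->\eta)$ for arbitrary $\eta>0$, one reduces to showing, for each $M$ and $\eta$, that $E\bigl[\iint_{\mathcal S^2}(\kappa\wedge M)\,\mathbf 1(\varphi_n^-(\mathbf x,\mathbf y)>\eta)\,\mu_n(d\mathbf x)\,\mu_n(d\mathbf y)\bigr]\to 0$, i.e.\ that the open ``bad'' sets $\{\varphi_n^->\eta\}$ carry asymptotically no $\kappa$‑weighted mass under the random product empirical measures $\mu_n\otimes\mu_n$. \textbf{I expect this transfer to be the main difficulty:} by Assumption~\ref{reg}(c), $\mathbf 1_{\{\varphi_n^->\eta\}}\to 0$ pointwise, but these sets move with $n$, so Assumption~\ref{reg}(a) cannot be applied to a fixed set. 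One has to combine Assumption~\ref{reg}(a) with inner/outer regularity of $\mu\otimes\mu$ (approximating the open bad sets from inside by compacts and by continuous test functions, so that the weak convergence $\mu_n\otimes\mu_n\Rightarrow\mu\otimes\mu$ can be invoked) and with Assumption~\ref{reg}(d), which is precisely what prevents an inflated arc probability from persisting on a set of non‑vanishing $\kappa$‑weighted empirical mass. Everything preceding this last point is routine.
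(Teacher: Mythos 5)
Your decomposition and the first two reductions are sound and essentially track the paper's own proof: the split of $B_{ij}^c$ into $\{q_{ij}^{(n)}>\epsilon\}$ and $\{|\varphi_n(\mathbf{X}_i,\mathbf{X}_j)|>\epsilon\}$, the truncation/uniform-integrability treatment of the tail $\tfrac1{n^2}E[\sum_{i,j}\kappa\,\mathbf 1(\kappa>n\epsilon)]$ (the paper gets the same conclusion via a Skorokhod coupling plus the convergence-of-means criterion for uniform integrability), and the use of Assumption~\ref{reg}(d) to kill the signed sum $\tfrac1n E[\sum_{i,j}(p_{ij}^{(n)}-q_{ij}^{(n)})]$ are all correct. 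The problem is that you stop exactly at the step that carries all the weight: $\tfrac1{n^2}E[\sum_{i,j}(\kappa\wedge M)(\mathbf{X}_i,\mathbf{X}_j)\,\mathbf 1(\varphi_n^-(\mathbf{X}_i,\mathbf{X}_j)>\eta)]\to0$, and the route you sketch for it — inner/outer regularity of $\mu\otimes\mu$ together with weak convergence of $\mu_n\otimes\mu_n$ — does not close it. Weak convergence controls integrals of \emph{fixed} test functions, whereas the sets $\{\varphi_n^->\eta\}$ move with $n$ and, since $\varphi_n$ may depend on the entire type sequence, can be adapted to the atoms of $\mu_n$: continuous bumps of depth $1/2$ supported on shrinking neighborhoods of the sample points satisfy $\varphi_n(\mathbf{x},\mathbf{y})\to0$ pointwise yet put full $\mu_n\otimes\mu_n$-mass on the bad set for every $n$. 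So pointwise convergence plus regularity of the limit measure cannot, by themselves, deliver the limit; some additional cancellation must be invoked.

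The paper closes this step by a different mechanism, which you have in fact already used once and should apply a second time. Rather than bounding the bad-set mass directly, write
$$\frac1{n^2}E\Bigl[\sum_{i,j}\kappa\,\mathbf 1(\varphi_n<-\eta)\Bigr]=E\bigl[\kappa(\mathbf{X}^{(n)},\mathbf{Y}^{(n)})\bigr]-E\bigl[\kappa(\mathbf{X}^{(n)},\mathbf{Y}^{(n)})\,\mathbf 1(\varphi_n(\mathbf{X}^{(n)},\mathbf{Y}^{(n)})\ge-\eta)\bigr],$$
where $(\mathbf{X}^{(n)},\mathbf{Y}^{(n)})$ are conditionally i.i.d.\ with law $\mu_n$ and Skorokhod-coupled so that $(\mathbf{X}^{(n)},\mathbf{Y}^{(n)})\to(\mathbf{X},\mathbf{Y})$ a.s.\ with $(\mathbf{X},\mathbf{Y})\sim\mu\otimes\mu$. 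The first term tends to $E[\kappa(\mathbf{X},\mathbf{Y})]$ by Assumption~\ref{reg}(d); for the second, one argues from Assumption~\ref{reg}(c) that $\varphi_n(\mathbf{X}^{(n)},\mathbf{Y}^{(n)})\xrightarrow{P}0$ and applies Fatou's lemma to get $\liminf_n E[\kappa(\mathbf{X}^{(n)},\mathbf{Y}^{(n)})\mathbf 1(\varphi_n\ge-\eta)]\ge E[\kappa(\mathbf{X},\mathbf{Y})]$, so the difference vanishes. This is the same ``total minus good'' cancellation you used for $\sum(p_{ij}^{(n)}-q_{ij}^{(n)})$, now applied at the level of the indicator of the good event; it is what lets Assumption~\ref{reg}(d) supply the matching upper bound without ever estimating $\mu_n\otimes\mu_n$ of a moving set. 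Until your final limit is established by some such argument, the proposal is not a complete proof.
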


\begin{proof}
We start by defining $A_{ij} = \{ q_{ij}^{(n)} \leq \epsilon \}$ and noting that the expectation is bounded from above by 
\begin{align}
&\frac{2}{n} E\left[\sum_{i=1}^n\sum_{j=1}^n q_{ij}^{(n)}1\left(p_{ij}^{(n)}<(1-\epsilon)q_{ij}^{(n)}, A_{ij} \right)+\sum_{i=1}^{n}\sum_{j=1}^n p_{ij}^{(n)}1\left(p_{ij}^{(n)}>(1+\epsilon)q_{ij}^{(n)}, \, A_{ij} \right)\right] \label{star3} \\
&\hspace{5mm} + \frac{1}{n} E\left[ \sum_{i=1}^n \sum_{j=1}^n (1 + q_{ij}^{(n)} ) 1(A_{ij}^c) \right]. \label{eq:TailQ}
\end{align}
To show that \eqref{eq:TailQ} converges to zero, let ${\bf X}^{(n)}$ and ${\bf Y}^{(n)}$ be random variables distributed according to $\mu_n$, conditionally independent given $\mathscr{F}$, and note that
\begin{align*}
 \frac{1}{n} E\left[ \sum_{i=1}^n \sum_{j=1}^n (1 + q_{ij}^{(n)} ) 1(A_{ij}^c) \right] &\leq  \frac{1}{n} E\left[ \sum_{i=1}^n \sum_{j=1}^n (\epsilon^{-1} + 1) q_{ij}^{(n)} 1(A_{ij}^c) \right] \\
 &= (\epsilon^{-1}+1) E\left[ \kappa({\bf X}^{(n)}, {\bf Y}^{(n)}) 1(  \kappa({\bf X}^{(n)}, {\bf Y}^{(n)}) > \epsilon n) \right] .
\end{align*}
Note that Assumption~\ref{reg}(a)-(b) imply that $\kappa({\bf X}^{(n)}, {\bf Y}^{(n)}) \Rightarrow \kappa({\bf X}, {\bf Y})$ as $n \to \infty$, where ${\bf X}$ and ${\bf Y}$ are i.i.d.~with distribution $\mu$. It follows that there exists a coupling $({\bf X}^{(n)}, {\bf Y}^{(n)}, {\bf X}, {\bf Y})$ such that $\kappa({\bf X}^{(n)}, {\bf Y}^{(n)}) \to \kappa({\bf X}, {\bf Y})$ a.s. From here on $({\bf X}^{(n)}, {\bf Y}^{(n)})$ always refers to the random variables in the coupling. Moreover, Assumption~\ref{reg}(d) implies that $E[ \kappa({\bf X}^{(n)}, {\bf Y}^{(n)}) ] \to E[ \kappa({\bf X}, {\bf Y})] < \infty$, and therefore, $\{\kappa({\bf X}^{(n)}, {\bf Y}^{(n)}): n \geq 1\}$ is uniformly integrable (see Chapter 5, Theorem 5.2.2 in \cite{durrett2010probability}). It follows that
$$\lim_{n \to\infty} E\left[ \kappa({\bf X}^{(n)}, {\bf Y}^{(n)}) 1(  \kappa({\bf X}^{(n)}, {\bf Y}^{(n)}) > \epsilon n) \right] = 0.$$

To analyze the first sum in \eqref{star3}, note that 
\begin{align}
&\frac{1}{n}E\left[\sum_{i=1}^n\sum_{j=1}^n q_{ij}^{(n)}1\left(p_{ij}^{(n)}<(1-\epsilon)q_{ij}^{(n)},  A_{ij} \right)\right]\notag\\
\hspace{5mm}
&=\frac{1}{n}E\left[\sum_{i=1}^n\sum_{j=1}^n q_{ij}^{(n)}1\left( q_{ij}^{(n)} (1 + \varphi_n({\bf X}_i, {\bf X}_j) ) < (1-\epsilon)q_{ij}^{(n)} < \epsilon (1-\epsilon) \right)\right]  \notag\\
&\leq\frac{1}{n^2}E\left[\sum_{i=1}^n\sum_{j=1}^n\kappa(\bold{X}_i,\bold{X}_j)1\left(\varphi_n(\bold{X}_i,\bold{X}_j)<-\epsilon\right)\right] \notag \\
&= E\left[ \kappa({\bf X}^{(n)}, {\bf Y}^{(n)}) \right] - E\left[ \kappa({\bf X}^{(n)}, {\bf Y}^{(n)}) 1( \varphi_n({\bf X}^{(n)}, {\bf Y}^{(n)}) \geq -\epsilon) \right]. \label{eq:KappaError}
\end{align}
Similarly, the second sum in \eqref{star3} can be bounded as follows
\begin{align}
&\frac{1}{n} E\left[ \sum_{i=1}^{n}\sum_{j=1}^n p_{ij}^{(n)}1\left(p_{ij}^{(n)}>(1+\epsilon)q_{ij}^{(n)}, \, A_{ij} \right)\right] \notag \\
&= \frac{1}{n} E\left[ \sum_{i=1}^{n}\sum_{j=1}^n p_{ij}^{(n)}1\left( q_{ij}^{(n)} (1 + \varphi_n({\bf X}_i, {\bf X}_j) )  >(1+\epsilon)q_{ij}^{(n)}, \, q_{ij}^{(n)} \leq \epsilon \right)\right] \notag \\
&\leq \frac{1}{n} E\left[ \sum_{i=1}^{n}\sum_{j=1}^n p_{ij}^{(n)} 1(\varphi_n({\bf X}_i, {\bf X}_j) > \epsilon) \right] \notag \\
&= \frac{1}{n} E\left[ \sum_{i=1}^{n}\sum_{j=1}^n p_{ij}^{(n)}  \right] \notag \\
&\hspace{5mm} - E\left[ \left( \left\{ \kappa({\bf X}^{(n)}, {\bf Y}^{(n)}) (1 + \varphi_n({\bf X}^{(n)}, {\bf Y}^{(n)})) \right\} \wedge n \right) 1( \varphi_n({\bf X}^{(n)}, {\bf Y}^{(n)}) \leq \epsilon ) \right]. \label{eq:PijError}
\end{align}
Since by Assumption~\ref{reg}(d) we have
$$\lim_{n \to \infty} E\left[ \kappa({\bf X}^{(n)}, {\bf Y}^{(n)}) \right] = \lim_{n \to \infty} \frac{1}{n} E\left[ \sum_{i=1}^{n}\sum_{j=1}^n p_{ij}^{(n)}  \right] = \iint_{\mathcal{S}^2} \kappa({\bf x}, {\bf y}) \mu(d{\bf x}) \mu(d{\bf y}),$$
it only remains to show that the limit inferior of the second expectations in \eqref{eq:KappaError} and \eqref{eq:PijError} equals $\iint_{\mathcal{S}^2} \kappa({\bf x}, {\bf y}) \mu(d{\bf x}) \mu(d{\bf y}) = E[\kappa({\bf X}, {\bf Y})]$. 

To this end, note that Assumption~\ref{reg}(c) gives that $\varphi_n$ is continuous and satisfies $|\varphi_n({\bf x}, {\bf y})| \stackrel{P}{\to} 0$ as $n \to \infty$ for any ${\bf x}, {\bf y} \in \mathcal{S}$. It follows that $|\varphi_n({\bf X}^{(n)}, {\bf Y}^{(n)})| \stackrel{P}{\to} 0$ as $n \to \infty$, and Fatou's lemma gives that
\begin{align*}
&\liminf_{n \to \infty} E\left[ \kappa({\bf X}^{(n)}, {\bf Y}^{(n)}) 1( \varphi_n({\bf X}^{(n)}, {\bf Y}^{(n)}) \geq -\epsilon) \right] \\
&\geq E\left[ \liminf_{n \to \infty} \kappa({\bf X}^{(n)}, {\bf Y}^{(n)}) 1( \varphi_n({\bf X}^{(n)}, {\bf Y}^{(n)}) \geq -\epsilon) \right] \\
&= E\left[ \kappa({\bf X}, {\bf Y}) \right]
\end{align*}
and
\begin{align*}
&\liminf_{n \to \infty} E\left[ \left( \left\{ \kappa({\bf X}^{(n)}, {\bf Y}^{(n)}) (1 + \varphi_n({\bf X}^{(n)}, {\bf Y}^{(n)}))  \right\} \wedge n \right) 1( \varphi_n({\bf X}^{(n)}, {\bf Y}^{(n)}) \leq \epsilon ) \right] \\
&\geq E\left[ \liminf_{n \to \infty} \left( \left\{ \kappa({\bf X}^{(n)}, {\bf Y}^{(n)}) (1 + \varphi_n({\bf X}^{(n)}, {\bf Y}^{(n)})) \right\} \wedge n \right) 1( \varphi_n({\bf X}^{(n)}, {\bf Y}^{(n)}) \leq \epsilon ) \right] \\
&= E\left[ \kappa({\bf X}, {\bf Y}) \right].
\end{align*}
This completes the proof. 
\end{proof}

\bigskip

We are now ready to prove Theorem~\ref{numberofarcs}.

\begin{proof}[Proof of Theorem \ref{numberofarcs}]
We start by defining $W_n$ to be the average number of arcs in the graph $G_n(\kappa (1+\varphi_n))$, that is, $W_n:=\mathbb{E}[e(G_n(\kappa (1+\varphi_n)))]/n=\frac{1}{n}\sum_{i=1}^n\sum_{j=1}^n p_{ij}^{(n)}$. As in the proof of Lemma~\ref{bound}, let $({\bf X}^{(n)}, {\bf Y}^{(n)}, {\bf X}, {\bf Y})$ be a random vector such that ${\bf X}^{(n)}$ and ${\bf Y}^{(n)}$ are distributed according to $\mu_n$, conditionally independent given $\mathscr{F}$, ${\bf X}$ and ${\bf Y}$ are i.i.d.~with distribution $\mu$, and ${\bf X}^{(n)} \to {\bf X}$, ${\bf Y}^{(n)} \to {\bf Y}$ a.s.~as $n \to \infty$. Note that 
$$E[\kappa({\bf X}, {\bf Y})] = \iint_{\mathcal{S}^2} \kappa({\bf x}, {\bf y}) \mu(d{\bf x}) \mu(d{\bf y}).$$

We will first show that $W_n \stackrel{L_1}{\longrightarrow} E[\kappa({\bf X}, {\bf Y})]$ as $n \to \infty$. To do this, define the events $B_{ij}$ according to \eqref{eq:BijDef} and note that
\begin{align*}
W_n &= \mathbb{E}\left[ \kappa({\bf X}^{(n)}, {\bf Y}^{(n)}) (1 + \varphi_n({\bf X}^{(n)}, {\bf Y}^{(n)}) ) \wedge n \right].
\end{align*}
Moreover, note that Assumption~\ref{reg}(a)-(c) imply that
$$\kappa({\bf X}^{(n)}, {\bf Y}^{(n)}) (1 + \varphi_n({\bf X}^{(n)}, {\bf Y}^{(n)}) ) \wedge n \stackrel{P}{\longrightarrow} \kappa({\bf X}, {\bf Y}), \qquad n \to \infty,$$
while Assumption~\ref{reg}(d) gives
$$\lim_{n \to \infty} E\left[ W_n \right] = \lim_{n \to \infty} \frac{1}{n} E\left[ \sum_{i=1}^n \sum_{j=1}^n p_{ij}^{(n)} \right] =  E[\kappa({\bf X}, {\bf Y})] .$$
Therefore, by Theorem~5.2.2 in Chapter 5 of \cite{durrett2010probability}, we have that 
$$\kappa({\bf X}^{(n)}, {\bf Y}^{(n)}) (1 + \varphi_n({\bf X}^{(n)}, {\bf Y}^{(n)}) ) \wedge n \stackrel{L_1}{\longrightarrow} \kappa({\bf X}, {\bf Y}), \qquad n \to \infty.$$
 It follows that
\begin{align*}
E\left[ \left| W_n- E[\kappa({\bf X}, {\bf Y})] \right| \right] &= E\left[ \left| \mathbb{E}\left[ \left\{ \kappa({\bf X}^{(n)}, {\bf Y}^{(n)}) (1 + \varphi_n({\bf X}^{(n)}, {\bf Y}^{(n)}) ) \right\} \wedge n \right] - \mathbb{E}[\kappa({\bf X}, {\bf Y})]  \right|\right]  \\
&\leq E\left[ \mathbb{E}\left[  \left| \left\{ \kappa({\bf X}^{(n)}, {\bf Y}^{(n)}) (1 + \varphi_n({\bf X}^{(n)}, {\bf Y}^{(n)}) ) \right\} \wedge n - \kappa({\bf X}, {\bf Y})  \right| \right] \right] \\
&= E\left[  \left| \left\{ \kappa({\bf X}^{(n)}, {\bf Y}^{(n)}) (1 + \varphi_n({\bf X}^{(n)}, {\bf Y}^{(n)}) ) \right\} \wedge n - \kappa({\bf X}, {\bf Y})  \right|\right]  \to 0,
\end{align*}
as $n \to \infty$. We conclude that $W_n \stackrel{L_1}{\longrightarrow} E[\kappa({\bf X}, {\bf Y})] $ as $n \to \infty$. 

Next, we need to show that $e(G_n(\kappa (1+\varphi_n)))/n \xrightarrow{L_1} E[\kappa({\bf X}, {\bf Y})] $ as $n \to \infty$. To do this, let $Y_{ij}$ denote the indicator of whether arc $(i,j)$ is present in $G_n(\kappa (1+\varphi_n))$ and note that
$$e(G_n(\kappa (1+\varphi_n))) = \sum_{i=1}^n \sum_{j \neq i} Y_{ij},$$
where the $\{Y_{ij} \}$ are Bernoulli random variables with means $\{ p_{ij}^{(n)}\}$, conditionally independent given $\mathscr{F}$. Hence,
$$\var(e(G_n(\kappa (1+ \varphi_n))) | \mathscr{F}) = \sum_{i=1}^n \sum_{j=1}^n \var(Y_{ij} | \mathscr{F}) \leq \sum_{i=1}^n \sum_{j=1}^n \mathbb{E}[Y_{ij} ] = \sum_{i=1}^n \sum_{j=1}^n p_{ij}^{(n)} = n W_n. $$
Therefore we have that
\begin{align*}
E\left[(e(G_n(\kappa (1+\varphi_n)))/n-W_n)^2\right] &= n^{-2}E[\var(e(G_n(\kappa (1 + \varphi_n)))|\mathscr{F})] \\
&\leq n^{-2}E[n W_n]\rightarrow 0, 
\end{align*}
as $n \to \infty$. 
Hence, $e(G_n(\kappa (1 + \varphi_n)))/n-W_n\xrightarrow{L_2} 0$. Combined with our earlier conclusion that $W_n \xrightarrow{L_1} E[\kappa({\bf X}, {\bf Y})]$, we obtain that
$$e(G_n(\kappa (1+\varphi_n)))/n \xrightarrow{L_1} E[\kappa({\bf X}, {\bf Y})], \qquad n \to \infty.$$
\end{proof}

\subsection{Distribution of Vertex Degrees}\label{degreesequence}

We now move on to the proof for Theorem~\ref{regularvarying}. The proof of Theorem~\ref{degree} is given in Section~\ref{S.ProofsPhaseTransition}, since it can be obtained as a  corollary to Theorem~\ref{T.Coupling}. We will show that $(Z^+, Z^-)$ has a non-standard regularly varying distribution whenever their conditional means $(\lambda^+(\bold{X}),\lambda^-(\bold{X}))$ have a non-standard regularly varying distribution. Throughout the proof we use the notation ${\bf [ a,  b]} = \{ {\bf x} \in \mathbb{R}^2: {\bf a} \leq {\bf x} \leq {\bf b} \}$ to denote the rectangles in $\mathbb{R}^2$. 

\bigskip

\begin{proof}[Proof of Theorem \ref{regularvarying}]
To simplify the notation, let ${\bf W} = (W^+,W^-)=(\lambda^+(\bold{X}),\lambda^-(\bold{X}))$, and recall that we need to show that $\tilde \nu_t(\cdot ) = tP( (Z^+/a(t) \in du, \, Z^-/b(t)) \in \cdot )$ converges vaguely to $\nu(\cdot)$ in $M_+([0,\infty]^2\setminus\{{\bf 0}\})$ as $t \to \infty$. Note that by Lemma~6.1 in \cite{resnick2007heavy}, it suffices to show that $\tilde \nu_t([{\bf 0}, {\bf x} ]^c) \to \nu([{\bf 0}, {\bf x}]^c)$ as $t \to \infty$ for any continuity point ${\bf x}  \in [{\bf 0}, \boldsymbol{\infty}) \setminus \{ {\bf 0}\}$ of $\nu([{\bf 0}, \cdot ]^c)$. 

To start, fix $(p, q)  \in [{\bf 0}, \boldsymbol{\infty}) \setminus \{ {\bf 0}\}$ to be a continuity point of $\nu([{\bf 0}, \cdot ]^c)$ and note that
\begin{align*}
\tilde \nu_t((p,\infty] \times (q, \infty]) &= \int_{p}^\infty \int_{q}^\infty t P\left( \frac{Z^+}{a(t)} \in du, \, \frac{Z^-}{b(t)} \in dv\right) \\
&= t P\left(  \frac{Z^+}{a(t)} > p , \, \frac{Z^-}{b(t)} > q  \right) \\
&= t E\left[  P\left( \left.  \frac{Z^+}{a(t)} > p, \,  \frac{Z^-}{b(t)} > q   \right| \bold{W} \right) \right] \\
&= t E\left[  P\left( \left.  Z^+ > pa(t)  \right| \bold{W} \right) P\left( \left.  Z^- > qb(t)    \right| \bold{W} \right) \right].
\end{align*}
It follows that we need to show that 
$$\lim_{t \to \infty} t E\left[ P\left( \left.  Z^+ > pa(t)  \right| \bold{W} \right) P\left( \left.  Z^- > qb(t)    \right| \bold{W} \right) \right] = \nu((p,\infty] \times (q,\infty]).$$

To this end, define $e(t) =  \sqrt{\gamma a(t)\log a(t)}$ and $d(t) =  \sqrt{\eta b(t) \log b(t)}$ with $\gamma > 2q\beta $, $\eta > 2p\alpha$, and use them to define the events
$$A_t = \{ W^+ > pa(t) - e(t)\} \qquad \text{and} \qquad B_t = \{ W^- > qb(t) - d(t) \}.$$
Now note that
\begin{align}
&t E\left[  P\left( \left.  Z^+ > pa(t)  \right| \bold{W} \right) P\left( \left.  Z^- > qb(t)    \right| \bold{W} \right) \right] \notag \\
&= t E\left[  P\left( \left.  Z^+ > pa(t)  \right| \bold{W} \right) P\left( \left.  Z^- > qb(t)    \right| \bold{W} \right) 1(A_t \cap B_t) \right]  \label{eq:NonNegligibleSet} \\
&\hspace{5mm} + t E\left[  P\left( \left.  Z^+ > pa(t)  \right| \bold{W} \right) P\left( \left.  Z^- > qb(t)    \right| \bold{W} \right) 1(A_t^c \cup B_t^c) \right]. \label{eq:NegligibleSet}
\end{align}
To see that \eqref{eq:NegligibleSet} vanishes in the limit, use the bound $P(\Poi(\lambda) \geq p) \leq e^{-\lambda} (e\lambda/p)^p$ for $p > \lambda$, where $\text{Poi}(\lambda)$ is Poisson random variable with mean $\lambda$, to obtain that
\begin{align*}
&t E\left[  P\left( \left. Z^+ > pa(t)  \right| \bold{W} \right) P\left( \left.  Z^- > qb(t)     \right| \bold{W} \right) 1(A_t^c) \right] \\
&\leq t E\left[  P\left( \left. Z^+ > pa(t)  \right| \bold{W} \right) 1(A_t^c) \right] \\
&\leq t E\left[ \exp\left\{-W^+ + pa(t) \left(1 + \log (W^+) - \log(pa(t)) \right) \right\} \, 1(A_t^c) \right] \\
&\leq t  \exp\{ -(pa(t)-e(t)) + pa(t) (1 + \log (pa(t)-e(t)) - \log(pa(t))) \}  \\
&= t \exp\left\{ e(t)  + pa(t) \log \left( 1 - \frac{e(t)}{ pa(t)} \right) \right\} \\
&= t \exp\left(- \frac{e(t)^2}{2 pa(t)} + O\left( \frac{e(t)^3}{(pa(t))^2} \right)\right) = t a(t)^{-\frac{\gamma}{2p}} \left(1 + O\left( \frac{(\log a(t))^{3/2}}{a(t)^{1/2}} \right) \right),
\end{align*}
where in the third inequality we used the observation that $g(u) = -u + pa(t)\log u$ is concave with a unique maximizer at $u^* = pa(t)$. Similarly,
\begin{align*}
&t E\left[  P\left( \left. Z^+ > pa(t)   \right| \bold{W} \right) P\left( \left. Z^- > qb(t)    \right| \bold{W} \right) 1(B_t^c) \right] \\
&\leq t b(t)^{-\frac{\eta}{2q}} \left(1 + O\left( \frac{(\log b(t))^{3/2}}{b(t)^{1/2}} \right) \right). 
\end{align*}
Our choice of $\gamma,\eta$ guarantees that both terms converge to zero as $t \to \infty$, hence showing that \eqref{eq:NegligibleSet} does so as well. 

It remains to show that \eqref{eq:NonNegligibleSet} converges to $\nu((p,\infty] \times (q, \infty])$ as $t \to \infty$. To do this, we first note that \eqref{eq:NonNegligibleSet} is equal to 
$$tP(A_t \cap B_t) - tE\left[ \left(1 - P\left( \left.  Z^+ > pa(t)  \right| \bold{W} \right) P\left( \left.  Z^- > qb(t)    \right| \bold{W} \right) \right) 1(A_t \cap B_t) \right],$$
where 
\begin{align*}
&tE\left[ \left(1 - P\left( \left.  Z^+ > pa(t)  \right| \bold{W} \right) P\left( \left.  Z^- > qb(t)    \right| \bold{W} \right) \right) 1(A_t \cap B_t) \right]  \\
&\leq tE\left[ P\left( \left.  Z^+ \leq pa(t)  \right| \bold{W} \right)  1(A_t \cap B_t) \right] +  tE\left[ P\left( \left.  Z^- \leq qb(t)    \right| \bold{W} \right)  1(A_t \cap B_t) \right] \\
&\leq tE\left[ P\left( \left.  Z^+ \leq pa(t)  \right| \bold{W} \right)  1(\tilde A_t \cap B_t) \right] +  tE\left[ P\left( \left.  Z^- \leq qb(t)    \right| \bold{W} \right)  1(A_t \cap \tilde B_t) \right] \\
&\hspace{5mm} + t P( \tilde A_t^c \cap A_t \cap B_t) + t P(  A_t \cap B_t \cap \tilde B_t^c)
\end{align*}
with
$$\tilde A_t = \{ W^+ > pa(t) + e(t)\} \subseteq A_t \qquad \text{and} \qquad \tilde B_t = \{ W^- > qb(t) + d(t) \} \subseteq B_t.$$

Now note that the inequality $P(\text{Poi}(\lambda) \leq p) \leq e^{-\lambda}(e\lambda/p)^p$ for $0 \leq p < \lambda$ gives that
\begin{align*}
&tE\left[ P\left( \left.  Z^+ \leq pa(t)  \right| \bold{W} \right)  1(\tilde A_t \cap B_t) \right]  \\
&\leq tE\left[ \text{exp}\left\{ - W^+ + pa(t) \left( 1 + \log(W^+) - \log(pa(t)) \right) \right\}  1(\tilde A_t ) \right] \\
&\leq t \, \text{exp}\left\{ - (pa(t) + e(t)) + pa(t) \left( 1 + \log(pa(t) + e(t)) - \log(pa(t)) \right) \right\} \\
&= t \, \text{exp}\left\{  - e(t)  + pa(t) \log\left( 1 + \frac{e(t)}{pa(t)} \right)  \right\} \\
&= t \exp\left(- \frac{e(t)^2}{2 pa(t)} + O\left( \frac{e(t)^3}{(pa(t))^2} \right)\right) = t a(t)^{-\frac{\gamma}{2p}} \left(1 + O\left( \frac{(\log a(t))^{3/2}}{a(t)^{1/2}} \right) \right),
\end{align*}
where we used again the concavity of $g(u) = -u + pa(t) \log u$. Similarly,
$$ tE\left[ P\left( \left.  Z^- \leq qb(t)    \right| \bold{W} \right)  1(A_t \cap \tilde B_t) \right] \leq t b(t)^{-\frac{\eta}{2q}} \left(1 + O\left( \frac{(\log b(t))^{3/2}}{b(t)^{1/2}} \right) \right),$$
and our choice of $\gamma,\eta$ give again that
\begin{equation} \label{eq:LowerTail}
\lim_{t \to \infty} \left\{ tE\left[ P\left( \left.  Z^+ \leq pa(t)  \right| \bold{W} \right)  1(\tilde A_t \cap B_t) \right] +  tE\left[ P\left( \left.  Z^- \leq qb(t)    \right| \bold{W} \right)  1(A_t \cap \tilde B_t) \right] \right\} = 0.
\end{equation}

Next,  let $\nu_t(du, dv) = tP( W^+/a(t) \in du, \, W^-/b(t) \in dv)$ and note that for any $0 < \epsilon < p \wedge q$, we have that
\begin{align*}
&\limsup_{t \to \infty} \left\{ t P( \tilde A_t^c \cap A_t \cap B_t) + t P(  A_t \cap B_t \cap \tilde B_t^c) \right\} \\
&= \limsup_{t \to \infty} \left\{  \nu_t\left( (p - e(t)/a(t), p+e(t)/a(t)] \times (q-d(t)/b(t), \infty] \right) \right. \\
&\hspace{20mm} \left. + \nu_t\left( (p-e(t)/a(t), \infty] \times (q-d(t)/b(t), q+d(t)/b(t)] \right) \right\} \\
&\leq \limsup_{t \to \infty} \left\{  \nu_t\left( (p - \epsilon, p+\epsilon] \times (q-\epsilon, \infty] \right) + \nu_t\left( (p-\epsilon, \infty] \times (q-\epsilon, q+\epsilon] \right) \right\} \\
&= \nu\left( (p - \epsilon, p+\epsilon] \times (q-\epsilon, \infty] \right) + \nu\left( (p-\epsilon, \infty] \times (q-\epsilon, q+\epsilon] \right).
\end{align*}
Moreover, since $(p,q)$ is a continuity point of $\nu$, then
$$\lim_{\epsilon \downarrow 0} \left\{ \nu\left( (p - \epsilon, p+\epsilon] \times (q-\epsilon, \infty] \right) + \nu\left( (p-\epsilon, \infty] \times (q-\epsilon, q+\epsilon] \right) \right\} = 0.$$
It follows that
$$\lim_{t \to \infty} \left\{ t P( \tilde A_t^c \cap A_t \cap B_t) + t P(  A_t \cap B_t \cap \tilde B_t^c) \right\} = 0,$$
which combined with \eqref{eq:LowerTail} gives that
$$\lim_{t \to \infty}  tE\left[ \left(1 - P\left( \left.  Z^+ > pa(t)  \right| \bold{W} \right) P\left( \left.  Z^- > qb(t)    \right| \bold{W} \right) \right) 1(A_t \cap B_t) \right] = 0.$$

Finally, the continuity of $\nu$ at $(p,q)$ also yields that
\begin{align*}
\lim_{t\to \infty} tP(A_t \cap B_t) &= \lim_{t \to \infty} \nu_t\left( (p-e(t)/a(t), \infty] \times (q - d(t)/b(t), \infty] \right)= \nu\left( (p, \infty] \times (q, \infty] \right).
\end{align*}
\end{proof}

\subsection{Phase transition for the largest strongly connected component} \label{S.ProofsPhaseTransition}

The last part of the paper considers the connectivity properties of the graph, in particular, the size of the largest strongly connected component. As mentioned in Section~\ref{S.PhaseTransition}, our Theorem~\ref{giant} provides the directed version of Theorem~3.1 in \cite{bollobas2007phase}. However, our proof approach differs from the one used in \cite{bollobas2007phase} in the order in which we construct the different couplings involved. Specifically, in \cite{bollobas2007phase} the authors first couple the graph $G_n(\kappa(1+\varphi_n))$ with another graph $G_n(\kappa_m)$, where $\kappa_m$ is a piecewise constant kernel taking at most a finite number of different values and such that $\kappa_m \nearrow \kappa$ as $m \to \infty$. Then, they provide a coupling between the exploration of the component of a randomly chosen vertex in $G_n(\kappa_m)$ and that of a multi-type branching process, $\mathcal{T}_\mu(\kappa_m)$, whose offspring distribution is determined by $\kappa_m$. The phase transition result is then obtained by relating the survival probability of $\mathcal{T}_\mu(\kappa_m)$ with the survival probability of its limiting tree $\mathcal{T}_\mu(\kappa)$. Our proof leverages on the work done in \cite{bloznelis2012birth}, which applies to a related graph $G_{n'}(\kappa_m)$, to establish a lower bound for the size of the largest strongly connected component. For the upper bound, we give a new direct coupling between the exploration of the in-component and out-component of a randomly chosen vertex in $G_n(\kappa (1+\varphi_n))$ and a double tree $(\mathcal{T}_\mu^+(\kappa_m), \mathcal{T}_\mu^-(\kappa_m))$, where $\kappa_m \nearrow \kappa$ as $m \to \infty$. We then relate the survival probabilities of  $(\mathcal{T}_\mu^+(\kappa_m), \mathcal{T}_\mu^-(\kappa_m))$ with those of their limiting trees  $(\mathcal{T}_\mu^+(\kappa), \mathcal{T}_\mu^-(\kappa))$ as $m \to \infty$. 

Interestingly, trying to adapt the approach used in \cite{bollobas2007phase} to the directed case leads to a phenomenon that does not occur when analyzing undirected graphs. Namely, if we consider two coupled undirected graphs $G_n(\kappa (1+ \varphi_n))$ and $G_n(\kappa'(1+\varphi_n'))$ such that every edge in the first graph is also present in the second one but not the other way around (e.g., when $\kappa({\bf x}, {\bf y})(1 + \varphi_n({\bf x}, {\bf y})) \leq \kappa'({\bf x}, {\bf y})(1 + \varphi_n'({\bf x}, {\bf y}))$ for all ${\bf x}, {\bf y} \in \mathcal{S}$), then, the difference in the sizes of the components of a vertex present in both graphs can be bounded by the difference in their number of edges (see Lemma~9.4 in \cite{bollobas2007phase}). However, in the directed case, this is no longer true, as Figure~\ref{F.TwoGraphs} illustrates.  In other words, the existence of a (giant) strongly connected component can be determined by a single arc. For this reason, a coupling of the graphs $G_n(\kappa(1+\varphi_n))$ and $G_n(\kappa_m)$, such as the one used in \cite{bollobas2007phase}, does not provide an upper bound for the size of the strongly connected component in the directed case. This may be a notable observation considering the folklore that exists around the 
equivalence of undirected and directed networks. 
 
\begin{figure}[!tbp]
  \centering
    \includegraphics[scale=0.9]{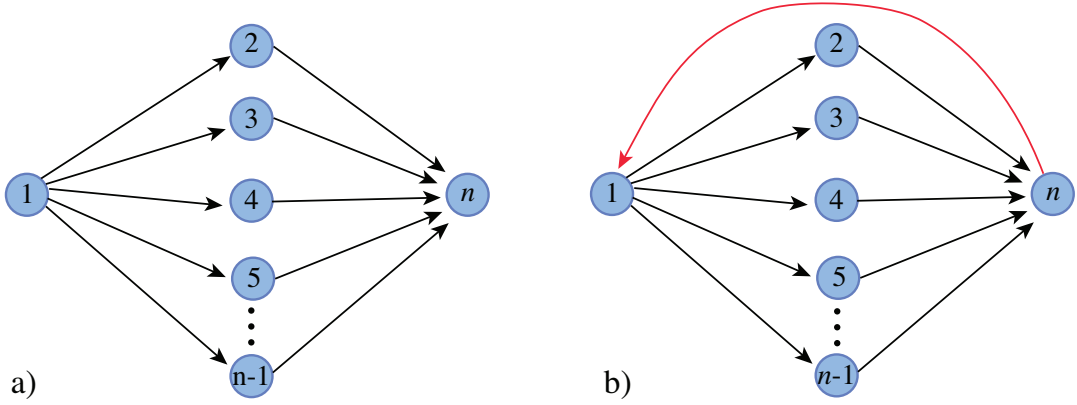}
    \caption{Directed graph with $n$ vertices. a) There is no strongly connected component. b) The same graph with one additional arc; the largest strongly connected component is {\em giant} of size $n$.} \label{F.TwoGraphs}
\end{figure}

With respect to how this section is organized, we have subdivided it into two subsections. In the first one we provide our coupling theorem between the exploration of the in-component and out-component of a randomly chosen vertex in $G_n(\kappa (1+\varphi_n))$ and the double tree  $(\mathcal{T}_\mu^+(\kappa_m), \mathcal{T}_\mu^-(\kappa_m))$. The second subsection gives the proof of Theorem~\ref{giant}, which establishes the phase transition for the size of the largest strongly connected component.

\subsubsection{Coupling with a double multi-type branching process} \label{SS.Coupling}

Starting with a randomly chosen vertex in $G_n(\kappa (1+\varphi_n))$, say vertex $i$, we will perform a double exploration process that we will couple with a double multi-type branching process $\{\hat {\bf Z}_t^{(n)}  : t \in \mathbb{N}_+ \}$ having  ``types'' $\{ 1, \dots, n\}$. Note that these ``types'' are actually the {\em identities} of the vertices in $V_n$, so to avoid confusion with the actual {\em types} of each of the vertices, i.e., $\{ {\bf X}_1, \dots, {\bf X}_n\}$, we will say that a vertex in the double tree has an {\em identity}, not a ``type''. The double tree is started at $\hat {\bf Z}_0^{(n)} = (\hat Z_{1,0}, \hat Z_{2,0}, \dots, \hat Z_{n,0}) $, and is such that for $t \geq 1$,  $ \hat {\bf Z}_t^{(n)} = (\hat Z_{1,t}^+, \hat Z_{2,t}^+, \dots, \hat Z_{n,t}^+, \hat Z_{1,t}^-, \hat Z_{2,t}^-, \dots, \hat Z_{n,t}^-) \in \mathbb{N}^{2n}$, where $\hat Z_{j,t}^+$ denotes the number of individuals of {\em identity} $j$ in the $t$th inbound generation of the double tree and $\hat Z_{j,t}^-$ denotes the number of individuals of {\em identity} $j$ in the $t$th outbound generation of the double tree. Moreover, the number of offspring that each node in the double tree has is independent of all other nodes in the double tree. The initial vector $\hat {\bf Z}_0^{(n)}$ is set to equal ${\bf e}_i$, where ${\bf e}_i$ is the unit vector that has a one in position $i$ and zeros elsewhere; note also that it does not have a $+/-$ superscript since it is at the center of the double tree. 

In order to define the offspring distribution of nodes in the double tree, we fix a kernel $\kappa_m$ on $\mathcal{S} \times \mathcal{S}$ satisfying
$$0 \leq \kappa_m({\bf x}, {\bf y}) \leq \kappa({\bf x}, {\bf y}) \qquad \text{for all } {\bf x}, {\bf y} \in \mathcal{S},$$
and such that 
$$\kappa_m({\bf x}, {\bf y}) = \sum_{i=1}^{M_m} \sum_{j=1}^{M_m} c_{ij}^{(m)} 1( {\bf x} \in \mathcal{J}_i^{(m)}, {\bf y} \in \mathcal{J}_j^{(m)}),$$
for some partition $\{ \mathcal{J}_i^{(m)} : 1 \leq i \leq M_m\}$ of $\mathcal{S}$ and some nonnegative constants $\{ c_{ij}^{(m)} : 1 \leq i,j \leq M_m\}$, $M_m < \infty$. Now let the number of offspring of {\em identity} $j$ that a node of {\em identity} $i$ in the inbound tree, respectively outbound tree, has, be Poisson distributed with mean $r_{ji}^{(m,n)}$, resp. $\tilde r_{ij}^{(m,n)}$, where:
$$r_{ji}^{(m,n)} = \frac{\kappa_m({\bf X}_j, {\bf X}_i) \mu(\mathcal{J}_{\theta(j)}^{(m)})}{n \mu_n(\mathcal{J}_{\theta(j)}^{(m)})} \qquad \text{and} \qquad \tilde r_{ij}^{(m,n)} = \frac{\kappa_m({\bf X}_i, {\bf X}_j) \mu(\mathcal{J}_{\theta(j)}^{(m)})}{n \mu_n(\mathcal{J}_{\theta(j)}^{(m)})}, $$
and $\theta(i) = j$ if and only if ${\bf X}_i \in \mathcal{J}_j^{(m)}$.   We denote $\mathcal{T}_\mu^+(\kappa_m; {\bf X}_i)$ and $\mathcal{T}_\mu^-(\kappa_m; {\bf X}_i)$ the inbound and outbound trees, respectively, whose root is vertex $i$. Note that the trees $\mathcal{T}_\mu^+(\kappa_m; {\bf X}_i)$ and $\mathcal{T}_\mu^-(\kappa_m; {\bf X}_i)$ are conditionally independent (given $\mathscr{F}$) by construction.

{\bf Note:}  We point out that in the double tree {\em identities} can appear multiple times, unlike in the graph where they appear only once. In either case, {\em identities} take values in the set $V_n = \{ 1, 2, \dots, n\}$.

\begin{remark}
An important observation that will be used later is that the double tree $ \hat {\bf Z}_t^{(n)} = (\hat Z_{1,t}^+, \dots, \hat Z_{n,t}^+, \hat Z_{1,t}^-,  \dots, \hat Z_{n,t}^-) \in \mathbb{N}^{2n}$ defined above, conditional on $\hat Z_0 = i \in V_n$, has the same law as the double tree $ \tilde {\bf Z}_t^{(m)} = (\tilde Z_{1,t}^+, \dots, \tilde Z_{M_m,t}^+, \tilde Z_{1,t}^-,  \dots, \tilde Z_{M_m,t}^-) \in \mathbb{N}^{2M_m}$, whose offspring distributions are Poisson with means
$$m_{ij}^+ := c_{ji}^{(m)} \mu(\mathcal{J}_j^{(m)}) \qquad \text{and} \qquad m_{ij}^- := c_{ij}^{(m)} \mu(\mathcal{J}_j^{(m)}), \qquad 1 \leq i,j \leq M_m.$$
Moreover, the latter is the same as $(\mathcal{T}_\mu^+(\kappa_m; {\bf x}), \mathcal{T}_\mu^-(\kappa_m; {\bf x}))$ for any ${\bf x} \in \mathcal{J}_i^{(m)}$. 
\end{remark}

Recall that $Y_{ij} = 1(\text{arc $(i,j)$ is present in } G_n(\kappa(1+\varphi_n))$ is a Bernoulli random variable with success probability
$$p_{ij}^{(n)} = \frac{\kappa({\bf X}_i, {\bf X}_j) (1 + \varphi_n({\bf X}_i, {\bf X}_j))}{n} \wedge 1, \quad 1 \leq i \neq j \leq n, \qquad p_{ii}^{(n)} = 0.$$
We will couple $Y_{ij}$ with a Poisson random variable $Z_{ij}$ having mean $r_{ij}^{(m,n)}$ on the inbound side, and with a Poisson random variable $\tilde Z_{ij}$ having mean $\tilde r_{ij}^{(m,n)}$ on the outbound side, using a sequence $\{ U_{ij}: 1 \leq i, j \leq n\}$ of i.i.d.~Uniform$(0,1)$ random variables.

The exploration of the graph and the construction of the double tree are done by choosing a vertex uniformly at random among those which have not been explored. Starting with vertex $i$, we fix the number of vertices to explore in the in-component of $i$, say $k_{in}$, and the number of vertices to explore in the out-component of $i$, say $k_{out}$.  A {\em step} in the exploration of the in-component (out-component) corresponds to identifying the inbound (outbound) neighbors of the vertex being explored. The exploration of the in-component continues until we have explored $k_{in}$ vertices or until there are no more vertices to reveal, after which we proceed to explore the out-component for $k_{out}$ steps or until there are no more vertices to reveal.  Moreover, we allow $k_{in}$ and $k_{out}$ to be stopping times with respect to the history of the exploration process. 

Vertices in the graph can have one of two labels: \{inactive, active\}. Active vertices are those that have been identified to be in the in-component, respectively out-component, of vertex $i$ but whose inbound, respectively outbound, neighbors have not been revealed. Inactive vertices are all other vertices that have been revealed through the exploration process but that are not active; again, there is an inbound inactive set and an outbound inactive set. Inactive vertices on the inbound side have revealed all its inbound neighbors, but not necessarily all their outbound ones; symmetrically, inactive nodes on the outbound side have revealed all their outbound neighbors but not necessarily all their inbound ones. 

In the double tree we will say that a node is ``active" if we have not yet sampled its offspring, and ``inactive" if we have.

{\bf Notation:} For $r = 0, 1, 2, \dots$, and assuming the chosen vertex is $i$, let
\begin{align*}
A_r^+ \, (A_r^-) &= \text{set of inbound (outbound) ``active" vertices after having explored the first $r$ } \\ &\hspace{5mm} \text{vertices in the in-component (out-component) of vertex $i$.} \\
I_r^+ \,  (I_r^-) &= \text{set of inbound (outbound) ``inactive" vertices after having explored the first $r$ } \\  &\hspace{5mm} \text{vertices in the in-component (out-component) of vertex $i$.} \\
T_r^+ \, (T_r^-) &= \text{{\em identity} of the vertex being explored in step $r$, $r \geq 1$, of the exploration of the } \\&\hspace{5mm} \text{in-component (out-component) of vertex $i$}. \\
\hat A_r^+\,  (\hat A_r^-) &= \text{set of  ``active" nodes in $\mathcal{T}_\mu^+(\kappa_m; {\bf X}_i)$ ($\mathcal{T}_n^-(\kappa_m; {\bf X}_i)$) after having sampled the offspring } \\ &\hspace{5mm} \text{of the first $r$ nodes in $\mathcal{T}^+_\mu(\kappa_m; {\bf X}_i)$ ($\mathcal{T}^-_\mu(\kappa_m; {\bf X}_i)$)}. \\
\hat I_r^+ \, (\hat I_r^-) &= \text{set of  {\em identities} belonging to ``inactive'' nodes in $\mathcal{T}_\mu^+(\kappa_m; {\bf X}_i)$ ($\mathcal{T}^-_\mu(\kappa_m; {\bf X}_i)$) after } \\ &\hspace{5mm} \text{having  sampled the offspring of the first $r$ nodes in $\mathcal{T}^+_\mu(\kappa_m; {\bf X}_i)$ ($\mathcal{T}^-_\mu(\kappa_m; {\bf X}_i)$)}. \\
\hat T_r^+ \, (\hat T_r^-) &= \text{{\em identity} of the node in $\mathcal{T}^+_\mu(\kappa_m; {\bf X}_i)$ ($\mathcal{T}^-_\mu(\kappa_m; {\bf X}_i)$) whose offspring are being sampled } \\ &\hspace{5mm} \text{in step $r$; $r \geq 1$}.
\end{align*}



\bigskip

{\em Exploration of the components of vertex $i$ in the graph:}

Fix $k_{in}$ and $k_{out}$.

\begin{itemize}
\item[1)]  For the exploration of the in-component:

Step 0:  Label vertex $i$ as ``active''  on the inbound side and set $A_0^+ = \{i\}$, $I_0^+  = \varnothing$. 

Step $r$, $1 \leq r \leq k_{in}$:  

Choose, uniformly at random, a vertex in $A_{r-1}^+$; let $T_r^+ = i$ denote its {\em identiy}. 
\begin{itemize} 
\item[a)] For $j = 1, 2, \dots, n$, $j \neq i$:
\begin{itemize}
\item[i.] Realize $Y_{ji} = 1(U_{ji} > 1- p_{ji}^{(n)})$. If $Y_{ji} = 0$ go to 1(a).
\item[ii.] If $Y_{ji} = 1$ and vertex $j \in I_{r-1}^+ \cup A_{r-1}^+$, do nothing. Go to 1(a). 
\item[iii.] If $Y_{ji} = 1$ and vertex $j$ had no label, label it ``active" on the inbound side. Go to 1(a).
\end{itemize}
\item[b)] Once all the new inbound neighbors of vertex $i$ have been identified and labeled ``active", label vertex $i$ as ``inactive" on the inbound side.
\item[c)] Define the sets $A_r^+ = A_{r-1}^+ \cup \{ \text{new ``active" vertices created in 1(a)(iii)}\} \setminus \{i \}$ and $I_r^+ = I_{r-1}^+ \cup \{ i\}$. This completes Step $r$ on the inbound side. 
\end{itemize}

\item[2)] For the exploration of the out-component:

Step 0: Label vertex $i$ as ``active'' on the outbound side and set $A_0^- = \{ i\}$, $I_0^- = \varnothing$. 

Step $r$, $1 \leq r \leq k_{out}$: 

Choose, uniformly at random, a vertex in $A_{r-1}^-$; let $T_r^-=i$ denote its {\em identity}. 
\begin{itemize}
\item[a)] For $j = 1, 2, \dots, n$, $j \neq i$, $j \notin I_{k_{in}}^+ \cup A_{k_{in}}^+$:
\begin{itemize}
\item[i.] Realize $Y_{ij} = 1(U_{ij} > 1- p_{ij}^{(n)})$. If $Y_{ij} = 0$ go to 2(a).
\item[ii.] If $Y_{ij} = 1$ and vertex $j \in I_{r-1}^- \cup A_{r-1}^-$, do nothing. Go to 2(a). 
\item[iii.] If $Y_{ij} = 1$ and vertex $j$ had no label, label it ``active" on the outbound side. Go to 2(a).
\end{itemize}
\item[b)] Once all the new outbound neighbors of vertex $i$ have been identified and labeled ``active", label vertex $i$ as ``inactive" on the outbound side.
\item[c)] Define the sets $A_{r}^- = A_{r-1}^- \cup \{ \text{new ``active" vertices created in 2(a)(iii)}\} \setminus \{ i \}$ and $I_r^- = I_{r-1}^- \cup \{ i\}$. This completes Step $r$ on the outbound side.
\end{itemize}
\end{itemize}

Note that by setting $k_{in} = \inf\{ r \geq 1: A_r^+ = \varnothing\}$ and $k_{out} = \inf\{ r \geq 1: A_r^- = \varnothing\}$ we can fully explore the in-component and out-component of vertex $i$.  We now explain how the coupled double tree is constructed. 

\bigskip

{\em Coupled construction of the double multi-type branching process:}

Let $g^{-1}(u)$ denote the pseudo inverse of function $g$, and let $G_{ji}$ and $\tilde G_{ij}$ be the distribution functions of Poisson random variables having means $r_{ji}^{(m,n)}$ and $\tilde r_{ij}^{(m,n)}$, respectively. On the double tree we use the index notation ${\bf i} = (i_1, \dots, i_r)$ to denote nodes in the $r$th generation (inbound/outbound) of the double tree. 
Let $T_{\bf i}$ denote the {\em identity} of node ${\bf i}$.

\bigskip

\begin{itemize}
\item[1)] Construction of the inbound tree:

Step 0: Set $\hat {\bf Z}_0^{(n)} = {\bf e}_i$. Let $\hat A_0^+ = \{ i\}$, $\hat I_0^+  = \varnothing$. 

Step $r$, $1 \leq r \leq k_{out}$:

Choose a node in ${\bf i} \in \hat A_{r-1}^+$, uniformly at random; set $\hat T_r^+ = T_{\bf i}$. 
\begin{itemize}
\item[I.] If this is the first time {\em identity} $T_{\bf i}$ appears in the inbound tree, do as follows: 
\begin{itemize}
\item[a)] For $j = 1, 2, \dots, n$,  $j \notin \{T_{\bf i}\} $:
\begin{itemize}
\item[i.] Realize $Z_{j,T_{\bf i}} = G_{j ,T_{\bf i}}^{-1}(U_{j,T_{\bf i}})$. If $Z_{j, T_{\bf i}} = 0$ go to 1(I)(a).
\item[ii.] If $Z_{j,T_{\bf i}} \geq 1$ label each of the newly created nodes as ``active" on the inbound side. Go to 1(I)(a).
\end{itemize} 
\item[b)] For $j = T_{\bf i} $:
\begin{itemize}
\item[i.] Sample $Z_{j,T_{\bf i}}^*$ to be a Poisson random variable with mean $r_{j,T_{\bf i}}^{(m,n)}$, independently of everything else. If $Z_{j,T_{\bf i}}^* = 0$ go to 1(I)(c).
\item[ii.] If $Z_{j,T_{\bf i}}^* \geq 1$ label each of the newly created nodes as ``active" on the inbound side. Go to 1(I)(c).
\end{itemize} 
\item[c)] Once all the inbound offspring of node ${\bf i}$ have been identified, label {\em identity} $T_{\bf i}$ as ``inactive" on the inbound side.
\item[d)] Define the sets $\hat A_r^+ = \hat A_{r-1}^+ \cup \{ \text{new ``active" nodes created in 1(I)(a)(ii) and 1(I)(b)(ii)}\} \setminus \{ {\bf i} \}$ and $\hat I_r^+ = \hat I_{r-1}^+ \cup \{ T_{\bf i} \}$. This completes Step $r$ on the inbound side.
\end{itemize} 

\item[II.] Else:
\begin{itemize}
\item[a)] For $j = 1,2, \dots, n$:
\begin{itemize}
\item[i.] Sample $Z_{j,T_{\bf i}}^*$ to be a Poisson random variable with mean $r_{j,T_{\bf i}}^{(m,n)}$, independently of everything else. If $Z_{j,T_{\bf i}}^* = 0$ go to 1(II)(a).
\item[ii.] If $Z_{j,T_{\bf i}}^* \geq 1$ label each of the newly created nodes as ``active" on the inbound side. Go to 1(II)(a).
\end{itemize}
\item[b)] Once all the inbound offspring of node ${\bf i}$ have been identified, label {\em identity} $T_{\bf i}$ as ``inactive" on the inbound side.
\item[c)] Define the sets $\hat A_r^+ = \hat A_{r-1}^+ \cup \{ \text{new ``active" nodes created in 1(II)(a)(ii)}\} \setminus \{{\bf i}\}$ and $\hat I_r^+ = \hat I_{r-1}^+ \cup \{ T_{\bf i} \}$. This completes Step $r$ on the inbound side.
\end{itemize}
\end{itemize} 

\item[2)] Construction of the outbound tree:

Step 0: Set $\hat A_0^- = \{ i\}$, $ \hat I_0^- = \varnothing$. 

Choose a node ${\bf i} \in \hat A_{r-1}^-$, uniformly at random; set $\hat T_r^- = T_{\bf i}$. 
\begin{itemize}
\item[I.] If this is the first time {\em identity} $T_{\bf i}$ appears in the outbound tree, do as follows: 
\begin{itemize}
\item[a)] For $j = 1, 2, \dots, n$, $j \notin \{ T_{\bf i} \}  \cup \{ T_{\bf j}: {\bf j} \in  \hat I_{k_{in}}^+ \cup \hat A_{k_{in}}^+\}$:
\begin{itemize}
\item[i.] Realize $\tilde Z_{T_{\bf i},j} = \tilde G_{T_{\bf i},j}^{-1}(U_{T_{\bf i},j})$. If $\tilde Z_{T_{\bf i},j} = 0$ go to 2(I)(a).
\item[ii.] If $\tilde Z_{T_{\bf i},j} \geq 1$ label each of the newly created nodes as ``active" on the outbound side. Go to 2(I)(a).
\end{itemize}
\item[b)] For $j \in \{ T_{\bf i} \} \cup \{ T_{\bf j}: {\bf j} \in \hat I_{k_{in}}^+ \cup \hat A_{k_{in}}^+\}$:
\begin{itemize}
\item[i.] Sample $\tilde Z_{T_{\bf i},j}^*$ to be a Poisson random variable with mean $\tilde r_{T_{\bf i},j}^{(m,n)}$, independently of everything else. If $\tilde Z_{T_{\bf i},j}^* = 0$ go to 2(I)(b).
\item[ii.] If $\tilde Z_{T_{\bf i},j}^* \geq 1$ label each of the newly created nodes as ``active" on the outbound side. Go to 2(I)(b).
\end{itemize}
\item[c)] Once all the outbound offspring of node ${\bf i}$ have been identified, label {\em identity} $T_{\bf i}$ as ``inactive" on the outbound side.
\item[d)] Define the sets $\hat A_r^- = \hat A_{r-1}^- \cup \{ \text{new ``active" nodes created in 2(I)(a)(ii) and 2(I)(b)(ii)}\} \setminus  \{ {\bf i} \}$ and $\hat I_r^+ = \hat I_{r-1}^+ \cup \{ T_{\bf i} \}$. This completes Step $r$ on the outbound side.
\end{itemize}
\end{itemize}

\item[II.] Else:
\begin{itemize}
\item[a)] For $j = 1,2, \dots, n$:
\begin{itemize}
\item[i.] Sample $\tilde Z_{T_{\bf i},j}^*$ to be a Poisson random variable with mean $\tilde r_{T_{\bf i},j}^{(m,n)}$, independently of everything else. If $\tilde Z_{T_{\bf i},j}^* = 0$ go to 2(II)(a).
\item[ii.] If $\tilde Z_{j,T_{\bf i}}^* \geq 1$ label each of the newly created nodes as ``active" on the outbound side. Go to 2(II)(a).
\end{itemize}
\item[b)] Once all the outbound offspring of node ${\bf i}$ have been identified, label {\em identity} $T_{\bf i}$ as ``inactive" on the outbound side.
\item[c)] Define the sets $\hat A_r^- = \hat A_{r-1}^- \cup \{ \text{new ``active" nodes created in 2(II)(a)(ii)}\} \setminus \{{\bf i}\}$ and $\hat I_r^+ = \hat I_{r-1}^+ \cup \{ T_{\bf i} \}$. This completes Step $r$ on the outbound side.
\end{itemize}
\end{itemize}

{\bf Note:} As long as the active sets in the graph and the double tree are the same, the chosen nodes in steps (1)(I) and (2)(I) are the same as the vertices chosen in steps (1) and (2) of the graph exploration process.

\begin{defn}
We say that the coupling of the graph and the double multi-type branching process holds up to Step $r$ on the inbound side if
$$A_t^+ = \{ T_{\bf j}: {\bf j} \in \hat A_t^+\} \qquad \text{and} \qquad |A_t^+| = |\hat A_t^+| \quad \text{for all } 0 \leq t \leq r,$$ 
and up to Step $r$ on the outbound side if
$$A_t^- = \{ T_{\bf j}: {\bf j} \in \hat A_t^-\} \qquad \text{and} \qquad |A_t^-| = |\hat A_t^-| \quad \text{for all } 0 \leq t \leq r.$$ 
Define the stopping time $\tau^+$ to be the step in the graph exploration process of vertex $T_0$ during which the coupling  breaks on the inbound side and $\tau^-$ to be the step during which it breaks on the outbound side. 
\end{defn}

\begin{remark} \label{R.BreakCoupling}
Note that $\tau^+ = r$ if either:
\begin{itemize}
\item[a.] If for any $j = 1, 2, \dots, n$, $j \notin \{T_r^+\} \cup A_{r-1}^+ \cup I_{r-1}^+$,  we have $Z_{j,T_r^+} \neq Y_{j, T_r^+}$ in step (1)(I)(a)(i),
\item[b.] If for any $j \in A_{r-1}^+ \cup I_{r-1}^+$ we have $Z_{j,T_r^+} \geq 1$ in step (1)(I)(a)(i),
\item[c.] If  $Z_{T_r^+,T_r^+}^* \geq 1$ in step (1)(I)(b)(i),
\end{itemize}
and $\tau^- = r$ if either:
\begin{itemize}
\item[d.] If for any $j = 1, 2, \dots, n$, $j \notin \{T_r^- \} \cup  I_{k_{in}}^+ \cup A_{k_{in}}^+ \cup A_{r-1}^- \cup I_{r-1}^-$, 
we have $\tilde Z_{T_r^-,j} \neq Y_{T_r^-,j}$ in step (2)(I)(a)(i),
\item[e.] If for any $j \in A_{r-1}^- \cup I_{r-1}^-$ we have $ \tilde Z_{T_r^-,j} \geq 1$ in step (2)(I)(a)(i),
\item[f.] If for any $j \in \{T_r^- \} \cup I_{k_{in}}^+ \cup A_{k_{in}}^+$, we have $\tilde Z_{T_r^-,j}^* \geq 1$ in step (2)(I)(b)(i).
\end{itemize}
\end{remark}

We are now ready to state our main coupling result, which provides an explicit upper bound for the probability that the coupling breaks before we can determine whether both the in-component and the out-component of the vertex being explored have at least $k$ vertices each or are fully explored. 

Throughout the reminder of the paper, we use the notation $\mathbb{P}_i(\cdot) = \mathbb{E}[ 1(\cdot ) | A_0 = \{i\} ]$ and $\mathbb{E}_i[ \cdot ] = \mathbb{E}[ \cdot | A_0 = \{ i\}]$; also, $\norm{{\bf x}}_1 = \sum_{i} |x_i|$ for any ${\bf x} \in \mathbb{R}^n$. Similarly to the definition of $\lambda_+({\bf x})$ and $\lambda_-({\bf x})$, define
$$\lambda_+^{(m)}({\bf x}) = \int_{\mathcal{S}} \kappa_m({\bf y}, {\bf x}) \mu(d{\bf y}) \qquad \text{and} \qquad \lambda_-^{(m)}({\bf x}) = \int_{\mathcal{S}} \kappa_m({\bf x}, {\bf y}) \mu(d{\bf y}),$$
$$\lambda_{m,n}^+({\bf x}) = \int_{\mathcal{S}} \kappa_m({\bf y}, {\bf x}) \mu_n(d{\bf y}) \qquad \text{and} \qquad \lambda_{m,n}^-({\bf x}) = \int_{\mathcal{S}} \kappa_m({\bf x}, {\bf y}) \mu_n(d{\bf y}),$$
and
$$\lambda_n^+({\bf x}) = \int_{\mathcal{S}} \kappa({\bf y}, {\bf x}) \mu_n(d{\bf y}) \qquad \text{and} \qquad \lambda_n^-({\bf x}) = \int_{\mathcal{S}} \kappa({\bf x}, {\bf y}) \mu_n(d{\bf y}).$$

\begin{theo}\label{T.Coupling}
Consider the exploration process described above along with its coupled double tree construction.  Define for any fixed $k \in \mathbb{N}_+$ the stopping times $\sigma_{k}^+ = \inf\{ t \geq 1: |A_t^+| + |I_t^+| \geq k \text{ or } A_t^+ = \varnothing \}$ and $\sigma_{k}^- = \{ t \geq 1: |A_t^-| + |I_t^-| \geq k \text{ or } A_t^- = \varnothing\}$.  For any $0 < \epsilon < 1/2$ and any $n,m \in \mathbb{N}_+$, 
$$\frac{1}{n} \sum_{i=1}^n \mathbb{P}_i \left( \{ \tau^+ \leq\sigma_{k}^+ \} \cup \{ \tau^- \leq \sigma_{k}^-\}  \right) \leq  H(n,m,k,\epsilon) ,$$
where  
\begin{align*}
H(n,m,k,\epsilon) &=  1(\Omega_{m,n}^c) + 4 \epsilon k^2 + 2\epsilon k^2 \left( 1 +  \sup_{{\bf x} \in \mathcal{S}} \lambda_+^{(m)}({\bf x}) \right) \\
&\hspace{5mm} + 1(\Omega_{m,n}) \sum_{r=1}^k \sum_{s=0}^{r-1} \binom{r-1}{s} 2^{r-1-s} \frac{1}{n} \left\{ \norm{ (\boldsymbol{\Gamma}^{(m,n)})^s {\bf g}^{(m,n)} }_1 + \norm{ (\boldsymbol{\tilde \Gamma}^{(m,n)})^s {\bf \tilde g}^{(m,n)} }_1 \right\},
\end{align*}
the matrices $\boldsymbol{\Gamma}^{(m,n)}, \boldsymbol{\tilde \Gamma}^{(m,n)} \in \mathbb{R}^{n\times n}$ are defined in Lemma~\ref{L.TypeDistributions}, and ${\bf g}^{(m,n)} = (g_1^{(m,n)}, \dots, g_n^{(m,n)})^T$, ${\bf \tilde g}^{(m,n)} = (\tilde g_1^{(m,n)}, \dots, \tilde g_n^{(m,n)})^T$ are defined according to
\begin{align*}
g_i^{(m,n)} &=   \min\left\{1, (1+5\epsilon) \lambda_n^+({\bf X}_i) -  \lambda_{m,n}^+({\bf X}_i) + (1+\epsilon) \sum_{j=1}^n  (p_{ji}^{(n)} + q_{ji}^{(n)}) 1(B_{ji}^c)  \right\} , \\
\tilde g_i^{(m,n)}  &=  \min\left\{ 1,  (1+5\epsilon) \lambda_n^-({\bf X}_i) -  \lambda_{m,n}^-({\bf X}_i)  + (1+\epsilon) \sum_{j=1}^n (p_{ij}^{(n)} + q_{ij}^{(n)}) 1(B_{ij}^c)\right\}, \\
\Omega_{m,n} &= \bigcap_{t=1}^{M_m}  \left\{  \left| \frac{\mu(\mathcal{J}_t^{(m)})}{\mu_n(\mathcal{J}_t^{(m)})} -1 \right| 1( \mu_n(\mathcal{J}_t^{(m)}) > 0) < \epsilon  \right\}, \\
B_{ij} &= \left\{ (1-\epsilon) q_{ij}^{(n)} \leq p_{ij}^{(n)} \leq (1+\epsilon) q_{ij}^{(n)}, \, q_{ij}^{(n)} \leq \epsilon \right\}.
\end{align*}
Moreover, there exists finite constants $H_{k}^+, H_{k}^-$, depending only on $k$, and $H_{1}^+ = H_{1}^- = 1$, such that
$$\lim_{\epsilon \downarrow 0} \limsup_{n \to \infty} H(n,m,k,\epsilon) \leq \hat H(m,k) \qquad \text{in probability},$$
where
\begin{align*}
\hat H(m,k) &= k^{-1} 1(k \geq 2)  + H_{k}^+ \int_{\mathcal{S}} (\lambda_+({\bf x}) -  \lambda_+^{(m)}({\bf x}) ) \mu(d{\bf x})  +  H_{k}^- \int_{\mathcal{S}} (\lambda_-({\bf x}) -  \lambda_-^{(m)}({\bf x}) ) \mu(d{\bf x}).
\end{align*}
\end{theo}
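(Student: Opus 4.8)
The plan is to carry out a step-by-step analysis of when the coupling between the graph exploration and the double tree can break, and to bound the probability of each type of breakage using the events $B_{ij}$, $\Omega_{m,n}$ and the Poisson-offspring means. First I would condition on the choice of root $i$ and decompose the event $\{\tau^+ \le \sigma_k^+\} \cup \{\tau^- \le \sigma_k^-\}$ according to the step $r \in \{1,\dots,k\}$ at which the coupling first fails and according to which of the six scenarios (a)--(f) of Remark~\ref{R.BreakCoupling} occurs. On the set $\Omega_{m,n}$, the Poisson means $r_{ji}^{(m,n)}$ and $\tilde r_{ij}^{(m,n)}$ are within a factor $(1\pm\epsilon)$ of $\kappa_m(\mathbf{X}_j,\mathbf{X}_i)/n$ and $\kappa_m(\mathbf{X}_i,\mathbf{X}_j)/n$, so the probability a single Poisson variable is $\ge 1$ is at most its mean, which is $O(1/n)$; this is what produces the $\epsilon k^2$-type terms (from collisions among the at most $k$ explored vertices) and the main sum over $r$. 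The coupling between the Bernoulli $Y_{ji}$ and the Poisson $Z_{j,T_r^+}$ via the common uniform $U_{j,T_r^+}$ fails only when either $q_{ji}^{(n)} > \epsilon$ or $p_{ji}^{(n)}$ and $q_{ji}^{(n)}$ differ by more than a $(1\pm\epsilon)$ factor — i.e. on $B_{ji}^c$ — or when the Poisson variable takes a value $\ge 2$; summing the total-variation gaps over $j$ gives precisely the terms $(1+5\epsilon)\lambda_n^\pm(\mathbf{X}_i) - \lambda_{m,n}^\pm(\mathbf{X}_i) + (1+\epsilon)\sum_j (p^{(n)}+q^{(n)})1(B^c)$ that define $g_i^{(m,n)}$ and $\tilde g_i^{(m,n)}$ (the slack between $\lambda_n$ and $\lambda_{m,n}$ absorbs the $\kappa_m \le \kappa$ discrepancy plus Poisson second-order terms).

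The combinatorial heart is to control how the per-step error propagates. Since at step $r$ the explored set has grown through at most $r-1$ prior branchings, the identity of the vertex $T_r^+$ being explored is reached along a path of length $\le r-1$ in the double tree, and the distribution of its type is governed by products of the one-step mean matrices. This is where the matrices $\boldsymbol{\Gamma}^{(m,n)}$ and $\boldsymbol{\tilde\Gamma}^{(m,n)}$ from Lemma~\ref{L.TypeDistributions} enter: the expected number of times identity $i$ has been explored by step $r$, weighted appropriately, is bounded by $\sum_{s=0}^{r-1}\binom{r-1}{s}2^{r-1-s}(\boldsymbol{\Gamma}^{(m,n)})^s$ applied to the indicator vector, the factor $2^{r-1-s}$ and the binomial coefficient coming from the bookkeeping of which of the two sides (inbound/outbound) the ancestors lie on and from over-counting active versus inactive nodes. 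Taking expectations of $g_{T_r^+}^{(m,n)}$ and $\tilde g_{T_r^-}^{(m,n)}$ then yields the $\frac{1}{n}\{\|(\boldsymbol{\Gamma}^{(m,n)})^s \mathbf{g}^{(m,n)}\|_1 + \|(\boldsymbol{\tilde\Gamma}^{(m,n)})^s \tilde{\mathbf{g}}^{(m,n)}\|_1\}$ summands after summing over $r$ and $s$, and the collision contributions (two explored identities coinciding, or a self-loop Poisson firing) are bounded by counting the $\le\binom{k}{2}$ pairs, each contributing $O(\epsilon)$ or $O(1/n)$, giving the $4\epsilon k^2$ and $2\epsilon k^2(1+\sup_{\mathbf{x}}\lambda_+^{(m)}(\mathbf{x}))$ terms; off $\Omega_{m,n}$ we use the trivial bound $1$, producing the leading $1(\Omega_{m,n}^c)$.

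For the second assertion, the plan is to let $n\to\infty$ first and then $\epsilon\downarrow 0$. By Assumption~\ref{reg}(a) and the fact that each $\mathcal{J}_t^{(m)}$ is a $\mu$-continuity set, $\mu_n(\mathcal{J}_t^{(m)}) \xrightarrow{P} \mu(\mathcal{J}_t^{(m)})$, so $1(\Omega_{m,n}^c)\xrightarrow{P}0$ (for $\epsilon$ small enough that the positive-mass cells satisfy the ratio bound); the $\epsilon k^2$ terms vanish as $\epsilon\downarrow 0$. For the matrix terms one shows, using Lemma~\ref{L.TypeDistributions} together with Assumption~\ref{reg}(a), that $\frac{1}{n}\|(\boldsymbol{\Gamma}^{(m,n)})^s\mathbf{g}^{(m,n)}\|_1$ converges in probability to an integral of the form $\int_{\mathcal{S}} (\text{iterated } \lambda_-^{(m)}\text{-operator})(\mathbf{x})\, g^{(m)}(\mathbf{x})\,\mu(d\mathbf{x})$, where the limiting $g^{(m)}(\mathbf{x}) = \min\{1, (\lambda_+(\mathbf{x}) - \lambda_+^{(m)}(\mathbf{x}))\}$ because Lemma~\ref{bound} forces $\frac1n\sum_j (p_{ji}^{(n)}+q_{ji}^{(n)})1(B_{ji}^c)\to 0$ in the relevant averaged sense and $\lambda_n^\pm \to \lambda^\pm$; bounding the iterated operators uniformly in $s \le k-1$ by a constant depending only on $k$ (using $\kappa$ integrable, Assumption~\ref{reg}(d)) collapses the double sum over $r,s$ into constants $H_k^\pm$ multiplying $\int_{\mathcal{S}}(\lambda_\pm(\mathbf{x}) - \lambda_\pm^{(m)}(\mathbf{x}))\mu(d\mathbf{x})$, plus the residual $k^{-1}1(k\ge2)$ coming from the $r=k$ diagonal/collision term that does not vanish with $\epsilon$. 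When $k=1$ there is a single step, no collisions, and the bound reduces to $1$, giving $H_1^\pm = 1$. The main obstacle I expect is the bookkeeping for the iterated-matrix bound — correctly accounting for the interaction between the inbound exploration (whose already-revealed set is excluded from the outbound exploration in step (2)(I)(a)) and the emergence of the $\binom{r-1}{s}2^{r-1-s}$ weights, and verifying that the limiting operators are genuinely bounded on the relevant cone so that $H_k^\pm < \infty$; this is where care with Lemma~\ref{L.TypeDistributions} and with uniform integrability (via Assumption~\ref{reg}(d) and Lemma~\ref{bound}) will be essential.
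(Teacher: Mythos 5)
Your plan for the first inequality tracks the paper's proof closely: decompose by the step $r$ at which the coupling first breaks and by the scenarios of Remark~\ref{R.BreakCoupling}, bound the Bernoulli--Poisson discrepancy at each arc by a total-variation estimate that splits according to $B_{ji}$ versus $B_{ji}^c$ (yielding exactly the two pieces of $g_i^{(m,n)}$), handle collisions and self-loops via $\mathbb{P}(Z\geq 1)\leq$ mean together with the bound on the expected explored set, and average the resulting per-identity cost $g^{(m,n)}_{\hat T_r^+}$ against the law of $\hat T_r^+$ supplied by Lemma~\ref{L.TypeDistributions}. One mechanistic point is off: the weights $\binom{r-1}{s}2^{r-1-s}$ do not come from bookkeeping of which side (inbound/outbound) the ancestors lie on --- the two sides are handled by entirely separate matrices --- but from expanding $(2{\bf I}+\boldsymbol{\Gamma}^{(m,n)})^{r-1}$, where the extra $2{\bf I}$ is the price of the size-biased uniform selection of the next active node (Lemma~\ref{L.RatioPoissons}). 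Since you invoke Lemma~\ref{L.TypeDistributions} as a black box, this misattribution does not break the first part of the argument.

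The genuine gap is in the limit statement. You propose to bound the iterated operators ``uniformly in $s\leq k-1$ by a constant depending only on $k$,'' but no such direct bound exists: the natural estimate $\frac1n\norm{(\boldsymbol{\Gamma}^{(m,n)})^s{\bf h}}_1\leq(1+\epsilon)\int_{\mathcal{S}}\left((\Gamma^{(m,n)})^{s-1}h({\bf y})\right)\lambda_{m,n}^-({\bf y})\,\mu_n(d{\bf y})$ carries a factor of $\lambda_{m,n}^-$, whose supremum depends on $m$, and the constants $H_k^\pm$ must be $m$-free for the subsequent $m\to\infty$, $k\to\infty$ limits in the proof of Theorem~\ref{giant} to go through. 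The paper resolves this by truncating $\lambda_{m,n}^\mp$ at levels $K_1,\dots,K_s$ chosen, using only the $\mu$-integrability of $\lambda_\mp$ (hence depending only on $k$), so that each tail integral is at most $1/(k^23^k\prod_{j<t}K_j)$; the products $\prod_j K_j$ become the constants $H_k^\pm$, and the accumulated truncation residuals sum to less than $k^{-1}1(k\geq 2)$. Your attribution of the $k^{-1}1(k\geq 2)$ term to an ``$r=k$ diagonal/collision term'' is therefore incorrect --- all the $\epsilon k^2$ collision terms vanish as $\epsilon\downarrow 0$ --- and without the truncation device the claim that $H_k^\pm$ depends only on $k$ is unsupported.
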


\bigskip

Before proving the theorem, we will state and prove several preliminary results. The first one below gives an upper bound for the number of offspring sampled in each side of  the double-tree $(\mathcal{T}_\mu^+(\kappa_m), \mathcal{T}_\mu^-(\kappa_m))$ up to step $\hat \sigma_k^+$ and step $\hat \sigma_k^-$, respectively.

\begin{lemma} \label{L.MeanTree}
Let $\hat \sigma_k^+ = \inf\{ t \geq 1: | \hat A_t^+| + | \hat I_t^+| \geq k \text{ or } \hat A_t^+ = \varnothing\}$ and $\hat \sigma_k^- = \inf\{ t \geq 1: | \hat A_t^-| + |\hat I_t^-| \geq k \text{ or } \hat A_t^- = \varnothing\}$. Then, 
$$\frac{1}{n} \sum_{i=1}^n \mathbb{E}_i\left[ \left| \hat I_{\hat \sigma_k^+}^+ \cup \hat A_{\hat \sigma_k^+}^+ \right| \right] \leq k +   k \sup_{{\bf x} \in \mathcal{S}} \lambda_+^{(m)}({\bf x}) \qquad \text{and} \qquad \frac{1}{n} \sum_{i=1}^n \mathbb{E}_i\left[ \left| \hat I_{\hat \sigma_k^-}^- \cup \hat A_{\hat \sigma_k^-}^- \right| \right] \leq k + k \sup_{{\bf x} \in \mathcal{S}} \lambda_-^{(m)}({\bf x}). $$
\end{lemma}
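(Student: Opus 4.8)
The plan is to bound the size of the explored-and-active set $|\hat I_{\hat\sigma_k^+}^+ \cup \hat A_{\hat\sigma_k^+}^+|$ by the number of nodes whose offspring have been sampled up to step $\hat\sigma_k^+$ (which is at most $k$, since each step inactivates exactly one node and the stopping rule triggers once this count reaches $k$, or sooner if the active set empties) plus the total number of offspring produced by those at most $k$ nodes. First I would write, for each $1\le r\le \hat\sigma_k^+$, that the new active nodes created in step $r$ number at most the total inbound offspring of node $\hat T_r^+$, which — conditionally on $\mathscr F$ and on the identity $T_{\hat T_r^+}$ being, say, vertex $\ell$ — is a sum of independent Poisson variables with total mean $\sum_{j=1}^n r_{j\ell}^{(m,n)} = \lambda_{m,n}^{+,\text{adj}}(\bold X_\ell)$, where the adjustment is the factor $\mu(\mathcal J_{\theta(j)}^{(m)})/(n\mu_n(\mathcal J_{\theta(j)}^{(m)}))$. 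Summing over $j$ in each block $\mathcal J_t^{(m)}$, this mean equals $\sum_{t} c_{t\theta(\ell)}^{(m)}\mu(\mathcal J_t^{(m)}) \cdot \frac{1}{n\mu_n(\mathcal J_t^{(m)})}\sum_{j:\theta(j)=t} 1 = \sum_t c_{t\theta(\ell)}^{(m)}\mu(\mathcal J_t^{(m)})$ exactly (the $\mu_n$ factors cancel against the count of vertices in each block), which is precisely $\lambda_+^{(m)}(\bold X_\ell) \le \sup_{\bold x\in\mathcal S}\lambda_+^{(m)}(\bold x)$. This is the key point: the block-size renormalization built into $r_{ji}^{(m,n)}$ makes the expected offspring count equal to the integral against $\mu$, with no error term, regardless of $n$.

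Next I would assemble the bound: $|\hat I_{\hat\sigma_k^+}^+ \cup \hat A_{\hat\sigma_k^+}^+| \le \hat\sigma_k^+ + \sum_{r=1}^{\hat\sigma_k^+}(\text{offspring of node explored at step }r)$, using that $\hat\sigma_k^+\le k$ almost surely. Taking $\mathbb E_i[\cdot]$ and then averaging over $i$, the first term contributes $\le k$. For the second, since $\hat\sigma_k^+\le k$ I can bound the sum by $\sum_{r=1}^k (\text{offspring of node at step }r)1(\hat\sigma_k^+\ge r)$; the event $\{\hat\sigma_k^+\ge r\}$ is measurable with respect to the history before step $r$, so conditioning on that history and on $\mathscr F$ gives expected offspring $\le \sup_{\bold x}\lambda_+^{(m)}(\bold x)$ for each term, yielding a total of at most $k\sup_{\bold x\in\mathcal S}\lambda_+^{(m)}(\bold x)$ after taking expectations and using the tower property. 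The averaging over $i\in\{1,\dots,n\}$ is harmless since the bound on each summand is uniform in $i$. The outbound case is identical with $r_{ji}^{(m,n)}$ replaced by $\tilde r_{ij}^{(m,n)}$ and $\lambda_+^{(m)}$ by $\lambda_-^{(m)}$.

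The only mild subtlety — and the step I would be most careful about — is the measurability/optional-stopping argument that lets me replace the random number of summands $\hat\sigma_k^+$ by the fixed bound $k$ while keeping the conditional-mean estimate valid term by term; this is a routine filtration argument (the indicator $1(\hat\sigma_k^+\ge r) = 1 - 1(\hat\sigma_k^+\le r-1)$ depends only on the first $r-1$ exploration steps, hence is independent of the fresh Poisson offspring sampled at step $r$ given the past), but it should be spelled out rather than waved through. Everything else is the exact cancellation computation above plus linearity of expectation, so no heavy machinery is needed.
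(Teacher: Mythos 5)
Your proposal is correct and follows essentially the same route as the paper: bound the explored set by the (at most $k$) inactivated nodes plus their offspring, use the exact cancellation $\sum_{j=1}^n r_{ji}^{(m,n)}=\lambda_+^{(m)}({\bf X}_i)$, and handle the random horizon by conditioning on the pre-step-$r$ history. The only cosmetic difference is that the paper charges only the offspring overshoot at the final step $\hat\sigma_k^+$ (bounding the count at step $\hat\sigma_k^+-1$ by $k-1$ and invoking $E[X1(X\geq j)]\leq\lambda$ for Poisson $X$), whereas you sum offspring over all steps with the predictable indicator $1(\hat\sigma_k^+\geq r)$; both yield the stated bound.
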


\begin{proof}
Define $\mathcal{G}_m^+$ to be the sigma-algebra containing all the information of the exploration process of the in-component of vertex $i$ up to the end of Step $m$ and including the {\em identity} of $T_{m+1}^+$. Note that 
\begin{align*}
&\mathbb{E}_i\left[ \left| \hat I_{\hat \sigma_k^+}^+ \cup \hat A_{\hat \sigma_k^+}^+ \right| \right] \\
&=  \mathbb{E}_i\left[ \left| \hat I_{\hat \sigma_k^+ -1}^+ \cup \hat A_{\hat \sigma_k^+ - 1}^+ \right|  + \sum_{j=1}^n Z_{j, \hat T_{\hat \sigma_k^+}^+} \right] \\
&\leq k-1 + \sum_{r=1}^k \mathbb{E}_i\left[ 1(\hat \sigma_k^+ = r) \sum_{j=1}^n Z_{j, \hat T_r^+} \right] \\
&= k -1+ \sum_{r=1}^k \mathbb{E}_i \left[ 1(\hat \sigma_k^+ > r-1) \mathbb{E} \left[ \left. 1\left( \sum_{j=1}^n  Z_{j, \hat T_r^+} \geq k - \left| \hat A_{r-1}^+ \cup \hat I_{r-1}^+  \right|   \right) \sum_{j=1}^n  Z_{j, \hat T_r^+}  \right| \mathcal{G}_{r-1}^+ \right] \right] .\end{align*}
Note that in the last equality the term that would correspond to $\{\hat A_r^+ = \emptyset \}$ in the description of the event $\{ \hat \sigma_k^+ = r\}$ vanishes since $\sum_{j=1}^n Z_{j,\hat T_r^+} = 0$ in that case. Now use the observation that $\sum_{j=1}^n  Z_{ji}$ is a Poisson random variable with mean $\sum_{j=1}^n r_{ji}^{(m,n)} = \lambda^{(m)}_+({\bf X}_i)$, and the identity $E[X 1(X \geq j) ] = \lambda P(X \geq j-1) \leq \lambda$ when $X$ is Poisson$(\lambda)$, to obtain that
\begin{align*}
&\sum_{r=1}^k \mathbb{E}_i\left[ 1(\hat \sigma_k^+ > r-1) \mathbb{E} \left[ \left. 1\left( \sum_{j=1}^n  Z_{j, \hat T_r^+} \geq k - \left| \hat A_{r-1}^+ \cup \hat I_{r-1}^+  \right|   \right) \sum_{j=1}^n  Z_{j, \hat T_r^+}  \right| \mathcal{G}_{r-1}^+ \right] \right] \\
&\leq \sum_{r=1}^k \mathbb{E}_i \left[ 1(\hat \sigma_k^+ > r-1) \lambda_+^{(m)}({\bf X}_{\hat T_r^+})  \right] \\
&\leq k \sup_{{\bf x} \in \mathcal{S}} \lambda_+^{(m)}({\bf x}).
\end{align*}

The proof for the outbound tree is essentially the same and is therefore omitted. 
\end{proof}

The next result is a technical lemma giving an explicit upper bound for the ratio of independent Poisson random variables.

\begin{lemma} \label{L.RatioPoissons}
Let $X, Y$ be independent Poisson random variables with means $\lambda$ and $\mu$, respectively. Let $a, b \in \mathbb{N}$. Then,
$$E\left[ \frac{a + X}{b + X + Y} \cdot 1(b+X+Y \geq 1) \right] \leq \frac{2a}{b+1} + \frac{\lambda}{\lambda+\mu} (1 - e^{-\lambda-\mu}).$$
\end{lemma}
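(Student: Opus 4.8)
The plan is to split the expectation according to whether $X \geq 1$ or $X = 0$, since the term $a/(b+X+Y)$ behaves very differently in the two cases. First I would write
\begin{align*}
E\left[ \frac{a+X}{b+X+Y} \cdot 1(b+X+Y\geq 1) \right] &= E\left[ \frac{a}{b+X+Y} \cdot 1(b+X+Y\geq 1) \right] + E\left[ \frac{X}{b+X+Y} \cdot 1(b+X+Y\geq 1) \right].
\end{align*}
For the second term, I would simply bound $X/(b+X+Y) \leq X/(X+Y)$ on the event $\{X+Y\geq 1\}$ (and the summand is $0$ when $X+Y=0$), and then use the classical fact that if $X \sim \Poi(\lambda)$ and $Y \sim \Poi(\mu)$ are independent, then conditionally on $X+Y = k \geq 1$ the variable $X$ is Binomial$(k, \lambda/(\lambda+\mu))$, so $E[X/(X+Y) \mid X+Y = k] = \lambda/(\lambda+\mu)$. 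Taking expectations over $k$ gives $E[X/(X+Y)\cdot 1(X+Y\geq 1)] = \frac{\lambda}{\lambda+\mu}P(X+Y\geq 1) = \frac{\lambda}{\lambda+\mu}(1-e^{-\lambda-\mu})$, which is exactly the second term in the claimed bound.

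For the first term I would condition on whether $X \geq 1$. On $\{X \geq 1\}$ we have $b+X+Y \geq b+1$, so $a/(b+X+Y) \leq a/(b+1)$, contributing at most $a/(b+1)$. On $\{X = 0\}$ we are left with $E[\, a/(b+Y) \cdot 1(b+Y\geq 1)\, ]$ where $Y \sim \Poi(\mu)$ and (using independence) this whole contribution is $P(X=0)\cdot a\, E[1/(b+Y)\cdot 1(b+Y\geq 1)] = e^{-\lambda} a\, E[1/(b+Y)\cdot 1(b+Y\geq 1)]$. To bound $E[1/(b+Y)\cdot 1(b+Y\geq 1)]$ by $1/(b+1)$ I would treat the case $b\geq 1$ trivially ($1/(b+Y)\leq 1/b \leq \dots$ is too weak; better: $1/(b+Y) \leq 1/(b+1)$ whenever $Y\geq 1$, and when $Y=0$ the term $1/b$ needs $b\geq 1$), and for $b=0$ use the standard identity $E[\,1/(Y+1)\,] = (1-e^{-\mu})/\mu \leq 1$ together with $1/Y \cdot 1(Y\geq 1) \leq 2/(Y+1)$ for integer $Y\geq 1$. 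Either way one gets the first term $\leq a/(b+1) + e^{-\lambda}\cdot \frac{2a}{b+1} \leq \frac{2a}{b+1}$, and adding the two pieces gives the stated inequality.

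The only mildly delicate point is handling the boundary cases $b = 0$ and small $Y$ cleanly; the constant $2$ in $2a/(b+1)$ is precisely the slack that absorbs the elementary bound $1/Y \leq 2/(Y+1)$ for $Y \in \{1, 2, \dots\}$ (equivalently $\frac{1}{m}\cdot 1(m\geq 1) \leq \frac{2}{m+1}$ for $m \in \mathbb{N}$), so I would make sure to invoke that inequality rather than something sharper. Everything else is the Poisson-splitting identity and monotonicity, so there is no real obstacle; the main thing to get right is the bookkeeping of the indicator $1(b+X+Y\geq 1)$ so that no division by zero occurs.
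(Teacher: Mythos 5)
Your argument reaches the stated bound but by a different decomposition from the paper's. The paper splits on the event $\{X+Y=0\}$ versus $\{X+Y\geq 1\}$: on the latter it conditions on $X+Y=n$, uses the binomial-thinning fact to get $E[a+X\mid X+Y=n]=a+n\lambda/(\lambda+\mu)$, bounds $a/(b+n)\le a/(b+1)$ and $n/(b+n)\le 1$, and absorbs the leftover term $\frac{a}{b}1(b\ge 1)P(X+Y=0)$ via the elementary inequality $\frac{a}{b}1(b\ge1)\le\frac{2a}{b+1}$. You instead split the numerator into $a$ and $X$, treat the $X$ piece with the same binomial fact (this part is identical in substance to the paper's), and handle the $a$ piece by conditioning on $\{X\ge 1\}$ versus $\{X=0\}$, which requires the extra estimate $E[\frac{1}{b+Y}1(b+Y\ge1)]\le\frac{2}{b+1}$, proved from $\frac1m 1(m\ge1)\le\frac{2}{m+1}$ and the identity $E[1/(Y+1)]=(1-e^{-\mu})/\mu$. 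Both routes rest on the same two ingredients (Poisson splitting and the factor-$2$ slack for the boundary case), so neither is deeper; the paper's is slightly shorter because it needs only one case split.

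One bookkeeping point needs fixing before your last step goes through. You bound the $\{X\ge1\}$ contribution by $a/(b+1)$ and then assert $a/(b+1)+e^{-\lambda}\cdot \frac{2a}{b+1}\le \frac{2a}{b+1}$; this requires $e^{-\lambda}\le 1/2$ and fails for small $\lambda$. The remedy is to keep the probability factor you discarded: the $\{X\ge1\}$ piece is at most $\frac{a}{b+1}\,P(X\ge 1)=\frac{a}{b+1}(1-e^{-\lambda})$, so the $a$-part totals $\frac{a}{b+1}(1-e^{-\lambda})+\frac{2a}{b+1}e^{-\lambda}=\frac{a}{b+1}(1+e^{-\lambda})\le\frac{2a}{b+1}$, valid for all $\lambda\ge 0$. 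With that correction the proof is complete.
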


\begin{proof}
Recall that $X$ given $X+Y = n$ is a Binomial$(n, \lambda/(\lambda+\mu))$. Hence, 
\begin{align*}
&E\left[ \frac{a + X}{b + X + Y} \cdot 1(b+X+Y \geq 1) \right] \\
&= E\left[ \frac{a}{b} \cdot 1(X+Y = 0, b \geq 1) \right] + E\left[ \frac{a + X}{b + X + Y} \cdot 1(X+Y \geq 1) \right] \\
&= \frac{a}{b} 1(b \geq 1) P(X+Y = 0)  + \sum_{n=1}^\infty \frac{E[ a + X | X+Y = n]}{b+n} P(X+Y = n).
\end{align*}
Now use the observation that $X$ given $X+Y = n$ is a binomial with parameters $(n, \lambda/(\mu+\lambda))$ to obtain that
\begin{align*}
&\sum_{n=1}^\infty \frac{E[ a + X | X+Y = n]}{b+n} P(X+Y = n) \\
&= \sum_{n=1}^\infty \frac{a + n\lambda/(\mu+\lambda)}{b+n} P(X+Y = n) \\
&= a \sum_{n=1}^\infty \frac{1}{b+n} P(X+Y = n) + \frac{\lambda}{\mu+\lambda} \sum_{n=1}^\infty \frac{ n}{b+n} P(X+Y = n) \\
&\leq \frac{a}{b+1} P(X+Y \geq 1) + \frac{\lambda}{\mu+\lambda} P(X + Y \geq 1) \\
&= \left( \frac{a}{b+1} + \frac{\lambda}{\lambda+\mu} \right) P(X+Y \geq 1) .
\end{align*}
Using the observation that $(a/b) 1(b \geq 1) \leq 2a/(b+1)$ gives that
\begin{align*}
E\left[ \frac{a + X}{b + X + Y} \cdot 1(b+X+Y \geq 1) \right] &\leq \frac{2a}{b+1} + \frac{\lambda}{\lambda+\mu} P(X + Y \geq 1),
\end{align*}
which completes the proof. 
\end{proof}

The following result constitutes a key step of the proof of Theorem~\ref{T.Coupling} by providing an upper estimate for the distribution of the {\em identities} of the active nodes $\hat T_r^+$ and $\hat T_r^-$.

\begin{lemma} \label{L.TypeDistributions}
Fix $i \in V_n$ and let ${\bf u}_r, {\bf v}_r \in \mathbb{R}^n$, $r \geq 1$, denote the row vectors defined according ${\bf u}_1 = {\bf v}_1 = {\bf e}_i$ and $u_{r,j} = \mathbb{P}_i(\hat A_{r-1}^+ \neq \varnothing, \hat T_r^+ = j)$, $v_{r,j} = \mathbb{P}_i(\hat A_{r-1}^- \neq \varnothing, \hat T_r^- = j)$ for each $j \in V_n$ and $r \geq 1$. Then, 
$${\bf u}_r \leq \sum_{s=0}^{r-1} \binom{r-1}{s} 2^{r-1-s} {\bf e}_i  (\boldsymbol{\Gamma}^{(m,n)})^s \qquad \text{and} \qquad {\bf v}_r \leq \sum_{s=0}^{r-1} \binom{r-1}{s} 2^{r-1-s} {\bf e}_i  (\boldsymbol{\tilde \Gamma}^{(m,n)})^s,$$
where ${\bf e}_i$ is the $i$th canonical  row vector in $\mathbb{R}^n$, and $\boldsymbol{\Gamma}^{(m,n)} = (\gamma_{ij}^{(m,n)})$ and $\boldsymbol{\tilde \Gamma}^{(m,n)} = (\tilde \gamma_{ij}^{(m,n)})$ are the matrices whose $(i,j)$th components are given by
$$\gamma_{ij} = \frac{r_{ji}^{(m,n)}}{\sum_{l=1}^n r_{li}^{(m,n)}} \left(1 - e^{-\sum_{l=1}^n r_{li}^{(m,n)}} \right)  \qquad \text{and} \qquad \tilde \gamma_{ij} = \frac{\tilde r_{ij}^{(m,n)}}{\sum_{l=1}^n \tilde r_{il}^{(m,n)}} \left( 1 - e^{-\sum_{l=1}^n \tilde r_{il}^{(m,n)} }\right).$$
\end{lemma}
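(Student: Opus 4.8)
The plan is to prove the bound on $\mathbf{u}_r$ by induction on $r$; the bound on $\mathbf{v}_r$ is identical after swapping the roles of inbound/outbound offspring means, so I would state it and say ``the proof is analogous.'' The base case $r=1$ is immediate since $\mathbf{u}_1 = \mathbf{e}_i$ and the sum on the right collapses to the $s=0$ term, which is also $\mathbf{e}_i$ (interpreting $(\boldsymbol{\Gamma}^{(m,n)})^0 = I$). For the inductive step, I would first derive a one-step recursion of the form $\mathbf{u}_{r+1} \leq 2\mathbf{u}_r + \mathbf{u}_r \boldsymbol{\Gamma}^{(m,n)}$ componentwise, and then verify that the claimed binomial expression satisfies exactly this recursion, using the Pascal identity $\binom{r}{s} = \binom{r-1}{s} + \binom{r-1}{s-1}$.

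The heart of the argument is establishing the one-step recursion. Here is the idea. Condition on the history $\mathcal{G}_{r-1}^+$ up to and including the selection of $\hat T_r^+$. Given that $\hat A_{r-1}^+ \neq \varnothing$ and $\hat T_r^+ = \ell$, the node explored in step $r$ produces offspring: a Poisson$(r_{j\ell}^{(m,n)})$ number of children of identity $j$ for each $j$, with the total number being Poisson$(\sum_{l} r_{l\ell}^{(m,n)})$. The new active set $\hat A_r^+$ is $\hat A_{r-1}^+$ minus the explored node plus these new children. The vertex $\hat T_{r+1}^+$ is then chosen uniformly at random from $\hat A_r^+$. The probability that it has identity $j$ is bounded by the expected fraction of nodes in $\hat A_r^+$ with identity $j$, which I would bound crudely: either $\hat T_{r+1}^+$ comes from a node that was already active before step $r$ (contributing a term bounded using the fact that $|\hat A_{r-1}^+| \geq 1$, giving the factor $2$ after applying the $(a/b)1(b\geq 1) \leq 2a/(b+1)$ trick), or it comes from a newly created child (contributing the $\boldsymbol{\Gamma}^{(m,n)}$ term). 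This is precisely where Lemma~\ref{L.RatioPoissons} enters: applying it with $X = $ number of new children of identity $j$, $Y = $ number of new children of other identities, $a = (\text{old count of identity } j \text{ in active set})$, $b = |\hat A_{r-1}^+| - 1$, gives $\mathbb{E}_i[ (\hat Z_{j,r}^+ \text{ fraction}) \mid \mathcal{G}_{r-1}^+, \hat T_r^+ = \ell] \leq 2 \cdot (\text{old fraction}) + \gamma_{\ell j}^{(m,n)}$. Summing over $\ell$ weighted by $u_{r,\ell}$ yields $u_{r+1,j} \leq 2 u_{r,j} + \sum_\ell u_{r,\ell}\gamma_{\ell j}^{(m,n)} = (2\mathbf{u}_r + \mathbf{u}_r\boldsymbol{\Gamma}^{(m,n)})_j$, as needed.

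I expect the main obstacle to be the bookkeeping in the conditioning argument: making precise that the ``old'' active nodes and the ``new'' children decompose cleanly, and that selecting $\hat T_{r+1}^+$ uniformly from the union is correctly handled by Lemma~\ref{L.RatioPoissons} with the right choice of $a$ and $b$ — in particular that the count of identity-$j$ nodes among the old active set is itself a random variable that must be bounded by an unconditional quantity. The subtlety is that $a$ in Lemma~\ref{L.RatioPoissons} is $\mathbb{N}$-valued and random, so one actually applies the lemma conditionally and then takes expectations, using that the conditional count of identity-$j$ active nodes at step $r-1$, divided by the size, is what propagates into $u_{r,j}$. A clean way to handle this is to track the full vector of conditional identity-counts and note that the recursion is linear once the ``$+2$ times old'' dominating bound is in place. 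The factor $2^{r-1-s}$ in the final formula is exactly the price of iterating the crude ``$\leq 2 \times \text{old}$'' bound $r-1-s$ times while accumulating $s$ applications of $\boldsymbol{\Gamma}^{(m,n)}$; the binomial coefficient counts the interleavings. Everything else — Pascal's identity, the base case — is routine.
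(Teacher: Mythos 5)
Your proposal is correct and follows essentially the same route as the paper: the paper tracks the vector of identity-counts in the active set, applies Lemma~\ref{L.RatioPoissons} with exactly your choices of $a$, $b$, $X$, $Y$ to get the one-step recursion ${\bf u}_r \leq {\bf u}_{r-1}(2{\bf I} + \boldsymbol{\Gamma}^{(m,n)})$, and then expands $(2{\bf I}+\boldsymbol{\Gamma}^{(m,n)})^{r-1}$ by the binomial theorem, which is equivalent to your Pascal-identity induction. The subtleties you flag (randomness of the old identity-counts, the explored node being removed from the active set) are handled in the paper precisely as you describe, by conditioning on the count vector and absorbing the self-identity term into the generic bound.
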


\begin{proof}
Let ${\bf W}_t^+ = (W_{t,1}^+, \dots, W_{t,n}^+)$ denote the process that keeps track of the {\em identities} of the vertices in the active set $\hat A_t^+$ for $t \geq 0$. Then,
\begin{align*}
 \mathbb{P}_i( \hat A_{r-1}^+ \neq \varnothing, \hat T_r^+ = l ) &= \mathbb{E}_i\left[ 1\left( \| {\bf W}_{r-1}^+ \|_1 \geq 1 \right) \mathbb{P}(\hat T_r^+ = l | {\bf W}_{r-1}^+) \right] \\
 &= \mathbb{E}_i\left[ \frac{W_{r-1,l}^+}{\sum_{j=1}^n W_{r-1,j}^+} \cdot 1\left( \| {\bf W}_{r-1}^+ \|_1 \geq 1 \right) \right] \\
 &= \mathbb{E}_i \left[  \mathbb{E}\left[ \left. \frac{W_{r-1,l}^+}{\sum_{j=1}^n W_{r-1,j}^+} \cdot 1\left( \| {\bf W}_{r-1}^+ \|_1 \geq 1 \right) \right| {\bf W}_{r-2}^+ \right] 1( \| {\bf W}^+_{r-2} \|_1 \geq 1) \right].
\end{align*}
Moreover, provided $ \| {\bf W}_{r-2}^+ \|_1 \geq 1$, we have
\begin{align*}
&\mathbb{E}\left[ \left. \frac{W_{r-1,l}^+}{\sum_{j=1}^n W_{r-1,j}^+} \cdot 1\left( \| {\bf W}_{r-1}^+ \|_1 \geq 1 \right) \right| {\bf W}_{r-2}^+ \right] \\
&= \sum_{s=1}^n \mathbb{P}(\hat T_{r-1}^+ = s | {\bf W}_{r-2}^+) \,  \mathbb{E}\left[  \left. \frac{W_{r-1,l}^+}{\sum_{j=1}^n W_{r-1,j}^+} \cdot 1\left( \| {\bf W}_{r-1}^+ \|_1 \geq 1 \right)  \right| {\bf W}_{r-2}^+, \{\hat T_{r-1}^+ = s \}\right] \\
&= \sum_{1\leq s\leq n, s \neq l} \frac{W_{r-2,s}^+}{\sum_{t=1}^n W_{r-2,t}^+} \mathbb{E}\left[  \left. \frac{W_{r-2,l}^+ + Z_{ls}}{\sum_{j=1}^n (W_{r-2,j}^+ +  Z_{js} ) - 1 } \cdot 1\left( \sum_{j=1}^n (W_{r-2,j}^+ + Z_{js}) \geq 2 \right) \right| {\bf W}_{r-2}^+ \right]  \\
&\hspace{5mm} + \frac{W_{r-2,l}^+}{\sum_{t=1}^n W_{r-2,t}^+} \mathbb{E}\left[  \left. \frac{W_{r-2,l}^+ + Z_{ll}-1}{\sum_{j=1}^n (W_{r-2,j}^+ +  Z_{jl}) - 1 } \cdot 1\left( \sum_{j=1}^n (W_{r-2,j}^+ + Z_{jl}) \geq 2 \right) \right| {\bf W}_{r-2}^+\right] \\
&\leq \sum_{s=1}^n \frac{W_{r-2,s}^+}{\sum_{t=1}^n W_{r-2,t}^+} \mathbb{E}\left[  \left. \frac{W_{r-2,l}^+ + Z_{ls}}{\sum_{j=1}^n (W_{r-2,j}^+ +  Z_{js} ) - 1 } \cdot 1\left( \sum_{j=1}^n (W_{r-2,j}^+ + Z_{js}) \geq 2 \right) \right| {\bf W}_{r-2}^+ \right]  .
\end{align*}
Now use Lemma~\ref{L.RatioPoissons} with $a = W_{r-2,l}^+$, $b = \sum_{j=1}^n W_{r-2,j}^+ - 1$, $X = Z_{ls}$ and $Y = \sum_{j \neq l} Z_{js}$ to obtain that
\begin{align*}
&\sum_{s=1}^n \frac{W_{r-2,s}^+}{\sum_{t=1}^n W_{r-2,t}^+} \mathbb{E}\left[  \left. \frac{W_{r-2,l}^+ + Z_{ls}}{\sum_{j=1}^n (W_{r-2,j}^+ +  Z_{js} ) - 1 } \cdot 1\left( \sum_{j=1}^n (W_{r-2,j}^+ + Z_{js}) \geq 2 \right) \right| {\bf W}_{r-2}^+ \right] \\
&\leq \sum_{s=1}^n \frac{W_{r-2,s}^+}{\sum_{t=1}^n W_{r-2,t}^+} \left( \frac{2 W_{r-2,l}^+}{\sum_{j=1}^n W_{r-2,j}^+} + \frac{r_{ls}^{(m,n)}}{\sum_{j=1}^n r_{js}^{(m,n)}} (1 - e^{-\sum_{j=1}^n r_{js}^{(m,n)}} ) \right)  \\
&=: \frac{2 W_{r-2,l}^+}{\sum_{j=1}^n W_{r-2,j}^+} + \sum_{s=1}^n \frac{W_{r-2,s}^+}{\sum_{t=1}^n W_{r-2,t}^+} \cdot \gamma_{sl}^{(m,n)}, 
\end{align*}
where
$$\gamma_{sl}^{(m,n)} = \frac{r_{ls}^{(m,n)}}{\sum_{j=1}^n r_{js}^{(m,n)}} \left(1 - e^{-\sum_{j=1}^n r_{js}^{(m,n)}} \right) $$
and we use the convention that $(1 - e^{-0})/0 \equiv 1$. It follows that
\begin{align*}
\mathbb{P}_i(\hat A_{r-1}^+ \neq \varnothing, \hat T_r^+ = l) &\leq \mathbb{E}_i\left[ \left\{ \frac{2 W_{r-2,l}^+}{\sum_{j=1}^n W_{r-2,j}^+} + \sum_{s=1}^n \frac{W_{r-2,s}^+}{\sum_{t=1}^n W_{r-2,t}^+} \cdot \gamma_{ls}^{(m,n)} \right\} 1( \| {\bf W}^+_{r-2} \|_1 \geq 1) \right] \\
&= 2 \mathbb{P}_i(\hat A_{r-2}^+ \neq \varnothing, \hat T_{r-1}^+ = l) + \sum_{s=1}^n \mathbb{P}_i(\hat A_{r-2}^+ \neq \varnothing, \hat T_{r-1}^+ = s) \gamma_{sl}^{(m,n)}.
\end{align*}
In vector notation, 
$${\bf u}_r \leq {\bf u}_{r-1} ( 2{\bf I} + \boldsymbol{\Gamma}^{(m,n)} ) ,$$
where $\boldsymbol{\Gamma}^{(m,n)}$ is the matrix whose $(i,j)$th component is $\gamma_{ij}^{(m,n)}$, ${\bf I}$ is the identity matrix in $\mathbb{R}^{n \times n}$, and the inequality holds component-wise. Iterating $r-1$ times we obtain that
\begin{align*}
{\bf u}_r &\leq {\bf u}_1 (2{\bf I} + \boldsymbol{\Gamma}^{(m,n)} )^{r-1}  = {\bf e}_i (2{\bf I} + \boldsymbol{\Gamma}^{(m,n)} )^{r-1}  = \sum_{s=0}^{r-1} \binom{r-1}{s} 2^{r-1-s} {\bf e}_i ( \boldsymbol{\Gamma}^{(m,n)} )^{s} .
\end{align*}
The proof for ${\bf v}_r$ is essentially the same and is therefore omitted. 
\end{proof}

The following lemma gives an upper bound for the mean of a bounded function with respect to the measures obtained in Lemma~\ref{L.TypeDistributions}.

\begin{lemma} \label{L.BoundNorms}
Define the matrices $\boldsymbol{\Gamma}^{(m,n)}$ and $\boldsymbol{\tilde \Gamma}^{(m,n)}$ according to Lemma~\ref{L.TypeDistributions} and fix $s \in \mathbb{N}_+$. Then, for any positive constants $\{ K_i \}_{i\geq 1}$ and $\{ \tilde K_i \}_{i\geq 1}$, and any vector ${\bf h} = (h_1, \dots, h_n)^T$ such that $0 \leq h_i \leq 1$ for all $1 \leq i \leq n$, we have
$$\frac{1}{n} \norm{(\boldsymbol{\Gamma}^{(m,n)})^s {\bf h} }_1 1(\Omega_{m,n}) \leq  \sum_{t=1}^s (1+\epsilon)^t \mathcal{I}_{m,n}^-(K_t) \prod_{j=0}^{t-1} K_j + (1+\epsilon)^s \prod_{j=0}^s K_j \cdot \frac{1}{n} \norm{ {\bf h} }_1$$
and
$$\frac{1}{n} \norm{(\boldsymbol{\tilde \Gamma}^{(m,n)})^s {\bf h} }_1 1(\Omega_{m,n})  \leq  \sum_{t=1}^s (1+\epsilon)^t \mathcal{I}_{m,n}^+(\tilde K_t) \prod_{j=0}^{t-1} \tilde K_j + (1+\epsilon)^s \prod_{j=0}^s \tilde K_j \cdot \frac{1}{n} \norm{ {\bf h} }_1,$$
with $K_0 = \tilde K_0 = 1$,  and $\Omega_{m,n}$ the event defined in Theorem~\ref{T.Coupling}, 
$$\mathcal{I}_{m,n}^-(K) = \int_{\mathcal{S}} \lambda_{m,n}^-({\bf y}) 1(\lambda_{m,n}^-({\bf y}) > K) \mu_n(d{\bf y}),  \qquad  \mathcal{I}_{m,n}^+(K) = \int_{\mathcal{S}} \lambda_{m,n}^+({\bf y}) 1(\lambda_{m,n}^+({\bf y}) > K) \mu_n(d{\bf y}),$$
$$\lambda_{m,n}^+({\bf x}) = \int_{\mathcal{S}} \kappa_m({\bf y}, {\bf x}) \mu_n(d{\bf y}) \qquad \text{and} \qquad \lambda_{m,n}^-({\bf x}) = \int_{\mathcal{S}} \kappa_m({\bf x}, {\bf y}) \mu_n(d{\bf y}).$$
\end{lemma}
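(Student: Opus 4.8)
The plan is to prove the first inequality by induction on $s$; the second one follows by the same argument with the roles of in- and out-offspring interchanged. The key structural observation is that each row of $\boldsymbol{\Gamma}^{(m,n)}$ is (up to the factor $1-e^{-\sum_l r_{l\cdot}^{(m,n)}} \leq 1$) a probability vector, so that $\boldsymbol{\Gamma}^{(m,n)}$ acts as a sub-stochastic operator; the issue is that the column sums, rather than the row sums, carry the relevant mass, and those column sums are exactly $\sum_{l} r_{lj}^{(m,n)}(\dots) = \lambda_{m,n}^-({\bf X}_j)(\dots)$ modulated by the ratio $\mu(\mathcal{J}_{\theta(\cdot)}^{(m)})/(n\mu_n(\mathcal{J}_{\theta(\cdot)}^{(m)}))$. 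On the event $\Omega_{m,n}$ this ratio is within $(1\pm\epsilon)/n$ of $1/n$, which is where the factors $(1+\epsilon)^t$ in the bound come from.

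The base case $s=1$ amounts to estimating $\frac1n\norm{\boldsymbol{\Gamma}^{(m,n)}{\bf h}}_1 1(\Omega_{m,n}) = \frac1n\sum_j h_j \sum_i \gamma_{ij}^{(m,n)} \,1(\Omega_{m,n})$. Summing $\gamma_{ij}^{(m,n)}$ over $i$ gives at most $\sum_i r_{ij}^{(m,n)} = \lambda_{m,n}^-({\bf X}_j)\cdot \mu(\mathcal{J}_{\theta(j)}^{(m)})/\mu_n(\mathcal{J}_{\theta(j)}^{(m)})$, which on $\Omega_{m,n}$ is $\leq (1+\epsilon)\lambda_{m,n}^-({\bf X}_j)$. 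Splitting according to whether $\lambda_{m,n}^-({\bf X}_j) > K_1$ or not, the ``large'' part contributes $\leq (1+\epsilon)\frac1n\sum_j \lambda_{m,n}^-({\bf X}_j) 1(\lambda_{m,n}^-({\bf X}_j) > K_1) = (1+\epsilon)\mathcal{I}_{m,n}^-(K_1)$ (using $h_j\leq 1$), while the ``small'' part contributes $\leq (1+\epsilon)K_1\cdot\frac1n\norm{{\bf h}}_1$. With $K_0 = 1$ this is precisely the claimed bound at $s=1$.

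For the inductive step, write $(\boldsymbol{\Gamma}^{(m,n)})^s {\bf h} = \boldsymbol{\Gamma}^{(m,n)}\bigl((\boldsymbol{\Gamma}^{(m,n)})^{s-1}{\bf h}\bigr)$ and set ${\bf h}' = (\boldsymbol{\Gamma}^{(m,n)})^{s-1}{\bf h}$. One then needs to apply the $s=1$ estimate to ${\bf h}'$, but ${\bf h}'$ is no longer bounded by $1$; here is where the constants $K_j$ enter. On $\Omega_{m,n}$, each entry of $\boldsymbol{\Gamma}^{(m,n)}$ summed over a row is bounded, and in fact iterating the row-sum bound shows $\norm{{\bf h}'}_\infty$ grows by at most a factor $(1+\epsilon)\sup_j\lambda_{m,n}^-({\bf X}_j)$ per application — but since we want a bound in terms of the $K_t$ rather than the (possibly large) sup, the cleaner route is: apply the $s=1$ argument with threshold $K_s$ directly to ${\bf h}'/\prod_{j=0}^{s-1}K_j$ after first invoking the induction hypothesis to control $\frac1n\norm{{\bf h}'}_1$, and handle the ``large-$\lambda$'' contribution of the outermost application separately. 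Concretely: the outermost $\boldsymbol{\Gamma}^{(m,n)}$ applied to ${\bf h}'$ produces, by the same column-sum computation as in the base case, a ``large'' piece bounded by $(1+\epsilon)\mathcal{I}_{m,n}^-(K_s)\norm{{\bf h}'}_\infty/K_{?}$ — and one checks $\norm{{\bf h}'}_\infty \leq \prod_{j=0}^{s-1}K_j$ fails in general, so instead one bounds the large piece by $(1+\epsilon)\mathcal{I}_{m,n}^-(K_s)\cdot\frac1n\norm{\text{rescaled }{\bf h}'}_1$-type quantities; then reassembling the telescoping product $\prod_{j=0}^{t-1}K_j$ across the $s$ levels gives the stated sum $\sum_{t=1}^s(1+\epsilon)^t\mathcal{I}_{m,n}^-(K_t)\prod_{j=0}^{t-1}K_j$, with the leftover ``all-small'' term picking up $\prod_{j=0}^s K_j$ and $\frac1n\norm{{\bf h}}_1$.

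The main obstacle I anticipate is bookkeeping the inductive step cleanly: the operator $\boldsymbol{\Gamma}^{(m,n)}$ does not preserve the bound $0\leq h_i\leq 1$, so one must carry along an $\ell_\infty$ (or weighted) bound on the partial iterates and feed it correctly into the threshold split at each level, making sure the truncation constant used at level $t$ is $K_t$ and that the accumulated prefactor is exactly $\prod_{j=0}^{t-1}K_j$ rather than off by one index. A safe way to organize this is to prove by induction the slightly stronger two-part statement that simultaneously bounds $\frac1n\norm{(\boldsymbol{\Gamma}^{(m,n)})^s{\bf h}}_1 1(\Omega_{m,n})$ by the target expression \emph{and} bounds $\norm{(\boldsymbol{\Gamma}^{(m,n)})^s{\bf h}}_\infty 1(\Omega_{m,n})$ by $(1+\epsilon)^s\prod_{j=1}^s K_j$ plus error terms — the second bound being what is needed to run the threshold split at the next level. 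Everything else (the column-sum identity $\sum_i r_{ij}^{(m,n)} = \lambda_{m,n}^-({\bf X}_j)\mu(\mathcal{J}_{\theta(j)}^{(m)})/\mu_n(\mathcal{J}_{\theta(j)}^{(m)})$, the $(1\pm\epsilon)$ control on $\Omega_{m,n}$, and $1-e^{-x}\leq 1$) is routine.
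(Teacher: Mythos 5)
Your overall skeleton --- peel off one application of $\boldsymbol{\Gamma}^{(m,n)}$ at a time, compute column sums to produce the factor $(1+\epsilon)\lambda_{m,n}^-$ on $\Omega_{m,n}$, and split on a threshold at each level --- is the same as the paper's. But the inductive step as you describe it does not close, and the ``main obstacle'' you flag is one you have manufactured: it is simply not true that $\boldsymbol{\Gamma}^{(m,n)}$ fails to preserve the bound $0\le h_i\le 1$. Its $i$th row sums to $\sum_j\gamma_{ij}^{(m,n)}=1-e^{-\sum_l r_{li}^{(m,n)}}\le 1$, so the matrix is row-substochastic, hence so is every power $(\boldsymbol{\Gamma}^{(m,n)})^t$, and therefore every iterate $(\boldsymbol{\Gamma}^{(m,n)})^{t}{\bf h}$ again has entries in $[0,1]$. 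You state this substochasticity in your opening paragraph and then abandon it two paragraphs later; that reversal is what derails the induction and leads you to the auxiliary $\ell_\infty$ bound of order $(1+\epsilon)^s\prod_j K_j$, which is both unnecessary and incapable of producing the stated coefficient $\prod_{j=0}^{t-1}K_j$ in front of $\mathcal{I}_{m,n}^-(K_t)$.

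The correct (and simpler) resolution, which is the paper's, is: since $(\boldsymbol{\Gamma}^{(m,n)})^{r-1}{\bf h}\le 1$ entrywise, for every $1\le r\le s$ and every threshold $K>0$ one has, on $\Omega_{m,n}$,
\begin{equation*}
\frac1n\norm{(\boldsymbol{\Gamma}^{(m,n)})^{r}{\bf h}}_1\le (1+\epsilon)K\cdot\frac1n\norm{(\boldsymbol{\Gamma}^{(m,n)})^{r-1}{\bf h}}_1+(1+\epsilon)\,\mathcal{I}_{m,n}^-(K),
\end{equation*}
where the large-$\lambda$ piece is bounded by $\mathcal{I}_{m,n}^-(K)$ \emph{with coefficient one} precisely because the previous iterate is bounded by $1$. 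Iterating this with thresholds $K_1,K_2,\dots,K_s$ produces the coefficients $(1+\epsilon)^t\prod_{j=0}^{t-1}K_j$ purely from the accumulated small-$\lambda$ factors of the outer levels, and the leftover all-small term is $(1+\epsilon)^s\prod_{j=0}^{s}K_j\cdot\frac1n\norm{{\bf h}}_1$, as claimed. (The paper phrases this in the equivalent integral-operator form on $(\mathcal{S},\mu_n)$ and applies Fubini, but that is cosmetic.) With the substochasticity observation reinstated, your argument becomes the paper's proof; without it, the inductive step is not established.
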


\begin{proof}
For notational convenience we may assume that ${\bf h} = (h_1, \dots, h_n)^T =  (h({\bf X}_i), \dots, h({\bf X}_n))^T$ for some function $h$ on $\mathcal{S}$, in which case we can write
\begin{align*}
(\boldsymbol{\Gamma}^{(m,n)} {\bf h})_i &= \sum_{j=1}^n \gamma_{ij}^{(m,n)} h_j \\
&= \sum_{j=1}^n \frac{\kappa_m({\bf X}_j, {\bf X}_i)  \mu(\mathcal{J}_{\theta(j)}^{(m)}) }{n \mu_n(\mathcal{J}_{\theta(j)}^{(m)})  \lambda_+^{(m)}({\bf X}_i)} (1 - e^{-\lambda_+^{(m)}({\bf X}_i)} )  h({\bf X}_j)   \\
&= \frac{(1 - e^{-\lambda_+^{(m)}({\bf X}_i)} )}{\lambda_+^{(m)}({\bf X}_i)} \int_{\mathcal{S}} \frac{\mu(\mathcal{J}_{\vartheta({\bf y})}^{(m)})}{\mu_n(\mathcal{J}_{\vartheta({\bf y})}^{(m)})} \kappa_m({\bf y}, {\bf X}_i) h({\bf y}) \mu_n(d{\bf y}) \\
&=: \Gamma^{(m,n)} h({\bf X}_i),
\end{align*}
where $\vartheta({\bf x}) = t$ if ${\bf x} \in \mathcal{J}_t^{(m)}$, and $\Gamma^{(m,n)}$ is a linear operator in $\mathcal{S} \times \mathcal{S}$. Hence, on the event $\Omega_{m,n}$, 
\begin{align*}
\frac{1}{n} \norm{(\boldsymbol{\Gamma}^{(m,n)})^s {\bf h} }_1 &= \int_{\mathcal{S}} (\Gamma^{(m,n)})^s h({\bf x}) \mu_n(d{\bf x}) \\
&= \int_{\mathcal{S}} \frac{(1 - e^{-\lambda_+^{(m)}({\bf x})} )}{\lambda_+^{(m)}({\bf x})} \int_{\mathcal{S}} \frac{\mu(\mathcal{J}_{\vartheta({\bf y})}^{(m)})}{\mu_n(\mathcal{J}_{\vartheta({\bf y})}^{(m)})} \kappa_m({\bf y}, {\bf x}) (\Gamma^{(m,n)})^{s-1} h({\bf y}) \mu_n(d{\bf y}) \mu_n(d{\bf x}) \\
&= \int_{\mathcal{S}} \frac{\mu(\mathcal{J}_{\vartheta({\bf y})}^{(m)})}{\mu_n(\mathcal{J}_{\vartheta({\bf y})}^{(m)})} (\Gamma^{(m,n)})^{s-1} h({\bf y})  \int_{\mathcal{S}}\frac{(1 - e^{-\lambda_+^{(m)}({\bf x})})}{\lambda_+^{(m)}({\bf x})}   \kappa_m({\bf y}, {\bf x}) \mu_n(d{\bf x})  \mu_n(d{\bf y}) \\
&\leq (1+\epsilon)  \int_{\mathcal{S}}  (\Gamma^{(m,n)})^{s-1} h({\bf y})  \int_{\mathcal{S}}  \kappa_m({\bf y}, {\bf x}) \mu_n(d{\bf x})  \mu_n(d{\bf y}) \\
&=  (1+\epsilon)  \int_{\mathcal{S}}  \left( (\Gamma^{(m,n)})^{s-1} h({\bf y})  \right) \lambda_{m,n}^-({\bf y})  \mu_n(d{\bf y}) ,
\end{align*}
where we used the observation that on the event $\Omega_{m,n}$ we have $\mu(\mathcal{J}_{\vartheta({\bf y})}^{(m)}) / \mu_n(\mathcal{J}_{\vartheta({\bf y})}^{(m)}) \leq 1+\epsilon$. Now note that since $\boldsymbol{\Gamma}^{(m,n)}$ is a substochastic matrix in $\mathbb{R}^n$, then so is $(\boldsymbol{\Gamma}^{(m,n)})^s$, and therefore, if $0 \leq h({\bf x}) \leq 1$ for all ${\bf x} \in \mathcal{S}$ we have 
$$(\Gamma^{(m,n)})^s h({\bf X}_i) = ((\boldsymbol{\Gamma}^{(m,n)})^s {\bf h} )_i \leq \sum_{j=1}^n ((\boldsymbol{\Gamma}^{(m,n)})^s)_{ij}  \leq 1, \qquad \text{ for all } 1 \leq i \leq n.$$
Therefore, for such $h$ and any $K_1 > 0$, we have that on the event $\Omega_{m,n}$, 
\begin{align*}
\frac{1}{n} \norm{(\boldsymbol{\Gamma}^{(m,n)})^s {\bf h} }_1 &\leq  (1+\epsilon) \int_{\mathcal{S}} \left((\Gamma^{(m,n)})^{s-1} h({\bf y}) \right) K_1 1(\lambda_{m,n}^-({\bf y}) \leq K_1) \mu_n(d{\bf y}) \\
&\hspace{5mm} + (1+\epsilon) \int_{\mathcal{S}} \left((\Gamma^{(m,n)})^{s-1} h({\bf y}) \right) \lambda_{m,n}^-({\bf y}) 1(\lambda_{m,n}^-({\bf y}) > K_1) \mu_n(d{\bf y}) \\
&\leq (1+\epsilon) \left\{  K_1  \int_{\mathcal{S}} (\Gamma^{(m,n)})^{s-1} h({\bf y})  \mu_n(d{\bf y}) +  \int_{\mathcal{S}} \lambda_{m,n}^-({\bf y}) 1(\lambda_{m,n}^-({\bf y}) > K_1) \mu_n(d{\bf y}) \right\} \\
&=: K_1 (1+\epsilon) \cdot \frac{1}{n} \norm{(\boldsymbol{\Gamma}^{(m,n)})^{s-1} {\bf h} }_1 + (1+\epsilon) \mathcal{I}_{m,n}^-(K_1). 
\end{align*}
Repeating the inequality for some $K_{2}$ gives
\begin{align*}
\frac{1}{n} \norm{(\boldsymbol{\Gamma}^{(m,n)})^s {\bf h} }_1 &\leq K_1 (1+\epsilon) \left( K_{2} (1+\epsilon) \cdot \frac{1}{n} \norm{(\boldsymbol{\Gamma}^{(m,n)})^{s-2} {\bf h} }_1  + (1+\epsilon)  \mathcal{I}_{m,n}^-(K_{2}) \right)  \\
&\hspace{5mm} + (1+\epsilon) \mathcal{I}_{m,n}^-(K_1) \\
&= (1+\epsilon)^2 K_1 K_{2} \cdot \frac{1}{n} \norm{(\boldsymbol{\Gamma}^{(m,n)})^{s-2} {\bf h} }_1 + (1+\epsilon)^2 K_1 \mathcal{I}_{m,n}^-(K_{2}) + (1+\epsilon) \mathcal{I}_{m,n}^-(K_1).
\end{align*}
In general, we obtain that
$$\frac{1}{n} \norm{(\boldsymbol{\Gamma}^{(m,n)})^s {\bf h} }_1 \leq \sum_{t=1}^{s} (1+\epsilon)^{t} \mathcal{I}_{m,n}^-(K_{t}) \prod_{j=0}^{t-1} K_{j} + (1+\epsilon)^s \prod_{j=0}^s K_j \cdot \frac{1}{n} \norm{ {\bf h}}_1 ,$$
with $K_0 = 1$. 

The proof for $\boldsymbol{\tilde \Gamma}^{(m,n)}$ is essentially the same and is therefore omitted. 
\end{proof}

The last two preliminary results allow us to compute the limit of the function $H(n,m,k,\epsilon)$ as  $n \to \infty$ and $\epsilon \downarrow 0$. 

\begin{lemma} \label{L.gLimits}
Define ${\bf g}^{(m,n)}$ and ${\bf \tilde g}^{(m,n)}$ according to Theorem~\ref{T.Coupling}. Then, the following limits hold in probability:
$$\limsup_{n \to \infty} \frac{1}{n} \norm{ {\bf g}^{(m,n)} }_1 \leq  \int_{\mathcal{S}}  \left( (1+5\epsilon) \lambda_+({\bf x})  -  \lambda_+^{(m)}({\bf x}) \right) \mu(d{\bf x})$$
and
$$\limsup_{n \to \infty} \frac{1}{n} \norm{ {\bf \tilde g}^{(m,n)} }_1 \leq \int_{\mathcal{S}}  \left( (1+5\epsilon) \lambda_-({\bf x})  -  \lambda_-^{(m)}({\bf x}) \right) \mu(d{\bf x}) .$$
\end{lemma}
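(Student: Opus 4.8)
The plan is to dominate $\tfrac1n\norm{{\bf g}^{(m,n)}}_1$ by a sum of three random quantities whose limits are already under control, and then combine. Since $\kappa_m\le\kappa$ pointwise we have $\lambda_{m,n}^+({\bf x})\le\lambda_n^+({\bf x})$ for every ${\bf x}\in\mathcal S$, so the quantity inside the minimum defining $g_i^{(m,n)}$ is a sum of three nonnegative terms, namely $5\epsilon\lambda_n^+({\bf X}_i)$, $\lambda_n^+({\bf X}_i)-\lambda_{m,n}^+({\bf X}_i)$, and $(1+\epsilon)\sum_j(p_{ji}^{(n)}+q_{ji}^{(n)})1(B_{ji}^c)$. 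Hence $0\le g_i^{(m,n)}\le1$, $\norm{{\bf g}^{(m,n)}}_1=\sum_{i=1}^n g_i^{(m,n)}$, and dropping the minimum together with the identities $\tfrac1n\sum_i\lambda_n^+({\bf X}_i)=\tfrac1{n^2}\sum_{i,j}\kappa({\bf X}_j,{\bf X}_i)$ and $\tfrac1n\sum_i\lambda_{m,n}^+({\bf X}_i)=\tfrac1{n^2}\sum_{i,j}\kappa_m({\bf X}_j,{\bf X}_i)$ gives
$$\frac1n\norm{{\bf g}^{(m,n)}}_1\le(1+5\epsilon)\,\frac1{n^2}\sum_{i=1}^n\sum_{j=1}^n\kappa({\bf X}_j,{\bf X}_i)-\frac1{n^2}\sum_{i=1}^n\sum_{j=1}^n\kappa_m({\bf X}_j,{\bf X}_i)+\frac{1+\epsilon}{n}\sum_{i=1}^n\sum_{j=1}^n(p_{ji}^{(n)}+q_{ji}^{(n)})1(B_{ji}^c).$$

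I would then handle the three terms separately. The third one tends to $0$ in $L_1$, hence in probability, by Lemma~\ref{bound} (the double sum is unchanged after relabeling the indices). For the subtracted $\kappa_m$-term, expanding $\kappa_m$ as a finite linear combination of indicators of products $\mathcal J_s^{(m)}\times\mathcal J_t^{(m)}$ of $\mu$-continuity sets and using Assumption~\ref{reg}(a) together with the continuous mapping theorem yields $\tfrac1{n^2}\sum_{i,j}\kappa_m({\bf X}_j,{\bf X}_i)=\sum_{s,t}c_{st}^{(m)}\mu_n(\mathcal J_s^{(m)})\mu_n(\mathcal J_t^{(m)})\xrightarrow{P}\sum_{s,t}c_{st}^{(m)}\mu(\mathcal J_s^{(m)})\mu(\mathcal J_t^{(m)})=\int_{\mathcal S}\lambda_+^{(m)}({\bf x})\mu(d{\bf x})$. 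For the $\kappa$-term, since $\kappa$ is unbounded I would reuse the coupling $({\bf X}^{(n)},{\bf Y}^{(n)},{\bf X},{\bf Y})$ from the proof of Lemma~\ref{bound}/Theorem~\ref{numberofarcs}: there $\tfrac1{n^2}\sum_{i,j}\kappa({\bf X}_j,{\bf X}_i)=\mathbb{E}[\kappa({\bf X}^{(n)},{\bf Y}^{(n)})]$, $\kappa({\bf X}^{(n)},{\bf Y}^{(n)})\to\kappa({\bf X},{\bf Y})$ a.s.~(Assumption~\ref{reg}(b)), and $\{\kappa({\bf X}^{(n)},{\bf Y}^{(n)})\}_{n\ge1}$ is uniformly integrable (Assumption~\ref{reg}(d)); taking conditional and then unconditional expectations exactly as in the last display of the proof of Theorem~\ref{numberofarcs} gives $\tfrac1{n^2}\sum_{i,j}\kappa({\bf X}_j,{\bf X}_i)\xrightarrow{L_1}\iint_{\mathcal S^2}\kappa\,d\mu\,d\mu=\int_{\mathcal S}\lambda_+({\bf x})\mu(d{\bf x})$, in particular in probability.

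Putting the three limits together, for any $\delta>0$ the event that $\tfrac1n\norm{{\bf g}^{(m,n)}}_1$ exceeds $\int_{\mathcal S}((1+5\epsilon)\lambda_+({\bf x})-\lambda_+^{(m)}({\bf x}))\mu(d{\bf x})+\delta$ is contained in the union of the three events that the corresponding term differs from its limit by more than $\delta/3$, each of probability $o(1)$; this yields the claimed $\limsup$ bound for ${\bf g}^{(m,n)}$ in probability. The bound for ${\bf \tilde g}^{(m,n)}$ follows by the identical argument, replacing $\lambda_n^+,\lambda_{m,n}^+,\lambda_+,\lambda_+^{(m)},1(B_{ji}^c)$ by $\lambda_n^-,\lambda_{m,n}^-,\lambda_-,\lambda_-^{(m)},1(B_{ij}^c)$ and noting that $\tfrac1n\sum_i\lambda_n^-({\bf X}_i)$ equals the same double sum $\tfrac1{n^2}\sum_{i,j}\kappa({\bf X}_i,{\bf X}_j)$, so that $\int_{\mathcal S}\lambda_-\,d\mu=\int_{\mathcal S}\lambda_+\,d\mu=\iint_{\mathcal S^2}\kappa\,d\mu\,d\mu$.

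The only point requiring genuine care is the upgrade from the convergence of the \emph{means} furnished by Assumption~\ref{reg}(d) to $L_1$ (hence in-probability) convergence of the random averages $\tfrac1{n^2}\sum_{i,j}\kappa({\bf X}_i,{\bf X}_j)$; this is precisely where the coupling-plus-uniform-integrability device borrowed from the proof of Theorem~\ref{numberofarcs} is indispensable, since a bare Markov inequality applied to a sequence whose mean converges to the target value does not produce an in-probability upper bound. Everything else is bookkeeping: monotonicity of $\lambda_{m,n}^+$ in the kernel, the trivial bound $g_i^{(m,n)}\le1$, and Lemma~\ref{bound}.
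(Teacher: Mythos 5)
Your proposal is correct and follows essentially the same route as the paper: drop the minimum, kill the $1(B_{ji}^c)$ term via Lemma~\ref{bound}, handle the $\kappa_m$-average through its finite piecewise-constant expansion and Assumption~\ref{reg}(a), and upgrade the $\kappa$-average to an in-probability limit via the coupling plus uniform integrability from Assumption~\ref{reg}(d). The only cosmetic difference is that you treat $\tfrac1{n^2}\sum_{i,j}\kappa({\bf X}_j,{\bf X}_i)$ directly as $\mathbb{E}[\kappa({\bf X}^{(n)},{\bf Y}^{(n)})]$ and apply Jensen, whereas the paper first establishes $L_1$ convergence of $\lambda_n^+({\bf X}^{(n)})$ to $\lambda_+({\bf X})$ by conditioning on $\sigma({\bf X},\{{\bf X}^{(n)}\})$; both rest on the identical device.
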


\begin{proof}
Note that by ignoring the minimum with one in the definition of $g_i^{(m,n)}$, we obtain that
\begin{align*}
\frac{1}{n} \norm{ {\bf g}^{(m,n)} }_1 &\leq (1+5\epsilon) \frac{1}{n} \sum_{i=1}^n \lambda_n^+({\bf X}_i) -  \frac{1}{n} \sum_{i=1}^n \lambda_{m,n}^+({\bf X}_i) + (1 +\epsilon) \frac{1}{n} \sum_{i=1}^n \sum_{j=1}^n (p_{ji}^{(n)} + q_{ji}^{(n)}) 1(B_{ji}^c). 
\end{align*}
Now note that by Lemma~\ref{bound}, we have that $\lim_{n \to \infty} \frac{1}{n} E\left[  \sum_{i=1}^n \sum_{j=1}^n (p_{ji}^{(n)} + q_{ji}^{(n)}) 1(B_{ji}^c) \right] = 0$, 
and therefore, 
$$\frac{1}{n} \sum_{i=1}^n \sum_{j=1}^n (p_{ji}^{(n)} + q_{ji}^{(n)}) 1(B_{ji}^c) \xrightarrow{P} 0, \qquad n \to \infty.$$ 

Moreover, since  $\kappa_m$ is piecewise constant on $\mathcal{J}_t^{(m)} \times \mathcal{J}_s^{(m)}$ for $1 \leq t,s \leq M_m$, then, 
$$\lambda_{m,n}^+({\bf x}) = \sum_{t=1}^{M_m} 1({\bf x} \in \mathcal{J}_t^{(m)}) \int_{\mathcal{S}} \kappa_m({\bf y}, {\bf x}) \mu_n(d{\bf x}) =  \sum_{t=1}^{M_m} 1({\bf x} \in \mathcal{J}_t^{(m)}) \alpha^{(m,n)}_t,$$
where, by Assumption~\ref{reg}(a), 
$$\alpha^{(m,n)}_t := \sum_{i=1}^{M_m} c_{i,t}^{(m)} \mu_n(\mathcal{J}_i^{(m)}) \xrightarrow{P} \alpha_t^{(m)} :=  \sum_{i=1}^{M_m} c_{i,t}^{(m)} \mu(\mathcal{J}_i^{(m)}) \quad  n \to \infty.$$
Therefore, 
\begin{equation} \label{eq:LimitLambdaMN}
\frac{1}{n} \sum_{i=1}^n \lambda_{m,n}^+({\bf X}_i) = \int_{\mathcal{S}} \sum_{t=1}^{M_m} 1({\bf x} \in \mathcal{J}_t^{(m)}) \alpha^{(m,n)}_t \mu_n(d{\bf x})  \xrightarrow{P} \int_{\mathcal{S}} \lambda_+^{(m)}({\bf x}) \mu(d{\bf x}), \qquad n \to \infty.
\end{equation}

It only remains to show that $n^{-1} \sum_{i=1}^n \lambda_n^+({\bf X}_i) \xrightarrow{P} \int_{\mathcal{S}} \lambda_+({\bf x}) \mu(d{\bf x})$. To this end, let  $({\bf X}^{(n)}, {\bf Y}^{(n)}, {\bf X}, {\bf Y})$ be  a random vector in $\mathcal{S}^4$ such that ${\bf X}^{(n)}$ and ${\bf Y}^{(n)}$ are conditionally i.i.d.~(given $\mathscr{F}$) with common distribution $\mu_n$, ${\bf X}$ and ${\bf Y}$ are i.i.d.~with common distribution $\mu$, and ${\bf X}^{(n)} \to {\bf X}$, ${\bf Y}^{(n)} \to {\bf Y}$ a.s.~as $n \to \infty$; note that such a coupling exist by Assumption~\ref{reg}(a). Note that since $\kappa$ is continuous on $\mathcal{S}^2$, then $\kappa({\bf X}^{(n)}, {\bf Y}^{(n)}) \to \kappa({\bf X}, {\bf Y})$ a.s. Also, by Assumption~\ref{reg}(d), we have convergence of the means, which implies that $\kappa({\bf X}^{(n)}, {\bf Y}^{(n)}) \xrightarrow{L_1} \kappa({\bf X}, {\bf Y})$ as $n \to \infty$. 

Next, define $\mathcal{G}_{\bf X} = \sigma( {\bf X}, \{ {\bf X}^{(n)}: n \geq 1\} )$ and note that
\begin{align*}
\lambda_n^+({\bf X}^{(n)}) &= \mathbb{E}\left[\left. \kappa({\bf Y}^{(n)}, {\bf X}^{(n)}) \right| \mathcal{G}_{\bf X} \right] \qquad \text{and} \qquad \lambda_+({\bf X}) = \mathbb{E} \left[\left. \kappa({\bf Y}, {\bf X}) \right| \mathcal{G}_{\bf X} \right], 
\end{align*}
It follows that
\begin{align*}
E\left[ \left| \lambda_n^+({\bf X}^{(n)})  - \lambda_+({\bf X}) \right|   \right] &\leq E\left[  \mathbb{E}\left[\left. \left| \kappa({\bf Y}^{(n)}, {\bf X}^{(n)}) - \kappa({\bf Y}, {\bf X})  \right| \right| \mathcal{G}_{\bf X} \right] \right]   \\
&=  E\left[  \left| \kappa({\bf Y}^{(n)}, {\bf X}^{(n)}) - \kappa({\bf Y}, {\bf X})  \right|   \right] \to 0,
\end{align*}
as $n \to \infty$, which in turn implies that
$$\frac{1}{n} \sum_{i=1}^n \lambda_n^+({\bf X}_i) \xrightarrow{P} E[ \kappa({\bf Y}, {\bf X})] =  \int_{\mathcal{S}} \lambda_+({\bf x}) \mu(d{\bf x}), \qquad n \to \infty.$$

The analysis of ${\bf \tilde g}^{(m,n)}$ is essentially the same and is therefore omitted. 
\end{proof}

\begin{lemma} \label{L.HLimit}
For $H(n,m,k,\epsilon)$ defined as in Theorem~\ref{T.Coupling}, and any $m,k \in \mathbb{N}_+$, 
$$\lim_{\epsilon \downarrow 0} \limsup_{n \to \infty} H(n,m,k,\epsilon) \leq \hat H(m,k) \quad \text{in probability} ,$$
where
\begin{align*}
\hat H(m,k) &=  k^{-1} 1(k\geq 2)  + H_{k}^+ \int_{\mathcal{S}} (\lambda_+({\bf x}) -  \lambda_+^{(m)}({\bf x}) ) \mu(d{\bf x})  +  H_{k}^- \int_{\mathcal{S}} (\lambda_-({\bf x}) -  \lambda_-^{(m)}({\bf x}) ) \mu(d{\bf x}) ,
\end{align*}
with $H_{k}^+, H_{k}^-$ finite constants depending only on $k$ and $H_{1}^+ = H_{1}^- = 1$. 
\end{lemma}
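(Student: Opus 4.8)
The plan is to split $H(n,m,k,\epsilon)$ into the four groups of terms in its definition and let $n\to\infty$ and then $\epsilon\downarrow0$ in each, keeping $m$ and $k$ fixed. The first group, $1(\Omega_{m,n}^{c})$, vanishes in probability as $n\to\infty$: since $\kappa_m$ is constant on each block $\mathcal J_i^{(m)}\times\mathcal J_j^{(m)}$ of a finite partition into $\mu$-continuity sets (we may discard the $\mu$-null blocks, as they affect neither $\kappa_m$ $\mu$-a.e.\ nor the branching trees), Assumption~\ref{reg}(a) gives $\mu_n(\mathcal J_t^{(m)})\xrightarrow{P}\mu(\mathcal J_t^{(m)})>0$ for each of the finitely many $t\le M_m$, so $1(\Omega_{m,n}^{c})\xrightarrow{P}0$ for every fixed $\epsilon>0$. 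The second group consists of the deterministic terms $4\epsilon k^{2}$ and $2\epsilon k^{2}(1+\sup_{\mathbf x}\lambda_+^{(m)}(\mathbf x))$; here $\sup_{\mathbf x}\lambda_+^{(m)}(\mathbf x)=\max_i\sum_j c_{ji}^{(m)}\mu(\mathcal J_j^{(m)})<\infty$ for fixed $m$, these terms do not depend on $n$, and they tend to $0$ as $\epsilon\downarrow0$. Hence both groups contribute $0$ in the iterated limit.

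The bulk of the argument is the main term. On the event $\Omega_{m,n}$ I feed $\mathbf h={\bf g}^{(m,n)}$, and then $\mathbf h=\tilde{\bf g}^{(m,n)}$, into Lemma~\ref{L.BoundNorms}; this is legitimate because the $\min\{1,\cdot\}$ in the definitions of $g_i^{(m,n)},\tilde g_i^{(m,n)}$ forces $0\le g_i^{(m,n)},\tilde g_i^{(m,n)}\le1$. Taking all the truncation levels equal to a single value $K$, Lemma~\ref{L.BoundNorms} bounds $\frac1n\norm{(\boldsymbol\Gamma^{(m,n)})^{s}{\bf g}^{(m,n)}}_1\,1(\Omega_{m,n})$ by $(1+\epsilon)^{s}K^{s}\,\frac1n\norm{{\bf g}^{(m,n)}}_1$ plus $\sum_{t=1}^{s}(1+\epsilon)^{t}K^{t-1}\mathcal I_{m,n}^{-}(K)$, and symmetrically on the outbound side with $\mathcal I_{m,n}^{+}(K)$. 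Summing against the weights $\binom{r-1}{s}2^{r-1-s}$ over $0\le s\le r-1$, $1\le r\le k$, the combinatorial sums telescope: the coefficient of $\frac1n\norm{{\bf g}^{(m,n)}}_1$ becomes $\sum_{r=1}^{k}(2+(1+\epsilon)K)^{r-1}$, and that of $\mathcal I_{m,n}^{-}(K)$ becomes a finite constant $C_k(K)$ (and likewise with $-$ replaced by $+$). Letting $n\to\infty$: Lemma~\ref{L.gLimits} gives $\limsup_n\frac1n\norm{{\bf g}^{(m,n)}}_1\le\int_{\mathcal S}((1+5\epsilon)\lambda_+(\mathbf x)-\lambda_+^{(m)}(\mathbf x))\mu(d\mathbf x)$ in probability; and since $\kappa_m\le\kappa$ one has $\mathcal I_{m,n}^{\pm}(K)\le\int_{\mathcal S}\lambda_n^{\pm}(\mathbf y)1(\lambda_n^{\pm}(\mathbf y)>K)\mu_n(d\mathbf y)$, whose $\limsup_n$ is controlled in probability, through the $L_1$-coupling used in the proof of Lemma~\ref{L.gLimits} (which makes $\{\lambda_n^{\pm}(\mathbf X^{(n)})\}$ uniformly integrable), by $\int_{\mathcal S}\lambda_{\pm}(\mathbf y)1(\lambda_{\pm}(\mathbf y)\ge K)\mu(d\mathbf y)$. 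Letting $\epsilon\downarrow0$, the coefficient of the first part becomes $H_k^{+}:=\sum_{r=1}^{k}(2+K)^{r-1}$ — which equals $1$ when $k=1$ — and analogously $H_k^{-}$; the $\frac1n\norm{{\bf g}^{(m,n)}}_1$-part then contributes at most $H_k^{+}\int_{\mathcal S}(\lambda_+-\lambda_+^{(m)})\,d\mu$, and the outbound part at most $H_k^{-}\int_{\mathcal S}(\lambda_- -\lambda_-^{(m)})\,d\mu$.

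It remains to choose $K$ so that the leftover $\mathcal I$-contribution is absorbed by the slack $k^{-1}1(k\ge2)$. Since $\int_{\mathcal S}\lambda_{\pm}\,d\mu<\infty$ by Assumption~\ref{reg}(d), the tail integrals $\int_{\{\lambda_{\pm}\ge K\}}\lambda_{\pm}\,d\mu$ vanish as $K\to\infty$, and one selects $K=K_k$ — a function of $k$ and the fixed model data only, not of $m$ or $n$ — large enough (using $t$-dependent levels if a single level does not suffice) that $C_k(K_k)\big(\int_{\{\lambda_+\ge K_k\}}\lambda_+\,d\mu+\int_{\{\lambda_-\ge K_k\}}\lambda_-\,d\mu\big)\le k^{-1}1(k\ge2)$; this freezes $H_k^{\pm}=\sum_{r=1}^{k}(2+K_k)^{r-1}$ as functions of $k$ alone, with $H_1^{\pm}=1$. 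Combining the three groups — and using that at each stage only finitely many convergence-in-probability statements are being added — yields $\lim_{\epsilon\downarrow0}\limsup_{n\to\infty}H(n,m,k,\epsilon)\le k^{-1}1(k\ge2)+H_k^{+}\int_{\mathcal S}(\lambda_+-\lambda_+^{(m)})\,d\mu+H_k^{-}\int_{\mathcal S}(\lambda_- -\lambda_-^{(m)})\,d\mu=\hat H(m,k)$ in probability.

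I expect the genuinely delicate step to be this last balancing inside the main term: one must pick the truncation levels in Lemma~\ref{L.BoundNorms} large enough that the operator-iteration error $\sum_{t}(1+\epsilon)^{t}K^{t-1}\mathcal I_{m,n}^{\pm}(K)$ is swallowed by $k^{-1}1(k\ge2)$, yet not so large that the multiplicative constants $H_k^{\pm}$ — governed by the products $\prod_{j\le s}K_j$ coming out of Lemma~\ref{L.BoundNorms} — acquire a dependence on $m$. The facts that make this possible are the boundedness of $\kappa_m$ for each fixed $m$ (so the errors are genuine tail contributions), the domination $\kappa_m\le\kappa$ (so those tails sit under the $\mu$-integrable functions $\lambda_{\pm}$), and the $\mu$-integrability of $\lambda_{\pm}$ from Assumption~\ref{reg}(d); everything else is bookkeeping with binomial sums and routine passages to the limit in probability.
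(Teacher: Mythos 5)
Your decomposition and your route through Lemmas~\ref{L.BoundNorms} and \ref{L.gLimits} are exactly the paper's: the treatment of $1(\Omega_{m,n}^c)$ via Assumption~\ref{reg}(a), the disposal of the explicit $\epsilon$-terms, the use of $0\le g_i^{(m,n)},\tilde g_i^{(m,n)}\le 1$ to feed ${\bf g}^{(m,n)},\tilde{\bf g}^{(m,n)}$ into Lemma~\ref{L.BoundNorms}, and the passage $\limsup_n \frac1n\norm{{\bf g}^{(m,n)}}_1\le\int_{\mathcal S}((1+5\epsilon)\lambda_+-\lambda_+^{(m)})\,d\mu$ all match. The one step that does not close as written is the choice of the truncation level. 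With a single level $K$, the coefficient $C_k(K)$ multiplying $\mathcal I^{\mp}_{m,n}(K)$ contains $K^{k-1}$ (from the term $t=s=r-1=k-1$), so the quantity you must make small is of order $K^{k-1}\int_{\{\lambda_\pm\ge K\}}\lambda_\pm\,d\mu$. Assumption~\ref{reg}(d) gives only $\int_{\mathcal S}\lambda_\pm\,d\mu<\infty$, and $K^{k-1}\int_{\{\lambda\ge K\}}\lambda\,d\mu\to 0$ as $K\to\infty$ essentially requires a finite $k$-th moment of $\lambda_\pm$ --- precisely what fails in the heavy-tailed regime the paper is designed for (e.g.\ $\mu(\lambda_->x)\sim x^{-1-\delta}$ with $\delta<k-1$ makes this product diverge). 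So the assertion that one can ``select $K=K_k$ large enough that $C_k(K_k)\cdot(\text{tails})\le k^{-1}1(k\ge 2)$'' is false in general for $k\ge 2$.

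The fix is the one you mention parenthetically and in your closing paragraph, but it must be the main construction rather than a fallback: choose the levels \emph{recursively}, taking $K_1^\pm$ with $\int_{\mathcal S}\lambda_\pm 1(\lambda_\pm\ge K_1^\pm)\,d\mu<(k^23^k)^{-1}$ and then $K_t^\pm$ with $\int_{\mathcal S}\lambda_\pm 1(\lambda_\pm\ge K_t^\pm)\,d\mu<\bigl(k^23^k\prod_{j<t}K_j^\pm\bigr)^{-1}$. Then each summand $(1+\epsilon)^t\,\mathcal I^{\mp}_{m,n}(K_t^{\mp})\prod_{j<t}K_j^{\mp}$ produced by Lemma~\ref{L.BoundNorms} is individually below $(1+\epsilon)^t/(k^23^k)$ no matter how fast the $K_t$'s grow, the products $\prod_{j\le s}K_j^{\mp}$ define $H_k^\pm$ as functions of $k$ alone, and the residual double sum $\frac{2}{k^23^k}\sum_{r=1}^k\sum_{s=0}^{r-1}\binom{r-1}{s}2^{r-1-s}s$ is what is actually dominated by $k^{-1}1(k\ge 2)$. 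With that substitution the rest of your argument --- including $H_1^\pm=1$, which holds because the error sum is empty when $k=1$, and your slightly different but valid bound $\mathcal I_{m,n}^\pm(K)\le\int\lambda_n^\pm 1(\lambda_n^\pm>K)\,d\mu_n$ via $\kappa_m\le\kappa$ and uniform integrability --- goes through and coincides with the paper's proof.
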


\begin{proof}
We start by choosing constants $\{ K_{i}^+ \}_{i \geq 1}$ and $\{ K_{i}^- \}_{i\geq 1}$ such that
$$\int_{\mathcal{S}} \lambda_+({\bf x}) 1(\lambda_+({\bf x}) \geq K_{1}^+) \mu(d{\bf x}) < \frac{1}{k^23^k} , \qquad \int_{\mathcal{S}} \lambda_-({\bf x}) 1(\lambda_-({\bf x}) \geq K_{1}^-) \mu(d{\bf x}) < \frac{1}{k^23^k}$$
and
$$\int_{\mathcal{S}} \lambda_+({\bf x}) 1(\lambda_+({\bf x}) \geq K_{i}^+) \mu(d{\bf x}) < \frac{1}{k^2 3^k \prod_{j=1}^{i-1} K_{j}^+} , \quad \int_{\mathcal{S}} \lambda_-({\bf x}) 1(\lambda_-({\bf x}) \geq K_{i}^-) \mu(d{\bf x}) < \frac{1}{k^2 3^k \prod_{j=1}^{i-1} K_j^-}$$
for $i \geq 2$. Set $K_0^+ = K_0^- = 1$ and note that the constants $\{ K_i^+, K_i^-\}_{i \geq 0}$ depend only on $k$. 

Next, use Lemma~\ref{L.BoundNorms} to obtain that for any $s \in \mathbb{N}_+$, 
$$\frac{1}{n} \norm{(\boldsymbol{\Gamma}^{(m,n)})^s {\bf g}^{(m,n)} }_1 1(\Omega_{m,n}) \leq  \sum_{t=1}^s (1+\epsilon)^t \mathcal{I}_{m,n}^-(K_t^-) \prod_{j=0}^{t-1} K_j^- + (1+\epsilon)^s \prod_{j=0}^s K_j^- \cdot \frac{1}{n} \norm{ {\bf g}^{(m,n)} }_1$$
and
$$\frac{1}{n} \norm{(\boldsymbol{\tilde \Gamma}^{(m,n)})^s {\bf \tilde g}^{(m,n)} }_1 1(\Omega_{m,n}) \leq  \sum_{t=1}^s (1+\epsilon)^t \mathcal{I}_{m,n}^+(K_t^+) \prod_{j=0}^{t-1} K_j^+ + (1+\epsilon)^s \prod_{j=0}^s K_j^+ \cdot \frac{1}{n} \norm{ {\bf \tilde g}^{(m,n)} }_1,$$
where $\mathcal{I}_{m,n}^+(K)$ and  $\mathcal{I}_{m,n}^-(K)$ are defined in Lemma~\ref{L.BoundNorms}. 

Now note that the same arguments leading to \eqref{eq:LimitLambdaMN}, also give that
$$\limsup_{n \to \infty} \mathcal{I}_{m,n}^+(K) = \int_{\mathcal{S}} \lambda_{m,n}^+({\bf x}) 1(\lambda_{m,n}^+({\bf x}) > K) \mu_n(d{\bf y}) \leq \int_{\mathcal{S}} \lambda_+^{(m)}({\bf x}) 1(\lambda_+^{(m)}({\bf x}) \geq K) \mu(d{\bf y}) $$
and
$$\limsup_{n \to \infty} \mathcal{I}_{m,n}^-(K) = \int_{\mathcal{S}} \lambda_{m,n}^-({\bf x}) 1(\lambda_{m,n}^-({\bf x}) > K) \mu_n(d{\bf y}) \leq \int_{\mathcal{S}} \lambda_-^{(m)}({\bf x}) 1(\lambda_-^{(m)}({\bf x}) \geq K) \mu(d{\bf y}) $$
 in probability. 

It follows from our choice of $\{ K_i^+, K_i^-\}_{i \geq 0}$ and from Lemma~\ref{L.gLimits}, that for any $s \in \mathbb{N}$, 
\begin{align*}
&\limsup_{n \to \infty} \frac{1}{n} \norm{(\boldsymbol{\Gamma}^{(m,n)})^s {\bf g}^{(m,n)} }_1 1(\Omega_{m,n})  \\
&\leq 1(s \geq 1) \sum_{t=1}^s (1+\epsilon)^t  \mathcal{I}^-(K_t^-) \prod_{j=0}^{t-1} K_j^-  + (1+\epsilon)^s \prod_{j=0}^s K_j^- \limsup_{n \to \infty} \frac{1}{n} \norm{ {\bf g}^{(m,n)} }_1 \\
&\leq \frac{s (1+\epsilon)^s}{k^2 3^k} + (1+\epsilon)^s \prod_{j=0}^s K_j^- \int_{\mathcal{S}}  \left( (1+5\epsilon) \lambda_+({\bf x})  -  \lambda_+^{(m)}({\bf x}) \right) \mu(d{\bf x})
\end{align*}
and
\begin{align*}
&\limsup_{n \to \infty} \frac{1}{n} \norm{(\boldsymbol{\tilde \Gamma}^{(m,n)})^s {\bf \tilde g}^{(m,n)} }_1 1(\Omega_{m,n}) \\
&\leq \frac{s(1+\epsilon)^s}{k^2 3^k} + (1+\epsilon)^s \prod_{j=0}^s K_j^+ \int_{\mathcal{S}}  \left( (1+5\epsilon) \lambda_-({\bf x})  -  \lambda_-^{(m)}({\bf x}) \right) \mu(d{\bf x}).
\end{align*}

Plugging these estimates into the expression for $H(n,m,k,\epsilon)$ given in Theorem~\ref{T.Coupling} gives 
\begin{align*}
&\lim_{\epsilon \downarrow 0} \limsup_{n \to \infty} \, H(n,m,k,\epsilon)  \\
&\leq \lim_{\epsilon \downarrow 0} \left\{ \lim_{n \to \infty} 1(\Omega_{m,n}^c) +  4\epsilon k^2  + 2\epsilon k^2 \left( 1 +   \sup_{{\bf x} \in \mathcal{S}} \lambda_+^{(m)}({\bf x}) \right)  \right. \\
&\hspace{5mm} + (1+\epsilon)^k \sum_{r=1}^k \sum_{s=0}^{r-1} \binom{r-1}{s} 2^{r-1-s} \left(  \frac{ s  }{k^2 3^k} + \prod_{j=0}^s K_j^- \int_{\mathcal{S}}  \left( (1+5\epsilon) \lambda_+({\bf x})  - \lambda_+^{(m)}({\bf x}) \right) \mu(d{\bf x})  \right) \\
&\hspace{5mm} \left. + (1+\epsilon)^k \sum_{r=1}^k \sum_{s=0}^{r-1} \binom{r-1}{s} 2^{r-1-s} \left(  \frac{s}{k^2 3^k} + \prod_{j=0}^s K_j^+ \int_{\mathcal{S}}  \left( (1+5\epsilon) \lambda_-({\bf x})  - \lambda_-^{(m)}({\bf x}) \right) \mu(d{\bf x}) \right) \right\} \\
&= \frac{2}{k^2 3^k} \sum_{r=1}^k \sum_{s=0}^{r-1} \binom{r-1}{s} 2^{r-1-s} s + \sum_{r=1}^k \sum_{s=0}^{r-1} \binom{r-1}{s} 2^{r-1-s}  \prod_{j=0}^s K_j^- \int_{\mathcal{S}}  \left(  \lambda_+({\bf x})  -  \lambda_+^{(m)}({\bf x}) \right) \mu(d{\bf x}) \\
&\hspace{5mm} + \sum_{r=1}^k \sum_{s=0}^{r-1} \binom{r-1}{s} 2^{r-1-s} \prod_{j=0}^s K_j^+ \int_{\mathcal{S}}  \left(  \lambda_-({\bf x})  -  \lambda_-^{(m)}({\bf x}) \right) \mu(d{\bf x})  ,
\end{align*}
where we used the observation that 
$$1(\Omega_{m,n}) \xrightarrow{P} 0, \qquad n \to \infty$$
for any $\epsilon >0$ by Assumption~\ref{reg}(a). 

Finally, define 
$$H_{k}^+ := \sum_{r=1}^k \sum_{s=0}^{r-1} \binom{r-1}{s} 2^{r-1-s}  \prod_{j=0}^s K_j^- \qquad \text{and} \qquad H_{k}^- := \sum_{r=1}^k \sum_{s=0}^{r-1} \binom{r-1}{s} 2^{r-1-s}  \prod_{j=0}^s K_j^+,$$
and note that
\begin{align*}
\frac{2}{k^23^k} \sum_{r=1}^k \sum_{s=0}^{r-1} \binom{r-1}{s} 2^{r-1-s} s &= \frac{2}{k^2 3^k} \sum_{r=1}^k 3^{r-1}\sum_{s=0}^{r-1} \binom{r-1}{s} (2/3)^{r-1-s} (1/3)^s s \\
&= \frac{2}{k^2 3^k} \sum_{r=1}^k 3^{r-2} (r-1) = \frac{2}{k^23^k} \cdot \frac{1 + (2k-3) 3^{k-1} }{4} \\
&< k^{-1} 1(k\geq 2). 
\end{align*}
This completes the proof. 
\end{proof}

\bigskip

We are now ready to give the proof of Theorem~\ref{T.Coupling}. 

\begin{proof}[Proof of Theorem~\ref{T.Coupling}]
To start, note that
\begin{align*}
&\mathbb{P}_i \left( \{ \tau^+ \leq\sigma_k^+ \} \cup \{ \tau^- \leq \sigma_k^-\}  \right)  \\
&= \left\{  \mathbb{P}_i \left(  \tau^+ \leq\sigma_k^+  \right) + \mathbb{P}_i \left( \{ \tau^+ > \sigma_k^+ \} \cap \{ \tau^- \leq \sigma_k^-\}  \right) \right\} 1(\Omega_{m,n}) + 1(\Omega_{m,n}^c),
\end{align*}
where the event $\Omega_{m,n}$ is defined in the statement of the theorem. To analyze the two probabilities, define $\mathcal{G}_m^+$ to be the sigma-algebra containing all the information of the exploration process of the in-component of vertex $i$ up to the end of Step $m$ and including the {\em identity} of $T_{m+1}^+$, and let $\mathcal{G}_m^-$ be the sigma-algebra containing all the information of the exploration process of the in-component of vertex $i$ up to Step $\sigma_k^+$, and of its out-component up to the end of Step $m$, including the {\em identity} of $T_{m+1}^-$; note that $\mathcal{G}_m^+ \subseteq \mathcal{G}_r^-$ for all $0 \leq m \leq \sigma_k^+$ and any $r \geq 0$. Next, for any $r \geq 1$ define the events
\begin{align*}
E_{r}^+ &= \{ |I_{r}^+| + |A_{r}^+| < k \}, \\
E_r^- &= \{ |I_{r}^-| + |A_{r}^-| < k \} ,\\ 
C_i^+(r) &= \left\{ \max_{j \in V_n, j \notin \{i \} \cup A_{r-1}^+ \cup I_{r-1}^+ }  |Z_{ji} -Y_{ji}| + \max_{j \in A_{r-1}^+ \cup I_{r-1}^+} Z_{ji} + Z_{ii}^* = 0 \right\},  \\
C_i^-(r) &= \left\{ \max_{j \in V_n, j \notin \{i\} \cup I_{\sigma_k^+}^+ \cup A_{\sigma_k^+}^+ \cup A_{r-1}^- \cup I_{r-1}^-} | \tilde Z_{ij} -Y_{ij}| + \max_{j \in A_{r-1}^- \cup I_{r-1}^-} \tilde Z_{ij} + \max_{j \in \{i\} \cup I_{\sigma_k^+}^+ \cup A_{\sigma_k^+}^+} \tilde Z_{ji}^* =0 \right\}.  
\end{align*}
Now, use Remark~\ref{R.BreakCoupling} to obtain that on the event $\Omega_{m,n}$, 
\begin{align*}
& \mathbb{P}_i \left(  \tau^+ \leq\sigma_k^+  \right) + \mathbb{P}_i \left( \{ \tau^+ > \sigma_k^+ \} \cap \{ \tau^- \leq \sigma_k^-\}  \right)  \\
&= \sum_{r=1}^k \left\{  \mathbb{P}_i \left(  r = \tau^+ \leq \sigma_k^+   \right)  + \mathbb{P}_i \left( \{ \tau^+  > \sigma_k^+ \} \cap \{ r = \tau^-  \leq \sigma_k^- \}  \right) \right\} \\
&\leq \sum_{r=1}^k \mathbb{P}_i \left( \tau^+ > r-1,  A_{r-1}^+ \neq \varnothing, E_{r-1}^+ \cap (C_{T_r^+}^+(r))^c   \right) \\
&\hspace{5mm} + \sum_{r=1}^k  \mathbb{P}_i \left( \tau^+  > \sigma_k^+, \, \tau^- > r-1,  A_{r-1}^- \neq \varnothing, E_{r-1}^- \cap (C_{T_r^-}^-(r))^c  \right) \\
&= \sum_{r=1}^k \mathbb{E}_i \left[ 1( \tau^+ > r-1, A_{r-1}^+ \neq \varnothing, E_{r-1}) \mathbb{P}_i \left( \left.  (C_{T_r^+}^+(r))^c \right| \mathcal{G}_{r-1}^+ \right)  \right] \\
&\hspace{5mm} + \sum_{r=1}^k  \mathbb{E}_i \left[ 1( \tau^+  > \sigma_k^+, \, \tau^- > r-1,  A_{r-1}^- \neq \varnothing, E_{r-1}) \mathbb{P}_i \left( \left. (C_{T_r^-}^-(r))^c \right| \mathcal{G}_{r-1}^- \right) \right]. 
\end{align*}

To analyze the two conditional probabilities in the last expressions, note that the union bound and the independence of the $\{U_{ij}: 1\leq i,j \leq n\}$ from everything else give
\begin{align}
\mathbb{P}\left( \left. (C_{T_r^+}^+(r))^c  \right| \mathcal{G}_{r-1}^+ \right) 
&\leq \sum_{j\in V_n, j \notin \{T_r^+\} \cup A_{r-1}^+ \cup I_{r-1}^+ } \mathbb{P}( |Z_{j,T_r^+} - Y_{j,T_r^+}| > 0 | T_r^+)  \label{eq:EdgeDiscrepanciesIn}  \\
&\hspace{5mm} + \sum_{j \in \{T_r^+\} \cup A_{r-1}^+ \cup I_{r-1}^+}  \mathbb{P}(Z_{j,T_r^+} \geq 1 | T_r^+) , \label{eq:SelfLoopsIn}
\end{align}
and
\begin{align}
\mathbb{P}\left( \left. (C_{T_r^-}^-(r))^c  \right| \mathcal{G}_{r-1}^- \right) 
&\leq \sum_{j\in V_n, j \notin \{T_r^-\} \cup I_{\sigma_k^+}^+ \cup A_{\sigma_k^+}^+ \cup A_{r-1}^- \cup I_{r-1}^- } \mathbb{P}( |\tilde Z_{T_r^-,j} - Y_{T_r^-,j}| > 0 | T_r^-)  \label{eq:EdgeDiscrepanciesOut}  \\
&\hspace{5mm} +  \sum_{j\in \{T_r^-\} \cup I_{\sigma_k^+}^+ \cup A_{\sigma_k^+}^+ \cup A_{r-1}^- \cup I_{r-1}^-} \mathbb{P}(\tilde Z_{T_r^-,j} \geq 1 | T_r^-) . \label{eq:SelfLoopsOut}
\end{align}

To analyze \eqref{eq:EdgeDiscrepanciesIn} note that on the event $B_{ji} $ we have that $(1-\epsilon) q_{ji}^{(n)} \leq p_{ji}^{(n)} < (1+\epsilon) q_{ji}^{(n)} \leq (1+\epsilon)\epsilon <1$, which implies that on the event $B_{ji}$ we have
\begin{align*}
\mathbb{P}( |Y_{ji} - Z_{ji}| > 0) &= (p_{ji}^{(n)} - r_{ji}^{(m,n)}) 1( p_{ji}^{(n)} > r_{ji}^{(m,n)}) \\
&\hspace{5mm} + (e^{-r_{ji}^{(m,n)}} - 1 + p_{ji}^{(n)}) 1(1-e^{-r_{ji}^{(m,n)}} < p_{ji}^{(n)} \leq r_{ji}^{(m,n)}) \\
&\hspace{5mm} + (1 - p_{ji}^{(n)} - e^{-r_{ji}^{(m,n)}} ) 1( p_{ji}^{(n)} < 1 - e^{-r_{ji}^{(m,n)}}) + 1 - e^{-r_{ji}^{(m,n)}} - e^{-r_{ji}^{(m,n)}} r_{ji}^{(m,n)} \\
&\leq (p_{ji}^{(n)} - r_{ji}^{(m,n)}) 1( p_{ji}^{(n)} > r_{ji}^{(m,n)}) + (p_{ji}^{(n)} - r_{ji}^{(m,n)} ) 1(1-e^{-r_{ji}^{(m,n)}} < p_{ji}^{(n)} \leq r_{ji}^{(m,n)}) \\
&\hspace{5mm} + ( r_{ji}^{(m,n)} - p_{ji}^{(n)}  ) 1( p_{ji}^{(n)} < 1 - e^{-r_{ji}^{(m,n)}}) + (r_{ji}^{(m,n)})^2 \\
&= | p_{ji}^{(n)} - r_{ji}^{(m,n)} | + (r_{ji}^{(m,n)})^2,
\end{align*}
where we have used the inequalities $e^{-x} -1 \leq -x + x^2/2$, $1-e^{-x} \leq x$, and $1 - e^{-x} - e^{-x} x \leq x^2/2$ for $x \geq 0$. It follows that if we let $q_{ji}^{(m,n)} = \kappa_m({\bf X}_j, {\bf X}_i)/n$, then, on the event $\Omega_{m,n}$, where we have $(1-\epsilon) q_{ji}^{(m,n)} \leq r_{ji}^{(m,n)} \leq (1+\epsilon) q_{ji}^{(m,n)}$, we have
\begin{align*}
&\mathbb{P}( |Y_{ji} - Z_{ji}| > 0) 1(B_{ji}) \\
&\leq \left( | p_{ji}^{(n)} - r_{ji}^{(m,n)} | + (r_{ji}^{(m,n)})^2  \right) 1(B_{ji} ) \\
&\leq \left(  | p_{ji}^{(n)} - q_{ji}^{(n)} |  + q_{ji}^{(n)} - q_{ji}^{(m,n)} + | q_{ji}^{(m,n)} - r_{ji}^{(m,n)} |  + (1 + \epsilon)^2 (q_{ji}^{(m,n)})^2 \right) 1(B_{ji})  \\
&\leq \epsilon q_{ji}^{(n)} +   q_{ji}^{(n)} - q_{ji}^{(m,n)} + \epsilon q_{ji}^{(m,n)}  + (1 + \epsilon)^2 \epsilon q_{ji}^{(m,n)} \\
&\leq  (1+5\epsilon) q_{ji}^{(n)} - q_{ji}^{(m,n)}.
\end{align*}

On the other hand, note that on the event $\Omega_{m,n}$ we have
\begin{align*}
\mathbb{P}( |Y_{ji} - Z_{ji}| > 0) 1(B_{ji}^c) &\leq \mathbb{P}( Y_{ji} + Z_{ji} > 0) 1(B_{ji}^c) \\
&\leq   \min\left\{1, p_{ji}^{(n)} + r_{ji}^{(m,n)} \right\} 1(B_{ji}^c) \\
&\leq (1 +\epsilon) (p_{ji}^{(n)} + q_{ji}^{(n)}) 1(B_{ji}^c) =: \mathcal{B}_n(j,i) .
\end{align*}
Hence, on the event $\Omega_{m,n}$,  \eqref{eq:EdgeDiscrepanciesIn} is bounded from above by
\begin{align*}
&  \sum_{j\in V_n} \left\{   (1+5\epsilon) q_{j,T_r^+}^{(n)} - q_{j,T_r^+}^{(m,n)}   \right\} + \sum_{j\in V_n, j \notin \{T_r^+\} \cup A_{r-1}^+ \cup I_{r-1}^+ } \mathcal{B}_n(j,T_r^+) \\
&\leq  (1+5\epsilon) \lambda_n^+({\bf X}_{T_r^+}) - \lambda_{m,n}^+({\bf X}_{T_r^+})  +   \sum_{j\in V_n, j \notin \{T_r^+\} \cup A_{r-1}^+ \cup I_{r-1}^+ } \mathcal{B}_n(j,T_r^+)  ,
\end{align*}
where we have used the observation that $\sum_{j=1}^n q_{ji}^{(n)} = \lambda_n^+({\bf X}_i)$ and $\sum_{j=1}^n q_{ji}^{(m,n)} = \int_{\mathcal{S}} \kappa_m({\bf y}, {\bf X}_i) \mu_n(d{\bf y}) = \lambda_{m,n}^+({\bf X}_i)$.

To analyze \eqref{eq:SelfLoopsIn}, note that on the event $\Omega_{m,n}$, 
\begin{align*}
\mathbb{P}(Z_{ji} \geq 1)  &= 1 - e^{-r_{ji}^{(m,n)}}  \leq r_{ji}^{(m,n)} \leq  (1+\epsilon) q_{ji}^{(m,n)} 1(B_{ji}) + (1+\epsilon) q_{ji}^{(m,n)} 1(B_{ji}^c) \leq 2\epsilon + \mathcal{B}_n(j,i) .
\end{align*}
We have thus obtained that, on the event $\Omega_{m,n}$, 
\begin{align*}
\mathbb{P}\left( \left. (C_{T_r^+}^+)^c  \right| \mathcal{G}_{r-1}^+ \right) &\leq  \mathcal{E}_{m,n}^+({\bf X}_{T_r^+})  +  \sum_{j\in V_n, j \notin \{T_r^+\} \cup A_{r-1}^+ \cup I_{r-1}^+ } \mathcal{B}_n(j,T_r^+) \\
&\hspace{5mm} + \sum_{j \in \{T_r^+\} \cup A_{r-1}^+ \cup I_{r-1}^+ }  \left( 2\epsilon +  \mathcal{B}_n(j, T_r^+) \right) \\ 
&\leq  \mathcal{E}_{m,n}^+({\bf X}_{T_r^+}) + 2\epsilon  \left| \{T_r^+\} \cup A_{r-1}^+ \cup I_{r-1}^+ \right| + \sum_{j \in V_n} \mathcal{B}_n(j,T_r^+) .
\end{align*}

The same arguments yield that, on the event $\Omega_{m,n}$, \eqref{eq:EdgeDiscrepanciesOut} is bounded by
$$ \mathcal{E}_{m,n}^-({\bf X}_{T_r^-}) + \sum_{j \in V_n, j \notin \{ T_r^-\} \cup I_{\sigma_k^+}^+ \cup A_{\sigma_k^+}^+ \cup A_{r-1}^- \cup I_{r-1}^-} \mathcal{B}_n(T_r^-,j),$$
and \eqref{eq:SelfLoopsOut} is bounded by
$$\sum_{j \in \{ T_r^-\} \cup I_{\sigma_k^+}^+ \cup A_{\sigma_k^+}^+ \cup A_{r-1}^- \cup I_{r-1}^-} \left( 2\epsilon + \mathcal{B}_n(T_r^-,j) \right).$$
Hence, on the event $\Omega_{m,n}$, 
$$\mathbb{P}\left( \left. (C_{T_r^-}^-)^c  \right| \mathcal{G}_{r-1}^- \right) \leq  \mathcal{E}_{m,n}^-({\bf X}_{T_r^-}) + 2\epsilon  \left| \{ T_r^-\} \cup I_{\sigma_k^+}^+ \cup A_{\sigma_k^+}^+ \cup A_{r-1}^- \cup I_{r-1}^- \right| + \sum_{j \in V_n} \mathcal{B}_n(T_r^-,j).$$

It follows that on the event $\Omega_{m,n}$ we have
\begin{align*}
& \mathbb{P}_i \left(  \tau^+ \leq\sigma_k^+  \right) + \mathbb{P}_i \left( \{ \tau^+ > \sigma_k^+ \} \cap \{ \tau^- \leq \sigma_k^-\}  \right)  \\
&\leq \sum_{r=1}^k \mathbb{E}_i \left[ 1( \tau^+ > r-1,  A_{r-1}^+ \neq \varnothing, E_{r-1}) \min \left\{ 1, \,   \mathcal{E}_{m,n}^+({\bf X}_{T_r^+}) \right. \right. \\
&\hspace{35mm} \left. \left. + 2\epsilon \left| \{T_r^+\} \cup A_{r-1}^+ \cup I_{r-1}^+ \right|  + \sum_{j \in V_n} \mathcal{B}_n(j,T_r^+)   \right\}   \right] \\
&\hspace{5mm} + \sum_{r=1}^k  \mathbb{E}_i \left[ 1( \tau^+  > \sigma_k^+, \, \tau^- > r-1,  A_{r-1}^- \neq \varnothing, E_{r-1}) \min \left\{ 1, \,  \mathcal{E}_{m,n}^-({\bf X}_{T_r^-}) \right. \right. \\
&\hspace{35mm} \left. \left.  + 2\epsilon  \left| \{ T_r^-\} \cup I_{\sigma_k^+}^+ \cup A_{\sigma_k^+}^+ \cup A_{r-1}^- \cup I_{r-1}^- \right| + \sum_{j \in V_n} \mathcal{B}_n(T_r^-,j)  \right\} \right] \\
&\leq \sum_{r=1}^k \mathbb{E}_i \left[ 1( \hat A_{r-1}^+ \neq \varnothing) \min \left\{  1, \,  \mathcal{E}_{m,n}^+({\bf X}_{\hat T_r^+}) + 2 \epsilon k + \sum_{j \in V_n} \mathcal{B}_n(j,\hat T_r^+)   \right\}   \right] \\
&\hspace{5mm} + \sum_{r=1}^k  \mathbb{E}_i \left[ 1(  \hat A_{r-1}^- \neq \varnothing) \min \left\{ 1, \,  \mathcal{E}_{m,n}^-({\bf X}_{\hat T_r^-})   + 2\epsilon \left( k + \left|  \hat I_{\hat \sigma_k^+}^+ \cup \hat A_{\hat \sigma_k^+}^+ \right|    \right)     + \sum_{j \in V_n} \mathcal{B}_n(\hat T_r^-,j)  \right\} \right] ,
\end{align*}
where $\hat T_{r}^+$ and $\hat T_{r}^-$ are the {\em identities} of the $r$th ``active" nodes to be explored in the inbound and outbound multi-type branching processes, respectively, and $\hat \sigma_k^\pm = \inf\{ t \geq 1: | \hat A_t^\pm| + | \hat I_t^\pm| \geq k \text{ or } \hat A_t^\pm = \varnothing\}$. Furthermore, using the inequality $\min\{ 1, x+y\} \leq x + \min\{1, y\}$ we obtain
\begin{align*}
&\left( \mathbb{P}_i \left(  \tau^+ \leq\sigma_k^+  \right) + \mathbb{P}_i \left( \{ \tau^+ > \sigma_k^+ \} \cap \{ \tau^- \leq \sigma_k^-\}  \right) \right) 1(\Omega_{m,n}) \\
&\leq \left( 4\epsilon k^2 + 2\epsilon k \mathbb{E}_i\left[ \left| \hat I^+_{\hat \sigma_k^+} \cup \hat A_{\hat \sigma_k^+}^+ \right| \right]  + \sum_{r=1}^k \mathbb{E}_i \left[ 1( \hat A_{r-1}^+ \neq \varnothing) g_{\hat T_r^+}   \right]  + \sum_{r=1}^k  \mathbb{E}_i \left[ 1(  \hat A_{r-1}^- \neq \varnothing) \tilde g_{\hat T_r^-} \right]  \right) 1(\Omega_{m,n}),
\end{align*}
where 
$$g^{(m,n)}_l = \min\left\{ 1, \, \mathcal{E}_{m,n}^+({\bf X}_j) + \sum_{j =1}^n \mathcal{B}_n(j,l) \right\} \quad \text{and} \quad \tilde g^{(m,n)}_l = \min\left\{ 1, \, \mathcal{E}_{m,n}^-({\bf X}_j) + \sum_{j =1}^n \mathcal{B}_n(l,j) \right\}.$$

Next, use Lemma~\ref{L.TypeDistributions} to obtain that for $r \geq 1$, 
\begin{align*}
\mathbb{E}_i \left[ 1\left(\hat A_{r-1}^+ \neq \varnothing \right) g_{\hat T_r^+} \right]  &= \sum_{l \in V_n} \mathbb{P}_i(\hat A_{r-1}^+ \neq \varnothing, \hat T_r^+ = l)  g_l^{(m,n)} \\
&\leq \sum_{l \in V_n} \sum_{s=0}^{r-1} \binom{r-1}{s} 2^{r-1-s} ({\bf e}_i (\boldsymbol{\Gamma}^{(m,n)})^s)_l g_l^{(m,n)} \\
&= \sum_{s=0}^{r-1} \binom{r-1}{s} 2^{r-1-s} {\bf e}_i (\boldsymbol{\Gamma}^{(m,n)})^s {\bf g}^{(m,n)}  ,
\end{align*}
where ${\bf g}^{(m,n)} = (g^{(m,n)}_1, \dots, g^{(m,n)}_n)'$ and the matrix $\boldsymbol{\Gamma}^{(m,n)}$ is defined in Lemma~\ref{L.TypeDistributions}. Similarly, 
$$\mathbb{E}_i \left[ 1\left(\hat A_{r-1}^- \neq \varnothing \right) \tilde g_{\hat T_r^-} \right] \leq \sum_{s=0}^{r-1} \binom{r-1}{s} 2^{r-1-s} {\bf e}_i (\boldsymbol{\tilde \Gamma}^{(m,n)})^s {\bf \tilde g}^{(m,n)},$$
with ${\bf \tilde g}^{(m,n)} = (\tilde g^{(m,n)}_1, \dots, \tilde g^{(m,n)}_n)'$ and $\boldsymbol{\tilde \Gamma}^{(m,n)}$ as defined in Lemma~\ref{L.TypeDistributions}.

Averaging over all $1 \leq i \leq n$ and using Lemma \ref{L.MeanTree} to bound $n^{-1} \sum_{i=1}^n \mathbb{E}_i\left[ \left| \hat I^+_{\hat \sigma_k^+} \cup \hat A_{\hat \sigma_k^+}^+ \right| \right]$, we obtain
\begin{align*}
&\frac{1}{n} \sum_{i=1}^n \left( \mathbb{P}_i \left(  \tau^+ \leq\sigma_k^+  \right) + \mathbb{P}_i \left( \{ \tau^+ > \sigma_k^+ \} \cap \{ \tau^- \leq \sigma_k^-\}  \right) \right) 1(\Omega_{m,n}) \\
&\leq 4 \epsilon  k^2 + 2\epsilon  k^2 \left( 1 +  \sup_{{\bf x} \in \mathcal{S}} \lambda_+^{(m)}({\bf x}) \right) \\
&\hspace{5mm} +1(\Omega_{m,n})  \sum_{r=1}^k \sum_{s=0}^{r-1} \binom{r-1}{s} 2^{r-1-s} \left\{ \frac{1}{n}  \norm{ ( \boldsymbol{ \Gamma}^{(m,n)})^s {\bf g}^{(m,n)} }_1 + \frac{1}{n} \norm{   ( \boldsymbol{\tilde \Gamma}^{(m,n)})^s  {\bf \tilde g}^{(m,n)} }_1 \right\} \\
&=: H(n,m,k,\epsilon) - 1(\Omega_{m,n}^c).
\end{align*}

The upper bound for the limit of $H(n,m,k,\epsilon)$ as $n \to \infty$ is given in Lemma~\ref{L.HLimit}. This completes the proof. 
\end{proof}

As a last proof in this section, we use Theorem~\ref{T.Coupling} to prove Theorem~\ref{degree}, the result establishing the limiting distribution of the degrees in $G_n(\kappa(1+\varphi_n))$.  The latter can also be proven directly using similar arguments as those used in the proof of Theorem~\ref{T.Coupling}, but we choose to do it this way to avoid repetition. 

\begin{proof}[Proof of Theorem \ref{degree}]
Let 
$$D_{n,i}^+ = \sum_{j \neq i} Y_{ji} \qquad \text{and} \qquad  D_{n,i}^- = \sum_{j\neq i} Y_{ij}$$
and define
$$Z_{n,i}^+ = \sum_{j =1}^n Z_{ji} \qquad \text{and} \qquad Z_{n,i}^- = \sum_{j=1}^n \tilde Z_{ij},$$
where $Z_{ji}$ is Poisson with mean $r_{ji}^{(m,n)}$ and $\tilde Z_{ij}$ is Poisson with mean $\tilde r_{ij}^{(m,n)}$. Then, 
\begin{align*}
\left( D_{n,\xi}^+, D_{n,\xi}^- \right) &= \left( D_{n,\xi}^+ - Z_{n,\xi}^+, \, D_{n,\xi}^- - Z_{n,\xi}^- \right) + \left( Z_{n,\xi}^+, \, Z_{n,\xi}^- \right),
\end{align*}
where since $\sum_{j=1}^n r_{ji}^{(m,n)} = \lambda_+^{(m)}({\bf X}_i)$ and $\sum_{j=1}^n \tilde r_{ij}^{(m,n)} = \lambda_-^{(m)}({\bf X}_i)$, we obtain that
\begin{align*}
\mathbb{P}\left( Z_{n,\xi}^+ = k, Z_{n,\xi}^- = l \right) &= \frac{1}{n} \sum_{i=1}^n \frac{e^{-\lambda_+^{(m)}({\bf X}_i)} (\lambda_+^{(m)}({\bf X}_i))^k}{k!} \cdot \frac{e^{-\lambda_-^{(m)}({\bf X}_i)} (\lambda_-^{(m)}({\bf X}_i))^l}{l!} \\
&\xrightarrow{P} \int_{\mathcal{S}} \frac{e^{-\lambda_+^{(m)}({\bf x})} (\lambda_+^{(m)}({\bf x}))^k}{k!} \cdot \frac{e^{-\lambda_-^{(m)}({\bf x})} (\lambda_-^{(m)}({\bf x}))^l}{l!} \, \mu(d{\bf x})
\end{align*}
for any $k,l \geq 0$, as $n \to \infty$ (by the bounded convergence theorem). Moreover, by Theorem~\ref{T.Coupling},
\begin{align*}
\mathbb{P}\left( |D_{n,\xi}^+ - Z_{n,\xi}^+| + |D_{n,\xi}^- - Z_{n,\xi}^-| > 0 \right) &= \frac{1}{n} \sum_{i=1}^n \mathbb{P}_i\left( \{ \tau^+ \leq \sigma_1^+\} \cup \{ \tau^- \leq \sigma_1^-\} \right) \leq H(n,m,1,\epsilon)
\end{align*}
for any $0 < \epsilon < 1/2$. Therefore, for $(Z^+_{(m)}, Z^-_{(m)})$ constructed on the same probability space as $\left( Z_{n,\xi}^+, Z_{n,\xi}^- \right)$, with $Z^+_{(m)}$ and $Z^-_{(m)}$ conditionally independent (given {\bf X}) Poisson random variables with parameters $\lambda_+^{(m)}({\bf X})$ and $\lambda_-^{(m)}({\bf X})$, and ${\bf X}$ distributed according to $\mu$, we obtain that
\begin{align*}
&\lim_{\epsilon \downarrow 0} \limsup_{n \to \infty} \mathbb{P}\left( |D_{n,\xi}^+ - Z^+_{(m)}| + |D_{n,\xi}^- - Z^-_{(m)}| > 0 \right) \\
&\leq \lim_{\epsilon \downarrow 0} \limsup_{n \to \infty} H(n,m,1,\epsilon) \\
&\leq  \int_{\mathcal{S}} (\lambda_+({\bf x}) - \lambda_+^{(m)}({\bf x})) \mu(d{\bf x}) + \int_{\mathcal{S}} (\lambda_-({\bf x}) - \lambda_-^{(m)}({\bf x})) \mu(d{\bf x}),
\end{align*}
which converges to zero as $m \nearrow \infty$ (by the monotone convergence theorem). Taking the limit as $m \nearrow \infty$ and noting that $(Z^+_{(m)}, Z^-_{(m)}) \to (Z^+, Z^-)$ a.s., where $(Z^+, Z^-)$ are conditionally independent (given ${\bf X}$) Poisson random variables with parameters $\lambda_+({\bf X})$ and $\lambda_-({\bf X})$, gives the weak convergence statement of the theorem. 

To obtain the convergence of the expectations note that
$$E[ D_{n,\xi}^+] = E[ D_{n,\xi}^-] =  \frac{1}{n} E\left[ \sum_{i=1}^n \sum_{j=1}^n p_{ji}^{(n)} \right] \rightarrow \iint_{\mathcal{S}^2} \kappa({\bf x}, {\bf y}) \mu(d{\bf x}) \mu(d{\bf y})$$
as $n \to \infty$ by Assumption~\ref{reg}(d). Now note that
$$\iint_{\mathcal{S}^2} \kappa({\bf x}, {\bf y}) \mu(d{\bf x}) \mu(d{\bf y}) = E[\lambda_+({\bf X})] = E[\lambda_-({\bf X})] = E[Z^+] = E[Z^-].$$
This completes the proof. 
\end{proof}

\subsubsection{Size of the Largest Strongly Connected Component}

This last section of the paper contains the proof of Theorem~\ref{giant}, the phase transition for the existence of a giant strongly connected component.  As mentioned earlier, the idea is to use Theorem~\ref{T.Coupling} to couple the exploration of the graph $G_n(\kappa(1+\varphi_n))$ starting from a given vertex with a double tree $(\mathcal{T}_\mu^+(\kappa_m), \mathcal{T}_\mu^-(\kappa_m))$ for a kernel $\kappa_m$ that takes at most a finite number of different values. 

Recall from Section~\ref{S.PhaseTransition} that $(\mathcal{T}_\mu^+(\kappa; {\bf x}), \mathcal{T}_\mu^-(\kappa; {\bf x}))$ denotes the double multi-type Galton-Watson process having root of type $\bold{x} \in \mathcal{S}$, and whose offspring distributions are given by \eqref{eq:OffspringDistr}. Let $\rho_{+}^{\geq k}(\kappa;\bold{x})$ (respectively,  $\rho_{-}^{\geq k}(\kappa;\bold{x})$) be the probability that the total population of $\mathcal{T}_\mu^+(\kappa; {\bf x})$ (respectively, $\mathcal{T}_\mu^-(\kappa; {\bf x})$) is at least $k$. Define also $\rho_{+}(\kappa;\bold{x})$ (respectively, $\rho_{-}(\kappa;\bold{x}))$ to be its survival probability, i.e., the probability that its total population is infinite. The averaged joint survival probability is defined as 
 $$\rho(\kappa)=\int_{\mathcal{S}}\rho_{-}(\kappa;\bold{x})\rho_{+}(\kappa; \bold{x})\mu(d\bold{x}).$$
Similarly, for any $k\in \mathbb{N}_+$, we define $\rho^{\geq k}(\kappa)=\int_{\mathcal{S}}\rho_-^{\geq k}(\kappa;\bold{x})\rho_+^{\geq k}(\kappa;\bold{x}) \mu(d\bold{x}).$ 
  
In addition, we will require from here on that the kernel $\kappa_m$ be regular finitary (see Definition~\ref{D.Finitary}) and quasi-irreducible (see Definition~\ref{D.Irreducible}).   The following lemma is taken from \cite{bollobas2007phase} and it provides the existence of a sequence of partitions $\{\mathscr{J}_m\}_{m \geq 1}$ of $\mathcal{S}$ over which we can define a sequence of regular finitary kernels. 

\begin{lemma}[Lemma 7.1 in  \cite{bollobas2007phase}] \label{L.Partition}
there exists a sequence of partitions $\{ \mathscr{J}_m: m \geq 1\}$ of $\mathcal{S}$, with  $\mathscr{J}_m = \{ \mathcal{J}_1^{(m)}, \dots, \mathcal{J}^{(m)}_{M_m}\}$, such that
\begin{enumerate} \renewcommand{\labelenumi}{\roman{enumi})}
\item each $\mathcal{J}_{i}^{(m)}$ is measurable and $\mu(\partial \mathcal{J}_{i}^{(m)})=0$,
\item for each $m$, $\mathscr{J}_{m+1}$ refines $\mathscr{J}_m$, i.e., each $\mathcal{J}_{i}^{(m)} = \bigcup_{j\in I_{i}^{(m)}}\mathcal{J}_{j}^{(m+1)}$ for some index set $I_{i}^{(m)}$,
\item for a.e. $\bold{x}\in\mathcal{S}$, diam$(\mathcal{J}_{\vartheta(\bold{x})}^{(m)})\rightarrow 0$ as $m\rightarrow\infty$, where $\vartheta(\bold{x})=j$ if and only if $\bold{x}\in \mathcal{J}_{j}^{(m)}$.
\end{enumerate}
\end{lemma}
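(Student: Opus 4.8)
## Proof Proposal for Lemma~\ref{L.Partition}

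This lemma is quoted directly from \cite{bollobas2007phase} (their Lemma~7.1), so strictly speaking the plan is to cite it and, for completeness, to sketch the standard construction. The plan is to build the partitions by a dyadic-refinement scheme on the separable metric space $\mathcal{S}$, taking care at each stage to choose the boundaries of the partition cells to be $\mu$-null sets.

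Here is the construction I would carry out. Since $\mathcal{S}$ is separable, fix a countable dense set $\{ \bold{s}_1, \bold{s}_2, \dots\} \subseteq \mathcal{S}$. For each center $\bold{s}_j$ and each radius $r>0$, the sphere $\{ \bold{x}: d(\bold{x}, \bold{s}_j) = r\}$ is a $\mu$-null set for all but at most countably many $r$, because these spheres are disjoint for distinct $r$ and $\mu$ is a finite measure; hence for every $j$ we may pick a sequence $r_{j,k} \downarrow 0$ with $\mu(\partial B(\bold{s}_j, r_{j,k})) = 0$ for all $k$. At stage $m$, consider the finite collection of balls $\{ B(\bold{s}_j, r_{j,k}): 1 \leq j \leq m,\ 1 \leq k \leq m\}$ and let $\mathscr{J}_m$ be the partition of $\mathcal{S}$ generated by this finite family of sets (i.e., the atoms of the finite $\sigma$-algebra they generate). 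Each atom is a finite Boolean combination of the chosen balls, so it is measurable and its topological boundary is contained in the union of the boundaries of those balls, which has $\mu$-measure zero; this gives (i). Because the generating family at stage $m+1$ contains that at stage $m$, the partition $\mathscr{J}_{m+1}$ refines $\mathscr{J}_m$, giving (ii). For (iii), fix $\bold{x} \in \mathcal{S}$; by density there is $\bold{s}_j$ with $d(\bold{x}, \bold{s}_j)$ arbitrarily small, and since $r_{j,k}\downarrow 0$ we can, for all large $m$, find a ball $B(\bold{s}_j, r_{j,k})$ of arbitrarily small radius containing $\bold{x}$ as well as its complement separating $\bold{x}$ from points at moderate distance; intersecting over the finitely many generators at stage $m$ forces $\mathrm{diam}(\mathcal{J}_{\vartheta(\bold{x})}^{(m)}) \to 0$. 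One should be slightly careful that (iii) may fail on a $\mu$-null set of atypical $\bold{x}$ (for instance isolated points contributing to no shrinking ball), which is why the statement only asserts it for a.e.\ $\bold{x}$; alternatively one refines the enumeration so the conclusion holds everywhere.

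The main obstacle — really the only subtle point — is the simultaneous control of the three requirements: one wants small-diameter cells (pushing toward \emph{many}, \emph{fine} generators), measurable cells with null boundary (forcing a \emph{careful} choice of radii), and nestedness across $m$ (forcing the stage-$m$ generators to be a subfamily of the stage-$(m+1)$ ones). The dyadic bookkeeping above reconciles these, but the verification that the chosen radii can always be taken $\mu$-almost-everywhere good and still shrink to zero is where the argument must be made precise. Since all of this is already done in \cite{bollobas2007phase}, in the paper I would simply state: ``See the proof of Lemma~7.1 in \cite{bollobas2007phase}'' and omit the details, or include the short sketch above for the reader's convenience.
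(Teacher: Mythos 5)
The paper itself gives no proof of this lemma---it is imported verbatim as Lemma~7.1 of \cite{bollobas2007phase}---so your plan of simply citing that reference is exactly what the authors do, and your supplementary sketch is the standard construction behind it. The only point to tighten is the choice of radii: merely taking $r_{j,k}\downarrow 0$ with $\mu$-null boundaries does not by itself guarantee that (almost) every ${\bf x}$ lies in some generator ball of arbitrarily small radius (a decreasing sequence can skip the interval $(d({\bf x},{\bf s}_j),\epsilon)$ for every relevant $j$), but this is repaired by the usual device of selecting each $r_{j,k}$ from the window $(2^{-k-1},2^{-k})$---possible since at most countably many spheres about ${\bf s}_j$ carry positive $\mu$-mass---after which property (iii) holds for every ${\bf x}$, not just almost every one.
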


Before we construct the sequence of quasi-irreducible regular finitary kernels that we need, we define for notational convenience the following relation.

\begin{defn} \label{D.Accessible}
Let $\tilde \kappa$ be a kernel on $\mathcal{S} \times \mathcal{S}$ and let $\mathscr{J} = \{ \mathcal{J}_1, \dots, \mathcal{J}_M\}$ be a finite partition of $\mathcal{S}$. Then, we say that set $A \subseteq \mathcal{S}$ is {\em inbound-accessible} (respectively, {\em outbound-accessible}) from ${\bf x} \in \mathcal{S}$ with respect to $(\tilde \kappa, \mathscr{J})$, denoted ${\bf x} \to A$ (respectively, ${\bf x} \leftarrow A$),  if there exists  $\{ u_1, \dots, u_k\} \subseteq \{1, \dots, M\}$ such that: 
\begin{itemize}
\item[i)] $\tilde \kappa({\bf x}, {\bf y}) > 0$ for all ${\bf y} \in \mathcal{J}_{u_1}$, 
\item[ii)] $\tilde \kappa > 0$ on $\mathcal{J}_{u_i} \times \mathcal{J}_{u_{i+1}}$ (respectively, $\tilde \kappa > 0$ on $\mathcal{J}_{u_{i+1}} \times \mathcal{J}_{u_{i}}$) for all $1 \leq i < k$, 
\item[iii)] $\mu(\mathcal{J}_{u_i}) > 0$ for all $1 \leq i \leq k$,  and 
\item[iv)] $\mathcal{J}_{u_k} \subseteq A$. 
\end{itemize}
\end{defn}

\begin{remark} \label{R.RemainAccessible}
Note that if we take $\mathscr{J}_m = \{ \mathcal{J}_1^{(m)}, \dots, \mathcal{J}_{M_m}^{(m)}\}$ as constructed in Lemma~\ref{L.Partition}, and we let $\tilde \kappa_m$ satisfy $\tilde \kappa_m \leq \tilde \kappa_{m+1}$ a.e., then if ${\bf x} \to A$ (${\bf x} \leftarrow A$) with respect to $(\tilde \kappa_{m_0}, \mathscr{J}_{m_0})$ for some $m_0 \geq 1$, then ${\bf x} \to A$ (${\bf x} \leftarrow A$) with respect to $(\tilde \kappa_{m}, \mathscr{J}_{m})$ for any $m \geq m_0$, since each $\mathcal{J}_{u_i}^{(m)}$ in part (iii) of Definition~\ref{D.Accessible} must contain at least one subset $\mathcal{J}_{t}^{(m+1)} \subseteq \mathcal{J}_{u_i}^{(m)}$ with $\mu(\mathcal{J}_{t}^{(m+1)}) > 0$. 
\end{remark}

We now give a result that states that we can always find a sequence of quasi-irreducible regular finitary kernels which converges monotonically to $\kappa$ and can be used to approximate from below $\kappa(1+\varphi_n)$. Its proof follows that of Lemma~7.3 in \cite{bollobas2007phase}, with some variations due to the directed nature of our kernels.

\begin{lemma}\label{approx_reducible}
For any continuous kernel $\kappa$ and any $\varphi_n$ satisfying Assumption~\ref{reg}, there exists a sequence $\{\tilde \kappa_m\}_{m \geq 1}$ of regular finitary kernels on $\mathcal{S} \times \mathcal{S}$, measurable with respect to $\mathscr{F}$, with the following properties.
\begin{enumerate}
\item $\tilde \kappa_m(\bold{x},\bold{y}) \nearrow \kappa(\bold{x},\bold{y})$ $\mathbb{P}$-a.s. as $m\rightarrow\infty$ for a.e. $(\bold{x},\bold{y})\in \mathcal{S} \times \mathcal{S}$
\item $\tilde \kappa_m(\bold{x},\bold{y})\leq \inf_{n \geq m} \kappa(\bold{x},\bold{y})(1+ \varphi_n({\bf x}, {\bf y}))$ for every $(\bold{x},\bold{y})\in \mathcal{S} \times \mathcal{S}$.
\item If $\kappa$ is quasi-irreducible, then so is $\kappa_m$ for all large $m$.
\end{enumerate}
\end{lemma}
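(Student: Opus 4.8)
The plan is to mimic the construction in Lemma~7.3 of \cite{bollobas2007phase}, building $\tilde\kappa_m$ as a piecewise-constant kernel on the partition $\mathscr{J}_m$ from Lemma~\ref{L.Partition}, but with the constants chosen small enough to respect the lower-bound constraint (b) and to preserve quasi-irreducibility. First I would fix the nested sequence of partitions $\{\mathscr{J}_m\}$ guaranteed by Lemma~\ref{L.Partition}, with $\mathscr{J}_m = \{\mathcal{J}_1^{(m)},\dots,\mathcal{J}_{M_m}^{(m)}\}$. For each pair $(i,j)$ with $1\le i,j\le M_m$, set
$$
c_{ij}^{(m)} = \Bigl(\inf_{{\bf x}\in \mathcal{J}_i^{(m)},\,{\bf y}\in\mathcal{J}_j^{(m)}} \ \inf_{n\ge m} \kappa({\bf x},{\bf y})(1+\varphi_n({\bf x},{\bf y}))\Bigr) \wedge m,
$$
and define $\tilde\kappa_m({\bf x},{\bf y}) = \sum_{i,j} c_{ij}^{(m)} 1({\bf x}\in\mathcal{J}_i^{(m)},{\bf y}\in\mathcal{J}_j^{(m)})$. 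By construction $\tilde\kappa_m$ is regular finitary, $\mathscr{F}$-measurable (the infimum over $n\ge m$ is $\mathscr{F}$-measurable since each $\varphi_n$ is), and property (b) holds immediately because $\tilde\kappa_m({\bf x},{\bf y})$ is, for each $({\bf x},{\bf y})$, bounded above by $\inf_{n\ge m}\kappa({\bf x},{\bf y})(1+\varphi_n({\bf x},{\bf y}))$.

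For property (a), monotonicity $\tilde\kappa_m\le\tilde\kappa_{m+1}$ a.e.\ follows because $\mathscr{J}_{m+1}$ refines $\mathscr{J}_m$ (so the infimum over a cell can only increase on a sub-cell), because the infimum over $\{n\ge m\}$ is nondecreasing in $m$, and because the truncation level $m$ increases; one must be slightly careful with the boundary sets, but these have $\mu\times\mu$-measure zero by Lemma~\ref{L.Partition}(i). For the a.s.\ convergence $\tilde\kappa_m({\bf x},{\bf y})\nearrow\kappa({\bf x},{\bf y})$, I would use that for a.e.\ $({\bf x},{\bf y})$ the diameters $\mathrm{diam}(\mathcal{J}_{\vartheta({\bf x})}^{(m)})\to 0$ and $\mathrm{diam}(\mathcal{J}_{\vartheta({\bf y})}^{(m)})\to 0$ (Lemma~\ref{L.Partition}(iii)), that $\kappa$ is continuous a.e.\ (Assumption~\ref{reg}(b)), and that $\varphi_n\to 0$ $\mathbb{P}$-a.s.\ (Assumption~\ref{reg}(c)); combining these, the cell infimum of $\kappa(1+\varphi_n)$ over $n\ge m$ converges to $\kappa({\bf x},{\bf y})$ as $m\to\infty$, and the truncation at level $m$ becomes inactive in the limit. (A short interchange-of-limits argument is needed here: for fixed $\delta>0$ and a continuity point $({\bf x},{\bf y})$, pick $m$ large enough that the cell is within $\delta$ of $({\bf x},{\bf y})$ in the metric and that $|\varphi_n|<\delta$ on the cell for all $n\ge m$; then conclude.)

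Property (c) is the main obstacle, exactly as in \cite{bollobas2007phase} but with the complication that irreducibility for directed kernels involves both "inbound" and "outbound" reachability, so one cannot simply quote the symmetric argument. Suppose $\kappa$ is quasi-irreducible, with $\mathcal{S}'$ the $\mu$-continuity set on which $\kappa$ restricts to an irreducible kernel. The plan is to show that for all large $m$ the restriction of $\tilde\kappa_m$ to (the union of the partition cells meeting) $\mathcal{S}'$ is irreducible, using Remark~\ref{R.RemainAccessible}: accessibility, once established at some stage $m_0$, persists for all $m\ge m_0$ since $\tilde\kappa_m\le\tilde\kappa_{m+1}$ and the partitions refine with each cell of positive measure splitting into at least one sub-cell of positive measure. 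Concretely, I would argue by contradiction: if $\tilde\kappa_m$ restricted to $\mathcal{S}'$ were reducible for infinitely many $m$, there would be, for each such $m$, a set $A_m$ (a union of cells of $\mathscr{J}_m$) with $\tilde\kappa_m=0$ a.e.\ on $A_m\times A_m^c$ and $\mu(A_m\cap\mathcal{S}')>0$, $\mu(A_m^c\cap\mathcal{S}')>0$. Since $\tilde\kappa_m\nearrow\kappa$ and $\tilde\kappa_m>0$ on a cell product forces $\kappa>0$ there (for $m$ large, because the cell infimum of $\kappa$ over shrinking cells at a point where $\kappa>0$ is eventually positive), a diagonal/compactness argument over the nested partitions would produce a limiting set $A$ witnessing reducibility of $\kappa$ on $\mathcal{S}'$, contradicting quasi-irreducibility of $\kappa$. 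The delicate points are (i) handling the a.e.\ qualifiers and the measure-zero boundaries carefully, and (ii) extracting the limiting set $A$ — here one uses that the $A_m$ can be taken monotone (by the persistence-of-accessibility observation in Remark~\ref{R.RemainAccessible}, the "reachable from a fixed positive-measure seed cell" sets are eventually nested), so $A=\bigcup_m A_m$ or $\bigcap_m A_m$ is the natural candidate and the verification reduces to a routine dominated/monotone convergence check.
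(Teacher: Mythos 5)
Your construction of $\tilde\kappa_m$ and your treatment of properties (a) and (b) essentially match the paper's: the paper also takes a cellwise infimum over $\mathscr{J}_m$ of $\kappa\wedge\inf_{n\ge m}\kappa(1+\varphi_n)$. One small but real difference: the paper keeps the extra $\wedge\,\kappa({\bf x}',{\bf y}')$ inside the infimum, which you dropped. Without it, if $\varphi_n>0$ on a cell for all $n\ge m$, your $c_{ij}^{(m)}$ can exceed $\kappa$ at the point of the cell where $\kappa$ is smallest, and then the sequence cannot be both nondecreasing and convergent to $\kappa$ from below; you should reinstate the cap by $\kappa$.

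The genuine gap is in part (c). You aim to show that for all large $m$ the restriction of $\tilde\kappa_m$ to (cells meeting) $\mathcal{S}'$ is \emph{irreducible}, arguing by contradiction that persistent reducibility of $\tilde\kappa_m$ would force reducibility of $\kappa$. That target claim is false in general: for every finite $m$ there can be a positive-measure cell $\mathcal{J}$ containing a single point ${\bf x}_0$ with, say, $\kappa({\bf x}_0,\cdot)\equiv 0$ (such points are compatible with irreducibility of $\kappa$ since they form a null set), and then the cellwise infimum gives $\tilde\kappa_m\equiv 0$ on $\mathcal{J}\times\mathcal{J}^c$, so $A=\mathcal{J}$ witnesses reducibility at \emph{every} stage $m$. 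No contradiction with the irreducibility of $\kappa$ arises, because $\tilde\kappa_m=0$ on $A_m\times A_m^c$ does not imply $\kappa=0$ there, and the witnessing sets $A_m$ need not stabilize. What the lemma actually asserts, and what the paper proves, is \emph{quasi}-irreducibility, and this is achieved by actively modifying the kernel rather than by showing the unmodified $\tilde\kappa_m$ is already irreducible: the paper first locates a directed cycle $\mathcal{C}$ of cells of positive measure on which some $\tilde\kappa_{m_1}$ is positive (the cycle, rather than a single positive cell-pair as in the undirected case, is the new ingredient forced by directedness), shows via irreducibility of $\kappa$ that the sets $B_m(\mathcal{C})$ and $\tilde B_m(\mathcal{C})$ of points inbound- and outbound-accessible to $\mathcal{C}$ exhaust $\mathcal{S}$ in measure as $m\to\infty$, and then defines the final kernel as $\kappa_m=\tilde\kappa_m\cdot 1\bigl({\bf x},{\bf y}\in B_m(\mathcal{C})\cap\tilde B_m(\mathcal{C})\bigr)$, which is irreducible on that (growing, eventually full-measure) set and zero outside it. You assembled the right tools (accessibility, Remark~\ref{R.RemainAccessible}, persistence under refinement) but pointed them at the wrong statement; without the zeroing-out step outside the doubly accessible set, the proof of (c) does not go through.
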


\begin{proof}
We may assume that $\kappa >0$ on a set of positive measure, as otherwise we may take $\kappa_m \equiv 0$ for every $m$ and there is nothing to prove. 
We will construct the sequence $\{\kappa_m: m \geq 1\}$ in two stages. First, we construct a sequence $\{\tilde \kappa_m: m \geq 1\}$ where each $\tilde \kappa_m$ is regular finitary and satisfies conditions (a) and (b); then we use this sequence to obtain $\{ \kappa_m: m \geq 1\}$ satisfying (c).  

To this end, construct the sequence of partitions $\{\mathscr{J}_m \}_{m \geq 1}$ according to Lemma~\ref{L.Partition} and define
$$\tilde{\kappa}_m(\bold{x},\bold{y}) := \inf\left\{  \kappa({\bf x}', {\bf y}') \wedge \inf_{n \geq m} \kappa ({\bf x}', {\bf y}') (1 + \varphi_n({\bf x}', {\bf y}')) :  \bold{x}'\in \mathcal{J}_{\vartheta(\bold{x})}^{(m)}, \, \bold{y}'\in \mathcal{J}_{\vartheta(\bold{y})}^{(m)}\right\}.$$ 
Note that the properties of $\{ \mathscr{J}_m: m \geq 1\}$, and the assumption on $\varphi_n$ imply that 
$$\tilde{\kappa}_m(\bold{x},\bold{y})\nearrow\kappa(\bold{x},\bold{y}) \quad \mathbb{P}\text{-a.s.} \quad \text{as }m\rightarrow\infty,\quad \text{for a.e. } (\bold{x},\bold{y})\in \mathcal{S} \times \mathcal{S}.$$
Moreover, for $n\geq m$ we have that
$$\tilde \kappa_m({\bf x}, {\bf y}) \leq \kappa({\bf x}, {\bf y})(1 + \varphi_n({\bf x}, {\bf y})) \qquad \text{for all } ({\bf x}, {\bf y}) \in \mathcal{S} \times \mathcal{S}. $$
Hence, $\kappa_m = \tilde \kappa_m$ satisfies conditions (a) and (b) in the statement of the lemma. 

To prove (c) assume from now on that $\kappa$ is quasi-irreducible.  In fact, without loss of generality we may assume that $\kappa$ is irreducible, since it suffices to construct $\kappa_m$ to be quasi-irreducible on the restriction $\mathcal{S}'\times\mathcal{S}'$ where $\kappa$ is irreducible and then set it to be zero outside of $\mathcal{S}' \times \mathcal{S}'$. 

The first step of the proof ensures the existence of a directed cycle $\mathcal{C} \subseteq \mathcal{S}$ for some $m_1 \geq 1$. The second step uses $\mathcal{C}$ to construct a set on which $\tilde \kappa_m$ is irreducible. To establish the existence of $\mathcal{C}$, note that if $\tilde \kappa_m = 0$ a.e.~for all $m \geq 1$, it would imply that $\kappa = 0$~a.e., which would contradict the irreducibility of $\kappa$. Therefore, there must exist some $m_0 \geq 1$ and indexes $1 \leq r, s, t \leq M_{m_0}$ \ such that $\tilde \kappa_{m_0} > 0$ on $(\mathcal{J}_t^{(m_0)} \times \mathcal{J}_r^{(m_0)})$ and on $(\mathcal{J}_r^{(m_0)} \times \mathcal{J}_s^{(m_0)})$, with $\mu(\mathcal{J}_t^{(m_0)}) \mu(\mathcal{J}_r^{(m_0)}) \mu(\mathcal{J}_s^{(m_0)}) > 0$. 

{\bf Claim:} for any set $A \subseteq \mathcal{S}$  for which there exists a set $D\subseteq \mathcal{S}$ such that $\mu(D)  > 0$ and $\tilde \kappa_m > 0$ on $D \times A$ (respectively, $A \times D$), the sequence of sets $\{B_m(A) \}_{m \geq 1}$ (respectively, $\{ \tilde B_m(A) \}_{m \geq 1}$) defined according to $B_m(A) = \{ {\bf x} \in \mathcal{S}: {\bf x} \to A \text{ w.r.t. } (\tilde \kappa_m, \mathscr{J}_m)  \}$ (respectively, $\tilde B_m(A) = \{{\bf x} \in \mathcal{S}:  {\bf x} \leftarrow A \text{ w.r.t. } (\tilde \kappa_m, \mathscr{J}_m) \}$) satisfy: 1) $B_m(A) \subseteq B_{m+1}(A)$ (respectively, $\tilde B_m(A) \subseteq \tilde B_{m+1}(A)$), and  2) $\mu\left( \bigcup_{m=1}^\infty B_m(A) \right) = 1$ $\left( \text{respectively, } \mu\left( \bigcup_{m=1}^\infty \tilde B_m(A) \right) = 1 \right)$.

To prove the claim note that Remark~\ref{R.RemainAccessible} implies (1). To see that (2) holds, let $B(A) = \bigcup_{m=1}^\infty B_m(A)$ and note that from the definition of $B(A)$  we have $\kappa = 0$ a.e.~on $B(A)^c \times B(A)$, and the irreducibility of $\kappa$ implies that either $\mu(B(A)^c) = 0$ or $\mu(B(A)) = 0$; since $\mu(B(A)) \geq \mu(D) > 0$, it must be that $\mu(B(A)^c) = 0$, which implies that $\mu(B(A)) = 1$. The symmetric arguments yield the claim for $\{ \tilde B_m(A)\}$.

Now apply the inbound part of the claim to $A  = \mathcal{J}_r^{(m_0)}$ and $D = \mathcal{J}_t^{(m_0)}$ to obtain that there exists $m_1 \geq m_0$ such that $\mu(B_{m_1}(\mathcal{J}_r^{(m_0)}) \cap \mathcal{J}_s^{(m_0)}) > 0$,  which in turn implies there exists a set $\mathcal{J}_{s'}^{(m_1)} \subseteq \mathcal{J}_s^{(m_0)}$ such that $\mu(\mathcal{J}_{s'}^{(m_1)}) > 0$ and ${\bf x} \to \mathcal{J}_r^{(m_0)}$ for all ${\bf x} \in \mathcal{J}_{s'}^{(m_1)}$.  
In other words, there exist sets $\{ \mathcal{J}_{u_0}^{(m_1)}, \dots, \mathcal{J}_{u_k}^{(m_1)} \}$ satisfying $\mu(\mathcal{J}_{u_i}^{(m_1)}) > 0$ for all $0 \leq i \leq k$, $\mathcal{J}_{u_0}^{(m_1)} =  \mathcal{J}_{s'}^{(m_1)}$, $\mathcal{J}_{u_k}^{(m_1)} \subseteq \mathcal{J}_r^{(m_0)}$, and $\tilde \kappa_{m_1} > 0$ on $\mathcal{J}_{u_i}^{(m_1)} \times \mathcal{J}_{u_{i+1}}^{(m_1)}$ for all $0 \leq i < k$.  Since $0 < \tilde \kappa_{m_0} \leq \tilde \kappa_{m_1}$ on $\mathcal{J}_{u_k}^{(m_1)} \times \mathcal{J}_{u_0}^{(m_1)}$ by construction, we have that the set $\mathcal{C} = \bigcup_{i=0}^k \mathcal{J}_{u_i}^{(m_1)}$ defines a directed cycle. 

Next, construct the sequences $\{ B_m(\mathcal{C}) \}_{m \geq 1}$ and $\{ \tilde B_m(\mathcal{C}) \}_{m \geq 1}$ according to the claim, and define 
$$\kappa_m({\bf x}, {\bf y}) = \tilde \kappa_m({\bf x}, {\bf y}) 1({\bf x} \in (B_m(\mathcal{C}) \cap \tilde B_m(\mathcal{C})), \, {\bf y} \in (B_m(\mathcal{C}) \cap \tilde B_m(\mathcal{C}))).$$
Note that $\kappa_m \nearrow \kappa$ $\mathbb{P}$-a.s.~as $m \to \infty$ since $\tilde \kappa_m \nearrow \kappa$ $\mathbb{P}$-a.s.~and 
$$\mu\left( \bigcup_{m = 1}^\infty (B_m(\mathcal{C}) \cap \tilde B_m(\mathcal{C})) \right) \geq  1 -  \mu\left( \bigcap_{m = 1}^\infty B_m(\mathcal{C})^c \right) - \mu\left( \bigcap_{m=1}^\infty \tilde B_m(\mathcal{C})^c \right) = 1.$$

It remains to show that $\kappa_m$ restricted to $(B_m(\mathcal{C}) \cap \tilde B_m(\mathcal{C})) \times (B_m(\mathcal{C}) \cap \tilde B_m(\mathcal{C}))$ is irreducible. To see this, let $A \subseteq (B_m(\mathcal{C}) \cap \tilde B_m(\mathcal{C}))$ and suppose $\kappa_m = 0$ on $A \times (A^c \cap B_m(\mathcal{C}) \cap \tilde B_m(\mathcal{C}))$. Note that since $\tilde \kappa_{m_1} > 0$ on each $\mathcal{J}_{u_i}^{(m_1)} \times \mathcal{J}_{u_{i+1}}^{(m_1)}$, then it must be that either $\mathcal{C} \subseteq A$ or $\mathcal{C} \subseteq A^c$. Suppose that it is the former, and note that for any ${\bf x} \in A^c \cap B_m(\mathcal{C}) \cap \tilde B_m(\mathcal{C})$ there exist indexes $\{v_1, \dots, v_l\}$ and $\{ w_1, \dots, w_j\}$ such that 
$$\tilde \kappa_{m_1} > 0 \text{ on } \mathcal{J}^{(m_1)}_{v_i} \times \mathcal{J}^{(m_1)}_{v_{i+1}}, \, 0 \leq i \leq l, \,  \mu(\mathcal{J}_{v_i}^{(m_1)}) > 0, \, 1 \leq i \leq l, \, \mathcal{J}^{(m_1)}_{v_l} \subseteq \mathcal{C},$$
and
$$ \tilde \kappa_{m_1} > 0 \text{ on } \mathcal{J}^{(m_1)}_{w_{i+1}} \times \mathcal{J}^{(m_1)}_{w_i}, \, 0 \leq i \leq j, \, \mu(\mathcal{J}_{w_i}^{(m_1)}) > 0, \, 1 \leq i \leq j, \, \mathcal{J}^{(m_1)}_{w_j} \subseteq \mathcal{C},$$
where $\mathcal{J}^{(m_1)}_{v_0} = \mathcal{J}^{(m_1)}_{w_0} = \mathcal{J}^{(m_1)}_{\vartheta({\bf x})}$. Moreover, $\mu(\mathcal{J}^{(m_1)}_{\vartheta({\bf x})}) > 0$ would imply that  $\mathcal{J}_{v_i}^{(m_1)} \subseteq B_m(\mathcal{C}) \cap \tilde B_m(\mathcal{C})$  for all $1 \leq i \leq l$ and $\mathcal{J}_{w_h}^{(m_1)} \subseteq B_m(\mathcal{C}) \cap \tilde B_m(\mathcal{C})$ for all $1 \leq h \leq j$, since they would all lie on a directed cycle of positive measure, but this contradicts our assumption that $\tilde \kappa_{m_1} = 0$ on $A \times A^c \cap B_m(\mathcal{C}) \cap \tilde B_m(\mathcal{C})$. Hence, it must be that $\mu(\mathcal{J}^{(m_1)}_{\vartheta({\bf x})})  = 0$ for all ${\bf x} \in A^c \cap B_m(\mathcal{C}) \cap \tilde B_m(\mathcal{C})$, and therefore, $\mu(A^c \cap B_m(\mathcal{C}) \cap \tilde B_m(\mathcal{C})) = 0$. The same argument gives that if $\mathcal{C} \subseteq A^c \cap B_m(\mathcal{C}) \cap \tilde B_m(\mathcal{C})$ then $\mu(A) = 0$. We conclude that $\kappa_m$ restricted to $(B_m(\mathcal{C}) \cap \tilde B_m(\mathcal{C})) \times (B_m(\mathcal{C}) \cap \tilde B_m(\mathcal{C}))$ is irreducible. This completes the proof. 
\end{proof}

The following lemma establishes the relationships between $\rho(\kappa_m)$, $\rho^{\geq k}(\kappa_m)$, $\rho^{\geq k}(\kappa)$, and $\rho(\kappa)$.

\begin{lemma}\label{L.MonotoneKernels}
Let $\{\kappa_m\}_{m \geq 1}$ be a sequence of kernels on $(\mathcal{S},\mu)$ increasing a.e. to $\kappa$. Then, the following limits hold:
\begin{enumerate}
\item  $\rho^{\geq k}(\kappa;\bold{x})\searrow\rho(\kappa;\bold{x})$ a.e. $\bold{x}$ and  $\rho^{\geq k}(\kappa)\searrow\rho(\kappa)$ as $k \to \infty$.

\item For every $k\geq 1$, $\rho^{\geq k}(\kappa_m;\bold{x})\nearrow\rho^{\geq k}(\kappa;\bold{x})$ for a.e. $\bold{x}$ and $\rho^{\geq k}(\kappa_{m})\nearrow \rho^{\geq k}(\kappa)$ as $m\rightarrow\infty$.

\item $\rho(\kappa_m;\bold{x})\nearrow\rho(\kappa;\bold{x})$ for a.e. $\bold{x}$ and $\rho(\kappa_{m})\nearrow \rho(\kappa)$ as $m\rightarrow\infty$.
\end{enumerate}
 \end{lemma}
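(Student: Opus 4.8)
The plan is to prove the three parts in order, since (b) will use the monotonicity machinery set up for a single kernel and (c) will combine (a) and (b). Throughout, the key objects are the survival (or ``$\geq k$'') probabilities of the double multi-type branching process, which by conditional independence of $\mathcal{T}_\mu^+(\kappa;{\bf x})$ and $\mathcal{T}_\mu^-(\kappa;{\bf x})$ given their common root factor as a product; so it suffices in each case to establish the corresponding monotone limit for each of the single-sided trees $\mathcal{T}_\mu^\pm(\kappa;{\bf x})$ and then pass to the product and integrate against $\mu$. For the integrated statements, bounded convergence (all quantities lie in $[0,1]$) upgrades the pointwise-a.e.\ convergence to convergence of the integrals, so the real content is pointwise in ${\bf x}$.

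For part (a): fix ${\bf x}$ and observe that $\{\text{total population of }\mathcal{T}_\mu^+(\kappa;{\bf x})\geq k\}$ is a decreasing sequence of events in $k$, whose intersection over all $k$ is exactly $\{\text{total population is infinite}\}$ (a branching process dies out iff its total population is finite, i.e.\ $\leq k$ for some $k$). Hence $\rho_+^{\geq k}(\kappa;{\bf x})\searrow\rho_+(\kappa;{\bf x})$ by continuity of measure from above, and likewise for the $-$ tree; multiplying the two (conditionally independent given the root, so the joint ``$\geq k$ on both sides'' probability is the product) and using monotone/bounded convergence inside the product gives $\rho^{\geq k}(\kappa;{\bf x})\searrow\rho_+(\kappa;{\bf x})\rho_-(\kappa;{\bf x})$; integrating against $\mu$ and applying bounded convergence yields $\rho^{\geq k}(\kappa)\searrow\rho(\kappa)$.

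For part (b): fix $k$ and ${\bf x}$. Since $\kappa_m\nearrow\kappa$ a.e., the offspring means $\int_A \kappa_m({\bf y},{\bf x})\mu(d{\bf y})$ increase to $\int_A\kappa({\bf y},{\bf x})\mu(d{\bf y})$ by monotone convergence, so the trees $\mathcal{T}_\mu^+(\kappa_m;{\bf x})$ can be coupled monotonically (thinning of Poisson offspring) so that the tree for $\kappa_m$ is a subtree of the tree for $\kappa_{m+1}$, and of the tree for $\kappa$. The event ``total population $\geq k$'' depends only on the first $k$ generations and on at most $k-1$ branching events, each involving a finite configuration; I would argue that $\rho_+^{\geq k}(\kappa_m;{\bf x})\nearrow\rho_+^{\geq k}(\kappa;{\bf x})$ either directly from this monotone coupling (the event is increasing along the coupling, so its probability increases, and one shows the limit equals $\rho_+^{\geq k}(\kappa;{\bf x})$ because any realization of the $\kappa$-tree reaching size $k$ does so using finitely many offspring, all of which are eventually ``captured'' by $\kappa_m$ for $m$ large — this requires that the finitely many offspring counts converge, which follows from weak convergence of the relevant Poisson vectors together with the fact that ``$\geq k$'' is determined by an open-and-closed condition on a finite integer tuple), or more cleanly by writing $\rho_+^{\geq k}(\kappa;{\bf x})$ as a finite sum/integral over tree shapes of size $<k$ of explicit functionals (products of Poisson point-probabilities and tail factors) that are continuous in the offspring-mean parameters, then invoking monotone convergence of those parameters. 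Taking the product over $\pm$ (conditionally independent given the root) and integrating against $\mu$, with monotone convergence, gives $\rho^{\geq k}(\kappa_m)\nearrow\rho^{\geq k}(\kappa)$.

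For part (c): combine (a) and (b). Pointwise in ${\bf x}$, we have $\rho_\pm(\kappa_m;{\bf x})\leq\rho_\pm(\kappa;{\bf x})$ by the monotone coupling, so $\rho(\kappa_m;{\bf x})$ is nondecreasing in $m$ and bounded above by $\rho(\kappa;{\bf x})$; for the reverse inequality on the limit, use $\rho_\pm(\kappa_m;{\bf x})=\inf_k\rho_\pm^{\geq k}(\kappa_m;{\bf x})$ from (a) applied to $\kappa_m$, together with (b), and a diagonal argument: for any $\delta>0$ pick $k$ with $\rho^{\geq k}(\kappa;{\bf x})<\rho(\kappa;{\bf x})+\delta$ (from (a)), then pick $m$ large with $\rho^{\geq k}(\kappa_m;{\bf x})>\rho^{\geq k}(\kappa;{\bf x})-\delta$ (from (b)), and note $\rho(\kappa_m;{\bf x})$ is sandwiched appropriately after also using $\rho(\kappa_m;{\bf x})\geq\rho^{\geq k}(\kappa_m;{\bf x})-(\rho^{\geq k}(\kappa_m;{\bf x})-\rho(\kappa_m;{\bf x}))$ — cleaner is to argue $\lim_m\rho(\kappa_m;{\bf x})=\sup_m\inf_k\rho^{\geq k}(\kappa_m;{\bf x})$ and exchange $\sup_m$ and $\inf_k$ using monotonicity of $\rho^{\geq k}(\kappa_m;{\bf x})$ in both $m$ (increasing) and $k$ (decreasing), which is legitimate here. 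Then integrate against $\mu$ and apply bounded convergence to get $\rho(\kappa_m)\nearrow\rho(\kappa)$.

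The main obstacle I anticipate is the ``upward'' direction in part (b): showing the limit of $\rho_+^{\geq k}(\kappa_m;{\bf x})$ is not merely $\leq\rho_+^{\geq k}(\kappa;{\bf x})$ but equal to it. The monotone coupling gives ``$\leq$'' for free, but the equality needs that a size-$\geq k$ realization of the $\kappa$-tree is, with probability approaching $1$ as $m\to\infty$, already a size-$\geq k$ realization for $\kappa_m$; since ``size $\geq k$'' is witnessed by finitely many offspring, this should reduce to the convergence of finitely many coupled Poisson variables under $\int\kappa_m(\cdot)\mu\nearrow\int\kappa(\cdot)\mu$, but making this rigorous (choosing the coupling so that the first $k$ generations stabilize) requires some care. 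The alternative route — expressing $\rho_+^{\geq k}$ as an explicit continuous functional of the offspring means — sidesteps the coupling subtlety at the cost of a slightly more computational argument; I would likely present the coupling argument as the main line and fall back on the explicit-functional description if a clean statement is needed.
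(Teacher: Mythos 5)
Your overall structure mirrors the paper's proof exactly: part (a) by monotone limits of the events $\{\text{population}\geq k\}$ plus bounded/monotone convergence for the integrated versions, part (b) by monotone approximation of the ``population $\geq k$'' probabilities, part (c) by an interchange of the $m$- and $k$-limits. The difference is that the paper simply cites Bollob\'as--Janson--Riordan for the pointwise statements (Lemma~9.5 there for (a), Theorem~6.5(i) for (b)) and only adds the monotone convergence step, whereas you reconstruct those ingredients from scratch. Your part (a) is correct as written (continuity from above, then the product by conditional independence of the two trees given the root). For part (b), your worry about the ``upward'' direction is the right one, and your fallback --- writing $1-\rho_+^{\geq k}$ as a finite sum over tree shapes with fewer than $k$ nodes of integrals of Poisson point-probabilities, each continuous and monotone in the offspring means $\int\kappa_m(\cdot,{\bf x})\,\mu(d\cdot)$ --- is the clean way to close it; the pure coupling argument needs exactly the care you identify.

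There is, however, a genuine gap in part (c), and it is precisely at the step you wave through as ``legitimate here.'' Writing $a_{m,k}=\rho^{\geq k}(\kappa_m;{\bf x})$, double monotonicity (increasing in $m$, decreasing in $k$) does \emph{not} imply $\sup_m\inf_k a_{m,k}=\inf_k\sup_m a_{m,k}$: the array $a_{m,k}=1(m\geq k)$ is increasing in $m$ and decreasing in $k$, yet $\sup_m\inf_k=0$ while $\inf_k\sup_m=1$. Monotonicity only gives the easy inequality $\lim_m\rho(\kappa_m;{\bf x})\leq\rho(\kappa;{\bf x})$; the reverse inequality is the actual content of (c) and cannot be extracted from (a) and (b) alone --- intuitively, the $\kappa$-tree can survive while every $\kappa_m$-subtree dies, so one needs either uniformity in $m$ of the $k$-limit or a separate argument. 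The standard repair (and what the cited reference actually does in its Theorem~6.5(ii)) is the fixed-point route: $\rho(\kappa_m;\cdot)$ increases to some limit $\rho_\infty\leq\rho(\kappa;\cdot)$ which, by monotone convergence applied to the functional equation $\rho_m=1-e^{-T^{\pm}_{\kappa_m}\rho_m}$, is itself a solution of the limiting equation, and one then identifies it with $\rho(\kappa;\cdot)$ using the characterization of the survival probability among nonnegative solutions. To be fair, the paper's own write-up of (c) performs the same silent interchange and implicitly leans on that external result; but since your proposal explicitly offers the (false) general principle as the justification, this step needs to be replaced before the proof is complete.
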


\begin{proof}
By Lemma 9.5~in \cite{bollobas2007phase}, we have that $\rho^{\geq k}_{-}(\kappa;\bold{x})\searrow\rho_{-}(\kappa;\bold{x})$ and $\rho^{\geq k}_{+}(\kappa;\bold{x})\searrow\rho_{+}(\kappa;\bold{x})$  as $k\rightarrow\infty$ for a.e.~$\bold{x}$. Then, by the monotone convergence theorem, we have
\begin{align*}
\lim_{k\rightarrow\infty}\rho^{\geq k}(\kappa)&=\lim_{k\rightarrow\infty}\int_{\mathcal{S}}\rho_{-}^{\geq k}(\kappa;\bold{s})\rho_{+}^{\geq k}(\kappa;\bold{s}) \mu(d\bold{s})\\
&=\int_{\mathcal{S}}\lim_{k\rightarrow\infty}\rho_{-}^{\geq k}(\kappa;\bold{s})\rho_{+}^{\geq k}(\kappa;\bold{s})\mu(d\bold{s})\\
&=\int_{\mathcal{S}}\rho_{-}(\kappa;\bold{s})\rho_{+}(\kappa;\bold{s})\mu(d\bold{s})=\rho(\kappa),
\end{align*}
which establishes (a). 

By Theorem~6.5(i) in \cite{bollobas2007phase} we have that for any fixed $k \geq 1$, $\rho^{\geq k}_-(\kappa_m;\bold{x}) \nearrow \rho^{\geq k}_-(\kappa;\bold{x})$ and $\rho^{\geq k}_+(\kappa_m;\bold{x}) \nearrow \rho^{\geq k}_+(\kappa;\bold{x})$ as $m\rightarrow\infty$ for a.e.~$\bold{x}$, which together with monotone convergence as above implies (b). 

Part (c) follows from part (a) applied to the kernel $\kappa_m$, followed by part (b), to obtain that
$$\lim_{m \to \infty} \rho(\kappa_m; {\bf x}) = \lim_{m \to \infty} \lim_{k \to \infty} \rho^{\geq k}(\kappa_m; {\bf x}) = \lim_{k \to \infty} \lim_{m \to \infty} \rho^{\geq k}(\kappa_m; {\bf x}) = \lim_{k \to \infty} \rho^{\geq k}(\kappa;{\bf x}) = \rho(\kappa;{\bf x})$$
for a.e.~${\bf x}$. Then use monotone convergence as above. 
\end{proof}

\bigskip

Recall the definition of the operators $T_\kappa^+$ and $T_\kappa^-$ given in Section~\ref{S.PhaseTransition}, as well as of their spectral radii $r(T_\kappa^+)$ and $r(T_\kappa^-)$. The strict positivity of $\rho(\kappa)$, which ensures the existence of a giant strongly connected component, is characterized below.  As a preliminary result, we establish the phase transition for regular finitary, quasi-irreducible kernels first.

\begin{prop} \label{P.FinitaryPhaseTransition}
Suppose that $\tilde \kappa$ is a regular finitary, quasi-irreducible, kernel on the type-space $\mathcal{S}$ with respect to measure $\mu$. Then, $r(T_{\tilde \kappa}^+) = r(T_{\tilde \kappa}^-)$ and we have that $\rho(\tilde \kappa) > 0$ if and only if $r(T_{\tilde \kappa}^+) > 1$.  Moreover, there exist nonnegative, non-zero eigenfunctions $f_+$ and $f_-$, such that $T_{\tilde\kappa}^+ f^+ = r(T_{\tilde \kappa}^+) f_+$ and $T_{\tilde\kappa}^- f^- = r(T_{\tilde\kappa}^-) f_-$, and they are the only (up to multiplicative constants and sets of measure zero) nonnegative, non-zero eigenfunctions of $T_{\tilde \kappa}^+$ and $T_{\tilde \kappa}^-$, respectively. 
\end{prop}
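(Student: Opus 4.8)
The plan is to reduce the problem to a finite-dimensional one using the regular finitary structure of $\tilde\kappa$, analyze the resulting nonnegative matrices via Perron--Frobenius theory, and then invoke classical multi-type Galton--Watson theory to read off the phase transition.

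First I would set up the reduction. Let $\mathcal{J}_1,\dots,\mathcal{J}_r$ be a partition witnessing that $\tilde\kappa$ is regular finitary, with $\tilde\kappa\equiv c_{ij}$ on $\mathcal{J}_i\times\mathcal{J}_j$, and let $\mathcal{S}'\subseteq\mathcal{S}$ be the $\mu$-continuity set on which $\tilde\kappa$ is irreducible, with $\tilde\kappa$ vanishing whenever either argument lies outside $\mathcal{S}'$ (Definition~\ref{D.Irreducible}). Discarding blocks of $\mu$-measure zero and intersecting the partition with $\mathcal{S}'$, we may assume $\mathcal{S}'=\bigcup_{i=1}^{r'}\mathcal{J}_i$ with $\mu(\mathcal{J}_i)>0$ for $1\le i\le r'$, and that $\tilde\kappa=0$ off $\mathcal{S}'\times\mathcal{S}'$. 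For $f$ vanishing off $\mathcal{S}'$ and ${\bf x}\in\mathcal{J}_i\subseteq\mathcal{S}'$ one has $T_{\tilde\kappa}^- f({\bf x})=\sum_{j\le r'}c_{ij}\mu(\mathcal{J}_j)\bar f_j$ and $T_{\tilde\kappa}^+ f({\bf x})=\sum_{j\le r'}c_{ji}\mu(\mathcal{J}_j)\bar f_j$, where $\bar f_j=\mu(\mathcal{J}_j)^{-1}\int_{\mathcal{J}_j}f\,d\mu$. Hence $T_{\tilde\kappa}^\pm$ have range inside the finite-dimensional space of functions that are constant on each $\mathcal{J}_j$ and vanish off $\mathcal{S}'$, and on this space they act as the $r'\times r'$ nonnegative matrices $A^-=CD$ and $A^+=C^TD$, where $C=(c_{ij})_{i,j\le r'}$ and $D=\mathrm{diag}(\mu(\mathcal{J}_1),\dots,\mu(\mathcal{J}_{r'}))$. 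Since $T_{\tilde\kappa}^\pm$ are finite rank, any eigenfunction with nonzero eigenvalue lies in the range and is therefore block-constant on $\mathcal{S}'$ and zero off it; thus the nonzero spectra of $T_{\tilde\kappa}^\pm$ coincide with those of $A^\pm$, and $r(T_{\tilde\kappa}^\pm)=r(A^\pm)$.

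Next I would prove $r(A^+)=r(A^-)$ and extract the eigenfunctions. The key observation is that $A^+=C^TD=(DC)^T$, while $DC=D(CD)D^{-1}$ is similar to $A^-=CD$; hence $A^+$, $DC$ and $A^-$ share the same set of eigenvalues, so $r(A^+)=r(A^-)$, which gives $r(T_{\tilde\kappa}^+)=r(T_{\tilde\kappa}^-)$. Irreducibility of $\tilde\kappa$ on $\mathcal{S}'\times\mathcal{S}'$ together with $\mu(\mathcal{J}_i)>0$ translates into combinatorial irreducibility of $C$, hence of $CD$, $DC$ and $C^TD$. Perron--Frobenius then gives that $r(A^+)=r(A^-)=:\lambda^*>0$ is a simple eigenvalue of each, with a strictly positive eigenvector unique up to scaling; writing $v^\pm$ for these Perron vectors, the functions $f_\pm({\bf x})=\sum_{i\le r'}v_i^\pm\,1({\bf x}\in\mathcal{J}_i)$ are nonnegative, non-zero, and satisfy $T_{\tilde\kappa}^\pm f_\pm=\lambda^* f_\pm$. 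For uniqueness, any nonnegative non-zero eigenfunction of $T_{\tilde\kappa}^\pm$ with nonzero eigenvalue is, as noted, block-constant on $\mathcal{S}'$ and zero off $\mathcal{S}'$, hence corresponds to a nonnegative eigenvector of $A^\pm$, which by Perron--Frobenius must be a positive multiple of $v^\pm$ (with eigenvalue $\lambda^*$); a nonnegative eigenfunction with eigenvalue $0$ is annihilated by the operator and is supported off $\mathcal{S}'$ (using the positive left Perron eigenvector to exclude nonnegative null vectors of $A^\pm$), which is exactly what the ``up to sets of measure zero'' clause absorbs (and the issue disappears entirely when $\kappa$ is irreducible, i.e.\ $\mathcal{S}'=\mathcal{S}$ mod $\mu$-null sets).

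Finally, for the phase transition I would use multi-type branching process theory. By the Remark following Theorem~\ref{T.Coupling}, for ${\bf x}\in\mathcal{J}_i$ the tree $\mathcal{T}_\mu^+(\tilde\kappa;{\bf x})$ is the finite multi-type Galton--Watson process with Poisson offspring and mean matrix $(m^+_{ij})=(c_{ji}\mu(\mathcal{J}_j))=A^+$, and $\mathcal{T}_\mu^-(\tilde\kappa;{\bf x})$ has mean matrix $A^-$; moreover quasi-irreducibility forbids any individual from leaving $\mathcal{S}'$, so these are well-defined irreducible processes on $\{\mathcal{J}_1,\dots,\mathcal{J}_{r'}\}$. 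Since Poisson offspring makes the processes non-singular, the classical dichotomy for positively regular multi-type Galton--Watson processes (see \cite{mode1971multitype,athreya1972}) yields $\rho_\pm(\tilde\kappa;{\bf x})>0$ for a.e.\ ${\bf x}\in\mathcal{S}'$ iff $r(A^\pm)>1$, and $\rho_\pm(\tilde\kappa;{\bf x})=0$ a.e.\ otherwise; trivially $\rho_\pm(\tilde\kappa;{\bf x})=0$ for ${\bf x}\notin\mathcal{S}'$. Because $r(A^+)=r(A^-)=\lambda^*$ and $\mu(\mathcal{S}')>0$, it follows that $\rho(\tilde\kappa)=\int_{\mathcal{S}'}\rho_+(\tilde\kappa;{\bf x})\rho_-(\tilde\kappa;{\bf x})\,\mu(d{\bf x})>0$ if and only if $\lambda^*=r(T_{\tilde\kappa}^+)>1$.

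I expect the main obstacle to be the bookkeeping in the reduction rather than any deep step: namely, converting the measure-theoretic notion of (quasi-)irreducibility of $\tilde\kappa$ into combinatorial irreducibility of the finite matrix $C$ while correctly discarding null blocks and controlling the behaviour off $\mathcal{S}'$, and reconciling the operator-theoretic definition of $r(T_{\tilde\kappa}^\pm)$ on an infinite-dimensional function space with the Perron root of $A^\pm$. Once that is done, the similarity computation $A^+=(DC)^T$ with $DC\sim CD=A^-$ and the invocation of the standard Galton--Watson dichotomy are routine.
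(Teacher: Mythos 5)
Your proposal is correct and follows essentially the same route as the paper: reduce to the finite block structure, identify $T_{\tilde\kappa}^\pm$ with the nonnegative matrices $CD$ and $C^TD=(DC)^T$, use the similarity $DC\sim CD$ to equate the spectral radii, invoke Perron--Frobenius for the eigenfunctions and their uniqueness, and apply the classical irreducible multi-type Galton--Watson dichotomy for the survival probabilities. The only cosmetic difference is that the paper passes to a rescaled kernel $\kappa'=\mu(\mathcal{S}')\tilde\kappa$ with normalized measure $\mu'$ on $\mathcal{S}'$ before extracting the matrices, whereas you work with the restricted operator directly; the resulting matrices and arguments are identical.
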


\begin{proof} 
Since $\tilde \kappa$ is quasi-irreducible, there exists $\mathcal{S}^* \subseteq \mathcal{S}$ such that $\tilde \kappa$ restricted to $\mathcal{S}^*$ is irreducible and $\mu(\mathcal{S}^*) > 0$. Also, since $\tilde \kappa$ is regular finitary, there exists a finite partition $\{ \mathcal{J}_i: 1 \leq i \leq M\}$ such that $\tilde \kappa$ is constant on $\mathcal{J}_i \times \mathcal{J}_j$. Next, define
$$\mathcal{S}' = \bigcup_{i=1}^M \{ \mathcal{J}_i \cap \mathcal{S}^*: \mu(\mathcal{J}_i \cap \mathcal{S}^*) > 0 \},$$
and define the kernel $\kappa'({\bf x}, {\bf y}) = \mu(\mathcal{S}') \tilde \kappa({\bf x}, {\bf y})$ for ${\bf x}, {\bf y} \in \mathcal{S}'$. Note that $\kappa'$ is regular finitary and irreducible on $\mathcal{S}'$ and $\mu(\mathcal{S}') = \mu(\mathcal{S}^*)$. Moreover, if we let $\mu'(A) = \mu(A)/\mu(\mathcal{S}')$ for $A \subseteq \mathcal{S}'$, and let $\{ \mathcal{J}_i' : 1 \leq i \leq M' \}$ denote the partition of $\mathcal{S}'$ such that $\kappa'$ is constant on $\mathcal{J}_i' \times \mathcal{J}_j'$, then $\mu'( \mathcal{J}_i') > 0$ for all $1 \leq i \leq M'$. 

Next, consider the double tree $(\mathcal{T}_{\mu'}^+(\kappa'), \mathcal{T}_{\mu'}^-(\kappa'))$ on the type-space $\mathcal{S}'$ with respect to measure $\mu'$. Note that each of these trees can be thought of as a multi-type branching process with $M'$ types (one associated to each of the $\mathcal{J}_i'$) each having positive probability. We will show that:
\begin{enumerate}
\item the survival probability $\rho(\tilde \kappa) = \mu(\mathcal{S}') \rho'(\kappa')$, where
$$\rho'(\kappa') = \int_{\mathcal{S}'} \rho_+'(\kappa'; {\bf x}) \rho_-'(\kappa'; {\bf x}) \mu'(d{\bf x}),$$
and $\rho_+'(\kappa'; {\bf x})$, $\rho_-'(\kappa'; {\bf x})$ are the survival probabilities of the trees $\mathcal{T}_{\mu'}^+(\kappa'; {\bf x})$ and $\mathcal{T}_{\mu'}^-(\kappa'; {\bf x})$, respectively; and

\item the spectral radii of the operators $T_{\tilde \kappa}^\pm$ on $\mathcal{S}$ and $T_{\kappa'}^\pm$ on $\mathcal{S}'$ are the same. 
\end{enumerate}

To prove (a), note that since types ${\bf x} \in (\mathcal{S}^*)^c$ are isolated (since $\tilde \kappa({\bf x}, {\bf y}) = 0$ for ${\bf x} \in (\mathcal{S}')^c$ or ${\bf y} \in (\mathcal{S}')^c$) and $\mathcal{S}^* \cap (\mathcal{S}')^c$ has measure zero, then they do not contribute to the survival probabilities of $\mathcal{T}_\mu^+(\tilde \kappa)$ and $\mathcal{T}_\mu^-(\tilde \kappa)$, which implies that
\begin{align*}
\rho(\tilde \kappa) &= \int_{\mathcal{S}} \rho_-(\tilde \kappa; {\bf x}) \rho_+(\tilde \kappa; {\bf x}) \mu(d{\bf x}) = \mu(\mathcal{S}') \int_{\mathcal{S}'} \rho_-(\tilde \kappa; {\bf x}) \rho_+(\tilde \kappa; {\bf x}) \mu'(d{\bf x}) .
\end{align*}
Now note that the trees $\mathcal{T}^\pm_\mu(\tilde \kappa)$ and $\mathcal{T}^\pm_{\mu'} (\kappa')$ have the same law when their roots belong to $\mathcal{S}'$ since the number of offspring of type ${\bf y} \in \mathcal{S}'$ that an individual of type ${\bf x} \in \mathcal{S}'$ on the tree  $\mathcal{T}^+_{\mu'}(\kappa')$ has, is Poisson distributed with mean
$$\int_{\mathcal{S}'} \kappa'({\bf y}, {\bf x}) \mu'(d{\bf x}) = \int_{\mathcal{S}'} \mu(\mathcal{S}') \tilde \kappa({\bf y}, {\bf x}) \mu(d{\bf x}) / \mu(\mathcal{S}') = \int_{\mathcal{S}} \tilde \kappa({\bf y}, {\bf x}) \mu(d{\bf x}),$$
which is equal to the corresponding distribution in $\mathcal{T}_\mu(\tilde \kappa)$. The same argument yields the result for $\mathcal{T}^-_\mu(\tilde \kappa)$ and $\mathcal{T}^-_{\mu'} (\kappa')$. Hence, we have that $\rho_\pm(\tilde \kappa; {\bf x}) = \rho_\pm(\kappa'; {\bf x})$ for ${\bf x} \in \mathcal{S}'$, and therefore, 
$$\rho(\tilde \kappa) = \mu(\mathcal{S}') \rho'(\kappa').$$ 

To establish (b), note that if $f_\pm'$ is the nonnegative eigenfunction associated to $r(T_{\kappa'}^\pm)$ on $\mathcal{S}'$, then $f_\pm({\bf x}) = f_\pm'({\bf x}) 1({\bf x} \in \mathcal{S}')$ satisfies
$$(T_{\tilde \kappa}^+ f_+)({\bf x}) = \int_{\mathcal{S}} \tilde \kappa({\bf y}, {\bf x}) f_+({\bf y}) \mu(d{\bf y}) = \int_{\mathcal{S}'} \kappa'({\bf y}, {\bf x}) f_+'({\bf y}) \mu'(d{\bf y})  = r(T_{\kappa'}^+) f'_+({\bf x}) =  r(T_{\kappa'}^+) f_+({\bf x})$$
for ${\bf x} \in \mathcal{S}'$, while for ${\bf x} \in (\mathcal{S}')^c$ we have $(T_{\tilde \kappa}^+ f_+)({\bf x})  = 0$ since $\tilde \kappa({\bf y}, {\bf x}) = 0$ for all ${\bf y} \in \mathcal{S}$. Therefore, $r(T_{\kappa'}^+)$ is an eigenvalue of $T_{\tilde \kappa}^+$, which implies that $r(T_{\kappa'}^+) \leq r(T_{\tilde \kappa}^+)$; similarly, $r(T_{\kappa'}^-)$ is an eigenvalue of $T_{\tilde \kappa}^-$ and $r(T_{\kappa'}^-) \leq r(T_{\tilde \kappa}^-)$.  For the opposite inequality, suppose $f_\pm$ is a nonnegative eigenvector associated to $r(T_{\tilde \kappa}^\pm)$ and set $f_\pm'$ to be its restriction to $\mathcal{S}'$. Then note that for ${\bf x} \in \mathcal{S}'$,
$$(T_{\kappa'}^+ f_+')({\bf x}) = \int_{\mathcal{S}'} \kappa'({\bf y}, {\bf x}) f_+'({\bf y}) \mu'(d{\bf y}) = \int_{\mathcal{S}} \tilde \kappa({\bf y}, {\bf x}) f_+({\bf y}) \mu(d{\bf y}) = r(T_{\tilde \kappa}^+) f_+({\bf x}) =  r(T_{\tilde \kappa}^+) f_+'({\bf x}),$$
and therefore, $r(T_{\tilde \kappa}^+)$ is an eigenvalue of $T_{\kappa'}^+$ and therefore $r(T_{\tilde \kappa}^+) \leq r(T_{ \kappa'}^+)$. Similarly, $r(T_{\tilde \kappa}^-) \leq r(T_{ \kappa'}^-)$. We conclude that
$$r(T_{\tilde \kappa}^\pm) = r(T_{ \kappa'}^\pm).$$

To see that $r(T_{\kappa'}^+) = r(T_{\kappa'}^-)$ we first point out that $\mathcal{T}_{\mu'}^+(\kappa')$ and $\mathcal{T}_{\mu'}^-(\kappa')$ can be thought of as irreducible multi-type Galton-Watson processes with a finite number of types and mean progeny matrices ${\bf M}^+ = (m_{ij}^+)$ and ${\bf M}^- = (m_{ij}^-)$, respectively, where $m_{ij}^+ = c_{ji} \mu'(\mathcal{J}_j')$, $m_{ij}^- = c_{ij} \mu'(\mathcal{J}_j')$, and $\kappa'({\bf x}, {\bf y}) = \sum_{i=1}^{M'} \sum_{j=1}^{M'} c_{ij} 1({\bf x} \in \mathcal{J}_i', {\bf y} \in \mathcal{J}_j')$.  Moreover, the operators $T_{\kappa'}^+$ and $T_{\kappa'}^-$ satisfy
$$T_{\kappa'}^\pm f = {\bf M}^\pm {\bf v} \qquad \text{for ${\bf v} = (v_1, \dots, v_{M'})^T \in \mathbb{R}^{M'}$ and $f({\bf x}) = v_i 1({\bf x} \in \mathcal{J}_i')$, ${\bf x} \in \mathcal{S}'$}.$$
That ${\bf M}^+$ and ${\bf M}^-$ have the same spectral radius follows from noting that ${\bf M}^+ = {\bf C} {\bf D}$ and ${\bf M}^- = {\bf C}^T {\bf D} = ({\bf D} {\bf C})^T$ for ${\bf D} = \text{diag}(\mu'(\mathcal{J}_1'), \dots, \mu'(\mathcal{J}_{M'}'))$ and ${\bf C} = (c_{ij})$, which implies that the eigenvalues of ${\bf M}^-$ are the complex conjugates of those of ${\bf D} {\bf C}$, which in turn are the same as those of ${\bf C} {\bf D}$.

The if and only if statement for the survival probabilities now follows from Theorem~8 in \cite{athreya1972} (see also Theorems 2.1 and 2.2 in Chapter 2 of \cite{mode1971multitype}), which states that 
$$\rho_\pm'(\kappa'; {\bf x}) > 0 \text{ for all } {\bf x} \in \mathcal{S}' \qquad \text{if and only if} \qquad  r({\bf M}^\pm)  > 1,$$
where $r({\bf M}^\pm) = r(T_{\kappa'}^\pm)$ is the spectral radius of ${\bf M}^\pm$.

The existence of the eigenfunctions $f_+$ and $f_-$ on $\mathcal{S}$ follows from the Perron-Frobenius theorem (see Theorem~1.5 in \cite{seneta2006non}), which guarantees the existence of strictly positive eigenfunctions $f_+'$ and $f_-'$ on $\mathcal{S}'$ such that $T_{\kappa'}^\pm f_\pm' = r(T_{\kappa'}^\pm) f_\pm'$, by setting $f_\pm({\bf x}) = f_\pm'({\bf x}) 1({\bf x} \in \mathcal{S}')$. Moreover, $f_+'$ and $f_-'$ are the only (up to multiplicative constants) nonnegative, non-zero eigenfunctions of the operators $T_{\kappa'}^+$ and $T_{\kappa'}^-$, respectively. To see that the nonnegative eigenfunctions $f_+$ and $f_-$ are also unique (up to multiplicative constants and sets of measure zero) note that any other nonnegative eigenfunction $g_+$ of $T_{\tilde\kappa}^+$ associated to a positive eigenvalue $\lambda$  would have to satisfy 
$$(T_{\tilde \kappa}^+ g_+)({\bf x}) = \int_{\mathcal{S}} \tilde \kappa({\bf y}, {\bf x}) g_+({\bf y}) \mu(d{\bf y}) = 0 \qquad \text{for } {\bf x} \in (\mathcal{S}^*)^c,$$
since $\tilde \kappa({\bf x}, {\bf y}) = 0$ for ${\bf x} \in (\mathcal{S}^*)^c$, and 
$$(T_{\tilde \kappa}^+ g_+)({\bf x}) = \int_{\mathcal{S}'}  \kappa'({\bf y}, {\bf x})  g_+({\bf y}) \mu'(d{\bf y}) = \lambda g_+({\bf x}) \qquad \text{for } {\bf x} \in \mathcal{S}'$$
which would imply $\lambda$ is a positive eigenvalue of $T_{\kappa'}^+$ with a nonnegative, non-zero, eigenfunction. The uniqueness of $f_+'$ then gives that $g_+({\bf x}) = \alpha f_+'({\bf x})$ for ${\bf x} \in \mathcal{S}'$ for some constant $\alpha > 0$. Finally, since $\mu(\mathcal{S}^* \cap (\mathcal{S}')^c) = 0$, we conclude that $g_+({\bf x}) = \alpha f_+({\bf x})$ a.e. The same arguments give that any other nonnegative eigenfunction $g_-$ of $T_{\tilde \kappa}^-$ would have to satisfy $g_-({\bf x}) = \beta f_-({\bf x})$ a.e. This completes the proof. 
\end{proof}

We now use the regular finitary and quasi-irreducible case to establish the result for general irreducible kernels. As pointed out in Remark~\ref{R.NotIFF}, the result does not provide a full if and only if condition for the strict positivity of $\rho(\kappa)$, since when the operators $T_\kappa^+$ and $T_\kappa^-$ are unbounded we cannot guarantee the continuity of the spectral radii of the sequence of operators $T_{\kappa_m}^+$ and $T_{\kappa_m}^-$.

\begin{lemma} \label{L.PhaseTransition}
Suppose that $\kappa$ is irreducible on the type-space $\mathcal{S}$ with respect to measure $\mu$. Then, if $\rho(\kappa) > 0$ then $r(T_\kappa^+) > 1$ and $r(T_\kappa^-) > 1$. Moreover, if there exists a regular finitary quasi-irreducible kernel $\tilde \kappa$ such that $\tilde \kappa \leq \kappa$ a.e.~and $r(T_{\tilde \kappa}^+)  > 1$ (equivalently, $r(T_{\tilde \kappa}^-) > 1$), then $\rho(\kappa) > 0$. 
\end{lemma}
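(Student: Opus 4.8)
The plan is to prove the two implications separately, in both cases reducing to the regular finitary, quasi-irreducible setting covered by Proposition~\ref{P.FinitaryPhaseTransition}. For the second (sufficiency) implication, I would argue as follows. Given a regular finitary quasi-irreducible $\tilde\kappa \leq \kappa$ a.e.~with $r(T_{\tilde\kappa}^+) > 1$, Proposition~\ref{P.FinitaryPhaseTransition} gives $\rho(\tilde\kappa) > 0$. By monotonicity of the offspring means in the kernel, the double tree $(\mathcal{T}_\mu^+(\tilde\kappa;\mathbf{x}), \mathcal{T}_\mu^-(\tilde\kappa;\mathbf{x}))$ is stochastically dominated (coordinatewise, via the obvious thinning coupling of Poisson offspring) by $(\mathcal{T}_\mu^+(\kappa;\mathbf{x}), \mathcal{T}_\mu^-(\kappa;\mathbf{x}))$, so $\rho_\pm(\tilde\kappa;\mathbf{x}) \leq \rho_\pm(\kappa;\mathbf{x})$ for a.e.~$\mathbf{x}$, hence $\rho(\kappa) \geq \rho(\tilde\kappa) > 0$. (Equivalently one may invoke Lemma~\ref{L.MonotoneKernels}(c) with a monotone sequence interpolating between $\tilde\kappa$ and $\kappa$, but the direct domination is cleaner.) The equivalence of $r(T_{\tilde\kappa}^+)>1$ and $r(T_{\tilde\kappa}^-)>1$ parenthetically asserted in the statement is exactly the content of Proposition~\ref{P.FinitaryPhaseTransition} ($r(T_{\tilde\kappa}^+) = r(T_{\tilde\kappa}^-)$).

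For the first (necessity) implication, suppose $\rho(\kappa) > 0$; I must show $r(T_\kappa^+) > 1$ and, by symmetry, $r(T_\kappa^-) > 1$. The key is to approximate $\kappa$ from below. Using Lemma~\ref{approx_reducible} (with $\varphi_n \equiv 0$, which trivially satisfies Assumption~\ref{reg}), there is a sequence $\{\kappa_m\}$ of regular finitary quasi-irreducible kernels with $\kappa_m \nearrow \kappa$ a.e. By Lemma~\ref{L.MonotoneKernels}(c), $\rho(\kappa_m) \nearrow \rho(\kappa) > 0$, so $\rho(\kappa_m) > 0$ for all sufficiently large $m$. Proposition~\ref{P.FinitaryPhaseTransition} then gives $r(T_{\kappa_m}^+) > 1$ for such $m$. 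It remains to transfer this to $\kappa$. Since the eigenfunction $f_m := f_+^{(m)}$ of Proposition~\ref{P.FinitaryPhaseTransition} satisfies $T_{\kappa_m}^+ f_m = r(T_{\kappa_m}^+) f_m$ with $f_m \geq 0$, $\|f_m\|_1 = 1$ say (or $\|f_m\|_2 = 1$), and since $\kappa_m \leq \kappa$ a.e., we get the pointwise inequality $T_\kappa^+ f_m \geq T_{\kappa_m}^+ f_m = r(T_{\kappa_m}^+) f_m > f_m$ a.e.~on the support of $f_m$. I then plan to extract a lower bound on $r(T_\kappa^+)$ from the existence of a nonnegative $f$ with $T_\kappa^+ f \geq c f$, $c > 1$: for a positive operator this forces $r(T_\kappa^+) \geq c > 1$, either by the Collatz–Wielandt characterization or, if $T_\kappa^+$ is unbounded, by testing against $f_m$ itself — iterating $T_\kappa^+ f_m \geq c^k f_m$ shows $\|(T_\kappa^+)^k f_m\| \geq c^k \|f_m\|$, and a Gelfand-type bound on the spectral radius (valid for the relevant operator, or read off directly from the finite-type truncations) yields $r(T_\kappa^+) \geq c$.

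The main obstacle I anticipate is precisely this last transfer when $T_\kappa^+$ may be unbounded, since then "spectral radius" is delicate and the usual Krein–Rutman / Collatz–Wielandt machinery does not apply verbatim. I would handle this by staying inside finite-dimensional truncations as long as possible: rather than passing to the full operator $T_\kappa^+$, note that $r(T_{\kappa_m}^+) \nearrow$ some limit $\leq r(T_\kappa^+)$ (monotonicity of spectral radii along $\kappa_m \nearrow \kappa$, which holds for the regular finitary truncations by comparison of mean progeny matrices), and the limit is $> 1$ because it is an increasing limit of numbers each $> 1$ once $m$ is large. The subtle point — and the reason Remark~\ref{R.NotIFF} flags the absence of a full iff — is that this limit could in principle be strictly below $r(T_\kappa^+)$; but for the direction we need (lower bound on $r(T_\kappa^+)$) we only need $r(T_\kappa^+) \geq \lim_m r(T_{\kappa_m}^+) > 1$, which follows from $\kappa_m \leq \kappa$ and the fact that $r(T^+)$ is monotone under domination of kernels. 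So the argument goes through without any compactness hypothesis; only the converse (reconstructing a suitable $\tilde\kappa$ from $r(T_\kappa^+) > 1$) would need compactness, and that is deliberately left out of the lemma.
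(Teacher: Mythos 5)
Your proposal is correct and follows essentially the same route as the paper: approximate $\kappa$ from below by regular finitary quasi-irreducible kernels via Lemma~\ref{approx_reducible}, pass $\rho(\kappa_m)>0$ through Lemma~\ref{L.MonotoneKernels}, apply Proposition~\ref{P.FinitaryPhaseTransition}, and conclude by monotonicity of the spectral radius under kernel domination (and, for sufficiency, by $\rho(\tilde\kappa)\le\rho(\kappa)$). The only difference is that you spell out the justification of the spectral-radius monotonicity step via the eigenfunction inequality $T_\kappa^+ f_m \ge r(T_{\kappa_m}^+) f_m$, which the paper simply asserts.
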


\begin{proof}
Suppose first that $\rho(\kappa) > 0$. Now use Lemma~\ref{approx_reducible} and Lemma~\ref{L.MonotoneKernels} to obtain that $\rho(\kappa_m) > 0$ for some quasi-irreducible, regular finitary, kernel $\kappa_m$ such that $\kappa_m({\bf x}, {\bf y}) \leq \kappa({\bf x}, {\bf y})$ for all ${\bf x}, {\bf y} \in \mathcal{S}$. By Proposition~\ref{P.FinitaryPhaseTransition} we have that the spectral radii of the operators $T_{\kappa_m}^+$ and $T_{\kappa_m}^-$ satisfy $r(T_{\kappa_m}^+) = r(T_{\kappa_m}^-)> 1$. By monotonicity of the spectral radius, we conclude that $r(T_{\kappa}^+) \geq r(T_{\kappa_m}^+) > 1$ and $r(T_{\kappa}^-) \geq r(T_{\kappa_m}^-) > 1$. 

For the converse, note that if $\tilde \kappa \leq \kappa$ a.e.~and $r(T_{\tilde \kappa}^+)  > 1$, then by Proposition~\ref{P.FinitaryPhaseTransition} we have that $\rho(\tilde \kappa) > 0$. Since $\rho(\tilde \kappa) \leq \rho(\kappa)$, the result follows. 
\end{proof}

The last preliminary result before proving Theorem~\ref{giant} provides the key estimates obtained through Theorem~\ref{T.Coupling}, since it relates the indicator random variables for each vertex $i$ to have in-component and out-component of size at least $k$ with the corresponding probabilities in the double-tree $(\mathcal{T}_\mu^+(\kappa_m; {\bf X}_i), \mathcal{T}_\mu^-(\kappa_m; {\bf X}_i))$.

\begin{prop} \label{P.BernoullisCoupled}
For any $k \geq 1 $ and $i \in V_n$, define $\chi_{n,i}^{\geq k}$ to be the indicator function of the event that vertex $i$ has in-component and out-component both of size at least $k$. Then, for any $0 < \epsilon < 1/2$, we have 
\begin{align*}
\left| \frac{1}{n} \sum_{i=1}^n \mathbb{E}\left[ \chi_{n,i}^{\geq k} \right]  - \frac{1}{n} \sum_{i=1}^n \rho_+^{\geq k}(\kappa_m; {\bf X}_i) \rho_-^{\geq k}(\kappa_m; {\bf X}_i)  \right| &\leq  H(n,m,k,\epsilon),  \\
\frac{1}{n^2} \sum_{i=1}^{n} \sum_{j \neq i} \mathbb{E}\left[ \left( \chi_{n,i}^{\geq k} -  \mathbb{E}\left[ \chi_{n,i}^{\geq k} \right]  \right) \left( \chi_{n,j}^{\geq k} -  \mathbb{E}\left[ \chi_{n,j}^{\geq k} \right]  \right) \right] &\leq K(n,m,k) + 3 H(n,m,k,\epsilon),
\end{align*}
where
\begin{align*}
K(n,m,k) &:= \frac{4(k+1) \log n}{n} \sup_{{\bf x}, {\bf y} \in \mathcal{S}} \kappa_m({\bf x}, {\bf y})  + \frac{k}{\log n} \left( 2 + \sup_{{\bf x} \in \mathcal{S}} \lambda_+^{(m)}({\bf x}) + \sup_{{\bf x} \in \mathcal{S}} \lambda_-^{(m)}({\bf x}) \right) ,
\end{align*}
and $H(n,m,k,\epsilon)$ is defined in Theorem~\ref{T.Coupling}. 
\end{prop}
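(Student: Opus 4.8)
The plan is to reduce everything to a comparison between the exploration process described in Section~\ref{SS.Coupling} and the coupled double tree, and then invoke Theorem~\ref{T.Coupling}. For the first inequality, fix $k$ and $i$, and recall that $\sigma_k^+$ and $\sigma_k^-$ are the stopping times at which we have either discovered at least $k$ vertices in the in-component (resp. out-component) of $i$ or fully explored it. The indicator $\chi_{n,i}^{\geq k}$ equals the indicator that both $|A_{\sigma_k^+}^+|+|I_{\sigma_k^+}^+|\geq k$ and $|A_{\sigma_k^-}^-|+|I_{\sigma_k^-}^-|\geq k$ hold in the graph exploration; the analogous event for the double tree has probability $\rho_+^{\geq k}(\kappa_m;{\bf X}_i)\rho_-^{\geq k}(\kappa_m;{\bf X}_i)$ by the Remark following the construction (the law of the double tree rooted at $i$ coincides with $(\mathcal{T}_\mu^+(\kappa_m;{\bf X}_i),\mathcal{T}_\mu^-(\kappa_m;{\bf X}_i))$, and conditional independence of the two sides is built in). On the event $\{\tau^+>\sigma_k^+\}\cap\{\tau^->\sigma_k^-\}$ the two events agree, so their probabilities differ by at most $\mathbb{P}_i(\{\tau^+\leq\sigma_k^+\}\cup\{\tau^-\leq\sigma_k^-\})$. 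Averaging over $i$ and applying Theorem~\ref{T.Coupling} gives the bound $H(n,m,k,\epsilon)$. One must be slightly careful that $\sigma_k^\pm$ in Theorem~\ref{T.Coupling} is defined via $|A_t^\pm|+|I_t^\pm|\geq k$ rather than $|I_t^\pm|\geq k$; since $\sigma_k^\pm$ is exactly the first time we can certify ``component of size $\geq k$ or fully explored'', the coupling up to $\sigma_k^\pm$ is precisely what determines $\chi_{n,i}^{\geq k}$, so this matches.

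For the covariance bound, I would first write $\chi_{n,i}^{\geq k}=\hat\chi_{n,i}^{\geq k}+(\chi_{n,i}^{\geq k}-\hat\chi_{n,i}^{\geq k})$, where $\hat\chi_{n,i}^{\geq k}$ is the corresponding indicator computed from the double tree rooted at $i$ (on the common probability space of the coupling). The ``error'' terms $\chi_{n,i}^{\geq k}-\hat\chi_{n,i}^{\geq k}$ are bounded by $1(\{\tau^+\leq\sigma_k^+\}\cup\{\tau^-\leq\sigma_k^-\})$, and expanding the product and using $|ab-a'b'|\le|a-a'|+|b-b'|$ for $\{0,1\}$-valued quantities, the contribution of these error terms to $\frac1{n^2}\sum_{i\ne j}\mathbb{E}[\cdots]$ is absorbed into $3H(n,m,k,\epsilon)$ (one factor of $H$ from each of the two ``cross'' sums and one from the ``error $\times$ error'' sum, after using Theorem~\ref{T.Coupling} and the trivial bound $|\cdot|\le 1$). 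It then remains to control $\frac1{n^2}\sum_{i\ne j}\mathrm{Cov}(\hat\chi_{n,i}^{\geq k},\hat\chi_{n,j}^{\geq k})$, i.e. the covariance in the \emph{double-tree} model; this is where $K(n,m,k)$ comes from. Here the idea is that $\hat\chi_{n,i}^{\geq k}$ depends only on the offspring variables attached to at most $k$ explored nodes on each side, hence on at most $O(k\log n)$ edges if we first intersect with the high-probability event that no explored vertex has more than $\log n$ children (the Poisson means are bounded by $\sup_{\bf x}\lambda_\pm^{(m)}({\bf x})/$const, so a Chernoff bound gives a failure probability controlled by $k/\log n$ times those suprema, which is the second term of $K$). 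On the complementary event, $\hat\chi_{n,i}^{\geq k}$ and $\hat\chi_{n,j}^{\geq k}$ are measurable with respect to disjoint sets of the $U$'s unless the two explorations share a vertex identity, and because the Poisson means $r_{\ell i}^{(m,n)},\tilde r_{i\ell}^{(m,n)}$ are $O(\sup\kappa_m/n)$, the probability that two small explorations (of size $O(k\log n)$ each) collide is $O((k\log n)^2\sup\kappa_m/n)$; this yields the first term $4(k+1)(\log n/n)\sup\kappa_m$ after averaging.

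The main obstacle I expect is the careful bookkeeping in the covariance step: making precise the statement ``$\hat\chi_{n,i}^{\geq k}$ and $\hat\chi_{n,j}^{\geq k}$ are independent unless the two double-tree explorations use a common identity'', given that the double-tree construction reuses the uniforms $U_{\ell,T_{\bf i}}$ across different roots and also samples fresh Poissons $Z^*$ when an identity recurs. The clean way to handle this is to condition on $\mathscr{F}$ and on the event $\Omega_{m,n}$, truncate the explorations at the first step where some node has offspring exceeding $\log n$ (introducing the $k/\log n$ term), and observe that up to that truncation each exploration touches at most $k+k\log n \le 2k\log n$ distinct identities; then a union bound over the $\le (2k\log n)^2$ potential coincidences of identities, each of probability $\le \sup\kappa_m/n$ (being the chance that some $Z_{\ell,\cdot}$ or $\tilde Z_{\cdot,\ell}$ linking the two identity-sets is nonzero), gives the first term of $K(n,m,k)$. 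Absent such a coincidence the two truncated indicators are functions of disjoint families of independent variables, hence uncorrelated, and the truncation error is swallowed by $K(n,m,k)$ via Lemma~\ref{L.MeanTree} to bound the expected number of explored nodes. The rest is routine.
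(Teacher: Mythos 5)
Your treatment of the first inequality is essentially the paper's: on $\{\tau^+>\sigma_k^+\}\cap\{\tau^->\sigma_k^-\}$ the graph indicator and the double-tree indicator coincide, the tree indicator has mean $\rho_+^{\geq k}(\kappa_m;{\bf X}_i)\rho_-^{\geq k}(\kappa_m;{\bf X}_i)$ by the remark identifying the law of the double tree, and Theorem~\ref{T.Coupling} controls the averaged break probability. That part is fine.

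The covariance bound is where you depart from the paper, and your route has a genuine gap. Your key step is: ``absent a coincidence of identities the two truncated indicators are functions of disjoint families of independent variables, hence uncorrelated.'' As stated this is the classical fallacy: the disjointness event $\hat C_{ij}$ is itself measurable only with respect to \emph{both} explorations, so restricting to it does not make the two indicators uncorrelated; you would still have to control $\mathbb{E}[\hat\chi_{n,i}\hat\chi_{n,j}1(\hat C_{ij})]-\mathbb{E}[\hat\chi_{n,i}]\mathbb{E}[\hat\chi_{n,j}]$, and conditioning on non-intersection biases the second exploration. The paper avoids this entirely by never forming the covariance of the tree indicators: it conditions on the full first graph exploration $\mathcal{F}_{k,i}$ and observes that, on $C_{ij}=\{\mathcal{N}_i^{(k)}\cap\mathcal{N}_j^{(k)}=\varnothing\}$, the conditional law of $\chi_{n,j}^{\geq k}$ is that of the same indicator in a graph with the \emph{smaller} kernel $\kappa_{n,i}\leq\kappa(1+\varphi_n)$ (arcs touching $\mathcal{N}_i^{(k)}$ deleted), whence $\mathbb{E}[\chi_{n,j}^{\geq k}\mid\mathcal{F}_{k,i},C_{ij}]\leq\mathbb{E}[\chi_{n,j}^{\geq k}]$ and the whole covariance sum is bounded by $n^{-2}\sum_{i\neq j}\mathbb{P}(C_{ij}^c)$. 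Some such monotonicity/domination statement is the missing idea; without it your decomposition does not close. (Your bookkeeping of the error terms is also optimistic: each cross-covariance $\mathrm{Cov}(\hat\chi_{n,i},\chi_{n,j}-\hat\chi_{n,j})$ can be as large as twice the break probability, so the three extra terms do not obviously sum to $3H$; in the paper the factor $3$ arises differently, as $1(\Omega_{m,n}^c)$ plus the two break probabilities for $i$ and $j$, each bounded by $H$.)

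There is also a quantitative mismatch. A union bound over the $\leq(2k\log n)^2$ potential identity coincidences, each of probability $O(\sup\kappa_m/n)$, yields $O(k^2(\log n)^2\sup\kappa_m/n)$, not the stated first term $4(k+1)n^{-1}\log n\,\sup\kappa_m$ of $K(n,m,k)$. The paper gets the sharper rate by an asymmetric argument: it truncates only $|\hat{\mathcal{N}}_i^{(k)}|$ at $\log n$ (paying the Markov/Lemma~\ref{L.MeanTree} price $k(\log n)^{-1}(2+\sup\lambda_+^{(m)}+\sup\lambda_-^{(m)})$, which is the second term of $K$), and on the $j$ side union-bounds only over the at most $k+1$ nodes whose offspring are actually sampled, each of which produces an offspring with identity in $\hat{\mathcal{N}}_i^{(k)}$ with conditional probability at most $R_n|\hat{\mathcal{N}}_i^{(k)}|\sup\kappa_m/n$. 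If you are content with a bound of the form $K'(n,m,k)\to 0$ rather than the specific $K(n,m,k)$, your rate would still suffice for the application in Theorem~\ref{giant}, but it does not prove the proposition as stated, and in any case the decorrelation step above must be repaired first.
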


\begin{proof}
To derive the first bound construct a coupling between the graph exploration processes of the in-component and out-component of vertex $i$ and the double tree $(\mathcal{T}_\mu^+(\kappa_m; {\bf X}_i), \mathcal{T}_\mu^-(\kappa_m; {\bf X}_i))$, as described in Section~\ref{SS.Coupling}. Define $\tau^+$ and $\tau^-$ to be the steps in the construction when the coupling breaks on the inbound, respectively outbound, sides, and let $\sigma_{k}^+ = \inf\{ t \geq 1: |A_t^+| + |I_t^+| \geq k  \text{ or } A_t^+ = \varnothing \}$ and $\sigma_k^- = \inf\{ t \geq 1: |A_t^-| + |I_t^-| \geq k \text{ or }   A_t^- = \varnothing \}$. Note that at time $\sigma_k^+ \vee \sigma_k^-$ it is possible to determine whether both the in-component and out-component of vertex $i$ have at least $k$ vertices or not. To simplify the notation, let $ \rho^{\geq k}(\kappa_m; {\bf x}) =  \rho_+^{\geq k}(\kappa_m; {\bf x}) \rho_-^{\geq k}(\kappa_m; {\bf x})$.
\begin{align*}
\frac{1}{n} \sum_{i=1}^n \mathbb{E}\left[ \chi_{n,i}^{\geq k} \right]  &= \frac{1}{n} \sum_{i=1}^n \mathbb{P}\left(\chi_{n,i}^{\geq k} = 1 \right) \\
&\leq \frac{1}{n} \sum_{i=1}^n \mathbb{P}_i \left( \chi_{n,i}^{\geq k} = 1, \, \tau^+ \geq \sigma_k^+, \tau^- \geq \sigma_k^- \right) + \frac{1}{n} \sum_{i=1}^n \mathbb{P}_i \left(\{ \tau^+ < \sigma_k^+\} \cup \{ \tau^- < \sigma_k^- \} \right) \\
 &\leq \frac{1}{n} \sum_{i=1}^n \mathbb{P} \left( \text{both $\mathcal{T}_\mu^+(\kappa_m; {\bf X}_i)$ and $\mathcal{T}_\mu^-(\kappa_m; {\bf X}_i)$ have at least $k$ nodes}  \right) + H(n,m,k,\epsilon) \\
 &= \frac{1}{n} \sum_{i=1}^n \rho^{\geq k}(\kappa_m; {\bf X}_i) +  H(n,m,k,\epsilon),
\end{align*}
where we used Theorem~\ref{T.Coupling} to obtain that $n^{-1}  \sum_{i=1}^n \mathbb{P}_i \left(\{ \tau^+ < \sigma_k^+\} \cup \{ \tau^- < \sigma_k^- \} \right) \leq H(n,m,k,\epsilon)$. 

For the second inequality, first note that  
\begin{align*}
&\frac{1}{n^2} \sum_{i=1}^n \sum_{j \neq i} \mathbb{E}\left[ \left( \chi_{n,i}^{\geq k} -  \mathbb{E}\left[ \chi_{n,i}^{\geq k} \right]  \right) \left( \chi_{n,j}^{\geq k} -  \mathbb{E}\left[ \chi_{n,j}^{\geq k} \right]  \right) \right] \\
&= \frac{1}{n^2} \sum_{i=1}^n \sum_{j \neq i} \mathbb{E}\left[ \chi_{n,i}^{\geq k}  \chi_{n,j}^{\geq k} \right] - \frac{1}{n^2} \sum_{i=1}^n \sum_{j\neq i}  \mathbb{E}\left[ \chi_{n,i}^{\geq k} \right]  \mathbb{E}\left[ \chi_{n,j}^{\geq k} \right]  .
\end{align*}
To estimate $\mathbb{E}\left[ \chi_{n,i}^{\geq k}  \chi_{n,j}^{\geq k} \right] $ we will assume that we first explore the inbound and outbound neighborhood of vertex $i$ up to the time both its in-component and out-component have at least $k$ vertices or there are no more vertices to explore, i.e., we will explore the in-component of vertex $i$ up to time $\sigma_{k,i}^+$ and its out-component up to time $\sigma_{k,i}^-$. Note that we have added the subscript $i$, relative to the notation introduced in Section~\ref{SS.Coupling}, to emphasize that the exploration starts at vertex $i$. Next, define $\mathcal{F}_{k,i}$ to be the sigma-algebra generated by the exploration of the in-component and out-component of vertex $i$, as described in Section~\ref{SS.Coupling}, up to Step $\sigma_{k,i}^+$ on the inbound side and up to Step $\sigma_{k,i}^-$ on the outbound side. Define $\mathcal{N}_i^{(k)} =  I_{\sigma_{k,i}^-}^-  \cup  I_{\sigma_{k,i}^+}^+  \cup A_{\sigma_{k,i}^+}^+ \cup A_{\sigma_{k,i}^-}^-$  to be the set of vertices discovered during that exploration. Now explore the in-component and out-component of vertex $j$, as described in Section~\ref{SS.Coupling}, up to Step $\sigma_{k,j}^+$ on the inbound side and up to Step $\sigma_{k,j}^-$ on the outbound side; let $\mathcal{N}_j^{(k)} $ be the corresponding set of vertices discovered during the exploration of vertex $j$. 

Define $C_{ij} = \left\{ \mathcal{N}_i^{(k)} \cap \mathcal{N}_j^{(k)} = \varnothing \right\}$ and note that,
\begin{align*}
\mathbb{E}\left[ \chi_{n,i}^{\geq k}  \chi_{n,j}^{\geq k} \right] &\leq \mathbb{E}\left[ \chi_{n,i}^{\geq k}  \chi_{n,j}^{\geq k}  1(C_{ij})\right] + \mathbb{E}\left[ 1(C_{ij}^c) \right]  = \mathbb{E}\left[ \chi_{n,i}^{\geq k} \mathbb{E}\left[ \left. \chi_{n,j}^{\geq k}   1(C_{ij}) \right| \mathcal{F}_{k,i} \right]   \right] + \mathbb{P}(C_{ij}^c) .
\end{align*}
To analyze the conditional expectation, observe that
$$\mathbb{E}\left[ \left. \chi_{n,j}^{\geq k}   1(C_{ij}) \right| \mathcal{F}_{k,i} \right] = \mathbb{E}\left[ \left. \chi_{n,j}^{\geq k}    \right| \mathcal{F}_{k,i}, C_{ij}  \right] \mathbb{P}(C_{ij} | \mathcal{F}_{k,i} ),$$
where, due to the independence among the arcs, we have that conditionally on $\mathcal{F}_{k,i}$ and $C_{ij}$, the random variable $\chi_{n,j}^{\geq k}$ has the same distribution as the indicator function of the event that vertex $j$ has in-component and out-component both of size at least $k$ on the graph $G_{n}(\kappa_{n,i})$, with
$$\kappa_{n,i}({\bf X}_s, {\bf X}_t) = \kappa({\bf X}_s, {\bf X}_t) (1 + \varphi_n({\bf X}_s, {\bf X}_t)) 1(s \notin \mathcal{N}_i^{(k)}, t \notin \mathcal{N}_i^{(k)}).$$
Now note that since $\kappa_{n,i} \leq \kappa (1+\varphi_n)$ for any realization of $\mathcal{N}_i^{(k)} \subseteq V_n$, we have
$$\mathbb{E}\left[ \left. \chi_{n,j}^{\geq k}    \right| \mathcal{F}_{k,i}, C_{ij}  \right] \leq  \mathbb{E}\left[ \chi_{n,j}^{\geq k} \right],$$
from where it follows that
\begin{align*}
\frac{1}{n^2} \sum_{i=1}^n \sum_{j \neq i} \mathbb{E}\left[ \chi_{n,i}^{\geq k}  \chi_{n,j}^{\geq k} \right] &\leq \frac{1}{n^2} \sum_{i=1}^n \sum_{j \neq i} \left( \mathbb{E}\left[ \chi_{n,i}^{\geq k}  \right] \mathbb{E}\left[ \chi_{n,j}^{\geq k}  \right] + \mathbb{P}(C_{ij}^c) \right),
\end{align*}
which in turn implies that
\begin{align*}
\frac{1}{n^2} \sum_{i=1}^n \sum_{j \neq i} \mathbb{E}\left[ \left( \chi_{n,i}^{\geq k} -  \mathbb{E}\left[ \chi_{n,i}^{\geq k} \right]  \right) \left( \chi_{n,j}^{\geq k} -  \mathbb{E}\left[ \chi_{n,j}^{\geq k} \right]  \right) \right] \leq \frac{1}{n^2} \sum_{i=1}^n \sum_{j \neq i} \mathbb{P}(C_{ij}^c). 
\end{align*}

Similarly to what was done on the graph, define $ \hat{\mathcal{N}}_i^{(k)} = \hat I_{\hat\sigma_{k,i}^-}^-  \cup  \hat I_{\hat \sigma_{k,i}^+}^+  \cup \hat A_{\hat \sigma_{k,i}^+}^+ \cup \hat A_{\hat \sigma_{k,i}^-}^-$ to be the set of {\em identities} that appear during the construction of the double tree $(\mathcal{T}_\mu^+(\kappa_m;\bold{X}_i), \mathcal{T}_\mu^-(\kappa_m;\bold{X}_i))$ up to Step $\hat{\sigma}_{k,i}^+$ on the inbound side, and up to Step $\hat{\sigma}_{k,i}^-$ on the outbound side. Let $\hat{C}_{ij}= \left\{ \hat{\mathcal{N}}_i^{(k)}\cap\hat{\mathcal{N}}_j^{(k)}=\varnothing \right\}$. We then have
\begin{align*}
\mathbb{P}(C_{ij}^c) &\leq \mathbb{P}(C_{ij}^c,\tau_i^+>\sigma_{k,i}^+,\tau_i^->\sigma_{k,i}^-,\tau_j^+>\sigma_{k,j}^+,\tau_j^->\sigma_{k,j}^-) 1(\Omega_{m,n})  + 1(\Omega_{m,n}^c) \\
&\hspace{5mm} + \mathbb{P}( \{ \tau_i^+ \leq \sigma_{k,i}^+ \} \cup \{\tau_i^-\leq \sigma_{k,i}^-\} )+\mathbb{P}( \{ \tau_j^+ \leq \sigma_{k,j}^+ \} \cup \{ \tau_j^- \leq \sigma_{k,j}^-\} )  \\
&\leq 1(\Omega_{m,n}^c) +  \mathbb{P}(\hat{C}_{ij}^c, \, |\hat{\mathcal{N}}_i^{(k) }  | \leq \log{n}) 1(\Omega_{m,n}) +\mathbb{P}(|\hat{\mathcal{N}}_i^{(k) }  | >\log{n}) \\
&\hspace{5mm} + \mathbb{P}_i( \{ \tau^+ \leq \sigma_{k}^+ \} \cup \{\tau^-\leq \sigma_{k}^-\} )+\mathbb{P}_j( \{ \tau^+ \leq \sigma_{k}^+ \} \cup \{ \tau^- \leq \sigma_{k}^-\} ) ,
\end{align*}
where the event $\Omega_{m,n}$ is defined in Theorem~\ref{T.Coupling}. 

To bound the first probability on the right-hand side, define $\hat{\mathcal{F}}_{k,i}$ to be the sigma-algebra generated by the construction of the double tree whose root has {\em identity} $i$, up to Step $\hat{\sigma}_{k,i}^+$ on the inbound side and up to Step $\hat{\sigma}_{k,i}^-$ on the outbound side. 
Now note that
\begin{align*}
\hat{C}_{ij} &= \{ j \notin \hat{\mathcal{N}}_i^{(k)} \} \cap \left(\bigcap_{r=1}^{\hat{\sigma}_{k,j}^-}\bigcap_{t\in \hat{\mathcal{N}}_i^{(k)}} \{\tilde{Z}_{\hat T_{r,j}^-,t}=0 \} \right) \cap \left(\bigcap_{r=1}^{\hat{\sigma}_{k,j}^+} \bigcap_{t\in \hat{\mathcal{N}}_i^{(k)}} \{ Z_{t,\hat T_{r,j}^+}=0 \} \right)
\end{align*}
where $\hat T_{r,j}^+$ and $\hat T_{r,j}^-$ are the $r$th active {\em identities} to have their offspring sampled in the double tree whose root is $j$. Moreover, if we define $B_{s} = \bigcap_{t\in \hat{\mathcal{N}}_i^{(k)}} \{ Z_{ts}=0 \}$ and $\tilde B_{s} = \bigcap_{t\in \hat{\mathcal{N}}_i^{(k)}} \{ \tilde Z_{st}=0 \}$, then
$$\{ j \notin \hat{\mathcal{N}}_i^{(k)} \} = B_{j} \cap \tilde B_{j} \qquad \text{and} \qquad \hat C_{ij} = B_{j} \cap \tilde B_{j} \cap \left( \bigcap_{r=1}^{\hat{\sigma}_{k,j}^-} \tilde B_{\hat T_{r,j}^-} \right) \cap \left( \bigcap_{r=1}^{\hat{\sigma}_{k,j}^+} B_{\hat T_{r,j}^+} \right) ,$$
and therefore, since $\hat \sigma_{k,j}^+, \hat \sigma_{k,j}^+ \leq k$, the union bound gives
\begin{align}
\mathbb{P}( \hat C_{ij}^c | \hat{\mathcal{F}}_i^{(k)})  &\leq   \mathbb{P}\left( \left.  B_{j}^c  \cup  \left( \bigcup_{r=1}^{\hat{\sigma}_{k,j}^+} B_{\hat T_{r,j}^+}^c\right)  \right|  \hat{\mathcal{F}}_i^{(k)} \right) +  \mathbb{P}\left( \left.   \tilde B_{j}^c \cup \left( \bigcup_{r=1}^{\hat{\sigma}_{k,j}^-} \tilde B_{\hat T_{r,j}^-}^c \right)   \right|  \hat{\mathcal{F}}_i^{(k)} \right) \notag  \\
&=  \mathbb{E}\left[ \left.  1(B_{j}^c) + \sum_{r=1}^{\hat{\sigma}_{k,j}^+}  1\left( B_j \cap \bigcap_{s=1}^{r-1} B_{\hat T_{s,j}^+} \cap B_{\hat T_{r,j}^+}^c \right)   \right|  \hat{\mathcal{F}}_i^{(k)} \right] \notag \\
&\hspace{5mm} + \mathbb{E}\left[ \left.  1(\tilde B_{j}^c) + \sum_{r=1}^{\hat{\sigma}_{k,j}^-}  1\left( \tilde B_j \cap \bigcap_{s=1}^{r-1} \tilde B_{\hat T_{s,j}^-} \cap \tilde B_{\hat T_{r,j}^-}^c \right)   \right|  \hat{\mathcal{F}}_i^{(k)} \right]  \notag \\ 
&\leq \mathbb{E}\left[ \left.  1(B_{j}^c) + \sum_{r=1}^k  1\left( \hat A_{r-1,j}^+ \neq \varnothing, \, \bigcap_{s=1}^r \{ \hat T_{s,j}^+ \notin \hat{\mathcal{N}}_i^{(k)} \},  B_{\hat T_{r,j}^+}^c \right)   \right|  \hat{\mathcal{F}}_i^{(k)} \right] \label{eq:InboundNoI} \\
&\hspace{5mm} +  \mathbb{E}\left[ \left.  1(\tilde B_{j}^c) + \sum_{r=1}^k  1\left( \hat A_{r-1,j}^- \neq \varnothing, \, \bigcap_{s=1}^r \{ \hat T_{s,j}^- \notin \hat{\mathcal{N}}_i^{(k)}\},  \tilde B_{\hat T_{r,j}^-}^c \right)   \right|  \hat{\mathcal{F}}_i^{(k)} \right] , \label{eq:OutboundNoI}
\end{align}
where $\hat A_{r,j}^+$ and $\hat A_{r,j}^-$ are the $r$th inbound and outbound active sets in the construction of the double tree started at $j$, and $\Omega_{m,n}$ is defined in Theorem~\ref{T.Coupling}. Now note that the event $\bigcap_{s=1}^r \{ \hat T_{s,j}^+ \notin \hat{\mathcal{N}}_i^{(k)}\}$ gives that none of the $\{ U_{s,\hat T_{r,j}^+}: 1 \leq s \leq n\}$  have been used in the construction of the double tree started at $i$, hence
$$ \mathbb{P}\left( \left. \hat A_{r-1,j}^+ \neq \varnothing, \, \bigcap_{s=1}^r \{ \hat T_{s,j}^+ \notin \hat{\mathcal{N}}_i^{(k)}\},   B_{\hat T_{r,j}^+}^c \right| \hat{\mathcal{F}}_i^{(k)} \right)  \leq \mathbb{E}\left[ 1( \hat A_{r-1,j}^+ \neq \varnothing) Q(\hat{\mathcal{N}}_i^{(k)}, \hat T_{r,j}^+) \right],$$
where
\begin{align*}
Q(V, s) &= \mathbb{P}\left( \bigcup_{t \in V} \{ Z_{ts} \geq 1 \}  \right) \leq  \sum_{t \in V} P(Z_{ts} \geq 1) =  \sum_{t \in V}  (1 - e^{-r_{ts}^{(m,n)}} ) \\
&\leq \sum_{t \in V}  r_{ts}^{(m,n)} \leq \frac{R_n}{n} \sum_{t \in V} \kappa_m({\bf X}_t, {\bf X}_s) \leq \frac{R_n}{n} |V| \sup_{{\bf x}, {\bf y} \in \mathcal{S}} \kappa_m({\bf x}, {\bf y}),
\end{align*}
and $R_n = \max_{1 \leq t \leq M_m} 1( \mu_n(\mathcal{J}_t^{(m)}) > 0) \mu(\mathcal{J}_t^{(m)})/\mu_n(\mathcal{J}_t^{(m)})$.  Since $\mathbb{P}(B_j^c | \hat{\mathcal{F}}_i^{(k)}) \leq Q(\hat{\mathcal{N}}_i^{(k)}, j)$, we obtain that \eqref{eq:InboundNoI} is bounded from above by
$$Q(\hat{\mathcal{N}}_i^{(k)}, j) + \sum_{r=1}^k \mathbb{E}\left[ \left. 1(\hat A_{r-1,j}^+ \neq \varnothing) Q(\hat{\mathcal{N}}_i^{(k)}, \hat T_{r,j}^+) \right| \hat{\mathcal{F}}_i^{(k)}\right] \leq \frac{R_n(k+1)}{n} |\hat{\mathcal{N}}_i^{(k)}| \sup_{{\bf x}, {\bf y} \in \mathcal{S}} \kappa_m({\bf x}, {\bf y}).$$
Similarly, \eqref{eq:OutboundNoI} is bounded from above by
$$\frac{R_n(k+1)}{n} |\hat{\mathcal{N}}_i^{(k)}| \sup_{{\bf x}, {\bf y} \in \mathcal{S}} \kappa_m({\bf x}, {\bf y}).$$

It follows that
$$\mathbb{P}( \hat C_{ij}^c | \hat{\mathcal{F}}_i^{(k)}) \leq \frac{2R_n(k+1)}{n} |\hat{\mathcal{N}}_i^{(k)}| \sup_{{\bf x}, {\bf y} \in \mathcal{S}} \kappa_m({\bf x}, {\bf y}),$$
which in turn implies that for any $i,j \in V_n$, 
\begin{align*}
\mathbb{P}( \hat C_{ij}^c, |\hat{\mathcal{N}}_i^{(k)}| < \log n) 1(\Omega_{m,n})  &= \mathbb{E}\left[ \mathbb{P}( \hat C_{ij}^c | \hat{\mathcal{F}}_i^{(k)}) 1( |\hat{\mathcal{N}}_i^{(k)}| < \log n) \right] 1(\Omega_{m,n}) \\
&\leq \frac{4(k+1) \log n}{n} \sup_{{\bf x}, {\bf y} \in \mathcal{S}} \kappa_m({\bf x}, {\bf y}),
\end{align*}
and we have used the observation that on $\Omega_{m,n}$ we have $R_n \leq 1+ \epsilon \leq 2$.  

Using this estimate we obtain that
\begin{align*}
\frac{1}{n^2}\sum_{i=1}^n\sum_{j\neq i}\mathbb{P}(C_{ij}^c)&\leq  \frac{1}{n^2} \sum_{i=1}^n\sum_{j\neq i} \left\{ 1(\Omega_{m,n}^c) + \mathbb{P}(\hat C_{ij}^c, \, |\hat{\mathcal{N}}_i^{(k)}| \leq \log n) 1(\Omega_{m,n}) + \mathbb{P}(|\hat{\mathcal{N}}_i^{(k)}| > \log n ) \right\} \\
&\hspace{5mm}  + \frac{2}{n}\sum_{i=1}^n \mathbb{P}_i( \{ \tau^+ \leq \sigma_{k}^+ \} \cup \{\tau^-\leq \sigma_{k}^-\})  \\
&\leq 1(\Omega_{m,n}^c) +  \frac{4(k+1) \log n}{n} \sup_{{\bf x}, {\bf y} \in \mathcal{S}} \kappa_m({\bf x}, {\bf y}) + \frac{1}{n}\sum_{i=1}^n \mathbb{P}(|\hat{\mathcal{N}}_i^{(k) }  | >\log{n}) \\
&\hspace{5mm} + \frac{2}{n} \sum_{i=1}^n \mathbb{P}_i( \{ \tau^+ \leq \sigma_{k}^+ \} \cup \{\tau^-\leq \sigma_{k}^-\}).
\end{align*}
To complete the proof, apply Theorem~\ref{T.Coupling} to obtain
$$1(\Omega_{m,n}^c) + \frac{2}{n} \sum_{i=1}^n \mathbb{P}_i( \{ \tau^+ \leq \sigma_{k}^+ \} \cup \{\tau^-\leq \sigma_{k}^-\}) \leq 3 H(n,m,k,\epsilon),$$
and Markov's inequality followed by Lemma~\ref{L.MeanTree} to get
$$\frac{1}{n}\sum_{i=1}^n \mathbb{P}(|\hat{\mathcal{N}}_i^{(k) }  | >\log{n}) \leq \frac{1}{n \log n}\sum_{i=1}^n \mathbb{E}\left[ |\hat{\mathcal{N}}_i^{(k) }  | \right] \leq \frac{k}{\log n} \left( 2 + \sup_{{\bf x} \in \mathcal{S}} \lambda_+^{(m)}({\bf x}) + \sup_{{\bf x} \in \mathcal{S}} \lambda_-^{(m)}({\bf x}) \right).$$
\end{proof}

We are now ready to prove Theorem \ref{giant}, the phase transition for the existence of a giant strongly connected component in $G_n(\kappa(1+\varphi_n))$. 

\begin{proof}[Proof of Theorem \ref{giant}]
By Lemma \ref{approx_reducible}, there exists a sequence of kernels $\{\kappa_m:m\geq 1\}$ defined on $\mathcal{S}\times\mathcal{S}$ such that $\kappa_m$ is quasi-irreducible, regular finitary, and such that for any $n\geq m$, we have
$$\kappa_m(\bold{x},\bold{y})\leq \kappa(\bold{x},\bold{y})(1+\varphi_n(\bold{x},\bold{y})) \qquad \text{for all } \bold{x,y}\in\mathcal{S}.$$

{\em Proof of the lower bound:} We will start by proving a lower bound for the largest strongly connected component of $G_n(\kappa,\varphi_n)$. To this end, note that we can construct a coupling between $G_n(\kappa (1+\varphi_n))$ and $G_n(\kappa_m)$ such that every arc in $G_n(\kappa_m)$ is also in $G_n(\kappa(1+\varphi_n))$ $\mathbb{P}$-a.s. It follows that
$$\mathcal{C}_1(G_n(\kappa(1+\varphi_n))) \geq \mathcal{C}_1(G_n(\kappa_m)) \quad\mathbb{P}\text{-a.s.}$$
The idea is now to apply Theorem 1 in \cite{bloznelis2012birth} to $G_n(\kappa_m)$, however, that theorem requires 
that the kernel $\kappa_m$ be irreducible, whereas $\kappa_m$ is only quasi-irreducible. To address this issue, we construct a third graph as follows. Let $\mathcal{S}^*$ be the restriction of $\mathcal{S}$ where $\kappa_m$ is irreducible and set
$$\mathcal{S}' =\bigcup_{i=1}^{M_m}\left\{\mathcal{J}_i^{(m)} \cap  \mathcal{S}^*: \mu(\mathcal{J}_i^{(m)})>0 \right\}.$$
To avoid trivial cases, assume from now on that $\mu(\mathcal{S}') > 0$. 

Now let $V_{n'} = \{ 1 \leq i \leq n: {\bf X}_i \in \mathcal{S}'\}$ denote the set of vertices in $G_n(\kappa_m)$ that have types in $\mathcal{S}'$ and let $n'$ denote its cardinality. Note that $n'$ is random, but measurable with respect to $\mathscr{F}$. Next, fix $0 < \delta < 1$ and define the kernel $\kappa'({\bf x}, {\bf y}) = (1-\delta) \mu(\mathcal{S}') \kappa_m({\bf x}, {\bf y})$ and the graph $G_{n'}(\kappa')$ whose arc probabilities are given by
$$p_{ij}^{(n')} = \frac{(1-\delta) \mu(\mathcal{S}') \kappa_m({\bf X}_i, {\bf X}_j)}{n'} \wedge 1, \qquad i,j \in V_{n'}, \, i\neq j.$$
Note that $G_{n'}(\kappa')$ is a graph on the type space $\mathcal{S}'$ whose types are distributed according to measure $\mu_{n}'( A) := \mu_n(A)/\mu_n(\mathcal{S}')$ for any $A \subseteq \mathcal{S}'$. 
Moreover, $\kappa'$ is irreducible on $\mathcal{S}'$ with each of its induced types, i.e., the sets $\mathcal{J}_i^{(m)} \cap \mathcal{S}'$, having strictly positive measure. Now note that since $n \mu_n(\mathcal{S}')= n'$ and $\mu_n(\mathcal{S}') \xrightarrow{P} \mu(\mathcal{S}')$ as $n \to \infty$, then
$$p_{ij}^{(n')} = \frac{(1-\delta) \mu(\mathcal{S}')  \kappa_m({\bf X}_i, {\bf X}_j)}{n \mu_n(\mathcal{S}')} \wedge 1 \leq \frac{\kappa_m({\bf X}_i, {\bf X}_j)}{n} \wedge 1, \qquad i,j \in V_n', \, i \neq j,$$
for all sufficiently large $n$. Therefore, there exists a coupling such that every arc in $G_{n'}(\kappa')$ is also in $G_n(\kappa_m)$, and therefore, for all sufficiently large $n$, 
$$\mathcal{C}_1(G_n(\kappa_m)) \geq \mathcal{C}_1(G_{n'}( \kappa')) \quad\mathbb{P}\text{-a.s.}$$
Now use Theorem~1 in \cite{bloznelis2012birth} to obtain that for every $\epsilon > 0$
$$P\left( \left| \frac{\mathcal{C}_1(G_{n'}(\kappa'))}{n'} - \rho'(\kappa') \right| > \epsilon \right) \to 0  \qquad n \to \infty,$$
where
$$\rho'(\kappa') = \int_{\mathcal{S}'} \rho'_+(\kappa'; {\bf x}) \rho_-'(\kappa'; {\bf x}) \mu'(d{\bf x}),$$
and $\rho'_+(\kappa'; {\bf x}), \rho_-'(\kappa'; {\bf x})$ are the survival probabilities of the trees $\mathcal{T}^+_{\mu'} (\kappa')$ and $\mathcal{T}^-_{\mu'} (\kappa')$, respectively, defined on the type space $\mathcal{S}'$ with respect to the measure $\mu'(A) = \mu(A)/\mu(\mathcal{S}')$ for $A \subseteq \mathcal{S}'$.

By the arguments in the proof of Proposition~\ref{P.FinitaryPhaseTransition}, we have that $\rho((1-\delta)\kappa_m) = \mu(\mathcal{S}') \rho'(\kappa')$, where 
$$\rho((1-\delta) \kappa) = \int_{\mathcal{S}} \rho_+((1-\delta) \kappa_m; {\bf x}) \rho_-((1-\delta) \kappa; {\bf x}) \mu(d{\bf x}),$$
and $\rho_+((1-\delta) \kappa_m; {\bf x})$, $\rho_-((1-\delta) \kappa_m; {\bf x})$ are the survival probabilities of the trees $\mathcal{T}_\mu^+((1-\delta)\kappa_m)$ and $\mathcal{T}_\mu^-((1-\delta) \kappa_m)$, defined on the type space $\mathcal{S}$.

Hence, 
\begin{align*}
\frac{\mathcal{C}_1(G_n(\kappa (1+\varphi_n)))}{n} \geq \frac{\mathcal{C}_1(G_n((1-\delta)\kappa_m))}{n} \geq \frac{\mathcal{C}_1(G_{n'}(\kappa'))}{n'} \cdot \frac{n'}{n} \xrightarrow{P} \rho'(\kappa') \mu(\mathcal{S}') = \rho((1-\delta) \kappa_m),
\end{align*}
as $n \to \infty$. Now use Lemma~\ref{L.MonotoneKernels} to obtain that
$$\lim_{m \to \infty} \lim_{\delta \downarrow 0} \rho((1-\delta) \kappa_m) =  \lim_{\delta \downarrow 0} \lim_{m \to \infty} \rho((1-\delta) \kappa_m) = \rho(\kappa), $$ 
from where we conclude that for any $\epsilon > 0$, 
$$P\left( \frac{\mathcal{C}_1(G_n(\kappa, \varphi_n))}{n} - \rho(\kappa) < -\epsilon \right) \to 0 \qquad n \to \infty.$$

\bigskip

{\em Proof of the upper bound:}  For any $k,m \in \mathbb{N}_+$ let $\rho^{\geq k}_+(\kappa_m; {\bf x})$  ($\rho^{\geq k}_-(\kappa_m; {\bf x})$) denote the probability that the tree $\mathcal{T}_\mu^+(\kappa_m;{\bf x})$ ($\mathcal{T}_\mu^-(\kappa_m; {\bf x})$) has a population of at least $k$ nodes. Define for $k \geq 1$ the set
$$N_n^{\geq k} = | \{ i \in V_n: \text{ both the in-component and out-component of $i$ have at least $k$ vertices} \}|,$$
and note that  
$$\mathcal{C}_1(G_n(\kappa (1+\varphi_n))) \leq N_n^{\geq k} \qquad \text{for any } k \geq 1.$$
It follows that
\begin{align}
\frac{\mathcal{C}_1(G_n(\kappa (1+\varphi_n)))}{n} - \rho(\kappa) &\leq \frac{N_n^{\geq k} }{n} - \frac{1}{n} \sum_{i=1}^n \mathbb{E}\left[ \chi_{n,i}^{\geq k} \right] \notag \\ 
&\hspace{5mm} +  \frac{1}{n} \sum_{i=1}^n \mathbb{E}\left[ \chi_{n,i}^{\geq k} \right]  - \frac{1}{n} \sum_{i=1}^n \rho_+^{\geq k}(\kappa_m;{\bf X}_i) \rho_-^{\geq k}(\kappa_m; {\bf X}_i) \label{eq:MeanApprox} \\
&\hspace{5mm} + \frac{1}{n} \sum_{i=1}^n \rho_+^{\geq k}(\kappa_m;{\bf X}_i) \rho_-^{\geq k}(\kappa_m; {\bf X}_i) - \rho^{\geq k}(\kappa_m) + \rho^{\geq k}(\kappa_m)  - \rho(\kappa) . \notag
\end{align}
Moreover, by Proposition~\ref{P.BernoullisCoupled} we have that for any $0 < \epsilon < 1/2$,  \eqref{eq:MeanApprox}  is bounded by $H(n,m,k,\epsilon)$, where $H(n,m,k,\epsilon)$ is defined in Theorem~\ref{T.Coupling} and satisfies
$$\lim_{\epsilon \downarrow 0} \limsup_{n \to \infty} H(n,m,k,\epsilon) \leq  \hat H(m,k) \qquad \text{in probability},$$
for some other function $\hat H(m,k)$ also defined in Theorem~\ref{T.Coupling} and satisfying $\lim_{m\to\infty} \hat H(m,k) = k^{-1} 1(k \geq 2)$. Also, by the bounded convergence theorem we have that for any $m,k \in \mathbb{N}_+$,
$$ \frac{1}{n} \sum_{i=1}^n \rho_+^{\geq k}(\kappa_m;{\bf X}_i) \rho_-^{\geq k}(\kappa_m; {\bf X}_i)  =  \int_{\mathcal{S}}  \rho_+^{\geq k}(\kappa_m;{\bf x}) \rho_-^{\geq k}(\kappa_m; {\bf x}) \mu_n(d{\bf x}) \xrightarrow{P}  \rho^{\geq k}(\kappa_m)  \qquad  n \to \infty,$$
and by Lemma~\ref{L.MonotoneKernels} we have
$$\lim_{k \to \infty} \lim_{m \to \infty} \rho^{\geq k}(\kappa_m)  - \rho(\kappa) = 0. $$
Therefore, for any $0 < \delta <1$ we can choose $m,k \in \mathbb{N}_+$ such that
$$\hat H(m,k) + \rho^{\geq k}(\kappa_m) - \rho(\kappa) < \delta/2,$$
and for such $\delta, m, k$, and any $0 < \epsilon < 1/2$, we have
\begin{align*}
\mathbb{P}\left( \frac{\mathcal{C}_1(G_n(\kappa, \varphi_n))}{n} - \rho(\kappa) > \delta \right) 1(\Omega_{m,n}) &\leq \mathbb{P}\left( \frac{N_n^{\geq k} }{n} - \frac{1}{n} \sum_{i=1}^n \mathbb{E}\left[ \chi_{n,i}^{\geq k} \right] + L(n,m,k,\epsilon) > \delta/2 \right) 1(\Omega_{m,n})  \\
&\leq \frac{1(\Omega_{m,n}) }{(\delta/2 - L(n,m,k,\epsilon))^2} \, \mathbb{E}\left[ \left( \frac{N_n^{\geq k} }{n} - \frac{1}{n} \sum_{i=1}^n \mathbb{E}\left[ \chi_{n,i}^{\geq k} \right] \right)^2 \right],
\end{align*}
where
$$L(n,m,k,\epsilon) := H(n,m,k,\epsilon) - \hat H(m,k) + \frac{1}{n} \sum_{i=1}^n \rho_+^{\geq k}(\kappa_m;{\bf X}_i) \rho_-^{\geq k}(\kappa_m; {\bf X}_i) - \rho^{\geq k}(\kappa_m) $$
satisfies $\lim_{\epsilon \downarrow 0} \limsup_{n \to \infty} L(n,m,k,\epsilon) 1(\Omega_{m,n}) = 0$ in probability. It remains to show that the expectation can be made arbitrarily small.  To see this, use Proposition~\ref{P.BernoullisCoupled} again to obtain that on the event $\Omega_{m,n}$,
\begin{align*}
&\mathbb{E}\left[ \left( \frac{N_n^{\geq k} }{n} - \frac{1}{n} \sum_{i=1}^n \mathbb{E}\left[ \chi_{n,i}^{\geq k} \right] \right)^2 \right]  \\
&= \frac{1}{n^2} \left\{ \sum_{i=1}^n \mathbb{E}\left[ \left( \chi_{n,i}^{\geq k} - \mathbb{E}\left[ \chi_{n,i}^{\geq k} \right] \right)^2 \right] +  \sum_{i=1}^n \sum_{j \neq i} \mathbb{E}\left[ \left( \chi_{n,i}^{\geq k} - \mathbb{E}\left[ \chi_{n,i}^{\geq k} \right] \right) \left( \chi_{n,j}^{\geq k} - \mathbb{E}\left[ \chi_{n,j}^{\geq k} \right] \right) \right] \right\}  \\
&\leq  \frac{1}{n^2}  \left( \sum_{i=1}^n \rho_+^{\geq k}(\kappa_m;{\bf X}_i) \rho_-^{\geq k}(\kappa_m; {\bf X}_i)  + n H(n,m,k,\epsilon)  \right) + K(n,m,k) + 3 H(n,m,k,\epsilon)  \\
&=: D(n,m,k,\epsilon,\delta) (\delta/2 - L(n,m,k,\epsilon))^2,
\end{align*}
where $K(n,m,k)$ is defined in Proposition~\ref{P.BernoullisCoupled} and satisfies $K(n,m,k) \xrightarrow{P} 0$ as $n \to \infty$. We have thus obtained that
\begin{align*}
&\lim_{\epsilon \downarrow 0} \limsup_{n \to \infty} P\left( \frac{\mathcal{C}_1(G_n(\kappa, \varphi_n))}{n} - \rho(\kappa) > \delta \right) \\
& \leq \lim_{\epsilon \downarrow 0} \limsup_{n \to \infty} \left\{ E\left[ \mathbb{P}\left( \frac{\mathcal{C}_1(G_n(\kappa, \varphi_n))}{n} - \rho(\kappa) > \delta \right)  1(\Omega_{m,n}, D(n,m,k,\epsilon,\delta) \leq 1) \right] \right. \\
&\hspace{15mm} \left. + P(\Omega_{m,n}^c)  + P\left( D(n,m,k,\epsilon,\delta) > 1 \right) \right\} \\
&\leq  \lim_{\epsilon \downarrow 0} \limsup_{n \to \infty}  \left\{ E\left[ D(n,m,k,\epsilon,\delta) 1(\Omega_{m,n},  D(n,m,k,\epsilon,\delta) \leq 1) \right] + P(\Omega_{m,n}^c) + P\left( D(n,m,k,\epsilon,\delta) > 1 \right) \right\} \\
&\leq  \frac{4}{\delta^2} \cdot 3 \hat H(m,k)  ,
\end{align*}
and we have used the observation that $\lim_{n \to \infty} P(\Omega_{m,n}^c) = 0$ by Assumption~\ref{reg}(a). Taking the limit as $m \to \infty$ and then as $k \to \infty$ completes the proof of the upper bound. 

\bigskip

{\em Proof of the phase transition:} It follows from Lemma~\ref{L.PhaseTransition}.
\end{proof}

We end the paper with the proof of Proposition~\ref{P.Rank-1}, which states the main results for the rank-1 kernel case. 

\begin{proof}[Proof of Proposition~\ref{P.Rank-1}]
The first two statements follow immediately from noting that $E[\kappa_-({\bf X})] E[\kappa_+({\bf X})] = \iint_{\mathcal{S}^2} \kappa({\bf x}, {\bf y}) \mu(d{\bf x}) \mu(d{\bf y})$. The third one from noting that \linebreak $\lambda_+({\bf X}) = \kappa_+(\bold{X})E[\kappa_-(\bold{X})]$ and $\lambda_-({\bf X}) = \kappa_-(\bold{X})E[\kappa_+(\bold{X})]$. 

To establish (d) assume first that $\rho(\kappa) > 0$. Now use Lemma~\ref{approx_reducible} (applied to $\kappa({\bf x}, {\bf y}) = \kappa_+({\bf y})$ and $\kappa({\bf x}, {\bf y}) = \kappa_-({\bf x})$ separately) to obtain that there exist a sequence of kernels $\{ \kappa_m^-({\bf x}): m \geq 1\}$ and $\{ \tilde \kappa_m^+({\bf x}): m \geq 1\}$ such that: 1) $0 \leq \kappa_m^\pm({\bf x}) \leq  \kappa_\pm({\bf x})$ for all ${\bf x} \in \mathcal{S}$, 2) each is piecewise constant taking only a finite number of values, and 3) $\kappa_m^\pm({\bf x}) \nearrow \kappa_\pm({\bf x})$ for a.e.~${\bf x} \in \mathcal{S}$ as $m \to \infty$.  Now set $B_m = \{ {\bf x} \in \mathcal{S}: \kappa_m^+({\bf x}) > 0, \kappa_m^-({\bf x}) > 0 \}$ and define 
$$\kappa_m({\bf x}, {\bf y}) = \kappa_m^-({\bf x}) \kappa_m^+({\bf y}) 1( {\bf x} \in B_m, {\bf y} \in B_m).$$
Note that $\kappa_m$ is regular finitary and is strictly positive on $B_m \times B_m$. Hence, the only set $A \subseteq B_m$ satisfying $\kappa_m = 0$ on $A \times (A^c \cap B_m)$ is $A = \varnothing$ or $A^c \cap B_m = \varnothing$, implying the irreducibility of $\kappa_m$ on $B_m \times B_m$. Moreover, since $\kappa_+ > 0$ and $\kappa_- > 0$ a.e.~in order for $\kappa$ to be irreducible, we have that $\kappa_m \nearrow \kappa$ as $m \to \infty$. 

Next, use Lemma~\ref{L.MonotoneKernels} to obtain that $\rho(\kappa) = \lim_{m \to \infty} \rho(\kappa_m)$, and therefore, $\rho(\kappa_m) > 0$ for some $m$ sufficiently large. By Proposition~\ref{P.FinitaryPhaseTransition} this implies that the spectral radii of the operators $T_{\kappa_m}^+$ and $T_{\kappa_m}^-$  are strictly larger than one.  Now note that the functions $f_m^+({\bf x}) = \kappa_m^+({\bf x})$ and $f_m^-({\bf x}) = \kappa_m^-({\bf x})$ are nonnegative and satisfy
\begin{align*}
T_{\kappa_m}^+ f^+_m({\bf x}) &= \int_{\mathcal{S}} \kappa_m^-({\bf y}) \kappa_m^+({\bf x}) f_m^+({\bf y}) \mu(d{\bf x}) = \kappa_m^+({\bf x}) \int_{\mathcal{S}} \kappa_m^-({\bf y}) f_{m}^+({\bf y}) \mu(d{\bf y}) \\
&= f_m^+({\bf x}) \int_{\mathcal{S}} \kappa_m^-({\bf y}) \kappa_{m}^+({\bf y}) \mu(d{\bf y}) ,
\end{align*}
and therefore, $r_m := \int_{\mathcal{S}} \kappa_m^-({\bf y}) \kappa_{m}^+({\bf y}) \mu(d{\bf y})$ is an eigenvalue of $T_{\kappa_m}^+$. Similarly, $r_m$ is an eigenvalue of $T_{\kappa_m}^-$ associated to the nonnegative eigenfunction $f_m^-$. Since we may assume that $\kappa_m^+({\bf x})$ and $\kappa_m^-({\bf x})$ are different from zero for sufficiently large $m$, then Proposition~\ref{P.FinitaryPhaseTransition} gives that $r_m = r(T_{\kappa_m}^\pm) > 1$. Taking the limit as $m \to \infty$ gives that
$$E[\kappa_-({\bf X}) \kappa_+({\bf X})] = \lim_{m \to \infty} r_m > 1.$$
For the converse, note that $E[\kappa_-({\bf X}) \kappa_+({\bf X})]  > 1$ and the monotone convergence theorem imply that $r_m > 1$ for some $m$ sufficiently large. For this $m$, Proposition~\ref{P.FinitaryPhaseTransition} gives that $r_m$ is the spectral radius of $T_{\kappa_m}^+$ and $T_{\kappa_m}^-$, and also that $\rho(\kappa_m) > 0$. Lemma~\ref{L.MonotoneKernels} now gives that $1 < \rho(\kappa_m) \nearrow \rho(\kappa)$ as $m \to \infty$.  
\end{proof}

\bibliography{component}

\begin{thebibliography}{10}

\bibitem{athreya1972}
Krishna~B. Athreya and Peter~E. Ney.
\newblock {\em Branching Processes}.
\newblock Dover, 1972.

\bibitem{bloznelis2012birth}
Mindaugas Bloznelis, Friedrich G{\"o}tze, and Jerzy Jaworski.
\newblock Birth of a strongly connected giant in an inhomogeneous random
  digraph.
\newblock {\em Journal of Applied Probability}, 49(03):601--611, 2012.

\bibitem{bollobas1998random}
B{\'e}la Bollob{\'a}s.
\newblock Random graphs.
\newblock In {\em Modern Graph Theory}, pages 215--252. Springer, 1998.

\bibitem{bollobas2007phase}
B{\'e}la Bollob{\'a}s, Svante Janson, and Oliver Riordan.
\newblock The phase transition in inhomogeneous random graphs.
\newblock {\em Random Structures \& Algorithms}, 31(1):3--122, 2007.

\bibitem{britton2006generating}
Tom Britton, Maria Deijfen, and Anders Martin-L{\"o}f.
\newblock Generating simple random graphs with prescribed degree distribution.
\newblock {\em Journal of Statistical Physics}, 124(6):1377--1397, 2006.

\bibitem{chen2013directed}
Ningyuan Chen, Mariana Olvera-Cravioto, et~al.
\newblock Directed random graphs with given degree distributions.
\newblock {\em Stochastic Systems}, 3(1):147--186, 2013.

\bibitem{chung2002average}
Fan Chung and Linyuan Lu.
\newblock The average distances in random graphs with given expected degrees.
\newblock {\em Proceedings of the National Academy of Sciences},
  99(25):15879--15882, 2002.

\bibitem{chung2002connected}
Fan Chung and Linyuan Lu.
\newblock Connected components in random graphs with given expected degree
  sequences.
\newblock {\em Annals of combinatorics}, 6(2):125--145, 2002.

\bibitem{chung2003spectra}
Fan Chung, Linyuan Lu, and Van Vu.
\newblock Spectra of random graphs with given expected degrees.
\newblock {\em Proceedings of the National Academy of Sciences},
  100(11):6313--6318, 2003.

\bibitem{Cooper2004size}
Colin Cooper and Alan Frieze.
\newblock The size of the largest strongly connected component of a random
  digraph with a given degree sequence.
\newblock {\em Combinatorics, Probability and Computing}, 13(3):319--337, 2004.

\bibitem{degla2008overview}
Guy Degla.
\newblock An overview of semi-continuity results on the spectral radius and
  positivity.
\newblock {\em Journal of Mathematical Analysis and Applications},
  338(1):101--110, 2008.

\bibitem{durrett2010probability}
Rick Durrett.
\newblock {\em Probability: theory and examples}.
\newblock Cambridge university press, 4 edition, 2010.

\bibitem{erdos1959random}
Paul Erd{\H{o}}s and Alfr{\'e}d R{\'e}nyi.
\newblock On random graphs {I}.
\newblock {\em Publ. Math. Debrecen}, 6:290--297, 1959.

\bibitem{erdos1960evolution}
Paul Erd{\H{o}}s and Alfr{\'e}d R{\'e}nyi.
\newblock On the evolution of random graphs.
\newblock {\em Publ. Math. Inst. Hung. Acad. Sci}, 5(1):17--60, 1960.

\bibitem{erdos1964strength}
Paul Erd{\H{o}}s and Alfr{\'e}d R{\'e}nyi.
\newblock On the strength of connectedness of a random graph.
\newblock {\em Acta Mathematica Academiae Scientiarum Hungarica},
  12(1-2):261--267, 1964.

\bibitem{frieze2015introduction}
Alan Frieze and Micha{\l} Karo{\'n}ski.
\newblock {\em Introduction to random graphs}.
\newblock Cambridge University Press, 2015.

\bibitem{karp1990transitive}
Richard~M. Karp.
\newblock The transitive closure of a random digraph.
\newblock {\em Random Structures \& Algorithms}, 1(1):73--93, 1990.

\bibitem{luczak1990phase}
Tomasz {\L}uczak.
\newblock The phase transition in the evolution of random digraphs.
\newblock {\em Journal of graph theory}, 14(2):217--223, 1990.

\bibitem{luczak1992giant}
Tomasz {\L}uczak and Joel~E. Cohen.
\newblock Giant components in three-parameter random directed graphs.
\newblock {\em Advances in applied probability}, 24(4):845--857, 1992.

\bibitem{luczak1994structure}
Tomasz {\L}uczak, Boris Pittel, and John~C. Wierman.
\newblock The structure of a random graph at the point of the phase transition.
\newblock {\em Transactions of the American Mathematical Society},
  341(2):721--748, 1994.

\bibitem{miller2012edge}
Joel~C. Miller, Anja~C. Slim, and Erik~M. Volz.
\newblock Edge-based compartmental modelling for infectious disease spread.
\newblock {\em Journal of the Royal Society Interface}, 9(70):890--906, 2012.

\bibitem{mode1971multitype}
Charles~J. Mode.
\newblock {\em Multitype branching processes: theory and applications},
  volume~34.
\newblock American Elsevier Pub. Co., 1971.

\bibitem{molloy1995critical}
Michael Molloy and Bruce Reed.
\newblock A critical point for random graphs with a given degree sequence.
\newblock {\em Random structures \& algorithms}, 6(2-3):161--180, 1995.

\bibitem{norros2006conditionally}
Ilkka Norros and Hannu Reittu.
\newblock On a conditionally poissonian graph process.
\newblock {\em Advances in Applied Probability}, 38(01):59--75, 2006.

\bibitem{penrose2015}
Mathew~D. Penrose.
\newblock The strong giant in a random digraph.
\newblock {\em Journal of Applied Probability}, 53(1):57--70, 2016.

\bibitem{resnick2015tauberian}
Sidney Resnick and Gennady Samorodnitsky.
\newblock Tauberian theory for multivariate regularly varying distributions
  with application to preferential attachment networks.
\newblock {\em Extremes}, 18(3):349--367, 2015.

\bibitem{resnick2007heavy}
Sidney~I. Resnick.
\newblock {\em Heavy-tail phenomena: probabilistic and statistical modeling}.
\newblock Springer Science \& Business Media, 2007.

\bibitem{riordan2005small}
Oliver Riordan.
\newblock The small giant component in scale-free random graphs.
\newblock {\em Combinatorics, Probability and Computing}, 14(5-6):897--938,
  2005.

\bibitem{samorodnitsky2016nonstandard}
Gennady Samorodnitsky, Sidney Resnick, Don Towsley, Richard Davis, Amy Willis,
  and Phyllis Wan.
\newblock Nonstandard regular variation of in-degree and out-degree in the
  preferential attachment model.
\newblock {\em Journal of Applied Probability}, 53(1):146--161, 2016.

\bibitem{seneta2006non}
Eugene Seneta.
\newblock {\em Non-negative matrices and Markov chains}.
\newblock Springer Science \& Business Media, 2006.

\bibitem{van2016random}
Remco van~der Hofstad.
\newblock Random graphs and complex networks, 2016.

\bibitem{hoorn2017distances}
Pim van~der Hoorn and Mariana Olvera-Cravioto.
\newblock Typical distances in the directed configuration model.
\newblock {\em To appear in Annals of Applied Probability}, pages 1--45, 2017.

\end{thebibliography}
\bibliographystyle{plain}
\end{document}